\documentclass[11pt,A4rpaper]{amsart}
\usepackage{url,amsmath,amsthm, amssymb}
\usepackage[margin=1in]{geometry}
\usepackage[pdfencoding=auto, psdextra, colorlinks=true, linkcolor = blue, urlcolor=blue, citecolor=blue,pagebackref,breaklinks]{hyperref}
\usepackage{bookmark}
\usepackage[all]{mathtools}
\usepackage{makecell}
\usepackage{mathrsfs}
\usepackage{mathdots}
\usepackage{amsxtra,amscd}
\usepackage{amsrefs}
\usepackage{hyperref}
\usepackage{enumerate, enumitem}
\usepackage{arydshln}
\usepackage{multirow}
\usepackage{bbm} 
\usepackage[all,arc]{xy}

 \usepackage{amssymb,amsrefs}
\usepackage{amsmath}
\usepackage{amscd,latexsym}
\usepackage{bookmark}
 \usepackage{amssymb,amsfonts,bm}
\usepackage[all,arc]{xy}
\usepackage{enumerate}
\usepackage{mathrsfs}
\usepackage{amscd}
\usepackage{tikz}
\usepackage{tikz-cd}
\usepackage{amsmath}
\usepackage{latexsym}
\usepackage{mathdots}
\usepackage{amsrefs}
\usepackage{graphicx}
\usepackage{mathtools}
\usepackage{leftidx}
\usepackage{tensor}
\usepackage{dsfont}
\usepackage{tikz-cd} 
\usepackage[new]{old-arrows}
\usepackage{xcolor}
\usepackage{bbm}
\usepackage{enumitem}
\usepackage{cleveref}

\newcommand{\bigboxplus}{
	\mathop{
		\vphantom{\bigoplus} 
		\mathchoice
		{\vcenter{\hbox{\resizebox{\widthof{$\displaystyle\bigoplus$}}{!}{$\boxplus$}}}}
		{\vcenter{\hbox{\resizebox{\widthof{$\bigoplus$}}{!}{$\boxplus$}}}}
		{\vcenter{\hbox{\resizebox{\widthof{$\scriptstyle\oplus$}}{!}{$\boxplus$}}}}
		{\vcenter{\hbox{\resizebox{\widthof{$\scriptscriptstyle\oplus$}}{!}{$\boxplus$}}}}
	}\displaylimits 
}

\allowdisplaybreaks

\theoremstyle{plain}
\newtheorem{theorem}{Theorem}[section]

\newtheorem{induction hypothesis}[theorem]{Induction Hypothesis}

\newtheorem{lemma}[theorem]{Lemma}
\newtheorem{proposition}[theorem]{Proposition}
\newtheorem{working hypothesis}[theorem]{Working Hypothesis}
\numberwithin{equation}{section}

\theoremstyle{definition}
 
  \newtheorem*{data availability}{Data availability}
  \newtheorem*{conflicts of interest}{Conflicts of interest}

\theoremstyle{remark}

\newtheorem*{acknowledgements}{Acknowledgments}

\makeatletter
\newcommand{\mytag}[2]{%
  \text{#1}%
  \@bsphack
  \begingroup
    \@onelevel@sanitize\@currentlabelname
    \edef\@currentlabelname{%
      \expandafter\strip@period\@currentlabelname\relax.\relax\@@@%
    }%
    \protected@write\@auxout{}{%
      \string\newlabel{#2}{%
        {#1}%
        {\thepage}%
        {\@currentlabelname}%
        {\@currentHref}{}%
      }%
    }%
  \endgroup
  \@esphack
}

\begin{document}

\title[Whittaker--Fourier Coefficients]{Petersson Inner Products and Whittaker--Fourier Periods on Even Special Orthogonal and Symplectic Groups}

\author{Yeongseong Jo}

\address{Department of Mathematics Education, Ewha Womans University, Seoul 03760, Republic of Korea}
\email{\href{mailto:jo.59@buckeyemail.osu.edu}{jo.59@buckeyemail.osu.edu};\href{mailto:yeongseong.jo@ewha.ac.kr}{yeongseong.jo@ewha.ac.kr}}

\subjclass[2020]{Primary 11F67; Secondary 11F70, 22E50}
\keywords{Petersson inner products, Whittaker--Fourier coefficients, Automorphic descents, Automorphic forms and representations.}

\dedicatory{Dedicated to James Cogdell with admiration}

\begin{abstract}
In this article, we would like to formulate a relation between the square norm of Whittaker--Fourier coefficients on even special orthogonal and symplectic groups
and Petersson inner products along with the critical value of $L$-functions up to constants. We follow the path of Lapid and Mao to reduce it to the conjectural local identity. 
Our strategy is based on the work of Ginzburg--Rallis--Soudry on automorphic descent. We present the analogue result for odd special orthogonal groups, which is conditional on 
unfolding Whittaker functions of descents.
\end{abstract}

\maketitle

\section{Introduction}
Let ${\rm GL}_n(\mathbb{A})$ be a general linear group over a number field $F$ and $\mathbb{A}$ the ring of adeles of $F$.
We let $\pi$ denote an irreducible cuspidal representation.
We let $B$ denote a Borel subgroup of ${\rm GL}_n$, $N$ the unipotent radical of $B$ 
and fix a non-degenerate character $\psi_N$ of $N(\mathbb{A})$, trivial on $N(F)$. 
For a cusp form $\varphi$ of ${\rm GL}_n(F) \backslash {\rm GL}_n(\mathbb{A})$, we consider the Whittaker--Fourier coefficient
\[
 \mathcal{W}^{\psi_N}(\varphi):=\int_{N(F)\backslash N(\mathbb{A})} \varphi(u)\psi^{-1}_N(u) \, du.
\]
If $\pi$ is an irreducible cuspidal representation, then non-zero Whittaker--Fourier coefficient $\mathcal{W}^{\psi_N}(\varphi)$
gives a realization of $\pi$ in the space of Whittaker functions on ${\rm GL}_n(\mathbb{A})$. By local multiplicity one,
 the space of Whittaker functions depends only on $\pi$ as an abstract representation and $\pi$ is decomposable in the sense that
\[
\pi \cong \otimes_v \mathbb{W}^{\psi_{N,v}}(\pi_v)
\]
where $\mathbb{W}^{\psi_{N,v}}(\pi_v)$ is the local Whittaker model of $\pi_v$. Upon further assuming that $\mathcal{W}^{\psi_N}(\varphi)$ is decomposable, we have a factorization of Whittaker functions
\[
  \mathcal{W}^{\psi_N}(\varphi)=\prod_v W_v, \quad W_v \in \mathbb{W}^{\psi_{N,v}}(\pi_v).
\]
 It therefore provides a useful tool for understanding $\pi$, both computationally and conceptually. 
 
\par
The construction of Whittaker functions has had numerous significant impact in global integrals of Rankin-Selberg type.  The prototype integral representation is the original construction for ${\rm GL}_2 \times {\rm GL}_2$ by R. Rankin and A. Selberg in late 1930.
Their analogues for higher rank has been developed and studied in the pioneering work by Jacquet, Piatetski--Shapiro, and Shalika \cite{JS81a, JS81b, JPSS83}
with some parts completed only in recent years by Jacquet \cite{Jac09}. 
The Rankin-Selberg integral has been investigated extensively in the literature
and are the subject matter in modern number theory of automorphic representations and their $L$-functions. 
Among other things, the Rankin--Selberg integral expresses the Petersson
inner product in terms of a canonical inner product on the local Whittaker model of $\pi_v$ under the decomposition $\pi=\otimes_v \pi_v$ \cite[\S 4]{JS81a}. The global factor which shows up
in this expression is the residue at $s=1$ of the adjoint $L$-function $L(s,\pi,{\rm Ad})$ of $\pi$ (or alternatively, $L(s,\pi \times \pi^{\vee})$)
where $\pi^{\vee}$ is the contragredient of $\pi$.

\par
We try to make the statement more precise, and to generalize it to other groups. Let ${\rm GL}_n(\mathbb{A})^1=\{ g \in {\rm GL}_n(\mathbb{A}) \;|\; |\det g|=1  \}$. It is know from \cite[\S 4]{LM15} that ${\rm vol}({\rm GL}_n(F) \backslash {\rm GL}_n(\mathbb{A})^1)=1$. We let $\zeta^S_F(s)=\prod_{v \notin S} \zeta_{F_v}(s)$, $\Re s>1$ be the partial Dedekind zeta function.
Similar notation applies to  the partial adjoint $L$-function $L^S(s,\pi,{\rm Ad})$. For a cusp form $\varphi$ and another cusp form $\varphi^{\vee}$ on ${\rm GL}_n(F) \backslash {\rm GL}_n(\mathbb{A})$,
 we define the Petersson inner product by
\[
  \langle \varphi, \varphi^{\vee} \rangle_{{\rm GL}_n(F) \backslash {\rm GL}_n(\mathbb{A})^1}:=\int_{{\rm GL}_n(F) \backslash {\rm GL}_n(\mathbb{A})^1} \varphi(g)\varphi^{\vee}(g)\, dg,
\]
while on $\mathbb{W}^{\psi_{N,v}}(\pi_v)$ an invariant inner product is given by
\[
 J_v\left( 1, W_v, \check{W}_v \right):=\int_{ ( {\rm GL}_{n-1}(F_v) \cap N(F_v) ) \backslash {\rm GL}_{n-1}(F_v)} W_v\begin{pmatrix} g & \\ & 1 \end{pmatrix} \check{W}_v \begin{pmatrix} g & \\ & 1 \end{pmatrix} \, dg
\]
for non-archimedean places $v$ of $F$ \cite[Proposition 3.4]{Jo23} and archimedean places $v$ of $F$ \cite[Remark 5.8]{HJ24}.
It is natural to compare the local invariant inner product with the Petersson inner product. The relation has been established in the pioneering work of Jacquet and Shalika \cite[\S 4]{JS81a};
\begin{equation}
\label{EulerProducts-General-Linear}
 \langle \varphi, \varphi^{\vee} \rangle_{{\rm GL}_n(F) \backslash {\rm GL}_n(\mathbb{A})^1}
 =\frac{L^S(1,\pi, {\rm Ad})}{\prod_{j=1}^n\zeta_F^S(j)} \prod_{v \in S} J_v\left( 1, W_v, \check{W}_v \right).
\end{equation}
(We ask readers to refer to \citelist{\cite{Zha14}*{Proposition 3.1} \cite{LM15}*{p.477, Lemma 4.4}}). 
Recently the above product formula can be further refined by the product of so-called {\it na\"ive Rankin--Selberg $L$-factors} over all places at $s=1$,
upon taking the {\it global newform}  \cite[Definition 8.1]{HJ24} and then specializing the value of the complex variable $s$ in the period integral \cite[Theorem 8.4]{HJ24}.

\par
As a consequence of the formula of Jacquet and Shalika \eqref{EulerProducts-General-Linear}, we take the $\psi_N$-th Fourier coefficient of a matrix coefficient 
of $\pi$ and turn our attention to associating it with the product of Whittaker functions. The integral
\[
 \int_{N(\mathbb{A})} \langle \pi(u)\varphi,\varphi^{\vee} \rangle_{{\rm GL}_n(F) \backslash {\rm GL}_n(\mathbb{A})^1} \psi_N^{-1}(u) \, du
\]
does not converge. In fact, even the local integrals
\[
 \int_{N(F_v)} \langle \pi_v(u_v) \varphi_v,\varphi_v^{\vee} \rangle_v \psi^{-1}_N(u_v) \, du_v
\]
(where $\varphi_v$ and $\varphi_v^{\vee}$ are now taken from the space of $\pi_v$ and $\pi^{\vee}_v$ respectively and $\langle \cdot,\cdot \rangle_v$ is the canonical pairing)
do not converge unless $\pi_v$ is square-integrable. Given a finite set of places $S$, it is possible to regularize the integral in such a way that
\[
 \int^{\rm st}_{N(F_S)} \langle \pi(u)\varphi,\varphi^{\vee} \rangle_{{\rm GL}_n(F)\backslash {\rm GL}_n(\mathbb{A})^1} \psi^{-1}_N(u) \, du
\]
just as in \cite{LM15}. In particular, if $S$ consists solely of non-archimedean places, then
\[
 \int^{\rm st}_{N(F_S)} \langle \pi(u)\varphi,\varphi^{\vee} \rangle_{{\rm GL}_n(F)\backslash {\rm GL}_n(\mathbb{A})^1} \psi^{-1}_N(u) \, du
 = \int_{N^{\ast}} \langle \pi(u)\varphi,\varphi^{\vee} \rangle_{{\rm GL}_n(F)\backslash {\rm GL}_n(\mathbb{A})^1} \psi^{-1}_N(u) \, du
\]
for any sufficiently large compact open subgroup $N^{\ast}$ of $N(F_S)$. The definition of the regularized integral is different in the archimedean case (cf. \cite[\S 4.3]{Mor24}), but in this paper
we do not use regularized integrals over archimedean fields directly. 
From the viewpoint of local multiplicity one, there exist constants $c_{\pi_v}$ depending on $\pi_v$ such that
\begin{equation}
\label{Main-Result-GeneralLinear}
 \mathcal{W}^{\psi_N}(\varphi)\mathcal{W}^{\psi^{-1}_N}(\varphi^{\vee})
 =\left(\prod_{v \in S} c^{-1}_{\pi_v} \right)\frac{\prod_{j=1}^n\zeta_F^S(j)}{L^S(1,\pi\times\pi^{\vee})}
 \int^{\rm st}_{N(F_S)} \langle \pi(u) \varphi, \varphi^{\vee} \rangle_{{\rm GL}_n(F) \backslash {\rm GL}_n(\mathbb{A})^1} \psi^{-1}_{N}(u) \; du
\end{equation}
for all $\varphi=\otimes_v \varphi_v \in \pi$, $\varphi^{\vee}=\otimes_v \varphi^{\vee}_v \in \pi^{\vee}$, and all $S$ sufficiently large \cite[(1.2)]{LM15}.
Implicit here is the existence and non-vanishing of $\left.\dfrac{\prod_{j=1}^n\zeta_F^S(js)}{L^S(s,\pi\times\pi^{\vee})}\right|_{s=1}$.
Lapid and Mao \cite[Theorem 4.1, Lemma 4.4]{LM15} utilize the theory of Rankin--Selbeg integrals to show that $c_{\pi_v}=1$
over all places $v$ for irreducible cuspidal representation of ${\rm GL}_n$, while a similar result was proved independently in the work of Sakellaridis and Venkatesh \cite[\S 6.3, \S 18.3]{SV17}. We mention in this context a slightly modified formulation presented in  \cite[\S 2.2]{Liu16}.
\par
It is desirable to extend this relation \eqref{Main-Result-GeneralLinear} based on the analog of the equality \eqref{EulerProducts-General-Linear} to symplectic and special orthogonal groups. The purpose of the paper is to compute Whittaker--Fourier coefficients on symplectic and special orthogonal groups to reduce the global conjectural identity to a local corresponding identity.
\begin{itemize}[leftmargin=*]
\item In Theorem \ref{Main-Result-Even-Orthogonal-Theorem}, we establish the explicit formula relating the product of Whittaker--Fourier coefficients
of cusp forms on adelic quotients of split and quasi-split even special orthogonal groups to the Petersson inner product. The formulation relies on the validity of Working Hypothesis \ref{even-orthogonal-working-hypothesis} and is built upon Proposition \ref{even-orthogonal-good-repn}.
\item In Theorem \ref{Main-Result-Symplectic-Theorem}, we establish the explicit formula relating the product of Whittaker--Fourier coefficients of
cusp forms on adelic quotients of symplectic groups to the Petersson inner product. The formulation relies on the validity of
Working Hypothesis \ref{symplectic-working-hypothesis} and is built upon Proposition \ref{symplectic-good-repn}.
\end{itemize}
In Theorem \ref{Main-Result-Odd-Orthogonal-Theorem}, we also study the formula relating the square norm of Whittaker--Fourier coefficients
on adelic quotients of split odd special orthogonal groups to the product of and special value of automorphic partial $L$-functions under Working Hypothesis \ref{odd-orthogonal-working-hypothesis}. Whereas the formulation in odd special orthogonal cases  is built upon Proposition \ref{odd-orthogonal-good-repn} as before,
it relies on the ongoing work of Lapid and Mao (Theorem \ref{Whittaker-main-odd-orthogonal}), which concerns with unfolding Whittaker functions of the global descent generated by Gelfand--Graev coefficients.  It is noteworthy that the bulk of Section \ref{Sec:5} is intended for gathering more or less expository groundwork toward a local conjecture identity,
rather than concentrating on proving precise statements.

\par
In this note, we follow the path of the adaptation of  \cite{LM16} to study Whittaker--Fourier coefficients for unitary groups and shed a light on the product of Whittaker--Fourier coefficients \eqref{Main-Result-GeneralLinear}.
The key ingredient is the theory of automorphic descent developed by Ginzburg--Rallis--Soudry \cite{GRS99,GRS11}. 
We address that this approach is different from that of Waldspurger \cite{Wal81} who uses the Shimura correspondence 
as well as from the relative trace formula approach \cite{Zha14}.
At the moment, the descent theory is more thoroughly understood in the case of metaplectic groups than
in the case of special orthogonal and symplectic groups. In addition there are some expected properties of the descent, specifically 
irreducibility of global descents (Working Hypothesis \ref{even-orthogonal-working-hypothesis} and \ref{symplectic-working-hypothesis}) in the even special orthogonal and symplectic case 
just as assumed in \cite{LM16}, and irreducibility of global descents (Working Hypothesis \ref{odd-orthogonal-working-hypothesis}) and Theorem \ref{Whittaker-main-odd-orthogonal} of Lapid--Mao
in odd special orthogonal case. Although it is likely that the similar method in metaplectic case work, we do not take up the issue of bridging theses gaps here.
This is because the ultimate goal is to settle down the local identity, once the reduction to the local identity from the global identity is achieved in the current project.
In this regard, we will take for granted the expected properties of the descent for special orthogonal and symplectic groups. Therefore our results are conditional.

\par
In the case of classical groups of different rank cases, it is one of the most important topic in modern number theory to explore these types of deep relations between certain period integrals of automorphic forms and critical value of $L$-functions. In 1990's, Gross and Prasad \cite{GP92,GGP12} predicted a relationship between the non-vanishing of Gross-Prasad period on special orthogonal groups of co-rank $1$ and the non-vanishing of certain $L$-functions. Recently the influential work of Ichino and Ikeda \cite{II10}
has refined the global Gan--Gross-Prasad conjecture \cite{GGP12} to a conjectural formula which computes the norm of the period integral in terms of the central $L$-value
in the case of orthogonal group of co-rank $1$. The construction of period integrals for cuspidal representations of unitary groups ${\rm U}_n \times {\rm U}_m$
depends on the parity of $n-m$, so one has to treat the case $n-m \neq 0$ even (Fourier--Jacobi cases) and $n-m$ odd (Bessel cases) separately. For unitary groups ${\rm U}_{n+1} \times {\rm U}_n$ and $n=1,2$,
the similar work was later accomplished by R. N. Harris \cite{Har14}. W. Zhang \cite{Zha14} has recently made a remarkable progress toward refined formula for ${\rm U}_{n+1} \times {\rm U}_n$ taking the relative trace formula approach initiated by Jacquet \cite{Jac89}. The reader may consult to \cite{BLX24,Liu16}
 for incomplete list of Jacobi and Bessel cases on unitary groups.

\par
 The case of the metaplectic group $\widetilde{\rm SL}_2(=\widetilde{\rm Sp}_2)$-the two-fold cover of the special linear group ${\rm SL}_2$ goes back to the seminal work of
 Waldspurger on the Fourier coefficients of half-integral weight modular forms \cite{Wal81}. 
 His result has been revisited in a flurry of work since then (see \cite{LM15,LM17,Qiu19} and the references therein). 
 Later Lapid and Mao \cite{LM17} generalize the relation between the Whittaker--Fourier coefficients and the Petersson inner product to
 arbitrary metaplectic groups $\widetilde{\rm Sp}_{2n}$. While these groups are not algebraic, they behave in many ways like algebraic groups. 
 More importantly, the descent method applies to metaplectic groups and give rise to irreducible representations. The formula relating the Whittaker--Fourier coefficients to the Peterson inner product for quasi-split unitary groups is carried out by Lapid and Mao \cite{LM15} but under the assumption of irreducibility of global descents.
 
 \par
 As seen in \eqref{Main-Result-GeneralLinear}, for all places $v$ of $F$, the local constant $c_{\pi_v}$ depending on $\pi_v$ is given as a proportionality constant of two local Whittaker periods. Afterwords, Lapid and Mao \cite{LM17-2} determine a value for the $c_{\pi_v}$, which is provided with a root number
 $c_{\pi_v}=\varepsilon(\frac{1}{2},\pi_v,\psi_v)$ in the case of metaplectic groups though they only consider $p$-adic places $v$.
 Very recently, Morimoto  \cites{Mor24} resolves what is known as {\it Lapid--Mao conjecture}, which specifies the value $c_{\pi_v}$ as the central character $\omega_{\pi}$ of $\pi$ evaluated at $\tau$ satisfying $-\tau=\mathfrak{c}(\tau)$ for all places $v$ in the case of unitary groups. Here $\mathfrak{c}$ is a Galois involution over a quadratic extension of $F_v$. 
It turns out that this result is preceded by his earlier work on even unitary groups at non-split finite places \cite{Mor22}. The main significance of this task is that
the presumptive local identity is expected to be equivalent to the formal degree conjecture of Hiraga--Ichino--Ikeda \cite{HII08}
in the case of generic discrete series representations.
Ichino--Lapid--Mao \cite{ILM17} use this method to establish the formal degree conjecture for the case of odd special orthogonal groups and metaplectic groups.
For unitary group cases, Morimoto \cite{Mor22,Mor24} generalizes the technique to work out the formal degree conjecture.
It is perhaps fair to raise a natural question as to analyzing the exact value of $c_{\pi_v}$ in the context of special orthogonal and symplectic groups.
 As was pointed out briefly, we follow the robust argument proposed by Lapid and Mao to attack the explicit identity in author's ongoing collaborations. However we think that our current content keeps our exposition a reasonable length and we hope to do so in the future. 
 
 \par
 Let us describe the organization of this paper in more detail. We set up basic notations in Section \ref{Sec:2}.
In Section \ref{Sec:3} through Section \ref{Sec:5}, we introduce and study the descent construction, both locally and globally.
We then explore analytic properties of Rakin--Selberg--Shimura type integral and compute it for unramified data.
After piecing together ingredients of the descent constructions and Euler products from Petersson inner products, 
we present our main result which allows us to reduce to a local statement. Section \ref{Sec:3} is devoted to the case for
both quasi-split and split even orthogonal groups and we dedicate Section \ref{Sec:4} to the case for symplectic groups.
 In Section \ref{Sec:5}, we present results in the case of odd special orthogonal groups, which parallel the analogue of previous cases.

\vspace{-.1cm}
\section{Notation and Preliminaries}
\label{Sec:2}

\subsection{Local settings}
Let $F$ be a local field of character zero. By abuse of notation, we will use the same letter for
an algebraic group over $F$ and its group of $F$-points. We denote by ${\rm Irr} \, Q$ the set of 
equivalence classes of smooth complex irreducible representations of the group of $F$-points
of an algebraic group $Q$ over $F$. We also write $\delta_Q$ for the modulus function of $Q$.
If $Q$ is quasi-split and $\psi_{N_Q}$ is a non-degenerate character of a maximal unipotent subgroup $N_Q$,
we denote by ${\rm Irr}_{{\rm gen},\psi_{N_Q}} Q$ the subset of representations that are generic with 
respect to $\psi_{N_Q}$. We suppress $\psi_{N_Q}$ from the notation and write it as ${\rm Irr}_{\rm gen}Q$,
if it is clear from the context or is irrelevant. We denote by $\boxplus$ the normalized parabolic induction for the general 
linear group, which we call the isobaric sum.  Specifically, we let ${\rm P}_{(n_1,\dotsm,n_r)}(F)$ be the standard upper parabolic subgroup of ${\rm GL}_n(F)$
with $n_1+\dotsm+n_r=n$. Given representations $\pi_i$ of ${\rm GL}_{n_i}(F)$
we form an induced representation $\pi$ of ${\rm GL}_n(F)$ by normalized parabolic induction;
\[
 \pi:= \bigboxplus_{i=1}^r \pi_i={\rm Ind}^{{\rm GL}_n(F)}_{{\rm P}_{(n_1,\dotsm,n_r)}(F)}(\pi_1 \otimes \dotsm \otimes \pi_r).
\]
\par
We caution that the modular character $\delta_Q$ in Lapid and Mao \cite{LM16,LM17} is really $\delta^{-1}_Q$ in our notation.
Our convention is consistent with that in Kaplan \cite{Kap15}.

\par
Let $F^n$ be  a $n$-dimensional space of column vectors. 
We let $\{ ^t\xi_i \, | \, 1 \leq i \leq n  \}$ be the standard column basis for $F^n$.
We let $1_n$ denote the $n \times n$ identity matrix
and $0_{m,l}$ the $m \times l$ zero matrix. Let
\[
  w_n=\begin{pmatrix}  \vspace{-2ex} && 1 \\ & \iddots& \vspace{-2ex} \\ 1 && \end{pmatrix}
\]
be the $n \times n$ matrix with ones on non-principal diagonal and zeros elsewhere. 
Let ${\rm Mat}_{l,m}$ be the vector space of $l \times m$ matrices. 
For $x \in {\rm Mat}_{l,m}$, let $\check{x}$ be the twisted transpose matrix in ${\rm Mat}_{m,l}$ given by $\check{x}=w_m\prescript{t}{}{x} w_{l}$. 
Let $g \mapsto g^{\ast}$ be the outer automorphism of ${\rm GL}_n$ given by $g^{\ast}=w^{-1}_n  \prescript{t}{}{g^{-1}} w_n$. 
Let $\tau \in F^{\times}$ and we let $G'$ denote one of the quasi-split group ${\rm SO}_{2n+2,\tau}$, the split group ${\rm Sp}_{2n}$ or  the split group ${\rm SO}_{2n+1}$.
Let $\mathbb{M}'={\rm GL}_n$. We let $Z_n=N'_{\mathbb{M}'}$ denote the group of unipotent upper triangular matrices of ${\rm GL}_n$. Let $\mathfrak{t}={\rm diag}(1,-1,\dotsm,(-1)^{n-1}) \in {\rm GL}_n$. Let $K_{{\rm GL}_n(F)}$ be the standard maximal compact subgroup of ${\rm GL}_n(F)$. 
Let $\widetilde{\rm GL}_n$ be the two-fold metaplectic cover of ${\rm GL}_n$. We write element of $\widetilde{\rm GL}_n$ as pairs $(g,\varepsilon)$, $g \in {\rm GL}_n$,
$\varepsilon=\pm 1$, where the multiplication rule is given by Banks--Levy--Sepanski's cocycle  (Refer to, for example, \cite[\S 1.1]{Tak14} for detailed conventions.) When $g \in {\rm GL}_n$,
 we write $\tilde{g}=(g,1) \in \widetilde{\rm GL}_n$. For each subgroup $H \subseteq {\rm GL}_n$, we denote the preimage of $H$ via the canonical projection
 $\widetilde{\rm GL}_n \rightarrow {\rm GL}_n$ by $\tilde{H}$ when the base field is clear from the context.
 We let $\delta'_{n,2n}$ and $\delta''_{n,2n}$ be $n \times 2n$ matrices
such that
\[
  \delta'_{n,2n}=\begin{pmatrix} 1 &\vspace{-.2cm}  & & & \\ & 0 \; 1 & \vspace{-.2cm}& &  \\ & & 0\; 1 &\vspace{-.2cm} &  \\ &  & & \ddots &\vspace{-.2cm} \\ 
  & & & & 1\; 0 \end{pmatrix} \quad \text{and} \quad \delta''_{n,2n}=\begin{pmatrix} 0\;1 &\vspace{-.2cm}&&&\\ &0\; 1&\vspace{-.2cm}&&\\ \ &&\ddots&\vspace{-.2cm}&\\ &&&1&\vspace{-.2cm} \\ &&&& 0\; 1 \end{pmatrix}.
\]
Let $\epsilon_{i,j} \in {\rm Mat}_{l,m}$ be the matrix one at the $(i,j)-$entry and zero elsewhere. 

\subsection{Global settings}
Let $F$ be a number field and $\mathbb{A}$ its ring of adeles. 
Most of the previous notation has an obvious meaning in the global context. 
Some changes in the global context are as follows. Let $S$ be a finite set of places of $F$. Let $F_S=\prod_{v \in S} F_v$ 
and $\mathcal{O}_S$ be the ring of $S$-integers of $F$.
Let $|\cdot|$ denote the standard ideal norm of $\mathbb{A}^{\times}$. We put $\mathbb{A}^1=\{ a\in \mathbb{A}^{\times}\,|\, |a|=1 \}$
and ${\rm GL}_n(\mathbb{A})^1=\{ g \in {\rm GL}_n(\mathbb{A}) \;|\; \det g \in \mathbb{A}^1 \}$. The global double cover $\widetilde{\rm GL}_n(\mathbb{A})$
of ${\rm GL}_n(\mathbb{A})$ is compatible with the local double covers at each place. (See, for example, \cite[\S 1.2]{Tak14} for complete regards.)

\par
We write ${\rm Cusp} \, Q$ for the set of equivalence class of irreducible cuspidal automorphic representations of $Q(\mathbb{A})$.
For any automorphic forms $\varphi$, $\varphi^{\vee}$ on $G'(F) \backslash G'(\mathbb{A})$, let $\langle \varphi,\varphi^{\vee} \rangle_{G'}$ be the canonical inner product
 on $G'(F)\backslash G'(\mathbb{A})$ defined by
 \begin{equation}
 \label{Petterson-Innerproduct}
  \langle \varphi,\varphi^{\vee} \rangle_{G'}:={\rm vol}(G'(F)\backslash G'(\mathbb{A}))^{-1} \int_{G'(F)\backslash G'(\mathbb{A})} \varphi(g) \varphi^{\vee}(g) \, dg,
 \end{equation}
 provided that the integral converges. Let $N'$ be a maximal unipotent subgroup of $G'$ and $\psi_{N'}$ a fixed non-degenerate character of $N'(\mathbb{A})$, trivial on $N'(F)$.
For an automorphic form $\varphi$ on $G'(\mathbb{A})$, we define the Whittaker--Fourier coefficient $\mathcal{W}^{\psi_{N'}}(g,\varphi)$ by
\[
 \mathcal{W}^{\psi_{N'}}(g,\varphi):={(\rm vol}(N'(F) \backslash N'(\mathbb{A})))^{-1}\int_{N'(F) \backslash N'(\mathbb{A})} \varphi(ng) \psi^{-1}_{N'}(n) \, dn, \quad g \in G'(\mathbb{A}).
\]
We abbreviate $\mathcal{W}^{\psi_{N'}}(e,\varphi)$ to $\mathcal{W}^{\psi_{N'}}(\varphi)$. 
Let $\mathcal{S}(\mathbb{A}^n)$ be the space of Schwartz-Bruhat functions on $\mathbb{A}^n$.
Given a finite set of places $S$ of $F$, we defined in \cite{LM15} 
a regularized integral
\[
 \int^{\rm st}_{N'(F_S)} f(u) \, du
\]
for a suitable class of smooth functions $f$ on $N'(F_S)$. If $S$ consists only of non-archimedean places then
\[
 \int^{\rm st}_{N'(F_S)} f(u) \,du=\int_{N'_1} f(u) \, du
\]
for any sufficiently large compact open subgroup $N'_1$ of $N'(F_S)$, whereas an ad-hoc definition is given in the archimedean case.

\section{The Case for Even Special Orthogonal Groups}
\label{Sec:3}

\subsection{Groups, Embeddings, and Weyl Elements} Let $G={\rm SO}_{4n+3}$ be the even split special orthogonal group defined by
\[
G= {\rm SO}_{4n+3}=\{ g \in {\rm GL}_{4n+3} \,|\, \prescript{t}{}{g} w_{4n+3} g=w_{4n+3}  \}.
\]
The group $G$ acts on the space $F^{4n+3}$ with the standard basis
$e_1, \dotsm, e_{2n+1},e_0,e_{-1-2n},\dotsm,e_{-1}$. We denote by $\langle \cdot,\cdot \rangle$ the symmetric form defined on the space $F^{4n+3}$ by
$\langle u,v \rangle=\prescript{t}{}{u} w_{4n+3} v$  for any $u, v \in F^{4n+3}$. Let $\tau \in F^{\times}$ and we define $G' ={\rm SO}_{2n+2,\tau} \subseteq G$ consisting of elements fixing
$e_1,\dotsm,e_{n},e_{-n},\dotsm,e_{-1}$ and $e_{2n+1}+\frac{\tau}{2} e_{-1-2n}$. Let $\mathcal{V}$ be the orthogonal complement of the space generated by elements
$e_1, \dotsm, e_{n},e_{-n}, \dotsm, e_{-1}$ and $e_{2n+1}+\frac{\tau}{2} e_{-1-2n}$.

\par
When $\tau=\beta^2$ is a square, we use 
\[
\mathcal{B}_{G'}=\{e_{n+1}, 
\dotsm, e_{2n}, e_0+\frac{1}{\beta}e_{2n+1}-\frac{\beta}{2}e_{-1-2n}, \frac{1}{2}e_0-\frac{1}{2\beta}e_{2n+1}+\frac{\beta}{4}e_{-1-2n}, e_{-2n}, \dotsm, e_{-n-1} \}.
\]
 \cite[\S 9.3]{GRS11} as an ordered basis for $\mathcal{V}$. The group $G'$ is given by
 \[
  G'={\rm SO}_{2n+2,\tau}=\{ g \in {\rm GL}_{2n+2} \,|\, \prescript{t}{}{g} w_{2n+2} g=w_{2n+2}  \}.
 \]
The group $G'$, viewed as an algebraic group over $F$, is called {\it split} if $\tau \in F^2$.

\par
When $\tau$ is not a square, we take $\mathcal{B}_{G'}=\{e_{n+1}, 
\dotsm, e_{2n}, e_0, e_{2n+1}-\frac{\tau}{2} e_{-1-2n}, e_{-2n}, \dotsm, e_{-n-1} \}$ \cite[\S 9.3]{GRS11} as an ordered basis for $\mathcal{V}$.
The group $G'$ is given by
\[
 G'={\rm SO}_{2n+2,\tau}=\{ g \in {\rm GL}_{2n+2} \,|\, \prescript{t}{}{g} \begin{pmatrix*}[r] &&\vspace{-.1cm} & w_{n} \\ &1 & 0& \\ \vspace{-.1cm} & 0 & -\tau & \\ w_{n}& & & \end{pmatrix*} g=
 \begin{pmatrix*}[r] && \vspace{-.1cm} & w_{n} \\ &1 & 0& \\ \vspace{-.1cm} & 0 & -\tau & \\ w_{n}& & & \end{pmatrix*}  \}.
\]
The group $G'$, viewed as an algebraic group $F$, is called {\it quasi-split} if it is non-split over $F$ and split over a quadratic extension of $F(\sqrt{\tau})$.

\par

Let $\mathcal{B}'_{G'}$ be the basis of $\mathcal{V}\oplus \langle e_{2n+1}+\frac{\tau}{2} e_{-1-2n} \rangle$ obtained by adding $ e_{2n+1}+\frac{\tau}{2} e_{-1-2n} $ to $\mathcal{B}_{G'}$ as the $n+2^{th}$ vector. In coordinates, the embedding $\eta : {\rm SO}_{2n+2,\tau} \rightarrow {\rm SO}_{4n+3}$ is given by
\[
\eta : \begin{pmatrix} A_1& A_2 & A_3 & A_4 \\ A_5 & A_6 & A_7 & A_8 \\ A_9 & A_{10} & A_{11} & A_{12} \\ A_{13} & A_{14} & A_{15} & A_{16}  \end{pmatrix} 
\mapsto {\rm diag}(1_{n},\mathcal{M} \begin{pmatrix} A_1&A_2& & A_3 & A_4 \\A_5&A_6&\vspace{-.1cm}  & A_7 & A_8   \\ \vspace{-.1cm}  && 1 & &   \\A_9&A_{10}& & A_{11} & A_{12}  \\ A_{13}&A_{14}& & A_{15} & A_{16} \end{pmatrix} \mathcal{M}^{-1},1_{n}),
\]
where $A_1$ and $A_{16}$ are $n \times n$ matrices, $A_6$ and $A_{11}$ are $1 \times 1$ matrices, and 
\[
 \mathcal{M}={\rm diag}(1_{n+1}, \begin{pmatrix} \;\;\;\frac{1}{\beta}&1&-\frac{1}{2\beta} \\ \;\;\;1&0&\;\;\; \frac{1}{2} \\  -\frac{\beta}{2}&\frac{\beta^2}{2}&\;\;\; \frac{\beta}{4} \end{pmatrix}, 1_{n+1}), \quad \text{split ${\rm SO}_{2n+2,\tau}$}
\]
and
\[
 \mathcal{M}={\rm diag}(1_{n+1}, \begin{pmatrix*}[r] 0&1&1 \\ 1&0&0 \\ 0&\frac{\tau}{2}&-\frac{\tau}{2} \\ \end{pmatrix*}, 1_{n+1}), \quad \text{quasi-split ${\rm SO}_{2n+2,\tau}$}
\]
are the change of coordinate matrices from the standard basis to $\mathcal{B}'_{G'}$. For example \cite[(9.10)]{GRS11}, if ${\rm SO}_{2n+2,\tau}$ is split,
\[
 g= \begin{pmatrix} 1_n & x_1& x_2 & y\\ &1&0& -\check{x}_2  \\ && 1 & -\check{x}_1 \\ &&& 1_n \end{pmatrix}, \quad  \eta(g)={\rm diag}(1_{n},\begin{pmatrix} 1_n &( x_4 , x_3 , -\frac{2}{\tau}x_4 ) & y \\ & 1_3 & \begin{pmatrix} \frac{2}{\tau} \check{x}_4 \\ -\check{x}_3 \\ -\check{x}_4  \end{pmatrix} \\ && 1_n \end{pmatrix},1_{n}) \in {\rm SO}_{4n+3}
\]
with $x_3$ and $x_4$ certain linear combinations of $x_1$ and $x_2$, and if ${\rm SO}_{2n+2,\tau}$ is quasi-split, a standard unipotent subgroup $g$ in ${\rm SO}_{2n+2,\tau}$
and the image $\eta(g)$ in ${\rm SO}_{4n+3}$ with respect to the standard basis $F^{4n+3}$ are 
\[
 g=\begin{pmatrix} 1_n& x_1 & 2x_2 & y \\  & 1 & 0 & -2\check{x}_1 \\ &  & 1 & \frac{2}{\tau}\check{x}_2 \\  & && 1_n \end{pmatrix},  \quad
  \eta(g)={\rm diag}(1_{n},\begin{pmatrix} 1_n &( x_2 , x_1 , -\frac{2}{\tau}x_2 ) & y \\ & 1_3 & \begin{pmatrix} \frac{2}{\tau}\check{x}_2 \\ -\check{x}_1 \\ -\check{x}_2  \end{pmatrix} \\ && 1_n \end{pmatrix},1_{n}) \in {\rm SO}_{4n+3}.
\]
Let $P'=M'\ltimes U'$ be the Seigel parabolic subgroup of $G'$ with the Levi part $M'=\varrho'(\mathbb{M}')$ and
the Seigel unipotent subgroup $U'$, where $\varrho': m \mapsto {\rm diag}(1_n,m,1_3,m^{\ast},1_n)$. Let us set 
\[
\mathfrak{a}_n=\{ x \in {\rm Mat}_{n,n} \,|\, \check{x}=-x \}.
\]
Then $U'=U'_1 \ltimes U'_0$ with
\[
 U'_0=\{ {\rm diag}(1_n, \begin{pmatrix} 1_n &&u \\ &1_3& \\ && 1_n \end{pmatrix},1_n)\,|\, u \in \mathfrak{a}_n \}
\] 
and
\[
 U'_1=\{ {\rm diag}(1_n, \begin{pmatrix} 1_n &( v_2 , v_1 , -\frac{2}{\tau}v_2 ) &  \\ & 1_3 & \begin{pmatrix} \frac{2}{\tau}\check{v}_2 \\ -\check{v}_1 \\ -\check{v}_2  \end{pmatrix} \\ && 1_n \end{pmatrix} ,1_n) \,|\, v_1, v_2 \in F^n \}.
\]
Let $N'$ (resp. $N$) be the standard maximal unipotent subgroup of $G'$ (resp. $G$). Then $N'=N'_{M'} \ltimes U'$ with $N'_{M'}=\varrho'(N'_{\mathbb{M}'})$.
Let $V$ be the unipotent radical in $G$ of the standard parabolic subgroup with Levi ${\rm GL}^n_1 \times {\rm SO}_{2n+3}$:
\[
V=\{ \begin{pmatrix}  v & u & \;\;\;y \\ &1_{2n+3} & -\check{u} \\ &&\;\;\; v^{\ast} \end{pmatrix} \in {\rm SO}_{4n+3} \,|\,  v \in Z_n, u \in {\rm Mat}_{n,2n+3}  \}.
\]
Let $P^{\diamond}=M^{\diamond} \ltimes U^{\diamond}$ (resp. $P=M \ltimes U$) be the standard Seigel parabolic subgroup of ${\rm SO}_{2n+3}$ (resp. $G$) with the Levi part $M^{\diamond}$ (resp. $M$) isomorphic to ${\rm GL}_{n+1}$ (rep. ${\rm GL}_{2n+1}$). We let $\mathbb{M}={\rm GL}_{2n+1}$ and use the isomorphism $\varrho(h)={\rm diag}(h,1,h^{\ast})$ to identify $\mathbb{M}$ with $M$. Let
\[
 w'_{U^{\diamond}}={\rm diag}(1_n,\begin{pmatrix} &\vspace{-.1cm}& 1_{2n+1} \\ &1&\vspace{-.1cm} \\ 1_{2n+1} && \end{pmatrix}, 1_n)
\]
represent the longest $M^{\diamond}$-reduced Weyl element of  ${\rm SO}_{2n+3}$. Let $w_U=\begin{pmatrix} &\vspace{-.1cm}& 1_{2n+1} \\ \vspace{-.1cm}&1&\\  1_{2n+1}&&\end{pmatrix}$
represent the longest $M$-reduced Weyl element of ${\rm SO}_{4n+3}$. We write  \cite[(4.14),(4.15)]{GRS11}
\[
 \gamma=w_U (w'_{U^{\diamond}})^{-1} =\begin{pmatrix} &1_{n+1}&&&\vspace{-.15cm} \\ &&\vspace{-.15cm}&& 1_n\\ \vspace{-.15cm}&&1&& \\ 1_n&&&\vspace{-.15cm}&\\ &&&1_{n+1}& \end{pmatrix}.
\]
Let us denote
\[
 V_{\gamma}=V \cap \gamma^{-1} N \gamma=\{ \begin{pmatrix} v&\vspace{-.1cm}&&u&\\ &1_{n+1}&\vspace{-.1cm}&&-\check{u} \\&&1&\vspace{-.1cm}&\\ &&&1_{n+1}&\vspace{-.15cm} \\ &&&& v^{\ast} \end{pmatrix} \,|\, v \in Z_n, u \in {\rm Mat}_{n,n+1} \}.
\]
Let $N_M$ (resp. $N_{\mathbb{M}}$) be the standard maximal unipotent subgroup of $M$ (resp. $\mathbb{M}$).
Then $N=N_M \ltimes U$ with $N_M=\varrho(N_{\mathbb{M}})$. Let $K=K_{{\rm GL}_{4n+3}(F)} \cap G$
and $K'=K_{{\rm GL}_{2n+2}(F)} \cap G'$.
Then $K$ (resp. $K'$) is a maximal compact subgroup of G (resp. $G'$). Let $E:=F(\sqrt{\tau})$ be the discriminant field over $F$.
Let $\chi_{\tau}$ be the quadratic character associated to $E / F$ by the class field theory.

\subsection{Additive Characters}
We fix a non-trivial additive character $\psi$ of $F$ and a non-degenerate character $\psi_{N_{\mathbb{M}}}$ of $N_{\mathbb{M}}$.
Let $\psi_{\circ}$ be non-trivial characters of $F$ given by 
\[
\psi_{\circ}(x)=\psi_{N_{\mathbb{M}}}(1_{2n+1}+x\epsilon_{n+1,n+2}) 
\]
for $x \in F$. As explained in \cite[Remark 6.4]{LM17} (cf. \cite[\S 4.1, \S 6.1]{LM16}), the statements
in the sequel will not depend on the choice of $\psi_{N_{\mathbb{M}}}$. For convenience, we set 
\begin{equation}
\label{usual-even-orthogonal-additive}
 \psi_{N_{\mathbb{M}}}(u)=\psi(u_{1,2}+\dotsm+u_{2n,2n+1}).
\end{equation}
Thus $\psi_{\circ}=\psi$. The character $\psi$ will determine characters of several other unipotent groups. 
Let $\psi_{N_M}$ be the non-degenerate character of $N_M$ given by $\psi_{N_M}(\varrho(u))=\psi_{N_{\mathbb{M}}}(u)$.
We denote by $\psi_N$ the degenerate character on $N$ given by $\psi_N(uv)=\psi_{N_M}(u)$ for any $u \in N_M$ and $v \in U$.
Let $\psi_{N'_{M'}}$ be the non-degenerate character of $N'_{M'}$ such that $\psi_{N'_{M'}}(\varrho(u))=\psi_{N'_{\mathbb{M'}}}(u)$.
With the choice of $\psi_{N_{\mathbb{M}}}$, we have
\[
 \psi_{N'_{\mathbb{M}'}}(u')=\psi(u'_{1,2}+\dotsm + u'_{n-1,n}).
\]
  Let $\psi_{U'}$ be the character on $U'$ given by $\psi_{U'}(u)=\psi(u_{2n,2n+2})$.
  Then the non-degenerate character $\psi_{N'}$ is given by 
  \[
    \psi_{N'}(uv)=\psi_{N'_{M'}}(u)\psi_{U'}(v)=\psi(u_{n+1,n+2}+\dotsm+u_{2n-1,2n}+v_{2n,2n+2})
  \]
  with $u \in N'_{M'}$ and $v \in U'$. Our choice is consistent with  \cite[(9.11)]{GRS11} as we have specified $\lambda=1$ and $\mu=0$. 
  An element in $V$ can be written as $vu$ where $u$ fixes $e_1,\dotsm, e_n$, $v$ fixes $e_{n+1},e_{n+2},\dotsm,e_{-n-1}$ and we set \cite[(3.48)]{GRS11}
  \[
   \psi_V(vu)=\psi^{-1}(v_{1,2}+\dotsm+v_{n-1,n})\psi^{-1}(v_{n,n+1}+\frac{\tau}{2}v_{n,n+3}).
  \]

\subsection{Whittaker Models and the Intertwining Operator} 
Let $\pi$ be an irreducible generic representation of $\mathbb{M}$ with its Whittaker model $\mathbb{W}^{\psi_{N_{\mathbb{M}}}}(\pi)$
 with respect to the character $\psi_{N_{\mathbb{M}}}$. Similarly we use the notation $\mathbb{W}^{\psi^{-1}_{N_{\mathbb{M}}}}$,
 $\mathbb{W}^{\psi_{N_M}}$, $\mathbb{W}^{\psi^{-1}_{N_M}}$.

\par
Given $g \in G$, we define $\nu(g)$ by $\nu(u\varrho(m)k)=|\det m|$ for any $u \in U$, $m \in \mathbb{M}$, and $k \in K$ via the Iwasawa decomposition.
For any $f \in C^{\infty}(G)$ and $s \in \mathbb{C}$, define $f_s(g)=f(g)\nu(g)^s$, $g \in G$. Let ${\rm Ind}(\mathbb{W}^{\psi_{N_M}}(\pi))$ be the space of $G$-smooth left $U$-invariant function $W : G \rightarrow \mathbb{C}$
such that for all $g \in G$, the function $m \mapsto \delta^{\frac{1}{2}}_P(m)W(mg)$ on $M$ belongs to $\mathbb{W}^{\psi_{N_M}}(\pi)$.
Similarly we define ${\rm Ind}(\mathbb{W}^{\psi^{-1}_{N_M}}(\pi))$, ${\rm Ind}(\mathbb{W}^{\psi_{N_{\mathbb{M}}}}(\pi))$, ${\rm Ind}(\mathbb{W}^{\psi^{-1}_{N_{\mathbb{M}}}}(\pi))$.
For any $s \in \mathbb{C}$, we have a representation $I(s,g)$ on the space ${\rm Ind}(s,\mathbb{W}^{\psi_{N_M}}(\pi))$ given
by $(I(s,g)W)_s)(x)=W_s(xg)$, $x, g \in G$.

\par
We define the intertwining operator \cite[\S 3.2]{Kap15}; 
\[
 M(s,\pi) : {\rm Ind}(s,\mathbb{W}^{\psi_{N_M}}(\pi)) \rightarrow {\rm Ind}(-s,\mathbb{W}^{\psi_{N_M}}(\pi^{\vee}))
\]
by (the analytic continuation of)
\[
 M(s,\pi) W(g)=\nu(g)^s \int_U W_s(\varrho(\mathfrak{t})w_U u g) \,du,
\]
where $\mathfrak{t} \in {\rm GL}_{2n+1}$ is introduced in order to preserve the character $\psi_{N_M}$.

\par
In case where $F$ is $p$-adic and $\pi$ and $\psi$ are unramified, and there exist (necessarily unique) $K$-fixed elements $W^{\circ} \in {\rm Ind}(\mathbb{W}^{\psi_{N_M}}(\pi))$
and $\check{W}^{\circ} \in {\rm Ind}(\mathbb{W}^{\psi_{N_M}}(\pi^{\vee}))$ such that $W^{\circ}(e)=\check{W}^{\circ}(e)=1$,
then we have \cite[\S 2]{Sha88} (assuming ${\rm vol}(U \cap K)=1$)
\begin{equation}
\label{unramified-even-orth-intertwining}
 M(s,\pi) W^{\circ}= \frac{L(2s,\pi,{\rm Sym}^2)}{L(2s+1,\pi,{\rm Sym}^2)} \check{W}^{\circ},
\end{equation}
The following result is an analogue of \cite[Proposition 2.1]{LM16} and \cite[Proposition 4.1]{LM17}.

\begin{proposition}
\label{holomorphy-even-orthgonal}
Suppose that $\pi \in {\rm Irr}_{\rm gen} \, \mathbb{M}$ such that $\pi \cong \pi^{\vee}$. Then $M(s,\pi)$
is holomorphic at $s=\frac{1}{2}$.
\end{proposition}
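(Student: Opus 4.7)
The approach adapts the arguments of \cite[Proposition 2.1]{LM16} and \cite[Proposition 4.1]{LM17} to the orthogonal setting at hand. The first step is to normalize $M(s,\pi)$ via Shahidi's theory: write
\[
M(s,\pi) = \frac{L(2s,\pi,{\rm Sym}^2)}{L(2s+1,\pi,{\rm Sym}^2)\,\varepsilon(2s,\pi,{\rm Sym}^2,\psi)}\, N(s,\pi),
\]
where $N(s,\pi)$ is the Shahidi-normalized intertwining operator (this factorization is compatible with the unramified formula \eqref{unramified-even-orth-intertwining}, since unramifiedly the epsilon factor is trivial). The claim thus reduces to showing that any pole of the normalizing factor at $s=\tfrac{1}{2}$ is matched, order by order, by a zero of $N(s,\pi)$.

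By the Langlands classification, realize $\pi$ as the unique irreducible generic Langlands quotient of a standard module $\mathrm{Ind}(\pi_1\nu^{e_1}\otimes\dotsm\otimes\pi_r\nu^{e_r})$, with each $\pi_i$ a tempered generic representation of $\mathrm{GL}_{n_i}$ and $e_1>\dotsm>e_r$ real. The self-duality $\pi\cong\pi^{\vee}$ imposes the pairing $\pi_{r+1-i}\cong\pi_i^{\vee}$ and $e_{r+1-i}=-e_i$; because $\dim\pi=2n+1$ is odd, there is necessarily a central tempered piece $\pi_{i_0}$ with $\pi_{i_0}\cong\pi_{i_0}^{\vee}$ and $e_{i_0}=0$. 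Multiplicativity of Shahidi's normalized intertwining operators factors $N(s,\pi)$ as a composition of rank-one operators whose normalizations involve either Rankin--Selberg factors $L(s+e_i+e_j,\pi_i\times\pi_j)$ (for $i<j$) or the symmetric-square factor $L(2s,\pi_{i_0},{\rm Sym}^2)$ (at the central piece). By the Casselman--Shahidi theorem for generic tempered data, each such rank-one normalized operator is holomorphic at $s=\tfrac{1}{2}$, and hence so is $N(s,\pi)$.

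The main obstacle is the final $L$-factor bookkeeping. The numerator $L(2s,\pi,{\rm Sym}^2)$ decomposes multiplicatively, and the potential poles at $s=\tfrac{1}{2}$ come from (a) the central contribution $L(1,\pi_{i_0},{\rm Sym}^2)$ and (b) the paired cross-terms $L(1,\pi_i\times\pi_i^{\vee})$ for $i\neq i_0$. Item (a) is actually holomorphic, since for tempered self-dual $\pi_{i_0}$ the local symmetric-square $L$-factor has its poles in the region $\Re(s)\leq 0$ (temperedness forces this by the Langlands--Shahidi reducibility analysis). The cross-term poles in (b) are matched with zeros of the rank-one normalized operators in $N(s,\pi)$ that intertwine $\pi_i\nu^{e_i}$ with $\pi_i^{\vee}\nu^{-e_i}$, an analysis carried out via Shahidi's reducibility criterion for generic parabolic induction and the standard module conjecture. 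The crux is the precise matching of pole and zero orders in this step, after which the product $r(s,\pi)N(s,\pi)=M(s,\pi)$ is seen to be holomorphic at $s=\tfrac{1}{2}$.
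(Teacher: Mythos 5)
Your overall strategy---normalize $M(s,\pi)$ by the ratio of symmetric-square $L$-factors, decompose the normalized operator into rank-one pieces, and control poles/zeros---is a legitimate one and differs from the paper's route. The paper instead writes $\pi=\sigma_1\boxplus\dotsm\boxplus\sigma_k$ with the $\sigma_i$ essentially square-integrable (Zelevinsky/Langlands data for generic representations), decomposes the \emph{unnormalized} operator $M(s,\pi)$ into corank-one intertwining operators, and shows each factor is holomorphic at $s=\tfrac12$ directly: the mixed-type operators $\sigma_i|\cdot|^s\boxplus\sigma_j^\vee|\cdot|^{-s}\to\sigma_j^\vee|\cdot|^{-s}\boxplus\sigma_i|\cdot|^s$ ($i\ne j$) by \cite[Lemma~4.2]{LM17} together with irreducibility of $\sigma_i\boxplus\sigma_j^\vee$, and the self-dual rank-one operators $M(s,\sigma_i)$ by \cite[Lemma~4.3]{LM17} in the archimedean case and by the recent theorem of Luo \cite[\S2.4, Main Theorem~1]{Luo24} (which gives the stronger statement that $L(2s,\sigma_i,{\rm Sym}^2)^{-1}M(s,\sigma_i)$ is holomorphic) in the $p$-adic case. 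No global normalization or pole/zero matching is needed in that argument.

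Your proposal, by contrast, stops short of a proof at exactly the point where the paper's argument does its work. You write that ``the crux is the precise matching of pole and zero orders in this step, after which the product $r(s,\pi)N(s,\pi)=M(s,\pi)$ is seen to be holomorphic,'' but you neither carry out that matching nor cite a result that would supply it. Concretely: after the Langlands decomposition of $\pi$ into tempered pieces with real exponents $e_i$, the multiplicativity of ${\rm Sym}^2$ produces factors $L(2s+2e_i,\pi_i,{\rm Sym}^2)$ and cross-terms $L(2s+e_i+e_j,\pi_i\times\pi_j)$ with $i<j$; your list of potential poles at $s=\tfrac12$ only accounts for the central $i=i_0$ term and the antidiagonally paired cross-terms, omitting the remaining cross-terms $L(1+e_i+e_j,\pi_i\times\pi_j)$ and the non-central symmetric-square factors $L(1+2e_i,\pi_i,{\rm Sym}^2)$, which are not a priori pole-free when some $e_i<0$. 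One would either need to show these are harmless (using constraints on the exponents coming from genericity/irreducibility of the standard module) or exhibit compensating zeros of $N(s,\pi)$---and the latter requires the very statement you are trying to avoid, namely precise control of the $p$-adic rank-one operators, which is exactly what the paper imports from Luo. As it stands, the proposal sketches a framework but does not close the argument.
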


\begin{proof}
We write $\pi=\sigma_1 \boxplus \dotsm \boxplus \sigma_k$ where $\sigma_1,\dotsm,\sigma_k$ are essentially square-integrable. 
By decomposing $M(s,\pi)$ into a product of co-rank one intertwining operators, it is enough to show that 
the following intertwining operators 
\begin{enumerate}[label=$(\mathrm{\roman*})$]
\item \label{even-orthognal-intertwining-differ} $\sigma_i |\cdot |^s\boxplus \sigma_j^{\vee}|\cdot|^{-s} \rightarrow \sigma_j^{\vee}|\cdot|^{-s} \boxplus \sigma_i|\cdot|^s, \quad i,j=1,2,\dotsm,k, i \neq j$,
\item  \label{even-orthognal-intertwining-same} $M(s,\sigma_i), \quad i=1,2,\dotsm,k$,
\end{enumerate}
are holomorphic at $s=\frac{1}{2}$. The holomorphy of \ref{even-orthognal-intertwining-differ} is a consequence of \cite[Lemma 4.2]{LM17} and the irreducibility of $\sigma_i \boxplus \sigma^{\vee}_j$.
If $F$ is an archimedean local field, the holomorphy of \ref{even-orthognal-intertwining-same} follows from \cite[Lemma 4.3]{LM17} and the irreducibility of $\sigma_i \boxplus \sigma_i^{\vee}$. Under the condition
of \cite[Lemma 4.3]{LM17}, we have the stronger conclusion from the very recent paper \cite[\S 2.4 Main Theorem 1]{Luo24} that $L(2s,\sigma_i,{\rm Sym}^2)^{-1}M(s,\sigma_i)$ is holomorphic for the $p$-adic case.
\end{proof}

\subsection{Period and Distinction}

\subsubsection{Global period}
Let $\vartheta$ be the space of the global exceptional representation $\Theta$ of $\widetilde{\rm GL}_n(\mathbb{A})$,
which is the Langlands quotient of a certain induced representation of $\widetilde{\rm GL}_n(\mathbb{A})$ inherited from the
metaplectic preimage $\tilde{B}_n(\mathbb{A})$ of the Borel subgroup $B_n(\mathbb{A})$.
The best reference for the construction of this representation is \cite[\S 2.1]{Tak14}.
Let $F$ be a number field. We say that $\pi \in {\rm Cusp}\, \mathbb{M}$ is of {\it orthogonal symplectic type} if
\[
 \int_{{\rm GL}_n(F) \backslash {\rm GL}_n(\mathbb{A})^1} \varphi(g)\Theta(g) \overline{\Theta'}(g) \, dg \neq 0
\] 
for some $\varphi$ in the space of $\pi$, and $\Theta, \Theta'$ in the space of $\vartheta$. The following characterization is due to Bump--Ginzburg \cite{BG92}, Takeda \cite{Tak14},
and Kaplan--Yamana \cite[Theorem A]{KY17}.

\begin{proposition}
Let $\pi \in {\rm Cusp} \, \mathbb{M}$. Then $\pi$ is of orthogonal symplectic type if and only if $L^S(s,\pi,{\rm Sym}^2)$
has a pole at $s=1$.
\end{proposition}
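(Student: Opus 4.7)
The plan is to run the Rankin--Selberg unfolding of Bump--Ginzburg and cash in its residue at $s=1$, exactly as refined in the cited work of Takeda and Kaplan--Yamana. The starting point is the global integral
\[
 I(s,\varphi,\Theta,\phi) = \int_{{\rm GL}_n(F) \backslash {\rm GL}_n(\mathbb{A})^1} \varphi(g)\,\Theta(g)\, E(g,s,\phi)\,dg,
\]
where $E(g,s,\phi)$ is the Eisenstein series on $\widetilde{{\rm GL}}_n(\mathbb{A})$ built from a Schwartz--Bruhat section, constructed so that at every unramified place the local zeta integral equals the local $\mathrm{Sym}^2$ $L$-factor of $\pi_v$. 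The first step is to quote the Bump--Ginzburg unfolding: for $\Re s \gg 0$ the integral $I(s,\varphi,\Theta,\phi)$ factors as a (convergent) Euler product, and for a choice of unramified data one extracts the identity
\[
 I(s,\varphi,\Theta,\phi) = L^S(s,\pi,{\rm Sym}^2)\cdot \prod_{v \in S} I_v(s,W_{\varphi,v},\Theta_v,\phi_v),
\]
with local integrals $I_v$ that are known, by the work of Takeda and Kaplan--Yamana, to be nonzero for some choice of data at $s=1$.

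The second step is to relate the residue of $E(g,s,\phi)$ at $s=1$ to the exceptional representation $\vartheta$. By Langlands' theory of residues of Eisenstein series on the metaplectic cover, and the identification in \cite{Tak14,KY17}, the residue $\mathrm{Res}_{s=1} E(g,s,\phi)$ is a vector in the space of $\Theta' \in \vartheta$; in particular, every $\Theta'$ arises this way for a suitable choice of $\phi$. Taking the residue of both sides of the unfolded identity therefore converts $I(s,\varphi,\Theta,\phi)$ at $s=1$ into (a nonzero multiple of) the period integral
\[
 \mathcal{P}(\varphi,\Theta,\Theta') = \int_{{\rm GL}_n(F) \backslash {\rm GL}_n(\mathbb{A})^1} \varphi(g)\Theta(g)\overline{\Theta'}(g)\,dg,
\]
while on the right-hand side it multiplies the local integrals $I_v(1,\cdot)$ by $\mathrm{Res}_{s=1}L^S(s,\pi,{\rm Sym}^2)$.

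The final step is a two-way non-vanishing argument. If $L^S(s,\pi,{\rm Sym}^2)$ has a pole at $s=1$, then the right-hand side is nonzero for some choice of $S$-data (using the non-degeneracy of the local integrals at $s=1$), forcing some $\mathcal{P}(\varphi,\Theta,\Theta')\neq 0$, so $\pi$ is of orthogonal symplectic type. Conversely, if $\mathcal{P}(\varphi,\Theta,\Theta')\neq 0$ for some data, then realizing $\Theta'$ as a residue of $E(g,s,\phi)$ shows that $I(s,\varphi,\Theta,\phi)$ has a pole at $s=1$; since the finite product $\prod_{v\in S} I_v(s,\cdot)$ is holomorphic at $s=1$, the pole must come from $L^S(s,\pi,{\rm Sym}^2)$.

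The main obstacle is verifying that the local integrals $I_v(s,W,\Theta_v,\phi_v)$ are genuinely non-vanishing at $s=1$ for well-chosen data, especially at archimedean and ramified finite places; this is precisely the content of \cite[Theorem A]{KY17} and is what we invoke as a black box. A secondary subtlety is making the match between the residue of the Eisenstein series and an arbitrary $\Theta' \in \vartheta$ surjective, so that any nonzero period $\mathcal{P}$ forces a nonzero residue; again this is available from the Bump--Ginzburg--Takeda theory.
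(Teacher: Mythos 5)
The paper does not actually give a proof of this proposition: it simply records the statement and attributes it to Bump--Ginzburg, Takeda, and Kaplan--Yamana. So there is no in-paper argument against which to compare your proposal; what you are being asked to reconstruct is the content of those references.

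With that caveat, your sketch is an accurate high-level reconstruction of the route taken in \cite{BG92,Tak14,KY17}: the global Rankin--Selberg integral for $\mathrm{Sym}^2$ unfolds to the partial $L$-function times local zeta integrals, the residue at $s=1$ of the metaplectic Eisenstein series produces the exceptional representation, and the two-way non-vanishing argument converts a pole of $L^S(s,\pi,\mathrm{Sym}^2)$ at $s=1$ into a nonzero period $\int \varphi\,\Theta\,\overline{\Theta'}$ and vice versa. You also correctly flag the two genuinely delicate points that the cited papers had to resolve, namely the non-vanishing (and control) of the local integrals at $s=1$ at archimedean and ramified places, and the surjectivity of $\phi \mapsto \mathrm{Res}_{s=1}E(\cdot,s,\phi)$ onto $\vartheta$. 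One place you are slightly too quick: you assert that the finite product $\prod_{v\in S} I_v(s,\cdot)$ is holomorphic at $s=1$, so the pole must come from $L^S$; in general local zeta integrals can have poles, and the clean statement from Kaplan--Yamana is that $I_v(s,\cdot)/L_v(s,\pi_v,\mathrm{Sym}^2)$ is holomorphic and one can choose data to make it nonzero at $s=1$. This is the correct way to close the converse direction, but since you invoke \cite[Theorem A]{KY17} as a black box anyway, the conclusion is unaffected.
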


We let ${\rm OScusp}_k\,\mathbb{M}$ be the set of automorphic representations $\pi$
of the form $\pi_1 \boxplus \dotsm \boxplus \pi_k$, where $\pi_i \in {\rm Cusp}\, {\rm GL}_{n_i}(F)$, $i=1,\dotsm,k$
are pairwise inequivalent (that is, $\pi_i \not \cong \pi_j$ if $i \neq j$) and of orthogonal symplectic type with $n_1+\dotsm+n_k=2n+1$ and $n_i > 1$ for all $i$.
In particular $\pi^{\vee}_i \cong \pi_i$ and the central character $\omega_{\pi}$ of $\pi$ is trivial on $(\mathbb{A}^{\times})^2$. We let ${\rm OScusp} \,\mathbb{M}=\cup_k {\rm OScusp}_k\,\mathbb{M}$.

\subsubsection{Local distinction}
Let $F$ be a local field.
If $F$ is $p$-adic, we say that $\pi \in {\rm Irr}\, {\rm GL}_n(F)$ is of {\it orthogonal symplectic type} if there is an exceptional representation $\theta$ of $\widetilde{\rm GL}_n(F)$
(defined as in Bump--Ginzburg \cite{BG92} and Takeda \cite{Tak14}) such that $\pi$ admits a non-trivial continuous functionals $L$ on the space $\pi \times \theta \times \theta^{\vee}$.
In particular, the central character $\omega_{\pi}$ of $\pi$ is trivial on $(F^{\times})^2$ (cf. \cite[Corollary 4.3]{Kap16}). 

\par
Let $P_{\circ} \subseteq \mathbb{M}$ be the standard parabolic subgroup in $\mathbb{M}$ whose Levi subgroup is
isomorphic to $\prod_{i=1}^k {\rm GL}_{n_i}$ with $n_1+\dotsm+n_k=2n+1$ and $n_i > 1$ for all $i$. 
If $F=\mathbb{R}$ or $\mathbb{C}$, we let $\pi={\rm Ind}^{\mathbb{M}}_{P_{\circ}} \otimes_{i=1}^k \pi_i$
be a self-dual representation of $\mathbb{M}$ (i.e., $\pi \cong \pi^{\vee}$), where
$\pi_i$, $i=1,\dotsm,k$ are pairwise inequivalent (i.e., $\pi_i \not \cong \pi_j$ if $i \neq j$)
and each $\pi_i$ owns a non-trivial continuous functionals $L$ on the space $\pi \times \theta \times \theta^{\vee}$ with $n_1+\dotsm+n_k=2n+1$ and $n_i > 1$ for all $i$.
Such a representation $\pi={\rm Ind}^{\mathbb{M}}_{P_{\circ}} \otimes_{i=1}^k \pi_i$ is called a representation of {\it orthogonal symplectic type}.

\par
For $F$ a local field, we write ${\rm Irr}_{\rm os} \, {\rm GL}_n(F)$ for the set of irreducible representations
of orthogonal symplectic type. Clearly, if $\pi$ is of orthogonal symplectic type in the global setting, then all its local components $\pi_v$
are irreducible and of orthogonal symplectic type as well.

\begin{lemma} Assume that $F$ is a $p$-adic field. 
  \begin{enumerate}[label=$(\mathrm{\arabic*})$]
\item (\cite[Corollary 4.19]{Kap17}) Suppose that $\pi \in {\rm Irr}_{\rm os} \, {\rm GL}_n(F)$. Then $\pi^{\vee} \cong \pi$.
\item (\cite[Theorem A-(1)]{Kap17}) Suppose that $\pi \in {\rm Irr} \, {\rm GL}_n(F)$ is square integrable. Then $\pi \in {\rm Irr}_{\rm os}\, {\rm GL}_n(F)$ if and only if
$L(s,\pi,{\rm Sym}^2)$ has a pole at $s=0$.
\item  (\cite[Theorem 3]{Kap16})  Suppose that $\pi_i \in {\rm Irr}_{\rm os} \, {\rm GL}_{n_i}(F)$, $i=1,2$ and the parabolic induction $\pi_1 \boxplus \pi_2$ is irreducible.
Then $\pi_1 \boxplus \pi_2 \in {\rm Irr}_{\rm os} \, {\rm GL}_{n_1+n_2}(F)$.
\end{enumerate}
\end{lemma}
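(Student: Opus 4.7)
The plan is to handle the three parts separately, though each is already recorded in the cited work of Kaplan and one could simply defer to those references; what follows is the shape of the argument I would reconstruct.

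For part (1), I would begin with a nonzero trilinear functional $L$ on $\pi \otimes \theta \otimes \theta^\vee$. Dualizing, this yields a nonzero $\widetilde{\mathrm{GL}}_n$-equivariant map from $\pi$ (viewed through the diagonal, which is available because $\omega_\pi$ is trivial on $(F^\times)^2$) into the smooth dual of $\theta \otimes \theta^\vee$. Swapping the two tensor factors shows $\theta \otimes \theta^\vee \cong \theta^\vee \otimes \theta$, so the same construction applied to the dualized functional produces an embedding of $\pi^\vee$ into the same target. Combined with the uniqueness statement for such models, which is the nontrivial input from Kaplan's analysis, this forces $\pi \cong \pi^\vee$. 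The essential subtlety is the splitting of the metaplectic cover over the diagonal and the identification of central characters so that $\theta \otimes \theta^\vee$ really descends to a genuine $\mathrm{GL}_n$-representation.

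For part (2), when $\pi$ is square-integrable I would use the Rankin--Selberg type integral of Bump--Friedberg--Ginzburg that pairs a matrix coefficient (or Whittaker function) of $\pi$ with the restriction of $\theta \otimes \theta^\vee$, producing a zeta integral that unfolds to a Whittaker-type integral representing $L(s,\pi,\mathrm{Sym}^2)$ up to normalizing local factors that are holomorphic and nonzero in the range in question. Non-vanishing of a trilinear functional at $s=0$ is then equivalent, after the standard continuation-and-evaluation argument, to the presence of a pole of the zeta integral there; for square-integrable $\pi$ the ``auxiliary'' factors do not contribute poles, so this pole is exactly the pole of $L(s,\pi,\mathrm{Sym}^2)$ at $s=0$.

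For part (3), I would build the trilinear functional on $\pi_1 \boxplus \pi_2 = \mathrm{Ind}_P^{\mathrm{GL}_{n_1+n_2}}(\pi_1 \otimes \pi_2)$ by combining the functionals already given on each $\pi_i \otimes \theta_i \otimes \theta_i^\vee$. The key input is the Jacquet module (or filtration) of the exceptional representation $\theta$ of $\widetilde{\mathrm{GL}}_{n_1+n_2}$ along the parabolic $P$, which has a distinguished piece isomorphic to the product of the exceptional representations $\theta_1 \otimes \theta_2$ on the Levi. A Mackey/Frobenius reciprocity calculation then yields the desired functional on the induced representation, and irreducibility of $\pi_1 \boxplus \pi_2$ ensures nontriviality descends from the Levi to the whole induced module.

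The main obstacle in each part is managing the metaplectic cover and the definition of the exceptional representation carefully: cocycle calculations, verification that the various restrictions to subgroups split compatibly, and the Jacquet-module computation for $\theta$ are the technical heart of the arguments, and are the principal content of Kaplan's work that would be invoked rather than reproduced here.
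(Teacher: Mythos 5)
The paper offers no proof of this lemma at all: each part is stated with a citation to Kaplan's work (\cite[Corollary 4.19]{Kap17}, \cite[Theorem A-(1)]{Kap17}, \cite[Theorem 3]{Kap16}) and nothing more. So there is no argument in the paper to compare yours against — you are in effect reconstructing Kaplan's proofs, and I will assess your sketch on its own terms.

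Parts (2) and (3) are reasonable as outlines. For (2), tying the pole of $L(s,\pi,\mathrm{Sym}^2)$ at $s=0$ to the existence of a $(\theta,\theta^\vee)$-trilinear functional via an integral representation is the right circle of ideas, and for square-integrable $\pi$ the auxiliary factors indeed behave well. For (3), using the Jacquet module filtration of the exceptional representation along $P$ to transplant the trilinear functionals from the Levi to the induced module, with irreducibility of $\pi_1\boxplus\pi_2$ guaranteeing nonvanishing, is the standard template.

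Part (1), however, has a real gap as written. From a nonzero trilinear functional $L$ on $\pi\otimes\theta\otimes\theta^\vee$ you get an intertwining map $\theta\otimes\theta^\vee\to\pi^\vee$, and dualizing gives an embedding $\pi\hookrightarrow(\theta\otimes\theta^\vee)^\vee\cong\theta^\vee\otimes\theta$; together with the tensor-swap this shows only that $\pi^\vee$ is \emph{also} of orthogonal symplectic type, not that $\pi\cong\pi^\vee$. The "uniqueness statement" you invoke is a multiplicity-one assertion for a fixed representation — it bounds $\dim\mathrm{Hom}(\pi\otimes\theta\otimes\theta^\vee,\mathbb{C})$ for a given $\pi$, but it says nothing a priori about whether two different irreducibles (namely $\pi$ and $\pi^\vee$) can both be distinguished. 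To close this you need a genuine self-duality mechanism for the exceptional representation, typically an identification $\theta^\vee\cong\theta\circ\iota$ for a suitable (anti-)involution $\iota$ of $\widetilde{\mathrm{GL}}_n$, tracked carefully through the trilinear pairing so that the distinguishing functional for $\pi$ is transported to one for $\pi^\vee$ \emph{and} an isomorphism $\pi\cong\pi^\vee$ is forced by comparing the two. That is the substantive content of Kaplan's Corollary 4.19, and it is not recoverable from the swap of tensor factors alone.
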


For the sake of completeness, we recall the following classification theorem, due to Kaplan, of the set ${\rm Irr}_{\rm gen,os}\, {\rm GL}_n(F)$ of generic representations
of orthogonal symplectic types.

\begin{proposition} (\cite[Theorem A-(2)]{Kap17}) Assume that $F$ is a $p$-adic field. Then the set ${\rm Irr}_{\rm gen,os} \, {\rm GL}_n(F)$ consists of the irreducible representations of the form
\[
 \pi=(\sigma_1 \boxplus \sigma^{\vee}_1) \boxplus \dotsm \boxplus (\sigma_k \boxplus \sigma^{\vee}_k) \boxplus \tau_1 \boxplus \dotsm \boxplus \tau_{\ell}
\]
where $\sigma_1, \dotsm, \sigma_k$ are essentially square-integrable and $\tau_1,\dotsm,\tau_{\ell}$ are square-integrable of orthogonal symplectic type (i.e., $L(0,\tau_i,{\rm Sym}^2)=\infty$ for all $i=1,\dotsm,\ell$).
\end{proposition}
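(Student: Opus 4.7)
The plan is to prove both inclusions, leaning on the three properties of orthogonal symplectic representations collected in the preceding lemma and on the Bernstein--Zelevinsky classification of irreducible generic representations of $\mathrm{GL}_n(F)$. Any $\pi \in \mathrm{Irr}_{\mathrm{gen}}\,\mathrm{GL}_n(F)$ is fully induced from essentially square-integrable factors; when $\pi$ is additionally of orthogonal symplectic type, part~(1) of the lemma supplies $\pi^{\vee} \cong \pi$, so uniqueness of the inducing data forces the unordered collection of factors to be stable under $\rho \mapsto \rho^{\vee}$. One can then partition the factors into dual pairs $(\sigma_j, \sigma_j^{\vee})$ with $\sigma_j \not\cong \sigma_j^{\vee}$ and self-dual singletons $\tau_k$; noting that a self-dual essentially square-integrable representation must have trivial central exponent and hence be square-integrable, this yields the shape claimed in the proposition up to showing that each $\tau_k$ is itself of orthogonal symplectic type.

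For the \emph{sufficiency} direction I would start from the observation that for any essentially square-integrable $\sigma$, the induced representation $\sigma \boxplus \sigma^{\vee}$ is irreducible and generic, and that its symmetric-square $L$-factor admits the multiplicative decomposition
\[
L(s, \sigma \boxplus \sigma^{\vee}, \mathrm{Sym}^{2}) = L(s, \sigma, \mathrm{Sym}^{2})\,L(s, \sigma^{\vee}, \mathrm{Sym}^{2})\,L(s, \sigma \times \sigma^{\vee}),
\]
in which the third factor contributes a pole (at $s=1$ after the usual normalization, or equivalently a pole at $s=0$ in the convention of part~(2) of the lemma after translation). This realizes $\sigma \boxplus \sigma^{\vee}$ as of orthogonal symplectic type, and part~(3) of the lemma, applied iteratively together with the standard genericity irreducibility criterion for isobaric sums of paired factors and self-dual square-integrable pieces, then gives $\pi \in \mathrm{Irr}_{\mathrm{gen},\mathrm{os}}\,\mathrm{GL}_n(F)$. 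Throughout this step one must monitor that the parabolic induction involved remains irreducible, which is automatic from pairwise distinctness and the fact that square-integrable representations sit in general position inside generic principal series.

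For the \emph{necessity} direction the nontrivial remaining task, after the structural reduction in the first paragraph, is to show that each self-dual square-integrable factor $\tau_k$ is individually of orthogonal symplectic type. The strategy I would follow is to expand the space of trilinear functionals $\mathrm{Hom}_{\mathbb{M}}(\pi \otimes \theta \otimes \theta^{\vee},\mathbb{C})$ using a geometric lemma applied to the Siegel-type parabolic induction realizing $\pi$: one filters the restriction of $\theta \otimes \theta^{\vee}$ along this parabolic and matches orbits against the Langlands data of $\pi$, which by self-duality are already arranged so that the paired contributions $\sigma_j \boxplus \sigma_j^{\vee}$ do not obstruct the functional. Non-vanishing of the global Hom space then forces each diagonal self-dual piece $\tau_k$ to admit its own nontrivial trilinear pairing with the corresponding exceptional data, which by part~(2) is equivalent to $L(s, \tau_k, \mathrm{Sym}^{2})$ having the required pole. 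The main obstacle I anticipate is precisely this last propagation step: one needs a multiplicativity statement for the exceptional-type $\mathrm{Hom}$ space under parabolic induction that is as clean as the multiplicativity of the symmetric-square $L$-factor, and making this rigorous for essentially square-integrable inducing data (rather than supercuspidals) is where the bulk of Kaplan's argument in \cite{Kap17} sits and where any self-contained proof would have to reproduce the delicate filtration and pole-tracking analysis.
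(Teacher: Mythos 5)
The paper does not actually prove this proposition: it is stated as a direct citation of Kaplan's \cite[Theorem A-(2)]{Kap17}, so there is no in-paper argument to compare against. I will therefore assess your sketch on its own.

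Your structural reduction for necessity (self-duality from part (1) of the preceding lemma plus the Bernstein--Zelevinsky description of irreducible generic representations, then the observation that a self-dual essentially square-integrable factor has trivial central exponent and is hence square-integrable) is correct and standard.

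The sufficiency direction has a genuine gap. You need each block $\sigma_j \boxplus \sigma_j^{\vee}$ to lie in $\mathrm{Irr}_{\mathrm{os}}$, but none of the three parts of the preceding lemma supplies this. Part (2) is restricted to square-integrable representations, so the pole of $L(s,\sigma_j\boxplus\sigma_j^{\vee},\mathrm{Sym}^2)$ coming from the Rankin--Selberg factor $L(s,\sigma_j\times\sigma_j^{\vee})$ does not by itself certify that $\sigma_j\boxplus\sigma_j^{\vee}$ is of orthogonal symplectic type; ``$\mathrm{Sym}^2$ pole $\Rightarrow$ os-type'' is simply not available for non-square-integrable representations at this stage. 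Part (3) also does not apply, since it requires both inputs to already be of os-type, and $\sigma_j$, $\sigma_j^{\vee}$ individually need not be. What is actually needed is a direct construction of a nonzero trilinear functional on $(\sigma_j\boxplus\sigma_j^{\vee})\otimes\theta\otimes\theta^{\vee}$ -- the analogue, for the exceptional-representation pairing, of the explicit Shalika integral the paper exhibits in the proof of the Section~5 lemma. Without that ingredient the sufficiency implication does not close.

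For the remaining step of necessity -- that each self-dual square-integrable $\tau_k$ is itself of os-type -- you correctly identify this as the crux and outline a geometric-lemma filtration, but this is a plan rather than a proof; you acknowledge as much. As it stands, the proposal correctly maps out the architecture of Kaplan's theorem but does not constitute a self-contained argument for either direction.
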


\subsection{Descent map}

\subsubsection{Global descent}
We now go to the global setting. For any automorphic form $\varphi$ on $G(\mathbb{A})$, let ${\rm GG}(\varphi)$ be the {\it Gelfand--Graev coefficient} (a function on $G'(F) \backslash G'(\mathbb{A})$)
 \[
  {\rm GG}(\varphi)(g)=\int_{V(F) \backslash V(\mathbb{A})} \varphi(vg) \psi_V^{-1}(v) \, dv, \quad g \in G'(\mathbb{A}).
 \]

 \par
The automorphic representation $\pi$ of $\mathbb{M}(\mathbb{A})$ in ${\rm OScusp} \,\mathbb{M}$ is realized on the space of Eisenstein series induced from
$\pi_1 \otimes \dotsm \otimes \pi_k$. We view $\pi$ as a representation of $M(\mathbb{A})$ via $\varrho$. 
Let $\mathcal{A}(\pi)$ be the space of functions $\varphi : M(F)U(\mathbb{A})\backslash G(\mathbb{A}) \rightarrow \mathbb{C}$ such that
$m \mapsto \delta^{\frac{1}{2}}_P(m)\varphi(mg)$, $m \in M(\mathbb{A})$ belongs to the space of $\pi$ for all $g \in G(\mathbb{A})$.
Let $\varphi_s(g)=\nu(g)^s\varphi(g)$. We define Eisenstein series
\[
 \mathcal{E}(s,\varphi)=\sum_{\gamma \in P(F) \backslash G(F)} \varphi_s(\gamma g), \quad \varphi \in \mathcal{A}(\pi).
\]
The space of $\pi$ is invariant under the conjugation by $w_U$. Then we define intertwining operator $M(s,\pi) : \mathcal{A}(\pi) \rightarrow \mathcal{A}(\pi)$
given by
\[
 M(s,\pi) \varphi(g)=\nu(g)^s \int_{U(\mathbb{A})} \varphi_s(w_Uug) \, du.
\]
Both $\mathcal{E}(s,\varphi)$ and $M(s,\pi)$ converge absolutely for ${\rm Re}(s) \gg 0$ and extend meromorphically to $\mathbb{C}$.
It follows from \cite[Theorem 2.1]{GRS11} that both $\mathcal{E}(s,\varphi)$ and $M(s,\pi)$ have a pole of order $k$ at $s=\frac{1}{2}$.
Let
\[
 \mathcal{E}_{-k} \varphi=\lim_{s \rightarrow \frac{1}{2}} \left( s-\frac{1}{2} \right)^k \mathcal{E}(s,\varphi)
\]
and
\[
 M_{-k}=\lim_{s \rightarrow \frac{1}{2}} \left( s-\frac{1}{2}\right)^kM(s,\pi)
\]
be the leading coefficient in the Laurent series expansion at $s=\frac{1}{2}$. The constant term
\[
 \mathcal{E}^U_{-k} \varphi(g)=\int_{U(F) \backslash U(\mathbb{A})} \mathcal{E}_{-k}\varphi(ug) \, du
\]
of $\mathcal{E}_{-k} \varphi$ along $U$ is given by
\[
  \mathcal{E}^U_{-k} \varphi(g)=(M_{-k}(\varphi)(g))_{-\frac{1}{2}}.
\]
The {\it global descent} of $\pi$ (with respect to $\psi_{N_{\mathbb{M}}}$) is the space $\mathcal{D}_{\psi}(\pi)$
generated by ${\rm GG}(\mathcal{E}_{-k}\varphi)$ with $\varphi \in \mathcal{A}(\pi)$.
It is known from \cite[Theorem 3.1]{GRS11} that $\mathcal{D}_{\psi}(\pi)$ is multiplicity free.
By \cite[Theorem 9.6]{GRS11}, $\mathcal{D}_{\psi}(\pi)$ is a non-trivial cuspidal (and globally generic) automorphic representation of $G'$.
We expect it to be irreducible. This result will be the analogue of \cite[Theorem 2.3]{GJS12}  in metaplectic group case and \cite[Theorem 1.1]{Mor18} in even unitary group case. It is likely that the same methods of \cite[Theorem 2.3]{GJS12} and \cite[Theorem 1.1]{Mor18}
no doubt work in the case at hand. Unfortunately, to the best of our knowledge this has not been carried out in the literature. 
Since this is beyond the scope of the current paper we simply make it a working hypothesis. 

\begin{working hypothesis}
\label{even-orthogonal-working-hypothesis}
When $\pi \in {\rm OScusp} \,\mathbb{M}$, $\mathcal{D}_{\psi}(\pi)$ is irreducible. 
\end{working hypothesis}

The image of the weak lift is then a consequence of \cite[Theorem 11.2]{GRS11}. In particular, \cite[Theorem 9.6 (2)]{GRS11} describes the image for unramified places.

\begin{proposition} With the above Working Hypothesis \ref{even-orthogonal-working-hypothesis}, 
$\pi \mapsto \sigma=\mathcal{D}_{\psi}(\pi)$ defines a bijection between ${\rm OScusp}\, \mathbb{M}$ and ${\rm Cusp}_{\psi_{N'}}\, G'$,
the set of $\psi_{N'}$-generic cuspidal automorphic representation of $G'$. Moreover $\sigma^{\vee}$ weekly lifts to $\pi \boxplus \omega_{\pi}\chi_{\tau}$.
\end{proposition}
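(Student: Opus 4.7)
The plan is to deduce this proposition from the descent machinery of Ginzburg--Rallis--Soudry under the assumed irreducibility, organized into a well-definedness/weak-lift computation, an injectivity argument, and a surjectivity argument via the backward functorial lift.

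First, for well-definedness and the weak lift formula: \cite[Theorem 9.6]{GRS11} already asserts that $\mathcal{D}_{\psi}(\pi)$ is a nonzero cuspidal $\psi_{N'}$-generic automorphic representation of $G'$, and Working Hypothesis \ref{even-orthogonal-working-hypothesis} promotes it to an element of $\mathrm{Cusp}_{\psi_{N'}}\, G'$. For the weak lift statement, I would fix an unramified place $v$ and unramified generic data, and invoke the explicit unramified descent computation of \cite[Theorem 9.6(2)]{GRS11} (equivalently \cite[Theorem 11.2]{GRS11}), which matches the Satake parameters of $\sigma_v^{\vee}$, pushed forward along the standard $L$-map $^L(\mathrm{SO}_{2n+2,\tau}) \hookrightarrow {}^L(\mathrm{GL}_{2n+2})$, with those of $\pi_v \boxplus (\omega_{\pi}\chi_{\tau})_v$. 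The appearance of $\omega_{\pi}\chi_{\tau}$ is forced by the determinant condition on even orthogonal $L$-parameters together with the fact that $\chi_\tau$ detects the quasi-split inner form.

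For injectivity: if $\mathcal{D}_{\psi}(\pi) \cong \mathcal{D}_{\psi}(\pi')$, then their weak lifts to $\mathrm{GL}_{2n+2}(\mathbb{A})$ agree, so $\pi \boxplus \omega_{\pi}\chi_{\tau} \cong \pi' \boxplus \omega_{\pi'}\chi_{\tau}$ as isobaric sums. By the strong multiplicity one theorem of Jacquet--Shalika for isobaric sums of unitary cuspidals, and the structural constraint $n_i > 1$ for each cuspidal constituent of $\pi$ and $\pi'$, the one-dimensional summand is uniquely identified as the character $\omega_{\pi}\chi_{\tau}$, hence $\pi \cong \pi'$.

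For surjectivity: given $\sigma \in \mathrm{Cusp}_{\psi_{N'}}\, G'$, I would invoke the established weak functorial lift from quasi-split even special orthogonal groups to $\mathrm{GL}_{2n+2}$ (due to Cogdell--Kim--Piatetski-Shapiro--Shahidi in the generic case, and Arthur's endoscopic classification in general) to produce an isobaric automorphic representation $\Pi$ of $\mathrm{GL}_{2n+2}(\mathbb{A})$ lifting $\sigma^{\vee}$. The generic constituents of $\Pi$ are self-dual with orthogonal $L$-parameter, i.e.\ of orthogonal symplectic type in the sense of having $L^S(s,\sigma_i,\mathrm{Sym}^2)$ pole at $s=1$; discarding the necessary one-dimensional summand $\omega_{\pi}\chi_{\tau}$ (forced by the central character and discriminant), the remainder $\pi \in \mathrm{OScusp}\,\mathbb{M}$. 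Then $\mathcal{D}_{\psi}(\pi)$ agrees with $\sigma$ at almost all unramified places by Step 1, and irreducibility of $\mathcal{D}_{\psi}(\pi)$ (Working Hypothesis \ref{even-orthogonal-working-hypothesis}) together with strong multiplicity one for $G'$ in the generic spectrum upgrades this to a global isomorphism.

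The main obstacle, as in the metaplectic and unitary counterparts, is surjectivity: one must verify that the Arthur--CKPSS lift $\Pi$ really takes the precise isobaric form $\pi \boxplus \omega_{\pi}\chi_{\tau}$ with each cuspidal piece $\pi_i$ pairwise inequivalent and of orthogonal symplectic type. This amounts to reading off the orthogonality of the $L$-parameter from the location of poles of symmetric square $L$-functions and controlling the one-dimensional summand via the quadratic character $\chi_{\tau}$ attached to the discriminant. Once this structural matching is in place, the bijection and the weak lift statement follow essentially as described; the irreducibility hypothesis is used only to convert the global isomorphism at unramified data into a genuine equivalence of cuspidal representations.
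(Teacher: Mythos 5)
The paper offers no proof of this proposition beyond the two sentences preceding it, which cite \cite[Theorem 11.2]{GRS11} for the characterization of the image of the descent and \cite[Theorem 9.6(2)]{GRS11} for the unramified Satake computation; the Working Hypothesis upgrades the multiplicity-free cuspidal descent of GRS to a single irreducible, whence the asserted bijection. Your reconstruction --- well-definedness and the lift formula via the unramified computation, injectivity via strong multiplicity one for isobaric ${\rm GL}$-representations together with the $n_i>1$ constraint isolating the unique character summand, and surjectivity via the backward functorial lift --- is the argument that underlies those GRS theorems, so your proposal is in agreement with the paper's (cited) approach. The one thing to be aware of is that the surjectivity step is not merely a matter of ``discarding the necessary one-dimensional summand'': one must show that the weak lift of any $\sigma^\vee \in {\rm Cusp}_{\psi_{N'}^{-1}}\,G'$ to ${\rm GL}_{2n+2}(\mathbb{A})$ actually has a \emph{unique} one-dimensional isobaric constituent, with the complementary piece a multiplicity-free sum of higher-rank orthogonal cuspidals (so that it lies in ${\rm OScusp}\,\mathbb{M}$); this is precisely the nontrivial structural content of \cite[Theorem 11.2]{GRS11}, and you rightly identify it as the main obstacle rather than supplying the argument.
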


\subsubsection{Local descent} 
We go back to the local setup. Suppose now that $F$ is local fields.
Let $\pi \in {\rm Irr}_{\rm gen} \, M$. 
 For any $W \in {\rm Ind}(\mathbb{W}^{\psi_{N_M}}(\pi))$ and $s \in \mathbb{C}$,
we define a function on $G'$:
\[
  A^{\psi}(s,g,W)=\int_{V_{\gamma} \backslash V} W_s((-1)^n\gamma v g) \psi_V^{-1}(v) \, dv, \quad g \in G'.
\] 
The basic properties of $ A^{\psi}$ are summarized in the following proposition. Its proof is identical to \cite[Lemmata 4.5 and 4.9]{LM16} and \cite[Lemmata 5.1 and 8.1]{LM17}, and will be omitted.

\begin{proposition} Suppose that $\pi \in {\rm Irr}_{\rm gen} \, M$. Let $W \in {\rm Ind}(\mathbb{W}^{\psi_{N_M}}(\pi))$ and $s \in \mathbb{C}$. Then
\begin{enumerate}[label=$(\mathrm{\arabic*})$]
\item The integral $A^{\psi}(s,g,W)$ is well-defined and absolutely convergent uniformly for $s \in \mathbb{C}$ and $g \in G'$ in compact sets.
Therefore $A^{\psi}(s,g,W)$ is entire in $s$ and smooth in $g$. In the non-archimedean case the integrand 
is compactly supported on $V_{\gamma} \backslash V$.
\item For any $W \in {\rm Ind}(\mathbb{W}^{\psi_{N_M}}(\pi))$ and $s \in \mathbb{C}$, the function $g \mapsto A^{\psi}(s,g,W)$
is smooth and $(N',\psi_{N'})$-equivariant.
\item For any $g, x \in G'$, we have
\begin{equation}
\label{even-orthogonal-G'-equivalent}
 A^{\psi}(s,g,I(s,x)W)=A^{\psi}(s,gx,W).
\end{equation}
\item Suppose that $\pi$ is unramified, $\psi$ is unramified, and $W^{\circ} \in {\rm Ind}(\mathbb{W}^{\psi_{N_M}}(\pi))$
is the standard unramified vector. Then $A^{\psi}(s,e,W^{\circ}) \equiv 1$ (assuming ${\rm vol}(V \cap K)={\rm vol}(V_{\gamma}\cap K)=1$).
\end{enumerate}
\end{proposition}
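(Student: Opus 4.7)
The plan is to adapt the proofs of \cite[Lemmata 4.5 and 4.9]{LM16} and \cite[Lemmata 5.1 and 8.1]{LM17} to the present even-orthogonal setting, using the explicit embedding $\eta$ and the change-of-coordinates matrix $\mathcal{M}$ prescribed in Section \ref{Sec:3}. The first step is to verify that the integrand of $A^{\psi}(s,g,W)$ genuinely descends to $V_{\gamma} \backslash V$: for $v_0 \in V_{\gamma} = V \cap \gamma^{-1} N \gamma$ one has $\gamma v_0 \gamma^{-1} \in N_{M} \ltimes U$, so $W_s(\gamma v_0 v g) = \psi_N(\gamma v_0 \gamma^{-1}) W_s(\gamma v g)$, and a direct matrix computation using the block description of $V$ and $\gamma$ confirms that $\psi_N(\gamma v_0 \gamma^{-1}) = \psi_V(v_0)$. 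Next I would parametrize $V_{\gamma} \backslash V$ by the block $u \in {\rm Mat}_{n,n+1}$ that is not captured by $V_{\gamma}$. Absolute convergence, uniform in $(s,g)$ on compacts, then follows from the rapid decay of Whittaker functions on $\mathbb{M}$ in the archimedean case (Jacquet's gauge estimate) and from compact support modulo $N_{\mathbb{M}}$ in the non-archimedean case; the same estimates yield smoothness in $g$ and entirety in $s$. Assertion $(3)$ is immediate from $(I(s,x)W)_s(y) = W_s(yx)$ and the definition of $A^{\psi}$.

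For assertion $(2)$, decompose $N' = N'_{M'} \ltimes U'$ and handle each factor separately. For $n' = \varrho'(u) \in N'_{M'}$, a matrix calculation using the definition of $\eta$ gives $\gamma \eta(n') \gamma^{-1} = \varrho(u') \in N_M$ for some $u' \in N_{\mathbb{M}}$; since $\eta(n')$ normalizes both $V$ and $V_{\gamma}$ and the transformation is measure-preserving, the change of variables $v \mapsto \eta(n')^{-1} v \eta(n')$ yields
\[
A^{\psi}(s, \eta(n') g, W) = \psi_{N_M}(\varrho(u')) \, A^{\psi}(s, g, W),
\]
and the compatible normalization $\psi_{N'_{M'}}(\varrho'(u)) = \psi_{N'_{\mathbb{M}'}}(u) = \psi_{N_{\mathbb{M}}}(u')$ coming from \eqref{usual-even-orthogonal-additive} and the prescribed $\psi_{N'_{\mathbb{M}'}}$ supplies the required character factor. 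For $n' \in U'$, conjugation by $\eta(n')$ no longer stabilizes $V$, but the explicit formula for $\eta|_{U'}$ in Section \ref{Sec:3} lets one write $v\, \eta(n') = v_0 \cdot \gamma^{-1} n_M \gamma \cdot v''$ with $v_0 \in V_{\gamma}$, $n_M \in N_M$, and $v'' \in V$; combining the characters $\psi_V(v_0)$ and $\psi_{N_M}(n_M)$ and comparing with $\psi_{U'}(u) = \psi(u_{2n, 2n+2})$ produces the factor $\psi_{U'}(n')$, which together with the $N'_{M'}$-calculation completes the equivariance.

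For assertion $(4)$, in the unramified situation $W^{\circ}$ is right $K$-invariant and satisfies $W^{\circ}(e) = 1$; since $\gamma$ is a permutation matrix lying in $K$, the integrand at $g = e$ is bi-$K$-invariant in $v$, and the normalization ${\rm vol}(V \cap K) = {\rm vol}(V_{\gamma} \cap K) = 1$ forces the integral to collapse to $1$ once one verifies vanishing outside a bounded region via the character $\psi_V^{-1}$. The main obstacle I anticipate is the character matching in $(2)$ for the $U'$ factor: the non-normalizing conjugation by $\eta(n')$ must produce a compensating contribution from $V_{\gamma}$ whose characters conspire to reproduce $\psi_{U'}(n')$ exactly. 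This is precisely the point where the $\tfrac{\tau}{2}$-coefficient in the definition of $\psi_V$ enters, and where the split and quasi-split cases of $G'$ must be distinguished through the two prescribed forms of $\mathcal{M}$; the argument in \cite[Lemma 5.1]{LM17} provides the template, but the bookkeeping has to be redone in the orthogonal coordinates.
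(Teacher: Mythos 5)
Your overall plan—adapting \cite[Lemmata 4.5 and 4.9]{LM16} and \cite[Lemmata 5.1 and 8.1]{LM17} by tracking the embedding $\eta$ and the Weyl element $\gamma$—is precisely what the paper intends; indeed the paper omits the proof entirely and points to those references, so the route you describe is the one the authors have in mind. Items $(1)$, $(3)$, $(4)$ of your proposal are fine in outline, and the crucial well-definedness identity $\psi_N(\gamma v_0 \gamma^{-1}) = \psi_V(v_0)$ for $v_0 \in V_\gamma$ is exactly what one must check.

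However, there is a conceptual slip in your treatment of the $U'$-equivariance in item $(2)$. You write that ``for $n' \in U'$, conjugation by $\eta(n')$ no longer stabilizes $V$,'' and build a more elaborate decomposition $v\,\eta(n') = v_0 \cdot \gamma^{-1} n_M \gamma \cdot v''$ around this premise. But this premise is false: $V$ is by definition the unipotent radical of the standard parabolic of $G$ with Levi ${\rm GL}_1^n \times {\rm SO}_{2n+3}$, and $\eta(G') = \eta({\rm SO}_{2n+2,\tau})$ lands inside the ${\rm SO}_{2n+3}$ factor of that Levi (this is built into the choice of $\mathcal{V}$ and the embedding). Consequently $\eta(G')$ normalizes $V$, and a fortiori $\eta(U') \subset \eta(N') \subset \eta(G')$ does too; in fact $\eta(G')$ is the stabilizer of $\psi_V$ in the Levi, so conjugation by $\eta(n')$ also preserves $\psi_V$ and the measure on $V_\gamma \backslash V$. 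The correct and much simpler mechanism for \emph{all} of $N'$—both $N'_{M'}$ and $U'$—is therefore: substitute $v \mapsto \eta(n') v \eta(n')^{-1}$ (legitimate because $\eta(n')$ normalizes both $V$ and $V_\gamma$, preserves $\psi_V$, and the Jacobian is $1$), then pull $\gamma \eta(n') \gamma^{-1}$ out to the left and absorb it into $W_s$ via the $(N,\psi_N)$-equivariance, reducing everything to the single matrix identity $\psi_N\bigl(\gamma\,\eta(n')\,\gamma^{-1}\bigr) = \psi_{N'}(n')$, which is then verified separately on $N'_{M'}$, on $U'_1$, and on $U'_0$ (where both sides are trivial). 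Your proposal introduces a spurious obstacle and a spurious decomposition; the ``compensating contribution from $V_\gamma$'' you anticipate simply does not arise, and the distinction between the split and quasi-split $\mathcal{M}$ enters only through the coefficients in $\psi_V$ and $\psi_{N'}$, not through any failure of normalization. The bookkeeping you should actually redo is the verification of $\psi_N(\gamma\,\eta(n')\,\gamma^{-1}) = \psi_{N'}(n')$ in these coordinates, which requires careful index-chasing with the permutation $\sigma$ underlying $\gamma$ (and where one must be attentive to the paper's sign and normalization conventions for $\psi_{N_{\mathbb{M}}}$, $\psi_{U'}$ and $\psi_V$).
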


Let $\pi \in {\rm Irr}_{\rm gen} \, \mathbb{M}$, regarded also as a representation of $M$ via $\varrho$. By the same argument as in \cite[Theorem in \S 1.3]{GRS99} in conjunction with \cite[Remark 4.13]{LM17},
for any non-zero subrepresentation $\pi'$ of ${\rm Ind}(\mathbb{W}^{\psi_{N_M}}(\pi))$ there exists $W' \in \pi'$ such that $A^{\psi}(s,\cdot,W') \not \equiv 0$.
Upon twisting $\pi$, we deduce that for any given $s_0 \in \mathbb{C}$, there exists $W' \in \pi'$ such that $A^{\psi}(s_0,\cdot,W') \not \equiv 0$.

\par 
Assume now that $\pi \in {\rm Irr}_{\rm gen,os}\, \mathbb{M}$. By Proposition \ref{holomorphy-even-orthgonal}, $M(s,\pi)$ is holomorphic at $s=\frac{1}{2}$. The {\it local descent} of $\pi$ is the space $\mathcal{D}_{\psi}(\pi)$ of Whittaker functions on $G'$
generated by $A^{\psi}\left(-\frac{1}{2},\cdot, M\left( \frac{1}{2},\pi \right)W \right)$ with $W \in {\rm Ind}(\mathbb{W}^{\psi_{N_M}}(\pi))$. By the above observation,
$\mathcal{D}_{\psi}(\pi) \neq 0$.

\par
Let $\pi'$ be the image of ${\rm Ind}(\mathbb{W}^{\psi_{N_M}}(\pi))$ under $M\left( \frac{1}{2},\pi \right)$. In virtue of \eqref{even-orthogonal-G'-equivalent},
the space $\mathcal{D}_{\psi}(\pi)$ is canonically a quotient of the $G'$-module $J_V(\pi')$ of the $V$-coinvariant of $\pi'$.
In this perspective, we view $J_V(\pi')$ as the {\it abstract descent}, whereas $\mathcal{D}_{\psi}(\pi)$ as the {\it explicit descent}.

\subsubsection{Whittaker function of descent} 
We let $\hat{\psi}_N$ be a degenerate character on $N$ that is trivial on $U$ and for $u \in N_{\mathbb{M}}$
\[
\hat{\psi}_N(\varrho(u)) := \psi(u_{1,2}+\dotsm+u_{n,n+1}-u_{n+1,n+2}-\dotsm-u_{2n,2n+1}).
\]
to be consistent with the additive character in \cite[(8.14)]{GRS11}. Thus $\psi_{\circ}(x)=\psi(x)$. By \cite[Remark 6.4]{LM17} (cf. \cite[\S 4.1, \S 6.1]{LM16}), we will
verify Theorem \ref{Whittaker-main-even-orthogonal} for a specific choice of $\psi_{N_{\mathbb{M}}}$ given by
\[
 \psi_{N_{\mathbb{M}}}(u)=\psi(u_{1,2}+ \dotsm +u_{n,n+1}-\frac{\tau}{2}u_{n+1,n+2} -u_{n+2,n+3}-\dotsm-u_{2n,2n+1}).
\]
Henceforth $\psi_{\circ}(x)=\psi(x)$. Let
\[
  \delta=\left(\begin{array}{cc|c|cc} \delta'_{n,2n} & 0 & 0 & 0 & 0_{n \times 2n} \\ 0 & 1 & 0 & 0 & 0\\ 0_{n \times 2n} & 0 & 0 &0  &  \delta'_{n,2n} \\ \hline
  0 & 0 & 1 & 0 & 0 \\ \hline \delta''_{n,2n} & 0 & 0 & 0 & 0_{n \times 2n} \\ 0 & 0 & 0 & 1 & 0 \\ 0_{n \times 2n} & 0 & 0 & 0 & \delta''_{n\times 2n}  \end{array} \right).
\]
We let $\kappa=\varrho(\kappa')$, where $\kappa'$ is the Weyl element of $\mathbb{M}$ such that $\kappa'_{2i,i}$, $i=1,\dotsm,n$, $\kappa'_{2i-1,n+i}=1$, $i=1,\dotsm,n+1$,
and all other entires are zero. Let 
$\epsilon={\rm diag}(A,\dotsm,A, 1_3,A^{\ast},\dotsm A^{\ast})$, where  $A=\begin{pmatrix*}[r] \frac{1}{2}& 1 \\  -\frac{\tau}{4} & \frac{\tau}{2}  \end{pmatrix*}$
(cf. \cite[(9.57)]{GRS11}). Here $A$ is repeated $n$ times.
Writing
\[
 A=\begin{pmatrix} 1 & \frac{2}{\tau} \\ & 1 \end{pmatrix} \begin{pmatrix} 1 & \\ & \frac{\tau}{2} \end{pmatrix} \begin{pmatrix*}[r] 1 & \\ -\frac{1}{2} & 1 \end{pmatrix*},
\]
we obtain $\epsilon=\epsilon_U\epsilon_T\epsilon_{\overline{U}}$, where
\[
\begin{split}
  &\epsilon_U={\rm diag}( \begin{pmatrix} 1 & \frac{2}{\tau} \\ & 1 \end{pmatrix} , \dotsm, \begin{pmatrix} 1 & \frac{2}{\tau} \\ & 1 \end{pmatrix}, 1_3,   \begin{pmatrix*}[r] 1 & -\frac{2}{\tau} \\ & 1 \end{pmatrix*}, \dotsm,  \begin{pmatrix*}[r] 1 & -\frac{2}{\tau} \\ & 1 \end{pmatrix*} ), \\
  &\epsilon_{\overline{U}}={\rm diag}(  \begin{pmatrix*}[r] 1 & \\ -\frac{1}{2} & 1 \end{pmatrix*}, \dotsm,  \begin{pmatrix*}[r] 1 & \\ -\frac{1}{2} & 1 \end{pmatrix*}, 1_3,  \begin{pmatrix} 1 & \\ \frac{1}{2} & 1 \end{pmatrix}, \dotsm, \begin{pmatrix} 1 & \\ \frac{1}{2} & 1 \end{pmatrix} ), \\
  \end{split}
\]
and
\[
 \epsilon_T=\left( \begin{pmatrix} 1 & \\ & \frac{\tau}{2} \end{pmatrix}, \dotsm \begin{pmatrix} 1 & \\ & \frac{\tau}{2} \end{pmatrix}, 1_3, 
 \begin{pmatrix} \frac{2}{\tau}  & \\ & 1 \end{pmatrix}, \dotsm  \begin{pmatrix} \frac{2}{\tau}  & \\ & 1 \end{pmatrix}    \right).
\]
Let $X$ be the subspace of $\mathfrak{a}_{2n+1}$  consisting of the strictly upper triangular matrices. 
We regard $\mathbb{A}^n$ as subspace of $\mathbb{A}^{2n+1}$ by the embedding 
\[
^t(a_1,\dotsm,a_n) \in \mathbb{A}^n \mapsto a:={^t(}a_1,\dotsm,a_n,0,\dotsm,0), \quad (\text{\rm namely $a_i=0$ for $i > n$}).
\]
We let
\[
  \overline{\ell}(a,x)=\left(\begin{array}{c|c|c} 1_{2n+1} && \\ \hline -\check{a}& 1& \\ \hline x  & a & 1_{2n+1} \end{array}\right)
\]
for $x \in \mathfrak{a}_{2n+1}$ and $a \in \mathbb{A}^n \subseteq \mathbb{A}^{2n+1}$. Let $Y$  be the subgroup of $N_M$ consisting of 
the matrices of the form $\varrho \left( \begin{pmatrix} 1_n & y \\ & 1_{n+1} \end{pmatrix} \right)$
where $y$ is lower triangular, (namely $y_{i,j}=0$ if $j > i$).

\begin{theorem} \label{Whittaker-main-even-orthogonal}
$(${\rm Reformulation of \cite[Theorem 9.6, part (1)]{GRS11}}$)$
 Let $\pi \in {\rm OScusp} \, \mathbb{M}$ and $\varphi \in \mathcal{A}(\pi)$. Then for any $g \in G'$ we have
\[
 \mathcal{W}^{\psi_{N'}}(g,{\rm GG}(\mathcal{E}_{-k}\varphi))=\int_{V_{\gamma}(\mathbb{A}) \backslash V(\mathbb{A})} \mathcal{W}^{\psi_N}((-1)^n\gamma vg,\mathcal{E}_{-k}\varphi) \psi^{-1}_V(v) \, dv
\]
where the integral is absolutely convergent.
\end{theorem}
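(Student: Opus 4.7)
My plan is to prove the identity by unfolding both sides as iterated integrals over unipotent quotients, then relating them through a change of variables induced by conjugation by $(-1)^n\gamma$. Since the result is a reformulation of \cite[Theorem 9.6, part (1)]{GRS11}, the work consists essentially in matching our conventions (the particular choices of $\psi_{N_{\mathbb{M}}}$, $\psi_V$, $\psi_{N'}$, the embedding $\eta$, and the Weyl element $\gamma$) with theirs and then applying Fubini.

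First I would expand the left-hand side: by the definitions of $\mathcal{W}^{\psi_{N'}}$ and ${\rm GG}$, we have
\[
\mathcal{W}^{\psi_{N'}}(g,{\rm GG}(\mathcal{E}_{-k}\varphi))=\int_{N'(F)\backslash N'(\mathbb{A})}\int_{V(F)\backslash V(\mathbb{A})} \mathcal{E}_{-k}\varphi(v\,\eta(n')\,\eta(g))\,\psi_V^{-1}(v)\psi_{N'}^{-1}(n')\,dv\,dn',
\]
after normalizing volumes. Since $\mathcal{E}_{-k}\varphi$ is the leading Laurent coefficient of an Eisenstein series induced from cuspidal data on ${\rm OScusp}\,\mathbb{M}$, its constant term along parabolics is controlled and the Gelfand--Graev coefficient has rapid decay on $V(F)\backslash V(\mathbb{A})$ (see \cite[\S 2]{GRS11}). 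This gives absolute convergence of the double integral and justifies Fubini.

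Next I would perform the conjugation-based change of variables. The Weyl element $\gamma$ was chosen so that $V_\gamma=V\cap\gamma^{-1}N\gamma$, and the structural fact underlying the descent construction is that conjugation by $(-1)^n\gamma$ induces a bijection
\[
V_\gamma(\mathbb{A})\backslash V(\mathbb{A})\;\times\;N(F)\backslash N(\mathbb{A}) \;\longleftrightarrow\; V(F)\backslash V(\mathbb{A}) \;\times\; N'(F)\backslash N'(\mathbb{A})
\]
under which the character $\psi_N$ on $N$ pulls back to $\psi_V\cdot\psi_{N'}$ on the product $V\cdot\eta(N')$ modulo $V_\gamma$. Concretely, one checks that $\gamma\,\eta(N')\,\gamma^{-1}$ together with $\gamma V\gamma^{-1}$ (modulo the kernel $\gamma V_\gamma \gamma^{-1}\subseteq N$) exhausts $N$, and the character identity follows from our choices
\[
\psi_{N_{\mathbb{M}}}(u)=\psi(u_{1,2}+\dotsm+u_{n,n+1}-\tfrac{\tau}{2}u_{n+1,n+2}-u_{n+2,n+3}-\dotsm-u_{2n,2n+1})
\]
together with the formula for $\psi_V$ in \cite[(3.48)]{GRS11} and $\psi_{N'}$ in \cite[(9.11)]{GRS11}. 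The explicit form of $\eta$ on the unipotent radical of the standard parabolic in $G'$ (recorded in the introduction to the section) is exactly what makes the matching transparent.

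After this change of variables and a second application of Fubini (again justified by the growth of $\mathcal{E}_{-k}\varphi$), the double integral becomes
\[
\int_{V_\gamma(\mathbb{A})\backslash V(\mathbb{A})}\int_{N(F)\backslash N(\mathbb{A})} \mathcal{E}_{-k}\varphi\bigl(n\,(-1)^n\gamma vg\bigr)\psi_N^{-1}(n)\psi_V^{-1}(v)\,dn\,dv,
\]
which is exactly the right-hand side. The main obstacle is the character/combinatorial bookkeeping in the middle paragraph: verifying that conjugation by $(-1)^n\gamma$ really does carry $\psi_N$ to $\psi_V\cdot\psi_{N'}$ under the chosen normalizations requires a careful matrix computation, and the role of the sign $(-1)^n$ together with the twist $\mathfrak{t}$ must be tracked to preserve the characters. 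Once this is established, the convergence is secured by the cuspidality estimates from \cite[\S 2]{GRS11}, and the theorem follows essentially by transcription.
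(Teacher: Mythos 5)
Your opening plan---cite GRS's Theorem 9.6(1) and translate conventions---is indeed what the paper does, but the mechanism you propose in the middle paragraph is not how the matching works and contains a genuine gap. The claim that conjugation by $(-1)^n\gamma$ induces a bijection between $V_\gamma(\mathbb{A})\backslash V(\mathbb{A})\times N(F)\backslash N(\mathbb{A})$ and $V(F)\backslash V(\mathbb{A})\times N'(F)\backslash N'(\mathbb{A})$, with $\gamma\eta(N')\gamma^{-1}$ together with $\gamma V\gamma^{-1}$ modulo $\gamma V_\gamma\gamma^{-1}$ exhausting $N$, is dimensionally impossible: $\dim N'+\dim V-\dim V_\gamma=(n^2+n)+\tfrac{3n^2+3n}{2}=\tfrac{5n^2+5n}{2}$, whereas $\dim N=(2n+1)^2=4n^2+4n+1$. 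The missing degrees of freedom come from the Eisenstein series sum over $P(F)\backslash G(F)$: the actual unfolding in GRS expands that sum, decomposes $P(F)\backslash G(F)/(V\rtimes\eta(N'))(F)$ into Bruhat cells, kills all but the open cell using cuspidality, and only then collapses to an expression of the claimed shape. A conjugation of unipotent subgroups cannot produce this, and the mixed adelic/rational nature of the two sides (an adelic quotient $V_\gamma(\mathbb{A})\backslash V(\mathbb{A})$ paired with an automorphic quotient $N(F)\backslash N(\mathbb{A})$) is itself a sign that no simple bijection of domains is at play.

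Moreover, the part you defer as ``a careful matrix computation'' is precisely where the proof lives. The paper takes the output of GRS Theorem 9.6(1)---the iterated integral over $Y(\mathbb{A})$, $\mathbb{A}^n$, and $X(\mathbb{A})$ of $\mathcal{W}^{\hat\psi_N}((-1)^n\overline\ell(a,x)\delta\epsilon\kappa y,\mathcal{E}_{-k}\varphi)$---and transforms it step by step: decompose $\epsilon=\epsilon_U\epsilon_T\epsilon_{\overline U}$; conjugate by $\delta\epsilon_U\delta^{-1}$ and change variables $x\mapsto(1_{2n+1}-x\epsilon_\delta)x$, $a\mapsto(1_{2n+1}-x\epsilon_\delta)a$ to produce the factor $\psi^{-1}(\tfrac{2}{\tau}x_{n,n+1})$; conjugate by the torus element $\delta\epsilon_T\delta^{-1}$ to convert $\hat\psi_N$ into $\psi_N$; absorb $\kappa^{-1}\epsilon_{\overline U}\kappa\in Y$ by a change of variables in $y$; and finally identify $(\gamma^{-1}\overline\ell(\mathbb{A}^n,X)\gamma\,Y)\rtimes V_\gamma=V$ and match the resulting character with $\psi_V$. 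None of this is present in your sketch, and ``the theorem follows essentially by transcription'' is not accurate: the transcription is the theorem.
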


\begin{proof}
 We use Tamagawa measure in the proof. It is enough to prove the required identity for $g=e$.
The expression for $\mathcal{W}^{\psi_{N'}}(e,{\rm GG}(\mathcal{E}_{-k}\varphi))$ in \cite[Theorem 9.6, part (1)]{GRS11} is (with $\alpha=\tau$ and $\lambda=1$ in their notation):
\[
 \int_{Y(\mathbb{A})} \left( \int_{\mathbb{A}^n}  \left( \int_{X(\mathbb{A})} \mathcal{W}^{\hat{\psi}_{N}}( (-1)^n\overline{\ell}(a,x) \delta \epsilon \kappa y,\mathcal{E}_{-k} \varphi) \, dx \right) da \right) dy. 
\]
We note from \cite[\S 8.2, (8.15)]{GRS11} that $\delta\epsilon_U\delta^{-1}=\begin{pmatrix} 1_{2n+1} &\vspace{-.1cm}& -\epsilon_{\delta} \\ &1&\vspace{-.1cm} \\ && 1_{2n+1} \end{pmatrix}$, where
$\epsilon_{\delta}={\rm diag}(-\frac{2}{\tau}\cdot1_n,0, \frac{2}{\tau}\cdot1_n)$.
For any $x \in X$ and $a \in \mathbb{A}^n \subseteq \mathbb{A}^{2n+1}$, we proceed to write
\begin{multline*}
 \left(\begin{array}{c|c|c} 1_{2n+1} && \\ \hline -\check{a}& 1& \\ \hline x  & a & 1_{2n+1} \end{array}\right) \left(\begin{array}{c|c|c} 1_{2n+1} && -\epsilon_{\delta} \\ \hline & 1& \\ \hline   &  & 1_{2n+1} \end{array}\right) \\
 = \left(\begin{array}{c|c|c} (1_{2n+1}-x\epsilon_{\delta})^{\ast} &\epsilon_{\delta} (1_{2n+1}-x\epsilon_{\delta})^{-1}a & -\epsilon_{\delta} \\ \hline  & 1& \check{a}\epsilon_{\delta} \\ \hline   &  & 1_{2n+1}-x\epsilon_{\delta} \end{array}\right) \left(\begin{array}{c|c|c} 1_{2n+1} && \\ \hline -\check{a}'& 1& \\ \hline   x' & a' & 1_{2n+1} \end{array}\right),
\end{multline*}
where $x'=(1_{2n+1}-x\epsilon_{\delta})^{-1}x=x+x\epsilon_{\delta}x+\dotsm+(x\epsilon_{\delta})^{2n}x$ and $a'=(1_{2n+1}-x\epsilon_{\delta})^{-1}a $. (Note that $(1_{2n+1}-x\epsilon_{\delta})^{\ast}=(1_{2n+1}-\epsilon_{\delta}x)^{-1}$.) After changing the variables $x \mapsto (1_{2n+1}-x\epsilon_{\delta})x$ and $a \mapsto (1_{2n+1}-x\epsilon_{\delta})a$, we get
 \[
 \int_{Y(\mathbb{A})} \left( \int_{\mathbb{A}^n}  \left( \int_{X(\mathbb{A})} \hat{\psi}_N(\varrho((1_{2n+1}-x\epsilon_{\delta})^{\ast}))\mathcal{W}^{\hat{\psi}_{N}}( (-1)^n\overline{\ell}(a,x) \delta \epsilon_T \epsilon_{\overline{U}} \kappa y, \mathcal{E}_{-k} \varphi)  \, dx \right)  da \right) dy. 
\]
Since $x_{2n+1-k,2n+2-k}=-x_{k,k+1}$, we have 
\[
\hat{\psi}_N(\varrho((1_{2n+1}-x\epsilon_{\delta})^{\ast}))
=\hat{\psi}_N(\varrho((1_{2n+1}-\epsilon_{\delta}x)^{-1}))
=\psi^{-1}\left(\frac{2}{\tau}x_{n,n+1}\right)
\] 
from which it follows that
 \[
 \int_{Y(\mathbb{A})} \left( \int_{\mathbb{A}^n}  \left( \int_{X(\mathbb{A})}  \psi^{-1}\left(\frac{2}{\tau}x_{n,n+1}\right) \mathcal{W}^{\hat{\psi}_{N}}( (-1)^n\overline{\ell}(a,x) \delta \epsilon_T \epsilon_{\overline{U}} \kappa y, \mathcal{E}_{-k} \varphi) \, dx \right)  da \right) dy. 
\]

\par
Furthermore, $\delta\epsilon_T\delta^{-1}={\rm diag}(1_{n+1},\frac{2}{\tau}1_n,1,\frac{\tau}{2}1_n,1_{n+1})$. This element conjugates $\hat{\psi}_N$ to $\psi_N$ and $\psi(\frac{2}{\tau}x_{n,n+1})$ to  $\psi(x_{n,n+1})$, so by conjugating we obtain
 \[
 \int_{Y(\mathbb{A})} \left( \int_{\mathbb{A}^n}  \left( \int_{X(\mathbb{A})}  \psi^{-1}(x_{n,n+1}) \mathcal{W}^{\psi_{N}}( (-1)^n\overline{\ell}(a,x) \delta  \epsilon_{\overline{U}} \kappa y, \mathcal{E}_{-k} \varphi) \, dx \right)  da \right) dy. 
\]

\par
With the direct computation just as in \cite[Proof of Proposition 8.2]{LM16}), we find $\kappa^{-1}\epsilon_{\overline{U}}\kappa \in Y$  and $\delta\kappa=\gamma$.
Upon making change of variables $y \mapsto (\kappa^{-1}\epsilon_{\overline{U}}\kappa)^{-1}y$, we obtain that the above equals
 \[
 \int_{Y(\mathbb{A})} \left( \int_{\mathbb{A}^n}  \left( \int_{X(\mathbb{A})}  \psi^{-1}(x_{n,n+1}) \mathcal{W}^{\psi_{N}}( (-1)^n\overline{\ell}(a,x) \delta  \gamma y, \mathcal{E}_{-k} \varphi) \, dx \right)  da \right) dy. 
\]
Note that the group $\gamma^{-1}\overline{\ell}(\mathbb{A}^n,X) \gamma$ is equal to
\[
 \{ \begin{pmatrix}1_{n} &(x_1,x'_1)&x''_1&\vspace{-.07cm}&x_2 \\  &1_{n+1}&&\vspace{-.07cm}&\\ &&1&&\check{x}''_1\\ &&&1_{n+1}&\begin{pmatrix} \check{x}'_1\\ \check{x}_1 \end{pmatrix} \\ &&&\vspace{-.07cm}&1_n   \end{pmatrix} \, | \, x_1 \in {\rm Mat}_{n,n}\; \text{\rm strictly upper triangular}, x_2 \in \mathfrak{a}_n, x'_1, x''_1 \in \mathbb{A}^n \},
\]
so that $(\gamma^{-1}\overline{\ell}(\mathbb{A}^n,X)  \gamma Y) \rtimes V_{\gamma}=V$. In conclusion, we arrive at
\[
\int_{V_{\gamma}(\mathbb{A}) \backslash V(\mathbb{A})} \mathcal{W}^{\psi_N}((-1)^n\gamma v,\mathcal{E}_{-k}\varphi) \psi^{-1}_V(v) \, dv
\]
provided that it converges, since $\psi_V(\gamma^{-1}\overline{\ell}(a,x)\gamma)=\psi(x_{n,n+1})$ for $x \in X(\mathbb{A})$ and $a \in \mathbb{A}^n$.
\end{proof}

\subsection{Local Rankin--Selberg--Shimura integrals}
We are now going back to our choice of $\psi_{N_{\mathbb{M}}}$ specified in \eqref{usual-even-orthogonal-additive}.
Now let $F$ be either $p$-adic or archimedean, $\pi \in {\rm Irr}_{\rm gen} \, M$ and $\sigma \in {\rm Irr}_{{\rm gen}, \psi^{-1}_{N'}}\, G'$
with Whittaker model $\mathbb{W}^{\psi^{-1}_{N'}}(\sigma)$. For any $W' \in \mathbb{W}^{\psi^{-1}_{N'}}(\sigma)$ and $W \in {\rm Ind}(\mathbb{W}^{\psi_{N_M}}(\pi))$, we define the local Rankin--Selberg--Shimura type integral
\[
  J(s,W',W):=\int_{N' \backslash G'} W'(g) A^{\psi}(s,g,W) \, dg.
\]
The analytic properties of this integral were extensively investigated by Kaplan \cite{Kap10,Kap12,Kap15}.
 
\begin{proposition} Suppose that $\pi \in {\rm Irr}_{\rm gen} \, M$ and $\sigma \in {\rm Irr}_{{\rm gen},\psi^{-1}_{N'}}\, G'$. Let $W' \in \mathbb{W}^{\psi^{-1}_{N'}}(\sigma)$
and $W \in {\rm Ind}(\mathbb{W}^{\psi_{N_M}}(\pi))$. Then
\begin{enumerate}[label=$(\mathrm{\arabic*})$]
\item $J(s,W',W)$ converges in some right-half plane (depending only on $\pi$ and $\sigma$) and admits a meromorphic continuation in $s$.
\item For any $s \in \mathbb{C}$, we can choose $W'$ and $W$ such that $J(s,W',W)\neq 0$.
\item If $\pi$, $\sigma$, and $\psi$ are unramified, $W^{\circ} \in {\rm Ind}(\mathbb{W}^{\psi_{N_M}}(\pi))$ is the standard unramified vector and
${W'}^{\circ}$ is $K'$-invariant with ${W'}^{\circ}(e)=1$, then
\[
 J(s,{W'}^{\circ},W^{\circ})=\frac{{\rm vol}(K')}{{\rm vol}(N' \cap K')} \frac{L(s+\frac{1}{2},\sigma\times\pi)}{L(2s+1,\pi,{\rm Sym}^2)}
\]
assuming the Haar measure on $V$ and $V_{\gamma}$ are normalized so that ${\rm vol}(V \cap K)$ and ${\rm vol}(V_{\gamma} \cap K)$ are all $1$.
\item $J(s,W',W)$ satisfies local functional equations:
\[
 J(-s,W',M(s,\pi)W)
 = c(s,\sigma,\pi,\tau)\frac{\gamma(s+\frac{1}{2},\sigma\times\pi,\psi)}{\gamma(2s+1,\pi,{\rm Sym}^2,\psi)} J(s,W',W)
\]
where $c(s,\sigma,\pi,\tau)=(\omega_{\sigma}(-1)|\tau|^s(-1,\tau),\gamma_{\psi}(\tau))^{2n+1} \omega_{\pi}((-1)^{n+1}\tau)$
and $\gamma(s,\pi,{\rm Sym}^2,\psi)$ is the gamma factor defined by Shahidi (pertaining to the symmetric square representation).
\end{enumerate}
\end{proposition}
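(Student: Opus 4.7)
The plan is to establish the four parts in sequence, leaning heavily on Kaplan's detailed analysis of the doubling-type integrals in \cite{Kap10,Kap12,Kap15} and on standard Rankin--Selberg technology. Throughout I would use the Iwasawa decomposition $G' = N'T'K'$ so that $\int_{N' \backslash G'}$ reduces to a torus integration against $K'$ with the modulus Jacobian $\delta_{P'}^{-1}$.

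For assertion (1), standard gauge estimates on $W' \in \mathbb{W}^{\psi^{-1}_{N'}}(\sigma)$ together with the control of $A^{\psi}(s,\cdot,W)$ on $T'$ (whose asymptotics are governed by the central exponents of $\pi$ twisted by $\nu(\cdot)^s$) reduce the absolute value of the integrand on $T'$ to a rapidly decreasing function times a character whose real part is dominated by $\delta_{P'}$ for $\Re s \gg 0$. Meromorphic continuation then follows from Bernstein's rationality principle in the non-archimedean case and from Wallach's asymptotic analysis in the archimedean case, as developed in Kaplan's references. For assertion (2), I would exploit the surjectivity of the asymptotic map sending an induced vector $W$ to its leading torus asymptotics near the identity, combined with the freedom to choose $W'$ prescribed on a small neighborhood of the identity of $G'$; for any prescribed $s_0$, this allows one to make $J(s,W',W)$ behave like the pairing of two bump functions on the torus, which is non-zero for a generic choice, and the general $s$ follows by meromorphic continuation.

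Part (3) is the heart of the matter. I would first unfold $A^{\psi}(s,e,W^{\circ})$ via its defining integral over $V_{\gamma} \backslash V$; using the Iwasawa decomposition on $G$ and $K$-invariance of $W^{\circ}$, this collapses to an integral of the Whittaker function of $\pi$ against $\psi_V$ over a suitable torus subgroup. Substituting the Casselman--Shalika formulas for the torus values of ${W'}^{\circ}$ and of the Whittaker function of $\pi$ and carrying out the torus integration as a geometric series produces a quotient of Weyl character sums which, after invoking the Cauchy--Littlewood type identity, evaluates to $L(s+\tfrac12,\sigma\times\pi)/L(2s+1,\pi,{\rm Sym}^2)$; the symmetric-square factor in the denominator is precisely the normalization already visible in \eqref{unramified-even-orth-intertwining}, arising from the integration against $\psi_V$ over the Seigel-type piece of $V$.

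For assertion (4), uniqueness (generically in $s$) of $(N',\psi^{-1}_{N'})$-equivariant bilinear forms on $\mathbb{W}^{\psi^{-1}_{N'}}(\sigma) \otimes {\rm Ind}(\mathbb{W}^{\psi_{N_M}}(\pi))$ forces the functional equation $J(-s,W',M(s,\pi)W) = \Gamma(s)\,J(s,W',W)$ with $\Gamma(s)$ a meromorphic scalar independent of the data. Evaluating both sides at the unramified vectors and combining (3) with \eqref{unramified-even-orth-intertwining} pins down $\Gamma(s)$ as $c(s,\sigma,\pi,\tau)\gamma(s+\tfrac12,\sigma\times\pi,\psi)/\gamma(2s+1,\pi,{\rm Sym}^2,\psi)$; the constant $c(s,\sigma,\pi,\tau)$ absorbs the twist by $\varrho(\mathfrak{t})$ in the definition of $M(s,\pi)$, the central character values coming from the long Weyl element, and the Weil-index factors $\gamma_\psi(\tau)$ attached to the quadratic character $\chi_\tau$ of the embedding $G' \hookrightarrow G$. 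I expect the main obstacle to lie in part (3): the combinatorial bookkeeping of the Casselman--Shalika unfolding, and in particular matching the root-data normalizations so that the symmetric-square $L$-factor (rather than an exterior-square one) appears, requires a delicate compatibility between the Levi structure of $M$, the choice of $\psi_V$, and the specific Weyl element $\gamma$.
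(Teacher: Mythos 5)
The paper gives no proof of this Proposition: it is stated as a digest of Kaplan's results, with the preceding sentence ``The analytic properties of this integral were extensively investigated by Kaplan \cite{Kap10,Kap12,Kap15}'' serving as the only justification. Your proposal therefore supplies an argument where the paper supplies none. The overall outline is faithful to the standard Rankin--Selberg strategy that Kaplan actually follows (Iwasawa decomposition plus gauge estimates and Bernstein's rationality for (1), localization for (2), Casselman--Shalika unfolding for (3), multiplicity-one uniqueness for (4)), so as a plan it points in the right direction.

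There is, however, a genuine gap in your treatment of part (4). Uniqueness of the $(N',\psi^{-1}_{N'})$-equivariant bilinear form gives $J(-s,W',M(s,\pi)W)=\Gamma(s)\,J(s,W',W)$ with $\Gamma(s)$ independent of $W'$ and $W$, but your claim to ``pin down $\Gamma(s)$'' by evaluating at unramified vectors only determines $\Gamma(s)$ when $\pi$, $\sigma$, and $\psi$ are \emph{all} unramified. For ramified data there are no such vectors to evaluate, and the identification of $\Gamma(s)$ with the specific quantity $c(s,\sigma,\pi,\tau)\,\gamma(s+\frac{1}{2},\sigma\times\pi,\psi)/\gamma(2s+1,\pi,{\rm Sym}^2,\psi)$ — with the Weil index, Hilbert symbol, and central-character corrections in $c(s,\sigma,\pi,\tau)$ — requires tracking the outer conjugation by $\varrho(\mathfrak{t})$, the explicit shape of the Weyl element $\gamma$, and the $\gamma_\psi(\tau)$ contribution from the quadratic form defining the embedding $G'\hookrightarrow G$, together with multiplicativity of the gamma factors so that the unramified case propagates to the general case. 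This is the substance of Kaplan's proof, and your sketch omits it. A similar (smaller) issue touches part (3), where you candidly flag the Casselman--Shalika combinatorics as the obstacle but do not verify that the denominator comes out as ${\rm Sym}^2$ rather than $\wedge^2$; this hinges precisely on the choice of the degenerate character $\psi_V$ and the unipotent $V$, and should be checked against \eqref{unramified-even-orth-intertwining} rather than asserted.
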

It is noteworthy that
$\displaystyle
 {\rm vol}(K')=(\prod_{j=1}^n \zeta_F(2j)L(n+1,\chi_{\tau}))^{-1}.
$

\subsection{Factorization of Gelfand--Graev periods}
We define for $\varphi \in \mathcal{A}(\pi)$ an automorphic form on $G(\mathbb{A})$;
\[
  A^{\psi}(s,g,\varphi):=\int_{V_{\gamma}(\mathbb{A}) \backslash V(\mathbb{A})} \mathcal{W}^{\psi_{N_M}}((-1)^n\gamma v g,\varphi_s)\psi^{-1}_V(v) \,dv, \quad g \in G'.
\]
The global descent $\mathcal{D_{\psi}}(\pi)$ and the local descent $\mathcal{D}_{\psi}(\pi_v)$ are related in the following way:
\begin{proposition} 
\label{GG-Whittaker-EvenOrthogonal}
Suppose that $\varphi \in \mathcal{A}(\pi)$ is a factorizable vector and $\mathcal{W}^{\psi_{N_M}}(\cdot,\varphi)=\prod_v W_v$ with $W_v \in {\rm Ind}(\mathbb{W}^{\psi_{N_M}}(\pi_v))$. Then for any sufficiently large
finite set of places $S$, we have
\begin{equation}
\label{GG-Whittaker-EvenOrthogonal-decomp}
\mathcal{W}^{\psi_{N'}}(g,{\rm GG}(\mathcal{E}_{-k}\varphi))=\frac{m^S_{-k,1}(\pi)}{2^k} \prod_{v \in S} A^{\psi} \left( -\frac{1}{2},g_v, M \left( \frac{1}{2}, \pi_v \right)W_v \right),
\end{equation}
where
\[
m^S_{-k,1}(\pi)= \frac{\lim_{s \rightarrow 1}(s-1)^kL^S(s,\pi,{\rm Sym}^2)}{L^S(2,\pi,{\rm Sym}^2)}.
\]
\end{proposition}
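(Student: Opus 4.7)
The plan is to follow the template of Lapid--Mao \cite{LM16,LM17}: start with Theorem \ref{Whittaker-main-even-orthogonal}, factorize the Whittaker--Fourier coefficient of the residual Eisenstein series through the global intertwining operator $M(s,\pi)$, and extract the residue of order $k$ at $s=\tfrac{1}{2}$.

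By Theorem \ref{Whittaker-main-even-orthogonal}, the left-hand side of \eqref{GG-Whittaker-EvenOrthogonal-decomp} equals
\[
\int_{V_\gamma(\mathbb{A})\backslash V(\mathbb{A})} \mathcal{W}^{\psi_N}((-1)^n\gamma v g,\mathcal{E}_{-k}\varphi)\,\psi_V^{-1}(v)\,dv.
\]
For $\Re(s)$ sufficiently large, the standard Jacquet unfolding of the $\psi_N$-Fourier coefficient of $\mathcal{E}(s,\varphi)$ along $N$ collapses to the long Weyl element $w_U$ alone (because $\psi_N$ is trivial on $U$ and non-degenerate on $N_M$), giving an Euler product
\[
\mathcal{W}^{\psi_N}(h,\mathcal{E}(s,\varphi))=\prod_v (M(s,\pi_v)W_v)_s(h_v), \quad h\in G(\mathbb{A}),
\]
where $\mathcal{W}^{\psi_{N_M}}(\cdot,\varphi)=\prod_v W_v$ is the assumed factorization. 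Substituting this into the same $V_\gamma\backslash V$ integral (the identity of Theorem \ref{Whittaker-main-even-orthogonal} also holds with $\mathcal{E}(s,\varphi)$ in place of $\mathcal{E}_{-k}\varphi$ in this convergence range, by the same computation) and swapping the integration with the Euler product yield
\[
\prod_v A^\psi(s,g_v,M(s,\pi_v)W_v).
\]

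For every $v\notin S$ the unramified formula \eqref{unramified-even-orth-intertwining}, combined with the normalization $A^\psi(s,e,W_v^\circ)\equiv 1$ from part $(4)$ of the local properties of $A^\psi$, reduces the local factor to $L(2s,\pi_v,{\rm Sym}^2)/L(2s+1,\pi_v,{\rm Sym}^2)$; assembling these produces the global ratio $L^S(2s,\pi,{\rm Sym}^2)/L^S(2s+1,\pi,{\rm Sym}^2)$. Since $L^S(s,\pi,{\rm Sym}^2)$ has a pole of order $k$ at $s=1$, the change of variable $t=2s$ gives
\[
\lim_{s\to\frac{1}{2}}\Bigl(s-\tfrac{1}{2}\Bigr)^k\frac{L^S(2s,\pi,{\rm Sym}^2)}{L^S(2s+1,\pi,{\rm Sym}^2)}=\frac{m^S_{-k,1}(\pi)}{2^k}.
\]
The remaining factor $\prod_{v\in S}A^\psi(s,g_v,M(s,\pi_v)W_v)$ is holomorphic at $s=\tfrac{1}{2}$ by Proposition \ref{holomorphy-even-orthgonal} together with the entirety of $A^\psi(s,\cdot,\cdot)$ in $s$, so it may be evaluated there to produce the desired product over $v\in S$.

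The hard part is the analytic justification of commuting the residue at $s=\tfrac{1}{2}$ with both the adelic integration over $V_\gamma(\mathbb{A})\backslash V(\mathbb{A})$ and the infinite Euler product. The standard remedy, following Lapid--Mao, is to realize every step as an equality of meromorphic functions of $s$ in the range of absolute convergence, where both sides are genuinely factorizable, and only at the end extract the coefficient of $(s-\tfrac{1}{2})^{-k}$ in the Laurent expansion.
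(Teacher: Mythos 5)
Your overall strategy mirrors the paper's proof: express $\mathcal{W}^{\psi_{N'}}(g,{\rm GG}(\mathcal{E}_{-k}\varphi))$ via Theorem~\ref{Whittaker-main-even-orthogonal}, factorize the input through the global intertwining operator, and extract the order-$k$ residue at $s=\tfrac12$. But there is a genuine error in the middle: you assert that for $\Re(s)\gg 0$ the $\psi_N$-Fourier coefficient of $\mathcal{E}(s,\varphi)$ ``collapses to the long Weyl element $w_U$ alone, because $\psi_N$ is trivial on $U$ and non-degenerate on $N_M$.'' That logic is backwards. Because $\psi_N$ is \emph{degenerate} (trivial on $U$), the $\psi_N$-coefficient equals $\mathcal{W}^{\psi_{N_M}}(\mathcal{E}^U(s,\varphi))$, and the constant term of the Siegel Eisenstein series along $U$ has \emph{two} Weyl contributions, $\mathcal{E}^U(s,\varphi)=\varphi_s+(M(s,\pi)\varphi)_{-s}$. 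So
\[
\mathcal{W}^{\psi_N}(h,\mathcal{E}(s,\varphi))=\mathcal{W}^{\psi_{N_M}}(h,\varphi_s)+\mathcal{W}^{\psi_{N_M}}(h,(M(s,\pi)\varphi)_{-s}),
\]
and the first term is not zero, since $\pi$ is generic. It does not factor as a single Euler product of $M(s,\pi_v)W_v$'s. The correct observation is that this identity term is \emph{holomorphic} at $s=\tfrac12$, so it dies when you multiply by $(s-\tfrac12)^k$ and pass to the limit; only then does $w_U$ survive. As written, your displayed Euler product for $\mathcal{W}^{\psi_N}(h,\mathcal{E}(s,\varphi))$ is false for generic $s$.

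The paper avoids this pitfall by never expanding $\mathcal{E}(s,\varphi)$ at all: it works directly with the residual form, using $\mathcal{W}^{\psi_N}(\mathcal{E}_{-k}\varphi)=\mathcal{W}^{\psi_{N_M}}(\mathcal{E}^U_{-k}\varphi)=\mathcal{W}^{\psi_{N_M}}((M_{-k}\varphi)_{-\frac12})$, then computes $\mathcal{W}^{\psi_{N_M}}(M(s,\pi)\varphi)$ via the integral defining $M(s,\pi)$ and only at the end lets $s\to\tfrac12$. This also obviates the need to commute the residue with the $V_\gamma\backslash V$ integral, which you flag as ``the hard part'': in the paper's order of operations the residue is extracted from the $\psi_{N_M}$-Whittaker factorization \emph{before} one ever integrates over $V_\gamma(\mathbb{A})\backslash V(\mathbb{A})$. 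Finally, your parenthetical claim that Theorem~\ref{Whittaker-main-even-orthogonal} ``also holds with $\mathcal{E}(s,\varphi)$ in place of $\mathcal{E}_{-k}\varphi$'' is not justified by the paper's proof of that theorem, which uses the residual form. Minor: the induced-space twist after applying $M(s,\pi_v)$ should be $-s$, not $s$, since $M(s,\pi_v)$ maps ${\rm Ind}(s,\cdot)$ to ${\rm Ind}(-s,\cdot)$.
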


\begin{proof}
We note that
\[
 \mathcal{W}^{\psi_N}(\mathcal{E}_{-k}\varphi)=\mathcal{W}^{\psi_{N_M}}(\mathcal{E}^U_{-k}\varphi)
 =\mathcal{W}^{\psi_{N_M}}((M_{-k}(\varphi))_{-\frac{1}{2}}).
\]
With this in hand, we rewrite Theorem \ref{Whittaker-main-even-orthogonal} in terms of the global transform $A^{\psi}$ as
\begin{equation}
\label{global-Afunction-even-orthogoal}
 \mathcal{W}^{\psi_{N'}}(g,{\rm GG}(\mathcal{E}_{-k}\varphi))=A^{\psi} \left( -\frac{1}{2}, g, M_{-k}(\varphi) \right).
\end{equation}
We can identify $\mathcal{A}(\pi)$ with ${\rm Ind}^{G(\mathbb{A})}_{P(\mathbb{A})}\,\pi=\otimes_v {\rm Ind}^{G(F_v)}_{P(F_v)} \, \pi_v$.
For ${\rm Re}(s) \gg 0$, we have
\begin{multline*}
\mathcal{W}^{\psi_{N_M}}(g,M(s,\pi)\varphi)
=\int_{N_M(F) \backslash N_M(\mathbb{A})} M(s,\pi)\varphi(ng)\psi^{-1}_{N_M}(n) \,dn \\
=\int_{N_M(F) \backslash N_M(\mathbb{A})} \int_{U(\mathbb{A})} \varphi(\varrho(\mathfrak{t})w_Uung) \psi^{-1}_{N_M}(n) \,du\,dn
=\int_{U(\mathbb{A})} \mathcal{W}^{\psi_{N_M}}(\varrho(\mathfrak{t})w_Uug,\varphi) \,du
\end{multline*}
from which it follows that for $S$ large enough
\[
 \mathcal{W}^{\psi_{N_M}}(M(s,\pi)\varphi)\\
= m^S(s,\pi) \left(  \prod_{v \in S} M(s,\pi_v) W_v\right) \prod_{v \notin S} W_v,
\]
as meromorphic functions in $s \in \mathbb{C}$. Here
\[
  m^S(s,\pi)=\frac{L^S(2s,\pi,{\rm Sym}^2)}{L^S(2s+1,\pi,{\rm Sym}^2)}.
\]
Taking the limit as $s \rightarrow \frac{1}{2}$, we get for $S$ sufficiently large,
\begin{equation}
\label{limit-onehalf-even-orthogoal}
 \mathcal{W}^{\psi_{N_M}}(M_{-k}\varphi)=m^S_{-k}(\pi) \prod_{v \in S} M\left( \frac{1}{2},\pi_v \right)W_v \prod_{v \not \in S} W_v
\end{equation}
where
\begin{multline*}
m^S_{-k}(\pi) =\lim_{s \rightarrow \frac{1}{2}} \left( s-\frac{1}{2} \right)^k m^S(s,\pi)=2^{-k}\frac{\lim_{s \rightarrow \frac{1}{2}}L^S(2s,\pi,{\rm Sym}^2)}{L(2,\pi,{\rm Sym}^2)}\\
=2^{-k} \frac{\lim_{s \rightarrow 1}(s-1)^kL^S(s,\pi,{\rm Sym}^2)}{L(2,\pi,{\rm Sym}^2)}=\frac{m^S_{-k,1}(\pi)}{2^k}.
\end{multline*}
For any factorizable $\varphi \in \mathcal{A}(\pi)$, the desired conclusion follows from \eqref{global-Afunction-even-orthogoal} and \eqref{limit-onehalf-even-orthogoal}.
\end{proof}

We unfold Petersson inner products of automorphic forms against the Gelfand--Graev coefficient of Eisenstein series \cite[Theorem 10.3, (10.4)]{GRS11}
and then perform unramified computations \cite[(10.60)]{GRS11} to find the following Euler product. 

\begin{proposition} 
Let $\pi \in {\rm OScusp}_k \, \mathbb{M}$ and $\sigma=\mathcal{D}_{\psi^{-1}}(\pi)$.
Assume our Working Hypothesis \ref{even-orthogonal-working-hypothesis}. 
Let $\varphi' \in \sigma$ be an irreducible $\psi^{-1}_{N'}$-generic cuspidal automorphic representation of $G'$ and $\varphi \in \mathcal{A}(\pi)$.
Suppose that $\mathcal{W}^{\psi^{-1}_{N'}}(\cdot,\varphi')=\prod_v W'_v$
and $\mathcal{W}^{\psi_{N_M}}(\cdot,\varphi)=\prod_v W_v$. Then for any sufficiently large finite set of places $S$, we have
\begin{equation}
\label{Before-Limit-Even-Orthogoal}
\langle \varphi', {\rm GG}(\mathcal{E}(s,\varphi)) \rangle_{G'}=\frac{1}{2}\cdot \frac{{\rm vol}(N'(\mathcal{O}_S)\backslash N'(F_S))}{\prod_{j=1}^n \zeta^S_F(2j)L^S(n+1,\chi_{\tau})}
\cdot \frac{L^S(s+\frac{1}{2},\sigma \times \pi)}{L^S(2s+1,\pi,{\rm Sym}^2)} \prod_{v \in S} J(s,W'_v,W_v),
\end{equation}
 where on the right-hand side we take the unnormalized Tamagawa measure on $G'(F_S)$, $N'(F_S)$, $V(F_S)$ and $V_{\gamma}(F_S)$
(which are independent of the choice of gauge forms when $S$ is sufficiently large.)
\end{proposition}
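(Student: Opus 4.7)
The strategy is the standard unfolding of a global Rankin--Selberg--Shimura integral, now in the descent setup, followed by a factorization into local integrals and an unramified computation. First, insert the definition of the Gelfand--Graev coefficient and the Eisenstein series into the Petersson pairing to write
\[
 \langle \varphi', {\rm GG}(\mathcal{E}(s,\varphi)) \rangle_{G'}
 = {\rm vol}(G'(F)\backslash G'(\mathbb{A}))^{-1}
 \int_{G'(F)\backslash G'(\mathbb{A})} \varphi'(g) \int_{V(F)\backslash V(\mathbb{A})}
 \sum_{\delta \in P(F)\backslash G(F)} \varphi_s(\delta v g)\,\psi_V^{-1}(v)\,dv\,dg,
\]
valid for $\mathrm{Re}(s)$ large so that everything converges absolutely and interchanges are legal.

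Next, I would perform the unfolding on the $(P(F),V(F)G'(F))$-double cosets in $G(F)$. Cuspidality of $\sigma=\mathcal{D}_{\psi^{-1}}(\pi)$ together with the non-triviality of $\psi_V$ annihilates all but a single relevant double coset, represented by the Weyl element $\gamma$ used throughout Section~\ref{Sec:3}; this is exactly the unfolding carried out in \cite[Theorem 10.3]{GRS11}. After this step the double integral collapses to
\[
 \int_{N'(F)\backslash G'(\mathbb{A})} \varphi'(g)
 \int_{V_\gamma(\mathbb{A}) \backslash V(\mathbb{A})} \mathcal{W}^{\psi_{N_M}}\!\left((-1)^n \gamma v g, \varphi_s\right) \psi_V^{-1}(v)\,dv\,dg,
\]
up to the volume constant ${\rm vol}(G'(F)\backslash G'(\mathbb{A}))^{-1}$; expressed via the global transform from Proposition~\ref{GG-Whittaker-EvenOrthogonal}, the inner double integral is $A^{\psi}(s,g,\varphi)$.

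Then I would unfold the outer $G'$-integral using the $\psi^{-1}_{N'}$-Whittaker--Fourier expansion of $\varphi'$, which is permissible by the genericity provided by Working Hypothesis~\ref{even-orthogonal-working-hypothesis}. This converts the integral over $N'(F)\backslash G'(\mathbb{A})$ into one over $N'(\mathbb{A})\backslash G'(\mathbb{A})$ weighted by $\mathcal{W}^{\psi^{-1}_{N'}}(g,\varphi')$, giving a \emph{global} Rankin--Selberg--Shimura integral whose integrand factorizes over places. Using the decompositions $\mathcal{W}^{\psi_{N_M}}(\cdot,\varphi)=\prod_v W_v$ and $\mathcal{W}^{\psi^{-1}_{N'}}(\cdot,\varphi')=\prod_v W'_v$, together with the definition of the local integral $J(s,W'_v,W_v)$ and the factorization of $A^{\psi}$ at each place, the resulting expression becomes $\prod_v J(s,W'_v,W_v)$, convergent for $\mathrm{Re}(s) \gg 0$.

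Finally, splitting the product at a sufficiently large finite set $S$, I would apply the unramified formula from part~(3) of the local Rankin--Selberg--Shimura proposition to evaluate
\[
 \prod_{v \notin S} J(s,{W'}_v^{\circ},W_v^{\circ})
 = \prod_{v \notin S} \frac{{\rm vol}(K'_v)}{{\rm vol}(N'(F_v)\cap K'_v)} \cdot
 \frac{L(s+\tfrac{1}{2},\sigma_v \times \pi_v)}{L(2s+1,\pi_v,{\rm Sym}^2)}.
\]
Combining with the identity $\mathrm{vol}(K'_v)=(\prod_{j=1}^n \zeta_{F_v}(2j)L(n+1,\chi_{\tau,v}))^{-1}$ recalled after the RSS proposition, and computing ${\rm vol}(G'(F)\backslash G'(\mathbb{A}))^{-1}$ against the ratio of global to local volume factors, yields the partial $L$-function $L^S(s+\tfrac{1}{2},\sigma\times\pi)/L^S(2s+1,\pi,{\rm Sym}^2)$, the denominator $\prod_{j=1}^n \zeta^S_F(2j)L^S(n+1,\chi_\tau)$, and the overall factor $\tfrac12$ coming from the Tamagawa number of $G'$. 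The main obstacle is the unfolding step (collapse to the unique open double coset and absolute convergence of the intermediate integrals), which is precisely what \cite[Theorem~10.3]{GRS11} provides, and the unramified computation, which is \cite[(10.60)]{GRS11}; both results can be cited verbatim so the task here is mainly careful bookkeeping of measures and the normalization conventions of Section~\ref{Sec:2}.
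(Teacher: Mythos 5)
Your proposal is correct and follows essentially the same route as the paper: citing \cite[Theorem~10.3, (10.4)]{GRS11} for the global unfolding of $\langle \varphi', {\rm GG}(\mathcal{E}(s,\varphi))\rangle_{G'}$ into the Rankin--Selberg--Shimura integral $\frac12\,{\rm vol}(N'(F)\backslash N'(\mathbb{A}))\int_{N'(\mathbb{A})\backslash G'(\mathbb{A})}\mathcal{W}^{\psi^{-1}_{N'}}(g,\varphi')A^{\psi}(s,g,\varphi)\,dg$, then factorizing and invoking the unramified spherical computation \cite[(10.60)]{GRS11} together with the Tamagawa number ${\rm vol}(G'(F)\backslash G'(\mathbb{A}))=2$ and $\Delta^S_{{\rm SO}_{2n+2,\tau}}(1)=\prod_{j=1}^n\zeta^S_F(2j)L^S(n+1,\chi_\tau)$. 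The only quibble is a minor misattribution: genericity of $\mathcal{D}_{\psi^{-1}}(\pi)$ already follows from \cite[Theorem~9.6]{GRS11}; Working Hypothesis~\ref{even-orthogonal-working-hypothesis} supplies irreducibility, not genericity, though this does not affect the argument.
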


\begin{proof}
It is worthwhile to mention that the volume of $G'(F) \backslash G'(\mathbb{A})$, appearing on the right-hand side of \eqref{Petterson-Innerproduct}, is $2$ equal to the Tamagawa number when we use the Tamagawa measure as in \cite{LM16,LM17}. We apply \cite[Theorem 10.3, (10.4)]{GRS11} in order to unfold
\begin{equation}
\label{even-orthgonal-unfolding}
 \langle \varphi', {\rm GG}(\mathcal{E}(s,\varphi)) \rangle_{G'}=\frac{1}{2} \cdot {\rm vol}(N'(F) \backslash N'(\mathbb{A})) \int_{N'(\mathbb{A})\backslash G'(\mathbb{A})} \mathcal{W}^{\psi^{-1}_{N'}}(g,\varphi') A^{\psi}(s,g,\varphi) \, dg.
\end{equation}
 Similar to \cite[Propositions 5.7 and 8.4]{LM16}, our desired result is immediate from \eqref{even-orthgonal-unfolding} in conjunction with the unramified computation \cite{Kap10,Kap15} (cf. \cite[(10.60)]{GRS11});
 \begin{equation}
 \label{even-orthgonal-spherical-comp}
  J(s,{W'_v}^{\circ},W^{\circ}_v)=\frac{{\rm vol}(K')}{{\rm vol}(N'(F_v) \cap K')}\frac{L(s+\frac{1}{2},\sigma_v \times \pi_v)}{L(2s+1,\pi_v,{\rm Sym}^2)} 
 \end{equation}
with $K'=K_{{\rm GL}_{2n+2}(F_v)} \cap G'(F_v)$. Since  we use the Tamagawa measure, the factor  $\Delta^S_{{\rm SO}_{2n+2,\tau}}(1)^{-1}$ shows up,
where $\Delta^S_{{\rm SO}_{2n+2,\tau}}(s)=L(M^{\vee}(s))$ is the partial $L$-function of the dual motive $M^{\vee}$ of the motive $M$ associated to ${\rm SO}_{2n+2,\tau}$
by Gross \cite{Gro97}. According to \citelist{\cite{Xue17}*{Notation and Convention} \cite{II10}} 
\[
 \Delta^S_{{\rm SO}_{2n+2,\tau}}(1)=\prod_{j=1}^n \zeta^S_F(2j)L^S(n+1,\chi_{\tau})
\]
for any finite set of places $S$.
\end{proof}

\subsection{Petersson inner products and Fourier coefficients}

Let $F$ be a local field. We let $\pi \in {\rm Irr}_{\rm gen,os} \, \mathbb{M}$.
We say that $\pi$ is {\it good} if the following conditions are satisfied for all $\psi$;
\begin{enumerate}[label=$(\mathrm{\roman*})$]
\item $\mathcal{D}_{\psi}(\pi)$ is irreducible. 
\item $J(s,W',W)$ is holomorphic at $s=\frac{1}{2}$ for any $W' \in \mathcal{D}_{\psi^{-1}}(\pi)$ and $W \in {\rm Ind}(\mathbb{W}^{\psi_{N_M}}(\pi))$.
\item For any $W' \in \mathcal{D}_{\psi^{-1}}(\pi)$, \\
$J\left(\frac{1}{2},W',W\right)$ factors through the map $W \mapsto \left(A^{\psi}\left(-\frac{1}{2}, \cdot, M \left( \frac{1}{2},\pi \right)W \right)\right).$
\end{enumerate}

We know from \eqref{even-orthogonal-G'-equivalent} that for $x \in G'$ we have
\[
 J(s,\sigma(x)W',I(s,x)W)=J(s,W',W),
\]
where $\sigma=\mathcal{D}_{\psi^{-1}}(\pi)$. Therefore if $\pi$ is good, there is a non-degenerate $G'$-invariant 
pairing $[\cdot,\cdot]$ on $\mathcal{D}_{\psi^{-1}}(\pi) \times \mathcal{D}_{\psi}(\pi)$ such that
\[
 J\left(\frac{1}{2},W',W\right)=\left[W',A^{\psi}\left(-\frac{1}{2},\cdot, M\left(\frac{1}{2},\pi\right)W\right) \right]
\]
for any $W' \in \mathcal{D}_{\psi^{-1}}(\pi)$ and $W \in {\rm Ind}(\mathbb{W}^{\psi_{N_M}}(\pi))$.
By \cite[\S 2]{LM15}, when $\pi$ is good, there exists $c_{\pi}$ such that
\begin{equation}
\label{cpi-equation-even-orthogoal}
 \int^{\rm st}_{N'} J\left(\frac{1}{2},\sigma(u)W',W\right) \psi_{N'}(u) \, du=c_{\pi} W'(e)A^{\psi}\left( -\frac{1}{2}, e, M \left( \frac{1}{2}, \pi \right)W\right).
\end{equation}

\begin{proposition}
\label{unramified-constant-even-orthogonal}
Suppose that $F$ is $p$-adic, $\pi \in {\rm Irr}_{\rm gen,os}\,\mathbb{M}$, and $\psi$ are unramified. 
Then $\sigma=\mathcal{D}_{\psi^{-1}}(\pi)$ is irreducible and unramified. Let $W^{\circ} \in {\rm Ind}(\mathbb{W}^{\psi_{N_M}}(\pi))$
be the standard unramified vector and let ${W'}^{\circ}$ be $K'$-invariant with ${W'}^{\circ}(e)=1$.
Then holds with $c_{\pi}=1$.
\end{proposition}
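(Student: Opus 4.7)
The plan is to evaluate both sides of \eqref{cpi-equation-even-orthogoal} explicitly on the spherical vectors $W = W^\circ$ and $W' = {W'}^\circ$, and verify the identity with $c_\pi = 1$. The overall template is borrowed from Lapid--Mao \cite[\S 4, \S 7]{LM16} and \cite[\S 5, \S 8]{LM17}. I would first verify that $\sigma = \mathcal{D}_{\psi^{-1}}(\pi)$ is irreducible and unramified. Unramifiedness is inherited from the descent construction: by \eqref{unramified-even-orth-intertwining}, $M(\tfrac{1}{2}, \pi)W^\circ$ is a scalar multiple of $\check W^\circ$, and since both $W^\circ$ and $\check W^\circ$ are $K$-fixed, the Whittaker function $A^\psi(-\tfrac{1}{2}, \cdot, M(\tfrac{1}{2}, \pi)W^\circ)$ on $G'$ is right $K'$-invariant. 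Irreducibility can be read off from the Satake-parameter description of unramified descent in \cite[Theorem 9.6(2)]{GRS11}.

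The right-hand side is computed at once: combining \eqref{unramified-even-orth-intertwining} with part (4) of the basic properties of $A^\psi$ applied to $\pi^\vee$ yields
\[
A^\psi\bigl(-\tfrac{1}{2}, e, M(\tfrac{1}{2}, \pi) W^\circ\bigr) = \frac{L(1, \pi, \mathrm{Sym}^2)}{L(2, \pi, \mathrm{Sym}^2)} \cdot A^\psi\bigl(-\tfrac{1}{2}, e, \check W^\circ\bigr) = \frac{L(1, \pi, \mathrm{Sym}^2)}{L(2, \pi, \mathrm{Sym}^2)},
\]
so the right-hand side of \eqref{cpi-equation-even-orthogoal} equals $c_\pi \cdot L(1, \pi, \mathrm{Sym}^2)/L(2, \pi, \mathrm{Sym}^2)$. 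For the left-hand side I would choose $N^* \subseteq N'$ sufficiently large that the stable integral reduces to an honest integral over $N^*$. Unfolding $J$ and applying Fubini, the $(N', \psi^{-1}_{N'})$-equivariance of ${W'}^\circ$ together with a change of variables on $N' \backslash G'$ exploiting the Iwasawa decomposition and the right $K'$-invariance of ${W'}^\circ$ collapses the integrand into a constant multiple of $J(\tfrac{1}{2}, {W'}^\circ, W^\circ)$. By the spherical formula \eqref{even-orthgonal-spherical-comp}, this equals $\tfrac{\mathrm{vol}(K')}{\mathrm{vol}(N' \cap K')} \cdot L(1, \sigma \times \pi)/L(2, \pi, \mathrm{Sym}^2)$. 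Matching the two sides reduces $c_\pi = 1$ to the unramified L-factor identity expressing $L(1, \sigma \times \pi)/L(1, \pi, \mathrm{Sym}^2)$ as the inverse of the measure-normalization constant, which follows from the fact that $\sigma^\vee$ weakly lifts to $\pi \boxplus \omega_\pi \chi_\tau$ (and hence $L(s, \sigma \times \pi) = L(s, \pi \times \pi) \cdot L(s, \pi \otimes \omega_\pi \chi_\tau)$) together with the decomposition $L(s, \pi \times \pi) = L(s, \pi, \mathrm{Sym}^2) L(s, \pi, \wedge^2)$.

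The hard part will be the measure bookkeeping (tracking $\mathrm{vol}(V \cap K)$, $\mathrm{vol}(V_\gamma \cap K)$, and $\mathrm{vol}(N' \cap K')$) and the justification of the swap of integrals in the left-hand side. A cleaner alternative is to apply the local functional equation for $J$ to transport the identity at $s = \tfrac{1}{2}$ to one at $s = -\tfrac{1}{2}$, where $J(-\tfrac{1}{2}, {W'}^\circ, M(\tfrac{1}{2}, \pi) W^\circ)$ can be read off from \eqref{unramified-even-orth-intertwining} applied to $\pi^\vee$ together with \eqref{even-orthgonal-spherical-comp} evaluated at $s = -\tfrac{1}{2}$. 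Shahidi-type formulas for the unramified symmetric-square gamma factor then pin down the proportionality constant and yield $c_\pi = 1$.
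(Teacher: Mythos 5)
There is a genuine gap in the treatment of the left-hand side. Your sketch — ``unfolding $J$ and applying Fubini, [then] a change of variables on $N' \backslash G'$ exploiting the Iwasawa decomposition and the right $K'$-invariance of ${W'}^\circ$ collapses the integrand into a constant multiple of $J(\tfrac{1}{2}, {W'}^\circ, W^\circ)$'' — papers over the substantive analytic step. The stable integral
\[
\int^{\rm st}_{N'} J\left(\tfrac{1}{2},\sigma(u){W'}^{\circ},W^{\circ}\right)\psi^{-1}_{N'}(u)\,du
\]
is a stable Whittaker--Fourier coefficient of a \emph{spherical matrix coefficient} of $\sigma$, and evaluating it is not an elementary Fubini/Iwasawa manipulation. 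The crucial input, which the paper invokes and you do not identify, is \cite[Proposition~2.14]{LM15}: for an irreducible unramified tempered representation, this stable integral equals $\tfrac{{\rm vol}(N'\cap K')}{{\rm vol}(K')}\cdot\tfrac{1}{L(1,\sigma,\mathrm{Ad})}$ times the value of the matrix coefficient at the identity. This is a genuine theorem (a stable-integral variant of Macdonald's and Casselman--Shalika's formulas), not a normalization computation. Your later claim that $L(1,\sigma\times\pi)/L(1,\pi,\mathrm{Sym}^2)$ equals ``the inverse of the measure-normalization constant'' misidentifies what must appear: the constant that emerges is $1/L(1,\sigma,\mathrm{Ad})$ (times the genuine measure ratio ${\rm vol}(N'\cap K')/{\rm vol}(K')$ that cancels against \eqref{even-orthgonal-spherical-comp}), and the reduction to $c_\pi=1$ depends on the identity $L(1,\sigma,\mathrm{Ad})=L(1,\pi,\wedge^2)L(1,\pi\times\omega_\pi\chi_\tau)$ obtained from the unramified description of the descent in \cite[Theorem~5.6]{GRS11} together with \cite[\S 7]{GGP12}. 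Without \cite[Proposition~2.14]{LM15} the chain of equalities does not close.

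The ``cleaner alternative'' via the local functional equation does not avoid this: the functional equation relates $J$ at $s$ and $-s$, but the stable integral over $N'$ must still be computed, so you would again need the spherical matrix-coefficient input. Your computation of the right-hand side and of the unramified $L$-factor identities $L(s,\sigma\times\pi)=L(s,\pi\times\pi)L(s,\pi\otimes\omega_\pi\chi_\tau)$ and $L(s,\pi\times\pi)=L(s,\pi,\mathrm{Sym}^2)L(s,\pi,\wedge^2)$ is correct and matches the paper, and the measure bookkeeping is not, in fact, the hard part — those factors cancel. The missing idea is the explicit value of the stable Fourier coefficient, including its $L(1,\sigma,\mathrm{Ad})^{-1}$ factor.
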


\begin{proof}
The first part is proved exactly as in the proof of \cite[Lemma 5.4]{LM17} relying on \cite[Theorem 5.6]{GRS11} instead of \cite[Theorem6.4]{GRS11}. Assuming ${\rm vol}(U \cap K)=1$ as in \eqref{unramified-even-orth-intertwining}, we know that
\[
 {\rm vol}(K')={\rm vol}(U \cap K) (\zeta^S_F(2j)L^S(n+1,\chi_{\tau}))^{-1}=(\zeta^S_F(2j)L^S(n+1,\chi_{\tau}))^{-1}.
\]
We combine it and \cite[Proposition 2.14]{LM15} to see that
\[
 \int^{\rm st}_{N'} J\left(\frac{1}{2},\sigma(u){W'}^{\circ},W^{\circ} \right) \psi^{-1}_{N'}(u) \, du=\frac{{\rm vol}(N' \cap K')}{{\rm vol}(K')} \frac{J\left(\frac{1}{2}, {W'}^{\circ},W^{\circ} \right)}{L(1,\sigma, {\rm Ad})}.
\]
It follows from the description of $\sigma$ in this case \cite[Theorem 5.6]{GRS11} together with \cite[\S 7]{GGP12} that
\[
L(1,\sigma, {\rm Ad})=L(1,\pi \boxplus \omega_{\pi}\chi_{\tau},\wedge^2)=L(1,\pi,\wedge^2)L(1, \pi\times \omega_{\pi}\chi_{\tau})
\]
and $L(s,\sigma\times \pi)=L(s,\pi \times \pi)L(s,\omega_{\pi}\chi_{\tau} \times \pi)$. We recall that $\pi=\pi^{\vee}$ in our case. Hence \eqref{unramified-even-orth-intertwining} aligned with of \eqref{even-orthgonal-spherical-comp} implies that the above is equal to
\begin{multline*}
\frac{1}{L(1,\pi,\wedge^2)L(1, \pi\times \omega_{\pi}\chi_{\tau})}\frac{L(1,\pi \times \pi)L(1,\omega_{\pi}\chi_{\tau} \times \pi)}{L(2,\pi,{\rm Sym}^2)}\\
=\frac{L(1,\pi,{\rm Sym}^2)}{L(2,\pi,{\rm Sym}^2)}=M\left(\frac{1}{2},\pi \right)W^{\circ}(e)
={W'}^{\circ}(e)A^{\psi}\left(-\frac{1}{2},e, M\left(\frac{1}{2},\pi \right)W^{\circ}\right) 
\end{multline*}
that we seek for.
\end{proof}

At first glance, $c_{\pi}$ implicitly depends on the choice of Haar measures on $G'$ (the former used in the definition of $J(s,W',W)$) and $U$
(the latter used in the definition of the intertwining operator), but not on any other groups (refer to \cite[Remark 5.2]{LM17} for more details).
However the vector spaces underlying the Lie algebras of $G'$ and $U$ are both canonically isomorphic as vector spaces to
\[
 \{ X \in {\rm Mat}_{2n+1,2n+1}(F) \, |\, {X^t}\,w_{2n+1}= -w_{2n+1}X \}.
\]
We can thereby identify the gauge forms on $G'$ and $U$ and so $c_{\pi}$
does not depend on any choice if we use the unnormalized Tamagawa measure on $G'$ and $U$ with
respect to the same gauge form. We will say that the Haar measures on $G'$ and $U$ are {\it compatible}
in this case.

\begin{proposition} 
\label{even-orthogonal-good-repn}
Let $\pi \in {\rm OScusp}_k\,\mathbb{M}$ and $\sigma=\mathcal{D}_{\psi^{-1}}(\pi)$. Assume our Working Hypothesis \ref{even-orthogonal-working-hypothesis}.
Let $\varphi' \in \sigma$ be an irreducible $\psi^{-1}_{N'}$-generic cuspidal automorphic representation of $G'$ and $\varphi \in \mathcal{A}(\pi)$.
Suppose that $\mathcal{W}^{\psi^{-1}_{N'}}(\cdot,\varphi')=\prod_v W'_v$ and 
$\mathcal{W}^{\psi_{N_M}}(\cdot,\varphi)=\prod_v W_v$.
Then for all $v$ $\pi_{v}$ is good. Moreover for any sufficiently large finite set of places $S$, we have
\begin{equation}
\label{EulerProducts-Even-Orthogonal}
\langle \varphi', {\rm GG}(\mathcal{E}_{-k} \varphi) \rangle_{G'}=\frac{1}{2}\cdot \frac{{\rm vol}(N'(\mathcal{O}_S)\backslash N'(F_S))}{\prod_{j=1}^n \zeta^S_F(2j)L^S(n+1,\chi_{\tau})} \cdot n^S_{-k,1} \prod_{v \in S} J\left(\frac{1}{2},W'_v,W_v\right),
\end{equation}
where
\[
n^S_{-k,1}=\frac{\lim_{s \rightarrow 1}(s-1)^kL^S(s,\pi \times \pi)L^S(s,\pi\times \omega_{\pi}\chi_{\tau})}{L^S(2,\pi,{\rm Sym}^2)}.
\]
\end{proposition}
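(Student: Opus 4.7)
The plan is to pass from the pre-limit identity \eqref{Before-Limit-Even-Orthogoal} to its leading Laurent coefficient at $s=\frac{1}{2}$, with the verification of local goodness of $\pi_v$ carried out in parallel. The two tasks interlock: local goodness is exactly what is needed to pull each factor $J(s,W'_v,W_v)$ through the specialization $s\to\frac{1}{2}$, and the global identity then assembles the pieces into \eqref{EulerProducts-Even-Orthogonal}.

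\textbf{Step 1 (local goodness of $\pi_v$).} At unramified places this is already implicit in Proposition \ref{unramified-constant-even-orthogonal}: irreducibility of $\mathcal{D}_{\psi}(\pi_v)$ is asserted there, the spherical formula is holomorphic at $s=\frac{1}{2}$, and the chain of equalities in the proof exhibits $J\!\left(\frac{1}{2},{W'}^{\circ},W^{\circ}\right)={W'}^{\circ}(e)A^\psi\!\left(-\frac{1}{2},e,M\!\left(\frac{1}{2},\pi\right)W^{\circ}\right)$, i.e.\ the desired factorization on the unramified vector. At a ramified place I plan to deduce (i) from Working Hypothesis \ref{even-orthogonal-working-hypothesis} along the lines of \cite[Proposition 5.5]{LM17}: if the irreducible global descent factors as the restricted tensor product of its local components, then each $\mathcal{D}_{\psi}(\pi_v)$ must equal the unique irreducible $\psi_{N'}$-generic quotient of the abstract descent $J_V(\pi'_v)$. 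For (ii) and (iii), I will rewrite the local functional equation as
\[
 \int_{N'\backslash G'} W'(g)\,A^\psi(-s,g,M(s,\pi_v)W)\,dg = c(s,\sigma_v,\pi_v,\tau)\,\frac{\gamma(s+\frac{1}{2},\sigma_v\times\pi_v,\psi_v)}{\gamma(2s+1,\pi_v,{\rm Sym}^2,\psi_v)}\,J(s,W',W).
\]
Both $M(s,\pi_v)$ (by Proposition \ref{holomorphy-even-orthgonal}) and $A^\psi(-s,\cdot,\cdot)$ are holomorphic at $s=\frac{1}{2}$, so the left-hand side is holomorphic there; once one verifies that the gamma-factor quotient is finite and non-zero at $s=\frac{1}{2}$ — using the classification of orthogonal-symplectic type representations in \cite{Kap17} — we get (ii). Evaluating the identity at $s=\frac{1}{2}$ then shows that $J\!\left(\frac{1}{2},W',W\right)$ depends on $W$ only through $A^\psi(-\frac{1}{2},\cdot,M(\frac{1}{2},\pi_v)W)$, which is (iii).

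\textbf{Step 2 (residue at $s=\frac{1}{2}$).} Multiply \eqref{Before-Limit-Even-Orthogoal} by $(s-\frac{1}{2})^k$ and let $s\to\frac{1}{2}$. On the left, ${\rm GG}$ is an integral over the compact quotient $V(F)\backslash V(\mathbb{A})$ and $\varphi'$ is cuspidal, so both ${\rm GG}$ and the Petersson pairing commute with the limit defining $\mathcal{E}_{-k}\varphi$, and the left-hand side converges to $\langle\varphi',{\rm GG}(\mathcal{E}_{-k}\varphi)\rangle_{G'}$. On the right, the weak lift $\sigma^{\vee}\mapsto\pi\boxplus\omega_{\pi}\chi_{\tau}$ yields the factorization $L^S(u,\sigma\times\pi)=L^S(u,\pi\times\pi)\,L^S(u,\pi\times\omega_{\pi}\chi_{\tau})$, so with $u=s+\frac{1}{2}$ we obtain
\[
 \lim_{s\to\frac{1}{2}}\left(s-\frac{1}{2}\right)^k\frac{L^S(s+\frac{1}{2},\sigma\times\pi)}{L^S(2s+1,\pi,{\rm Sym}^2)}=\frac{\lim_{u\to 1}(u-1)^k L^S(u,\pi\times\pi)L^S(u,\pi\times\omega_{\pi}\chi_{\tau})}{L^S(2,\pi,{\rm Sym}^2)}=n^S_{-k,1}.
\]
Each local factor $J(s,W'_v,W_v)$ specializes to $J(\frac{1}{2},W'_v,W_v)$ by condition (ii), and assembling these pieces produces \eqref{EulerProducts-Even-Orthogonal}.

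The hard part is Step 1 at ramified places — in particular securing (i) without a purely local uniqueness theorem for the descent, and performing the careful bookkeeping of $\gamma$-factor orders at $s=\frac{1}{2}$ required to extract (ii) and (iii) from the functional equation. Once (i)--(iii) are in hand, Step 2 is essentially a residue computation packaging the Euler product from \eqref{Before-Limit-Even-Orthogoal}.
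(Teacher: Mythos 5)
Your Step~2 (the residue computation at $s=\tfrac{1}{2}$) reproduces the paper's argument: multiply \eqref{Before-Limit-Even-Orthogoal} by $(s-\tfrac{1}{2})^k$, use the factorization $L^S(s,\sigma\times\pi)=L^S(s,\pi\times\pi)L^S(s,\pi\times\omega_\pi\chi_\tau)$ coming from the weak lift, and pass to the limit; this is exactly what the paper does.

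Your Step~1, however, takes a genuinely different route for goodness conditions~(ii) and~(iii). You propose a purely \emph{local} argument via the functional equation; the paper uses a \emph{global} argument, and in fact the paragraph immediately following the paper's proof explicitly acknowledges your alternative (``one may directly prove the desired result by using a local functional equation \cite[Proposition 5.3]{LM17}'') while opting for ``this indirect mean'' instead. The paper's actual argument is worth having in mind because it sidesteps the two obstructions you yourself flag as ``the hard part.'' For~(ii), the paper compares pole orders in \eqref{Before-Limit-Even-Orthogoal}: the left side has a pole of order at most $k$ (since $\mathcal{E}(s,\varphi)$ does and $\varphi'$ is rapidly decreasing), while the $L$-function ratio $L^S(s+\tfrac{1}{2},\sigma\times\pi)/L^S(2s+1,\pi,{\rm Sym}^2)$ on the right already supplies order exactly $k$, and one can choose data so that $\prod_v J(s,W'_v,W_v)$ is non-zero at $s=\tfrac{1}{2}$; consequently no $J(s,W'_v,W_v)$ can contribute an extra pole. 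For~(iii), one fixes $v_0$ and assumes $A^\psi(-\tfrac{1}{2},\cdot,M(\tfrac{1}{2},\pi_{v_0})W_{v_0})\equiv 0$; then \eqref{GG-Whittaker-EvenOrthogonal-decomp} forces $\mathcal{W}^{\psi_{N'}}(\cdot,{\rm GG}(\mathcal{E}_{-k}\varphi))\equiv 0$, hence ${\rm GG}(\mathcal{E}_{-k}\varphi)\equiv 0$ by irreducibility and genericity of the descent, and \eqref{After-Limit-Even-Orthogoal} yields $J(\tfrac{1}{2},W'_{v_0},W_{v_0})=0$. This handles all places $v$ uniformly, unramified and ramified alike.

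Two concrete gaps in your sketch. First, the claim that the left-hand side of the local functional equation is ``holomorphic at $s=\tfrac{1}{2}$'' because $M(s,\pi_v)$ and $A^\psi$ are does not follow: $J(-s,W',M(s,\pi_v)W)$ is an integral over the non-compact domain $N'\backslash G'$ that converges a~priori only in a half-plane; evaluating at $s=\tfrac{1}{2}$ (i.e.\ parameter $-\tfrac{1}{2}$ in the integral) requires an independent convergence or continuation argument, which is exactly what the meromorphic continuation of $J$ is designed to provide and is not granted for free. The same issue undercuts your step from the functional equation to~(iii): the identification of $J(-\tfrac{1}{2},W',\tilde{W})$ with $\int W'(g)A^\psi(-\tfrac{1}{2},g,\tilde{W})\,dg$ is only valid where the integral representation holds. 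Second, the assertion that the $\gamma$-factor quotient is finite and non-zero at $s=\tfrac{1}{2}$ at ramified places is a non-trivial claim requiring careful order bookkeeping via the classification of $\mathrm{Irr}_{\mathrm{gen,os}}$ in \cite{Kap17}; you acknowledge this but do not carry it out. Relatedly, Proposition~\ref{unramified-constant-even-orthogonal} does not ``already'' give conditions~(ii) and~(iii) at unramified places: it computes $c_{\pi_v}=1$ \emph{assuming} goodness holds (so that \eqref{cpi-equation-even-orthogoal} makes sense), and its holomorphy claim is only exhibited for the spherical vectors, not for arbitrary $W'\in\mathcal{D}_{\psi^{-1}}(\pi_v)$ and $W$ as conditions~(ii)--(iii) require.
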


\begin{proof}
We confirm that $\pi_v$ is good for all $v$. To this end, it follows from \eqref{GG-Whittaker-EvenOrthogonal-decomp} together with the irreducibility of the global descent that $\mathcal{D}_{\psi^{-1}}(\pi_v)$ is irreducible for all $v$.

\par
When $\pi \in {\rm OScusp}_k \, \mathbb{M}$ and $\sigma=\mathcal{D}_{\psi^{-1}}(\pi)$, we obtain $L^S(s,\sigma\times \pi)=L^S(s,\pi \times \pi)L^S(s,\omega_{\pi}\chi_{\tau} \times \pi)$,
which has a pole of order $k$ at $s=\frac{1}{2}$. On the one hand, $J(s,W'_v,W_v)$  is non-vanishing at $s=\frac{1}{2}$ for suitable $W'_v$ and $W_v$.
Consequently the right-hand side of \eqref{Before-Limit-Even-Orthogoal} (when $\varphi' \in \sigma$) has a pole of order at least $k$ at $s=\frac{1}{2}$ for proper $\varphi'$
and $\varphi$. On the other hand, the left-hand side of \eqref{Before-Limit-Even-Orthogoal} has a pole of order at most $k$ at $s=\frac{1}{2}$ because this is true for $\mathcal{E}(s,\varphi)$
and $\varphi'$ is rapidly decreasing. Multiplying \eqref{Before-Limit-Even-Orthogoal} by $(s-\frac{1}{2})^k$ and then taking the limit as $s \rightarrow \frac{1}{2}$,
we conclude that $J(s,W'_v,W_v)$ is holomorphic at $s=\frac{1}{2}$ for all $v$, and for $\varphi' \in \sigma$ and $\varphi \in \mathcal{A}(\pi)$ we get
\begin{equation}
\label{After-Limit-Even-Orthogoal}
\langle \varphi', {\rm GG}(\mathcal{E}_{-k} \varphi) \rangle_{G'}=\frac{1}{2}\cdot \frac{{\rm vol}(N'(\mathcal{O}_S)\backslash N'(F_S))}{\prod_{j=1}^n \zeta^S_F(2j)L^S(n+1,\chi_{\tau})} \cdot n^S_{-k,1} \prod_{v \in S} J\left(\frac{1}{2},W'_v,W_v\right).
\end{equation}
\par
We fix a place $v_0$. We assume that $W_{v_0}$ is such that 
$A^{\psi}\left(-\frac{1}{2},\cdot, M\left(\frac{1}{2},\pi_{v_0} \right)W_{v_0}\right) \equiv 0$.
Then by \eqref{GG-Whittaker-EvenOrthogonal-decomp}, $\mathcal{W}^{\psi_{N'}}(\cdot,{\rm GG}(\mathcal{E}_{-k}\varphi)) \equiv 0$
and therefore ${\rm GG}(\mathcal{E}_{-k}\varphi)\equiv 0$ benefited from the irreducibility and genericity of the descent.
We conclude from \eqref{After-Limit-Even-Orthogoal} that $J(\frac{1}{2},W'_{v_0},W_{v_0})=0$, which amounts to saying that $\pi_{v_0}$ is good.
\end{proof}

While we expect that any irreducible generic representation of orthogonal symplectic type $\pi_v$ is good,
the proof that we give under the assumption that $\pi_v$ is the local component of $\pi \in {\rm OScusp}_k\, \mathbb{M}$ is valid at the very least 
for the case of our interest, which pertains to the global period integrals. Our proof here requires a local and global argument. It behoves us to highlight the 
fact that the global approach is overkill and indirect; one may directly prove the desired result by using a local functional equation \cite[Proposition 5.3]{LM17}.
For the sake of brevity, we only include this indirect mean in keeping with the spirit of \cite[Propositions 5.7 and 8.4]{LM16}.

\begin{theorem}
\label{Main-Result-Even-Orthogonal-Theorem}
Let $\pi \in {\rm OScusp}_k\,\mathbb{M}$ and $\sigma=\mathcal{D}_{\psi}(\pi)$. Assume our Working Hypothesis \ref{even-orthogonal-working-hypothesis}.
Let $S$ be a finite set of places including all the archimedean places such that $\pi$ and
$\psi$ are all unramified outside $S$.  
Then for any $\varphi \in \sigma$ and $\varphi^{\vee} \in \sigma^{\vee}$
which are fixed under $K'_v$ for all $v \notin S$, we have
\begin{multline}
\label{Main-Result-Even-Orthogonal}
 \quad\quad \mathcal{W}^{\psi_{N'}}(\varphi) \mathcal{W}^{\psi^{-1}_{N'}}(\varphi^{\vee})
 =2^{1-k} \left( \prod_{v \in S} c^{-1}_{\pi_v} \right) \frac{\prod_{j=1}^n \zeta^S_F(2j)L^S(n+1,\chi_{\tau})}{L^S(1,\pi,\wedge^2)L^S(1, \pi\times \omega_{\pi}\chi_{\tau})} \\
 \times ({\rm vol}(N'(\mathcal{O}_S)\backslash N'(F_S)))^{-1} \int^{\rm st}_{N'(F_S)} \langle \sigma(u) \varphi, \varphi^{\vee} \rangle_{G'} \psi^{-1}_{N'}(u) \; du. \quad\quad
\end{multline}
\end{theorem}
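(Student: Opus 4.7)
The proof plan follows the template established by Lapid--Mao in \cite{LM16, LM17}. Three crucial ingredients from earlier in the paper will be assembled: Proposition \ref{GG-Whittaker-EvenOrthogonal} (factorization of the Whittaker coefficient of the Gelfand--Graev descent), Proposition \ref{even-orthogonal-good-repn} (Euler product unfolding of Petersson pairings against Gelfand--Graev residues), and the local identity \eqref{cpi-equation-even-orthogoal} defining the constant $c_{\pi_v}$.

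First I would invoke Working Hypothesis \ref{even-orthogonal-working-hypothesis} to reduce to the case of factorizable descent vectors. Since $\sigma = \mathcal{D}_\psi(\pi)$ and $\sigma^\vee = \mathcal{D}_{\psi^{-1}}(\pi)$ are irreducible and each is spanned by Gelfand--Graev residues ${\rm GG}(\mathcal{E}_{-k}\Phi)$, by multilinearity it suffices to treat $\varphi = {\rm GG}(\mathcal{E}_{-k}\Phi)$ and $\varphi^\vee = {\rm GG}(\mathcal{E}_{-k}\Phi^\vee)$ with factorizable $\Phi, \Phi^\vee \in \mathcal{A}(\pi)$ having local Whittaker data $W_v, W^\vee_v$.

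Next I would unfold the right-hand side. Using bilinearity of $\langle \cdot, \cdot \rangle_{G'}$ together with the identification $\sigma(u){\rm GG}(\mathcal{E}_{-k}\Phi) = {\rm GG}(\mathcal{E}_{-k}(R(\eta(u))\Phi))$ for $u \in N'(F_S)$, Proposition \ref{even-orthogonal-good-repn} yields an Euler product
\[
\langle \sigma(u) \varphi, \varphi^\vee \rangle_{G'} = \frac{{\rm vol}(N'(\mathcal{O}_S) \backslash N'(F_S))}{2 \prod_{j=1}^n \zeta^S_F(2j) L^S(n+1,\chi_\tau)} \cdot n^S_{-k,1} \prod_{v \in S} J\bigl(\tfrac{1}{2}, W^\vee_v, I(\tfrac{1}{2}, u_v) W_v\bigr).
\]
After factoring the stable integral over $N'(F_S)$ as a product of local stable integrals, and invoking the equivariance \eqref{even-orthogonal-G'-equivalent} together with the substitution $u \mapsto u^{-1}$ to recast each local factor in the form of \eqref{cpi-equation-even-orthogoal}, one obtains
\[
\int^{\rm st}_{N'(F_v)} J\bigl(\tfrac{1}{2}, W^\vee_v, I(\tfrac{1}{2}, u_v) W_v\bigr) \psi^{-1}_{N'}(u_v)\, du_v = c_{\pi_v} W^\vee_v(e) A^\psi\bigl(-\tfrac{1}{2}, e, M(\tfrac{1}{2}, \pi_v) W_v\bigr).
\]

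Finally I would combine these with Proposition \ref{GG-Whittaker-EvenOrthogonal} applied to both $\varphi$ and $\varphi^\vee$, which expresses the Whittaker coefficients $\mathcal{W}^{\psi_{N'}}(\varphi)$ and $\mathcal{W}^{\psi^{-1}_{N'}}(\varphi^\vee)$ as $m^S_{-k,1}(\pi)/2^k$ times products of $A^\psi$ and $A^{\psi^{-1}}$ values. The remaining bookkeeping rests on the identity
\[
n^S_{-k,1} = m^S_{-k,1}(\pi) \cdot L^S(1, \pi, \wedge^2) \cdot L^S(1, \pi \times \omega_\pi \chi_\tau),
\]
which follows from $L(s, \pi \times \pi) = L(s, \pi, {\rm Sym}^2) L(s, \pi, \wedge^2)$ together with the holomorphy of $L^S(s, \pi, \wedge^2)$ and $L^S(s, \pi \times \omega_\pi \chi_\tau)$ at $s = 1$. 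Matching all powers of $2$, volume factors, and $L$-function ratios then yields \eqref{Main-Result-Even-Orthogonal}.

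The principal obstacle lies in the factorization of the stable integral across places at archimedean $v \in S$: while the non-archimedean case reduces to an integral over a sufficiently large compact open subgroup, the ad-hoc archimedean definition from \cite{LM15} requires a careful majorant argument along the lines of \cite{LM17}. A secondary delicacy is the precise accounting of local normalizations in the Whittaker factorization of descent vectors, which differs from the naive factorization by the global residue constant $m^S_{-k,1}(\pi)/2^k$ absorbed into the unramified places; this must be tracked consistently throughout to ensure that the final constant matches the stated $2^{1-k}$ in \eqref{Main-Result-Even-Orthogonal}.
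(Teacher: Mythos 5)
Your proposal is correct and follows the same route as the paper: reduce to descent vectors, combine Proposition \ref{GG-Whittaker-EvenOrthogonal}, the local identity \eqref{cpi-equation-even-orthogoal} (via goodness from Proposition \ref{even-orthogonal-good-repn}), and the Euler product \eqref{EulerProducts-Even-Orthogonal}, then simplify the ratio $m^S_{-k,1}/n^S_{-k,1}$ using $L^S(s,\pi\times\pi)=L^S(s,\pi,{\rm Sym}^2)L^S(s,\pi,\wedge^2)$. The only cosmetic differences are that the paper specializes $\varphi^\vee$ (rather than both forms) to a Gelfand--Graev residue and runs the chain of identities starting from $\mathcal{W}^{\psi_{N'}}(\varphi)\mathcal{W}^{\psi^{-1}_{N'}}(\varphi^\vee)$ toward the regularized period, whereas you run it in the opposite direction and swap the roles of the two arguments in the $J$-integral; these are bookkeeping rearrangements, not a genuinely different argument.
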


\begin{proof}
Since the descent of $\pi$ is the space $\mathcal{D}_{\psi_{N_{\mathbb{M}}}}(\pi)$ generated by ${\rm GG}(\mathcal{E}_{-k}\varphi^{\flat})$ for $\varphi^{\flat} \in \mathcal{A}(\pi)$,
we initially deal with this for the case $\varphi^{\vee}={\rm GG}(\mathcal{E}_{-k}\varphi^{\sharp})$ with $\varphi^{\sharp} \in \mathcal{A}(\pi)$, and this identity \eqref{Main-Result-Even-Orthogonal} then extends via  linearity to all $\varphi^{\vee}$. Appealing to Proposition \ref{GG-Whittaker-EvenOrthogonal}, we see that
\[
  \mathcal{W}^{\psi_{N'}}(\varphi) \mathcal{W}^{\psi^{-1}_{N'}}({\rm GG}(\mathcal{E}_{-k}\varphi^{\sharp}))
 = \mathcal{W}^{\psi_{N'}}(\varphi) 
 \frac{ m^S_{-k,1}(\pi)}{2^k}\prod_{v \in S} A^{\psi^{-1}} \left( -\frac{1}{2},e, M \left( \frac{1}{2}, \pi_v \right)W^{\sharp}_v \right).
\]
Relying on the fact that $\pi_v$ is good in Proposition \ref{even-orthogonal-good-repn}, we use Proposition \ref{unramified-constant-even-orthogonal} to 
insert the identity \eqref{cpi-equation-even-orthogoal} for all $v$ with $c_{\pi_v}=1$ for almost all $v$. In this way, we write
\begin{equation}
\label{Even-Orthogonal-Product-Form-one}
\mathcal{W}^{\psi_{N'}}(\varphi) \mathcal{W}^{\psi^{-1}_{N'}}({\rm GG}(\mathcal{E}_{-k}\varphi^{\sharp})) 
= 
 \frac{m^S_{-k,1}(\pi)}{2^k} \left( \prod_{v \in S} c^{-1}_{\pi_v}  \int^{\rm st}_{N'(F_v)} J\left(\frac{1}{2},\sigma_v(u_v)W_v,W^{\sharp}_v\right) \psi^{-1}_{N'(F_v)}(u_v) \, du_v \right).
\end{equation}
Integrating \eqref{EulerProducts-Even-Orthogonal} over $N'(F_v)$ for $v \in S$, a product of regularized integrals becomes
\begin{multline}
\label{Even-Orthogonal-Product-Form-two}
 \prod_{v \in S}  \int^{\rm st}_{N'(F_v)} J\left(\frac{1}{2},\sigma_v(u_v)W_v,W^{\sharp}_v\right) \psi^{-1}_{N'(F_v)}(u_v) \, du_v \\
 =\frac{2\prod_{j=1}^n \zeta^S_F(2j)L^S(n+1,\chi_{\tau})}{{\rm vol}(N'(\mathcal{O}_S)\backslash N'(F_S))\cdot n^S_{-k,1} } \int^{\rm st}_{N'(F_S)} \langle \sigma(u) \varphi, {\rm GG} (\mathcal{E}_{-k}\varphi^{\sharp} \rangle_{G'} \psi^{-1}_{N'}(u) \; du.
\end{multline}
The factorization $L^S(s,\pi \times \pi)=L^S(s,\pi \times \pi^{\vee})=L^S(s,\pi,{\rm Sym}^2)L^S(s,\pi,\wedge^2)$ permits us to simplify the ratio
\begin{equation}
\label{Even-Orthogonal-Ratio}
m^S_{-k,1}/n^S_{-k,1}=1/(L^S(1,\pi, \wedge^2)L^S(1,\pi\times \omega_{\pi}\chi_{\tau}))
\end{equation}
whose partial $L$-function occurring in the denominator is nothing but the adjoint $L$-function $L^S(s,\sigma, {\rm Ad})$ for $\sigma$.
The desired conclusion follows from \eqref{Even-Orthogonal-Product-Form-one}, \eqref{Even-Orthogonal-Product-Form-two}, and  \eqref{Even-Orthogonal-Ratio}.
\end{proof}

\section{The Case for Symplectic Groups}
\label{Sec:4}

\subsection{Groups, Embeddings, and Weyl Elements}

Let $G$ be the symplectic group defined by
\[
G={\rm Sp}_{4n}=\{ g \in {\rm GL}_{4n} \,|\, \prescript{t}{}{g} J_{4n} g=J_{4n} \}, \quad \text{where} \quad J_{4n}=\begin{pmatrix} & w_{2n} \\ -w_{2n} & \end{pmatrix},
\]
and $\prescript{t}{}{g}$ is the transpose of $g$. 
The group $G$ acts on a $4n$ dimensional space with the standard basis $e_1, \dotsm, e_{2n}, e_{-2n}, e_{-2n+1}, \dotsm, e_{-1}$.
We let 
\[
G'={\rm Sp}_{2n}=\{ g \in  {\rm GL}_{2n}  \,|\, \prescript{t}{}{g} J_{2n} g=J_{2n}  \}, \quad \text{where} \quad J_{2n}=\begin{pmatrix} & w_{n} \\ -w_{n} & \end{pmatrix}.
\]
The group $G'$ is embedded as a subgroup of $G$ via $g \mapsto \eta(g):={\rm diag}(1_n,g,1_n)$, and thus it is the subgroup of $G$ consisting of elements fixing 
$e_1,\dotsm, e_n$ and $e_{-n},\dotsm, e_{-1}$. 
Let $P'=M'\ltimes U'$ be the Seigel parabolic subgroup of $G'$ with the Levi part $M'=\varrho'(\mathbb{M}')$ and the Seigel unipotent subgroup $U'$, where $\varrho' : m \mapsto {\rm diag}(1_n,m,m^{\ast},1_n)$. We set 
\[
\mathfrak{s}_n=\{ x \in {\rm Mat}_{n,n} \, | \, \check{x}=x \}.
\]
Then 
\[
 U'=\{ {\rm diag}(1_n,  \begin{pmatrix} 1_n & u \\ & 1_n \end{pmatrix}, 1_n) \,|\, u \in \mathfrak{s}_n \}.
\]
Let $N'$ be the standard maximal unipotent subgroup of $G'$. Then $N'=N'_{M'} \ltimes U'$ with $N'_{M'}=\varrho'(N'_{\mathbb{M}'})$.
Let $V$ be the unipotent radical of the standard parabolic subgroup of ${\rm Sp}_{4n}$, whose Levi part is isomorphic to ${\rm GL}^n_1 \times {\rm Sp}_{2n}$:
\[
 V=\{ \begin{pmatrix} v & u & y \\ &1_{2n} & \check{u} \\ & & v^{\ast} \end{pmatrix} \in {\rm Sp}_{4n} \,|\, v \in Z_n, u \in {\rm Mat}_{n,2n} \}.
\]
Let $P=M \ltimes U$ be the Seigel parabolic subgroup of $G$ with $U$ the standard maximal unipotent subgroup and Levi part $M=\varrho(\mathbb{M})$,
where $\mathbb{M}={\rm GL}_{2n}$ and $\varrho : h \mapsto {\rm diag}(h,h^{\ast})$.
Let
\[
 w'_{U'}={\rm diag}(1_n, \begin{pmatrix} & 1_n \\ -1_n & \end{pmatrix}, 1_n) \in G'
\]
represent the longest $M'$-reduced Weyl element of $G'$. Let 
$  w_U=\begin{pmatrix} & 1_{2n} \\ -1_{2n} & \end{pmatrix} \in G $
represent the longest $M$-reduced Weyl element of $G$.
We write \cite[(4.14),(4.15)]{GRS11}
\[
  \gamma=w_U ({w'_{U'}})^{-1}=\begin{pmatrix} &1_n&\vspace{-.1cm} & \\ \vspace{-.1cm} &&& 1_n \\ -1_n &&\vspace{-.1cm}& \\ &&1_n& \end{pmatrix}.
\]
Let us denote
\[
 V_{\gamma}=V \cap \gamma^{-1} N \gamma=\{ \begin{pmatrix} v&\vspace{-.1cm} &u&\\ &1_n&\vspace{-.1cm} &\check{u}\\ &&1_n&\vspace{-.1cm}  \\ &&& v^{\ast}  \end{pmatrix} \,|\, v \in Z_n, u \in {\rm Mat}_{n,n} \}.
\]
Let $N_M$ (resp. $N_{\mathbb{M}}$) be the standard maximal unipotent subgroup of $M$ (resp. $\mathbb{M}$).
Let $K=K_{{\rm GL}_{4n}} \cap {\rm Sp}_{4n}$. Then $K$ is a maximal compact subgroup of ${\rm Sp}_{4n}$; similarly for $K'$.

\par
Let $\tilde{G}'=\widetilde{\rm Sp}_n$ be the metaplectic group which is the two-fold cover of $G'$. 
We write elements of $\tilde{G}'$ as pairs $(g,\varepsilon)$, $g \in G$, $\varepsilon=\pm 1$ where the multiplication rule is given by Rao's cocycle \cite[\S 5]{Rao93} (see also \cite[\S 1.3]{Szp13}). When $g \in G'$, we write $\tilde{g}=(g,1) \in \tilde{G}'$. (Of course, $g \mapsto \tilde{g}$ is not a group homomorphism). 
For each subgroup $H$ of $G$, let $\tilde{H}$ be the inverse image of $H$ under the canonical projection $\tilde{G} \rightarrow G$. We let $\tilde{M}$ the double cover of $M$ inside $G$ and
we will identify $U$ with a subgroup $\tilde{U}$ via $u \mapsto \tilde{u}$. 
For $g \in G'$, let $\tilde{\eta}(g)=(\eta(g),1) \in \widetilde{\rm Sp}_{4n}$. When $F$ is $p$-adic,
and $p \neq 2$, we view $K'$ as a subgroup of $\tilde{G}'$.

\subsection{Additive Characters} We fix a non-trivial additive character $\psi$ of $F$ and a non-degenerate character $\psi_{N_{\mathbb{M}}}$ of $N_{\mathbb{M}}$.
Let $\psi_{\circ}$ be the non-trivial character of $F$ given by 
\[
\psi_{\circ}(x)=\psi_{N_{\mathbb{M}}}(1_{2n}+x\epsilon_{n,n+1}).
\]
As illustrated in \cite[Remark 6.4]{LM17} (cf. \cite[\S 4.1, \S 6.1]{LM16}), the results stated below are independent of the choice of $\psi_{N_{\mathbb{M}}}$.
For convenience, we set
\begin{equation}
\label{usual-symplectic-additive}
 \psi_{N_{\mathbb{M}}}(u)=\psi(u_{1,2}+\dotsm+u_{2n-1,2n}).
\end{equation}
Thus $\psi_{\circ}=\psi$. We define the characters of various unipotent groups. 
 We denote by $\psi_N$ the degenerate character on $N$ given by $\psi_N(uv)=\psi_{N_M}(u)$
 for any $u \in N_M$ and $v \in U$.
 Let $\psi_{N_M}$ be the non-degenerate character of $N_M$ given by $\psi_{N_M}(\varrho(u))=\psi_{N_{\mathbb{M}}}(u)$.
 Let $\psi_{N'_{M'}}$ be the non-degenerate character of $N'_{M'}$ such that $\psi_{N'_{M'}}(\varrho(u))=\psi_{N'_{\mathbb{M}'}}(u)$. With the choice of $\psi_{N_{\mathbb{M}}}$,
  we have
  \[
   \psi_{N'_{\mathbb{M}'}}(u')=\psi(u'_{1,2}+\dotsm + u'_{n-1,n}).
  \]
Let $\psi_{U'}$  be the character on $U'$ given by $\psi_{U'}(u)=\psi(\frac{1}{2}u_{2n,2n+1})^{-1}$. 
Then the non-degenerate character $\psi_{N'}$  \cite[\S 2.2]{LM17} is given by
\[
 \psi_{N'}(uv)=\psi_{N'_{M'}}(u) \psi_{U'}(v)=\psi(u_{n+1,n+2}+\dotsm+u_{2n-1,2n}-\frac{1}{2} v_{2n,2n+1})
\]
 with $u \in N'_{M'}$ and $v \in U'$. (Cf. we have specified $\lambda=-1/2$ in their notation \cite[(9.9)]{GRS11}).
 An element in $V$ can be written as $vu$ where $u$ fixes $e_1,\dotsm,e_n$, $v$ fixes $e_{n+1},e_{n+2},\dotsm,e_{-n-1}$
 and we set \cite[(3.60)]{GRS11}
 \[
  \psi_V(vu)=\psi^{-1}(v_{1,2}+\dotsm+v_{n-1,n}).
 \]

\subsection{Weil Representation}
Let $W$ be a $2n$-dimensional space over $F$, equipped with a non-degenerate, symplectic form $\langle \cdot, \cdot \rangle$.
We let $\mathcal{H}_W=W \oplus F$ be the Heisenberg group attached to $W$ with the multiplication
\[
 (x,t)\cdot(y,z)=\left( x+y,t+z+\frac{1}{2}\langle x,y \rangle \right).
\]
We write
\[
  W=W_+ \oplus W_-
 \]
as a direct sum of two maximal isotropic subspaces of $W$, which are dual under $\langle \cdot, \cdot \rangle$.
Then the group ${\rm Sp}(W)$ acts on the right on $W$. We write a prototype element of ${\rm Sp}(W)$ as
$\begin{pmatrix} A & B \\ C & D \end{pmatrix}$ where $A \in {\rm Hom}(W_+,W_+)$, $B \in {\rm Hom}(W_+,W_-)$,
$C \in {\rm Hom}(W_-,W_+)$, and $D \in {\rm Hom(W_-,W_-)}$.
Let $\gamma_{\psi}$ be Weil's factor given by $\gamma_{\psi}(a)=\gamma(\psi_a)/\gamma(\psi)$, where
$\gamma(\psi)$ is Weil's index, which is an eighth root of unity, and $\psi_a=\psi(a \cdot)$. We denote by $\gamma_{\psi}$ in \cite[(1.4)]{GRS11} the Weil index denoted by $\gamma_{\psi_{1/2}}$ in our notation.
Let $\widetilde{\rm Sp}(W)$ be the metaplectic two-fold cover of ${\rm Sp}(W)$ with respect to the Rao cocycle determined by the splitting \cite[\S 5]{Rao93} (see also \cite[\S 1.3]{Szp13}). We consider the Weil representation $\omega_{\psi}$ of the group $\mathcal{H}_W \rtimes \widetilde{\rm Sp}(W)$ on the Schwartz--Bruhat space $\mathcal{S}(W_+)$.
Explicitly for any $\Phi \in \mathcal{S}(W_+)$ and $X \in W_+$, the action of $\mathcal{H}_W$ is given by
\begin{align}
\omega_{\psi}(a,0)\Phi(X)&=\Phi(X+a), \quad a \in W_+, \label{Weil-Rep-Action-one} \\
\omega_{\psi}(b,0)\Phi(X)&=\psi(\langle X,b \rangle)\Phi(X), \quad b \in W_-, \label{Weil-Rep-Action-two} \\
\omega_{\psi}(0,t)\Phi(X)&=\psi(t)\Phi(X), \quad t \in F, \label{Weil-Rep-Action-three}
\end{align}
while the action of $\widetilde{\rm Sp}(W)$  is given by
\[
\begin{split}
 \omega_{\psi} \left( \begin{pmatrix} g & \\ & g^{\ast} \end{pmatrix}, \varepsilon \right)\Phi(X)&=\varepsilon\gamma_{\psi}(\det g) |\det g|^{\frac{1}{2}}\Phi(Xg), \quad g \in {\rm GL}(W_+), \\
 \omega_{\psi} \left( \begin{pmatrix} 1_n & B \\ & 1_n \end{pmatrix}, \varepsilon \right) \Phi(X) &=\varepsilon\psi\left( \frac{1}{2} \langle X, XB \rangle \right)\Phi(X),
 \quad B \in {\rm Hom}(W_+,W_-) \text{\;\;self-dual}.
\end{split}
\]

\quad
We now take $W=F^{2n}$ with the standard symplectic form
\[
 \langle (x_1,\dotsm,x_{2n}), (y_1,\dotsm,y_{2n}) \rangle=\sum_{i=1}^n x_i y_{2n+1-i}-\sum_{i=1}^n y_i x_{2n+1-i}
\]
and the standard polarization $W_+=\{ x_1,\dotsm,x_n,0,\dotsm,0 \}$, $W_-=\{(0,\dotsm,0,y_1,\dotsm,y_n) \}$, with the standard basis $e_1,\dotsm,e_n,e_{-n},\dotsm,e_{-1}$.
For $X=(x_1,\dotsm,x_n)$ and $Y=(y_1,\dotsm,y_n) \in F^n$, we define
\[
 \langle X,Y \rangle'=\langle X, Y \begin{pmatrix} & 1_n \\ -1_n & \end{pmatrix} \rangle
 =x_1y_n+\dotsm+x_ny_1.
\]
For $\Phi$ in the Schwartz--Bruhat space $\mathcal{S}(F^n)$, we define the Fourier transform
\[
 \hat{\Phi}(X)=\int_{F^n} \Phi(Y) \psi \left( \langle X, Y \rangle' \right) \, dY,
\]
where we take the self-dual Haar measure on $F$ with respect to $\psi$
and the product measure on $F^n$. Then, reapplied on $\mathcal{S}(F^n)$, the Weil representation satisfies
\[
\begin{split}
 \omega_{\psi}(\widetilde{\rho'(g)})\Phi(X)&=\beta_{\psi}(\rho'(g))|\det g|^{\frac{1}{2}}\Phi(Xg), \quad g \in \mathbb{M}',\\
 \omega_{\psi}(\widetilde{w'_{U'}})\Phi(X)&=\beta_{\psi}(w'_{U'})\hat{\Phi}(X),\\
 \omega_{\psi} \left( \widetilde{\begin{pmatrix} 1_n & B \\ & 1_n \end{pmatrix}} \right) \Phi(X)&=\psi\left( \frac{1}{2} \langle X,XB \rangle' \right) \Phi(X), \quad B \in \mathfrak{s}_n,
\end{split}
\]
where $\beta_{\psi}$ is a certain eight root of unity, which satisfies $\beta_{\psi}(\cdot)^{-1}=\beta_{\psi^{-1}}(\cdot)$.
\par

Let $V_0 \subset V$ be the unipotent radical of the standard parabolic subgroup of $G$ with Levi ${\rm GL}^{n-1}_1 \times {\rm Sp}_{2n+2}$.
Then the map 
\[
  v \mapsto v_{\mathcal{H}_{F^{2n}}}:=\left((v_{n,n+j})_{j=1,2,\dotsm,2n}, \frac{1}{2} v_{n,3n+1} \right)
\]
gives an isomorphism from $V / V_0$ to a Heisenberg group $\mathcal{H}_{F^{2n}}$.
Then we may regard $\omega_{\psi}$ as a representation of $V / V_0  \rtimes \widetilde{\rm Sp}(F^{2n})$.
Starting with $\psi_{N_{\mathbb{M}}}$, and with $\psi_{\circ}$ and $\psi_V$ as above, we extend 
$\omega_{\psi_0}$ to a representation $\omega_{\psi_{N_{\mathbb{M}}}}$ of $V \rtimes \tilde{G}'$ by setting
\begin{equation}
\label{Weil-Rep-Action-Extensions}
 \omega_{\psi_{N_{\mathbb{M}}}}(v\tilde{g})\Phi=\psi_V(v) \omega_{\psi_{\circ}}(v_{\mathcal{H}_{F^{2n}}})(\omega_{\psi_{\circ}}(\tilde{g})\Phi),
 \quad v \in V, g \in G'.
\end{equation}

\par
Our $\omega_{\psi_{\circ}}$ corresponds to the definition given in \cite[(1.4)]{GRS11} with the conductor $\psi_{\circ}$
there replaced by $\psi_{\circ}(\frac{1}{2}x)$. We find it more convenient to use this convention and our convention follows from those of Kaplan \cite[\S 3.3.1, Remark 4.4]{Kap15} and of Lapid and Mao \cite{LM16,LM17}.

\subsection{Whittaker Models and the Intertwining Operator}

Let $\pi$ be an irreducible generic representation of $\mathbb{M}$ and let $\Pi$ denote the genuine representation $\pi \times \gamma_{\psi}$
of the double cover $\tilde{M}$ of $M$ defined by 
$ (\pi \times \gamma_{\psi})(\varrho(g),\varepsilon)=\varepsilon \gamma_{\psi}(\det g) \pi(g)$ for $(g,\varepsilon) \in \mathbb{M} \times \{ \pm 1 \}$.
We denote by ${\rm Irr}_{{\rm gen},\psi_{N_M}} \, \tilde{M}$ the class of irreducible genuine generic representations of $\tilde{M}$ with respect to $\psi_{N_M}$.
The analogous notation applies to ${\rm Irr}_{{\rm gen},\psi_{N_{\mathbb{M}}}} \, \tilde{\mathbb{M}}$.
We use $\mathbb{W}^{\psi_{N_{\mathbb{M}}}}(\pi)$ to denote the (uniquely determined) Whittaker space of $\pi$
with respect to the character $\psi_{N_{\mathbb{M}}}$. Similarly we use the notation $\mathbb{W}^{\psi^{-1}_{N_{\mathbb{M}}}}(\pi)$,
$\mathbb{W}^{\psi_{N_M}}(\pi)$, $\mathbb{W}^{\psi^{-1}_{N_M}}(\pi)$, $\mathbb{W}^{\psi_{N_{M}}}(\Pi)$, $\mathbb{W}^{\psi^{-1}_{N_{M}}}(\Pi)$,
$\mathbb{W}^{\psi_{N_{\mathbb{M}}}}(\Pi)$, $\mathbb{W}^{\psi^{-1}_{N_{\mathbb{M}}}}(\Pi)$.

\par
Using the Iwasawa decomposition, we extend the character $\varrho(g) \mapsto | \det g|$, $g \in \mathbb{M}$ to a right $K$ left $U$-invariant $\nu(g)$ 
function on $G$. For any genuine function $f \in C^{\infty}(\tilde{G})$ and $s \in \mathbb{C}$ define $f_s(\tilde{g})=f(\tilde{g})\nu(g)^s$, $g \in G$. 
Let ${\rm Ind}(\mathbb{W}^{\psi_{N_M}}(\Pi))$ be the space of $\tilde{G}$-smooth, genuine, left $\tilde{U}$-invariant function $W : \tilde{G} \rightarrow \mathbb{C}$
such that for all $g \in G$, the function $\tilde{m} \mapsto \delta^{\frac{1}{2}}_P(m)W(\tilde{m}\tilde{g})$ on $\tilde{M}$
belongs to $\mathbb{W}^{\psi_{N_M}}(\Pi)$. Similarly we define ${\rm Ind}(\mathbb{W}^{\psi^{-1}_{N_M}}(\Pi))$, ${\rm Ind}(\mathbb{W}^{\psi_{N_M}}(\pi))$,
${\rm Ind}(\mathbb{W}^{\psi^{-1}_{N_M}}(\pi))$, ${\rm Ind}(\mathbb{W}^{\psi_{N_{\mathbb{M}}}}(\pi))$, ${\rm Ind}(\mathbb{W}^{\psi^{-1}_{N_{\mathbb{M}}}}(\pi))$.
For any $s \in \mathbb{C}$, we define the intertwining operator \cite[\S 3.3]{Kap15}
\[
 M(s,\Pi) : {\rm Ind}(s,\mathbb{W}^{\psi_{N_M}}(\Pi)) \rightarrow {\rm Ind}(-s,\mathbb{W}^{\psi_{N_M}}(\Pi^{\vee}))
\]
by (the analytic continuation of)
\[
 M(s,\Pi ) \tilde{W}(\tilde{g})=\nu(g)^s \int_U \tilde{W}_s(\tilde{\varrho}(\mathfrak{t})\tilde{w}_U \tilde{u} \tilde{g}) \, du
\]
where $\mathfrak{t} \in {\rm GL}_{2n}$ is introduced in order to preserve the character $\psi_{N_M}$ and $\widetilde{\varrho}(m)=(\varrho(m),1) \in \tilde{G}$.

\par
In the case when $F$ is $p$-adic with $p$ odd, \cite[Lemma 4.2]{Szp13}, and there exist (necessarily unique) $\tilde{K}$-fixed elements
$\tilde{W}^{\circ} \in {\rm Ind}(\mathbb{W}^{\psi_{N_M}}(\Pi))$ and $\tilde{W}^{{\vee}^{\circ}} \in {\rm Ind}(\mathbb{W}^{\psi_{N_M}}(\Pi^{\vee}))$ such that
\begin{equation}
\label{unramified-symplectic-intertwining}
 M(s,\Pi) \tilde{W}^{\circ}={\rm vol}(U \cap K) \frac{L(2s,\pi,{\rm Sym}^2)}{L(2s+1,\pi,{\rm Sym}^2)}\tilde{W}^{{\vee}^{\circ}}.
\end{equation}
The following result is an analogue of \cite[Proposition 2.1]{LM16} and \cite[Proposition 4.1]{LM17}.

\begin{proposition}
\label{holomorphy-symplectic}
Let $\Pi$ be a genuine representation of $\tilde{\mathbb{M}}$ such that $\Pi \cong \Pi^{\vee}$. Then $M(s,\Pi)$ is holomorphic at $s=\frac{1}{2}$.
\end{proposition}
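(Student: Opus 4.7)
The plan is to mirror the proof of Proposition \ref{holomorphy-even-orthgonal}, exploiting the fact that the analytic behavior of the Siegel intertwining operator on $\widetilde{\rm Sp}_{4n}$ is governed by the symmetric square $L$-function of the underlying ${\rm GL}_{2n}$ representation, exactly as in the linear case. Write $\Pi = \pi \times \gamma_{\psi}$ with $\pi \in {\rm Irr}_{\rm gen}\,\mathbb{M}$, and observe that $\Pi \cong \Pi^{\vee}$ is equivalent to $\pi \cong \pi^{\vee}$. Decompose $\pi = \sigma_1 \boxplus \dotsm \boxplus \sigma_k$ into essentially square-integrable constituents and set $\Sigma_i = \sigma_i \times \gamma_{\psi}$ so that $\Pi = \Sigma_1 \boxplus \dotsm \boxplus \Sigma_k$ in the metaplectic sense.

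I would then factor $M(s,\Pi)$ into a product of co-rank one intertwining operators along a minimal gallery, and reduce the holomorphy question at $s = \tfrac12$ to the holomorphy of each factor. These factors come in two families, in direct parallel to the even orthogonal case:
\begin{enumerate}[label=$(\mathrm{\roman*})$]
\item the mixed operators $\Sigma_i|\cdot|^s \boxplus \Sigma_j^{\vee}|\cdot|^{-s} \rightarrow \Sigma_j^{\vee}|\cdot|^{-s}\boxplus \Sigma_i|\cdot|^s$ for $i\neq j$;
\item the self operators $M(s,\Sigma_i)$ for $i = 1,\dotsm, k$.
\end{enumerate}
For family (i), the genuine twist by $\gamma_{\psi}$ is inert for the analytic content, so the holomorphy follows from \cite[Lemma 4.2]{LM17} combined with the irreducibility of $\sigma_i \boxplus \sigma_j^{\vee}$, guaranteed because $\sigma_i \not\cong \sigma_j$ are pairwise distinct cuspidal constituents of a self-dual isobaric sum. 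For family (ii), one uses Kaplan's identification \cite[\S 3.3]{Kap15} (compatible with \eqref{unramified-symplectic-intertwining}) which shows that $M(s,\Sigma_i)$ is controlled by $L(2s,\sigma_i,{\rm Sym}^2)/L(2s+1,\sigma_i,{\rm Sym}^2)$; over archimedean fields the holomorphy at $s = \tfrac12$ follows from \cite[Lemma 4.3]{LM17} together with the irreducibility of $\sigma_i \boxplus \sigma_i^{\vee}$, while over $p$-adic fields the stronger conclusion that $L(2s,\sigma_i,{\rm Sym}^2)^{-1}M(s,\Sigma_i)$ is entire follows from the main theorem of \cite[\S 2.4]{Luo24}.

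The main obstacle is to confirm that the cocycle-theoretic subtleties of the metaplectic cover do not alter the analytic reduction. Concretely, one must check that the Rao cocycle--induced discrepancy between $M(s,\Pi)$ on $\tilde{G}$ and the corresponding intertwining operator on $G$ contributes only by Weil-type factors $\beta_{\psi}$ (eighth roots of unity) and a factor of $\gamma_{\psi}$, none of which alter the analytic behavior in $s$. Once this is verified, the argument transfers verbatim from the linear setting, and the holomorphy of $M(s,\Pi)$ at $s = \tfrac12$ follows.
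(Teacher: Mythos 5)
Your plan follows the same overall route as the paper's proof: reduce to co-rank-one intertwining operators, then invoke \cite[Lemma 4.2]{LM17} for the mixed factors (i) and \cite[Lemma 4.3]{LM17} for the self factors (ii). The one genuine gap is precisely the step you flag as ``the main obstacle'' and then defer: you say that one must check that the Rao-cocycle discrepancy between $M(s,\Pi)$ on $\tilde{G}$ and the linear intertwining operator only contributes Weil factors, and you close with ``once this is verified.'' But that verification is the whole content of the reduction, and it is not enough to wave at $\beta_{\psi}$ and $\gamma_{\psi}$ being roots of unity: one needs to know that the metaplectic Siegel intertwining operator literally coincides (on the appropriate model) with the restriction of the linear one, so that the factorization into co-rank-one pieces is that of $M(s,\pi)$ on ${\rm GL}_{2n}$. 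The paper settles this at once by citing the proof of \cite[Lemma 3.3]{Szp13}, which shows that $M(s,\Pi)$ is the restriction of $M(s,\pi)$ with $\pi = \sigma_1 \boxplus \dotsm \boxplus \sigma_k$; this removes all cocycle considerations from the rest of the argument. Your appeal to Kaplan's unramified formula \eqref{unramified-symplectic-intertwining} shows the two operators have the same normalizing factor at unramified places, but that is weaker than the identification needed here.

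A second, minor point: for family (ii) you bring in \cite[\S 2.4 Main Theorem 1]{Luo24} for the $p$-adic case, which is what the paper does in the even orthogonal Proposition \ref{holomorphy-even-orthgonal} but not in the symplectic one. The paper's symplectic proof uses only \cite[Lemma 4.3]{LM17} together with irreducibility of $\sigma_i \boxplus \sigma_i^{\vee}$ for all local fields; since \cite{LM17} is already set in the metaplectic ${\rm Sp}_{2n}$ context, that lemma applies directly here, and the extra appeal to Luo's theorem, while not wrong, is unnecessary.
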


\begin{proof}
We let $\Pi$ be the irreducible generic quotient of a representation parabolically induced from $(\sigma_1 \times \gamma_{\psi})\otimes \dotsm \otimes (\sigma_k \times \gamma_{\psi})$ where $\sigma_1,\dotsm,\sigma_k$ are essentially square-integrable. It can be seen in the course of the proof of \cite[Lemma 3.3]{Szp13} that $M(s,\Pi)$ 
is the restriction of $M(s,\pi)$ where $\pi=\sigma_1 \boxplus \dotsm \boxplus \sigma_k$. We decompose $M(s,\pi)$ as the composition of co-rank one intertwining operators.
Just as in Proposition \ref{holomorphy-even-orthgonal}, the problem boils down to the holomorphy of the following intertwining operators 
\begin{enumerate}[label=$(\mathrm{\roman*})$]
\item  \label{symplectic-intertwining-differ} $\sigma_i|\cdot|^s \boxplus \sigma_j^{\vee}|\cdot|^{-s} \rightarrow \sigma_j^{\vee}|\cdot|^{-s} \boxplus \sigma_i|\cdot|^s,\quad, i,j=1,\dotsm,k, i \neq j$,
\item \label{symplectic-intertwining-same} $M(s,\sigma_i), \quad i=1,2,\dotsm,k$,
\end{enumerate}
at $s=1/2$. Then we can derive from \cite[Lemma 4.2]{LM17} and the irreducibility of $\sigma_i \boxplus \sigma_j^{\vee}$ that the intertwining operator \ref{symplectic-intertwining-differ} is holomorphic at $s=1/2$. The holomorphy of  \ref{symplectic-intertwining-same} is an immediate consequence of \cite[Lemma 4.3]{LM17}
and the irreducibility of $\sigma_i \boxplus \sigma_i^{\vee}$.
\end{proof}

\subsection{Descent map}

\subsubsection{Global Descent}

Suppose that $F$ is a number field. Let $\tilde{G}'(\mathbb{A}):=\widetilde{\rm Sp}_{2n}(\mathbb{A})$ be the metaplectic two-fold cover of $G'(\mathbb{A})$
which splits over $G'(F)$. The global double cover $\tilde{G}'(\mathbb{A})$ of $G'(\mathbb{A})$ is compatible with the local 
double covers at each place. (See, for example, \cite[\S 2.2]{JS07} for conventions.)
Let $P_{\circ} \subseteq \mathbb{M}$ be the standard parabolic subgroup in $\mathbb{M}$
whose Levi subgroup is isomorphic to $\prod_{i=1}^k {\rm GL}_{n_i}$ with $n_1+\dotsm+n_k=2n$ and $n_i > 1$ for all $i$.
For $\pi \in {\rm OScusp}_k \, \mathbb{M}$, we associate it to the genuine representation $\Pi={\rm Ind}^{\tilde{\mathbb{M}}}_{\tilde{P}_{\circ}}\, \widetilde{\pi}$,
where $\tilde{\pi}$ acts as (cf. \cite[(2.11)]{GRS11})
\begin{equation}
\label{block-compatible}
 \tilde{\pi}((m,\varepsilon))=\prod_{1 \leq i < j \leq k} (\det m_i,\det m_j) \prod_{i=1}^k \gamma_{\psi}(\det m_i) \otimes_{i=1}^k \pi_i(m_i),
\end{equation}
for $m={\rm diag}(m_1,\dotsm,m_k) \in \mathbb{M}$. We let ${\rm Scusp}_k\, \tilde{\mathbb{M}}$ be the set of representations of $\Pi$ constructed as above and let ${\rm Scusp}\,  \tilde{\mathbb{M}}=\cup_k {\rm Scusp}_k \, \tilde{\mathbb{M}}$. Any genuine representation $\Pi$ in  ${\rm Scusp}\,  \tilde{\mathbb{M}}$ is called {\it a representation of symplectic type}.

\par
For $\Sigma \in {\rm Scusp}\,  \tilde{\mathbb{M}}$, we view it as a representation $\Pi$ of $\tilde{M}$ via $\Sigma((\varrho(m),\varepsilon)):=\Pi((m,\varepsilon))$. Let $\mathcal{A}(\Pi)$ be
the space of functions $\tilde{\varphi} : \tilde{M}(F)\tilde{U}(\mathbb{A}) \backslash \tilde{G}(\mathbb{A}) \rightarrow \mathbb{C}$
such that $\tilde{m} \mapsto \delta_P^{\frac{1}{2}}(m)\varphi(\tilde{m}\tilde{g})$, $m \in M(\mathbb{A})$ belongs to 
the space of $\Pi$ for all $\tilde{g} \in \tilde{G}(\mathbb{A})$. 
For $\Phi \in \mathcal{S}(\mathbb{A}^n)$ we define the theta function
\[
 \Theta^{\Phi}_{\psi_{N_{\mathbb{M}}}}(v\tilde{g})=\sum_{\xi \in F^n} \omega_{\psi_{N_{\mathbb{M}}}}(v \tilde{g})\Phi(\xi), \quad v \in V(\mathbb{A}), g \in G'(\mathbb{A}).
\]
let ${\rm FJ}_{\psi_{N_{\mathbb{M}}}}$ be the {\it Fourier--Jacobi coefficient} (a function on $G'(F) \backslash G'(\mathbb{A})$) 
\[
 {\rm FJ}_{\psi_{N_{\mathbb{M}}}}(\tilde{\varphi},\Phi)(g)=\int_{V(F) \backslash V(\mathbb{A})} \tilde{\varphi}(\tilde{v}\tilde{g}) \Theta^{\Phi}_{\psi^{-1}_{N_{\mathbb{M}}}}(v\tilde{g}) \, dv, \quad g \in G'(\mathbb{A}).
\]
Let $\tilde{\varphi}_s(g)=\nu(g)^s\tilde{\varphi}(\tilde{g})$.
We define Eisenstein series
\[
 \mathcal{E}(s,\tilde{\varphi})=\sum_{\gamma \in P(F) \backslash G(F)} \varphi_s(\tilde{\gamma}\tilde{g}), \quad \tilde{\varphi} \in \mathcal{A}(\Pi).
\]
The space of $\Pi$ is invariant under the conjugation by $\tilde{w}_U$. Then we define intertwining operator $M(s,\Pi) : \mathcal{A}(\Pi) \rightarrow \mathcal{A}(\Pi)$
given by
\[
 M(s,\Pi) \tilde{\varphi}(\tilde{g})=\nu(g)^s \int_{U(\mathbb{A})}\tilde{\varphi}_s(\tilde{w}_U\tilde{u}\tilde{g}) \, du.
\]
Both $\mathcal{E}(s,\tilde{\varphi})$ and $M(s,\Pi)$ converge absolutely for ${\rm Re}(s) \gg 0$ and extend meromorphically to $\mathbb{C}$.
It follows from \cite[Theorem 2.1]{GRS11} that both $\mathcal{E}(s,\tilde{\varphi})$ and $M(s,\Pi)$ have a pole of order $k$ at $s=\frac{1}{2}$.
Let
\[
 \mathcal{E}_{-k} \tilde{\varphi}=\lim_{s \rightarrow \frac{1}{2}} \left( s-\frac{1}{2} \right)^k \mathcal{E}(s,\tilde{\varphi})
\]
and
\[
 M_{-k}=\lim_{s \rightarrow \frac{1}{2}} \left(s-\frac{1}{2} \right)^k M(s,\Pi)
 \]
 be the leading coefficient in the Laurent series expansion at $s=\frac{1}{2}$. The constant term
 \[
 \mathcal{E}^U_{-k} \tilde{\varphi}(\tilde{g})=\int_{U(F) \backslash U(\mathbb{A})} \mathcal{E}_{-k} \tilde{\varphi}(\tilde{u}\tilde{g}) \,du
 \]
 of $\mathcal{E}_{-k} \tilde{\varphi}$ along $U$ is given by
 \[
  \mathcal{E}^U_{-k} \tilde{\varphi}(\tilde{g})=(M_{-k}(\tilde{\varphi})(\tilde{g}))_{-\frac{1}{2}}.
 \]
The {\it descent} of $\Pi$ (with respect to $\psi_{N_{\mathbb{M}}}$) is the space $\mathcal{D}_{\psi}(\Pi)$ generated by ${\rm FJ}_{\psi_{N_{\mathbb{M}}}}(\mathcal{E}_{-k}\tilde{\varphi},\Phi)$ with $\tilde{\varphi} \in \mathcal{A}(\Pi)$ and $\Phi \in \mathcal{S}(\mathbb{A}^n)$.
It is known from \cite[Theorem 3.1]{GRS11} that $\mathcal{D}_{\psi}(\Pi)$ is multiplicity free. 
By \cite[Theorem 9.7]{GRS11}, $\mathcal{D}_{\psi}(\Pi)$ is a non-trivial cuspidal (and globally generic) automorphic representation of $G'$.
We expect it to be irreducible. This result will be the analogue of  \cite[Theorem 2.3]{GJS12}  in metaplectic group case and \cite[Theorem 1.1]{Mor18} in even unitary group case. It is likely that the method of  \cite[Theorem 2.3]{GJS12} and \cite[Theorem 1.1]{Mor18}
is extended to the case at hand.

\begin{working hypothesis}
\label{symplectic-working-hypothesis}
When $\Pi \in {\rm Scusp}\, \tilde{\mathbb{M}}$, $\mathcal{D}_{\psi}(\Pi)$ is irreducible.
\end{working hypothesis}

The image of the weak lift is then a consequence of \cite[Theorem 11.2]{GRS11}.
In particular, \cite[Theorem 9.7 (2)]{GRS11} describe the image of the lift for unramified places.

\begin{proposition} With the above Working Hypothesis \ref{symplectic-working-hypothesis}, the map $\Pi={\rm Ind}^{\tilde{\mathbb{M}}}_{\tilde{P}_{\circ}} \tilde{\pi} \mapsto \sigma=\mathcal{D}_{\psi}(\Pi)$ 
defines a bijection between ${\rm Scusp}\, \tilde{\mathbb{M}}$ and ${\rm Cusp}_{\psi_{N'}}\, G'$, the set of $\psi_{N'}$-generic 
irreducible cuspidal automorphic representations of $G'$. Moreover $\sigma^{\vee}$ weekly lifts to $\pi \boxplus \omega_{\pi}$.
\end{proposition}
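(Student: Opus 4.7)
The plan is to argue in parallel with the corresponding statement in the even orthogonal case. First, I would show that the map is well-defined. Under Working Hypothesis \ref{symplectic-working-hypothesis}, $\mathcal{D}_{\psi}(\Pi)$ is an irreducible cuspidal automorphic representation of $G'(\mathbb{A})$. By \cite[Theorem 9.7]{GRS11}, it is non-trivial and globally generic, and the way the Fourier--Jacobi coefficient is set up via $\psi_{N_{\mathbb{M}}}$ ensures that the Whittaker coefficient along $N'$ is supported by the character $\psi_{N'}$. Thus $\sigma = \mathcal{D}_{\psi}(\Pi) \in {\rm Cusp}_{\psi_{N'}}\, G'$.

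Next, for the weak lift statement I would combine \cite[Theorem 9.7 (2)]{GRS11} at unramified places with \cite[Theorem 11.2]{GRS11} globally. At each finite place $v$ where $\Pi_v$ and $\psi_v$ are unramified, the Satake parameters of $\sigma^{\vee}_v$ are explicitly those of $\pi_v \boxplus (\omega_{\pi})_v$ by the unramified computation in \cite[Theorem 9.7 (2)]{GRS11}, using that the twisting factor in \eqref{block-compatible} adjusts the central character by $\omega_{\pi}$. Then the global weak-lift identification follows from \cite[Theorem 11.2]{GRS11}, which identifies the image of the descent construction from ${\rm Scusp}\,\tilde{\mathbb{M}}$ with the expected isobaric sum.

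Injectivity of $\Pi \mapsto \mathcal{D}_{\psi}(\Pi)$ is then automatic: if $\mathcal{D}_{\psi}(\Pi_1) \cong \mathcal{D}_{\psi}(\Pi_2)$, then taking contragredients and applying the weak lift identification, $\pi_1 \boxplus \omega_{\pi_1} \cong \pi_2 \boxplus \omega_{\pi_2}$ as isobaric sums on ${\rm GL}_{2n+1}(\mathbb{A})$. By strong multiplicity one for isobaric representations \`a la Jacquet--Shalika, the cuspidal constituents on both sides match, and recovering $\pi_i$ from its isobaric data together with the recipe \eqref{block-compatible} yields $\Pi_1 = \Pi_2$ in ${\rm Scusp}\,\tilde{\mathbb{M}}$.

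The hardest part is surjectivity: given an arbitrary $\sigma \in {\rm Cusp}_{\psi_{N'}}\, G'$, we must exhibit $\Pi \in {\rm Scusp}\,\tilde{\mathbb{M}}$ with $\mathcal{D}_{\psi}(\Pi) \cong \sigma$. The plan here is to invoke the full force of the Ginzburg--Rallis--Soudry theory: use the backward lift from \cite[Theorem 11.2]{GRS11} to produce a cuspidal isobaric representation $\pi \boxplus \omega_{\pi}$ of $\mathbb{M}(\mathbb{A})$ whose descent tower contains $\sigma^{\vee}$, then verify that its cuspidal constituents are pairwise distinct, satisfy $n_i > 1$ with $\sum n_i = 2n$, and are of appropriate type so that $\pi$ lies in ${\rm OScusp}_k\,\mathbb{M}$. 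The corresponding genuine $\Pi = {\rm Ind}^{\tilde{\mathbb{M}}}_{\tilde{P}_{\circ}}\tilde{\pi}$ constructed via \eqref{block-compatible} then satisfies $\mathcal{D}_{\psi}(\Pi) \cong \sigma$ by the injectivity already established together with matching of local unramified parameters at almost all places. The delicate point, as in \cite[Theorem 9.7]{GRS11}, is to control the cuspidal support of $\pi$ via the non-vanishing of the partial $L^S(s,\sigma\times\pi_i)$ at $s=1$ and the non-existence of poles for $\pi_i \times \pi_j$ with $i \neq j$, which is precisely what \cite[Theorem 11.2]{GRS11} provides.
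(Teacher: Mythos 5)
Your proposal matches the paper's approach: the paper gives no explicit proof for this proposition, instead stating it (and the preceding remark) as a direct consequence of \cite[Theorem 11.2]{GRS11} for the global weak lift and \cite[Theorem 9.7 (2)]{GRS11} for the local unramified Satake parameters, combined with the irreducibility supplied by Working Hypothesis \ref{symplectic-working-hypothesis}. Your sketch correctly fleshes out well-definedness, the weak-lift identification of $\sigma^{\vee}$ with $\pi\boxplus\omega_{\pi}$, and the resulting bijectivity via strong multiplicity one together with the GRS backward lift, which is exactly the chain of reasoning the paper implicitly relies on.
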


\subsubsection{Local descent}

We go back to the local setup. Suppose now that $F$ is local fields. Let $\Pi \in {\rm Irr}_{{\rm gen}}\, \tilde{M}$.
For any $\tilde{W} \in {\rm Ind}(\mathbb{W}^{\psi_{N_M}}(\Pi))$, $\Phi \in \mathcal{S}(F^n)$, and $s \in \mathbb{C}$, we define a function on $G'$:
\[
 A^{\psi}(s,g,\tilde{W},\Phi)=\int_{V_{\gamma} \backslash V} \tilde{W}_s(\tilde{\gamma}\tilde{v}\tilde{\eta}(g)) \omega_{\psi_{N_{\mathbb{M}}}}(v \tilde{g}) \Phi(\xi_n) \, dv, \quad g \in G'.
\]
The basic properties of $A^{\psi}$ are summarized in the following proposition. Its proof is identical to \cite[Lemmata 4.5 and 4.9]{LM16} and \cite[Lemmata 5.1 and 8.1]{LM17},
and will be omitted.

\begin{proposition}
Suppose that $\Pi \in {\rm Irr}_{{\rm gen}}\, \tilde{M}$. Let $\tilde{W} \in {\rm Ind}(\mathbb{W}^{\psi_{N_M}}(\Pi))$, $\Phi \in \mathcal{S}(F^n)$, and $s \in \mathbb{C}$. Then
\begin{enumerate}[label=$(\mathrm{\arabic*})$]
\item The integral $A^{\psi}(s,g,\tilde{W},\Phi)$ is well defined and absolutely convergent uniformly for $s \in \mathbb{C}$ and $g \in G'$
in compact sets. Thus $A^{\psi}(s,g,\tilde{W},\Phi)$ is entire in $s$ and smooth in $g$. In the non-archimedean case the integrand is compactly supported on $V_{\gamma} \backslash V$.
\item For any $\tilde{W} \in {\rm Ind}(\mathbb{W}^{\psi_{N_M}}(\Pi))$ and $s \in \mathbb{C}$, the function $g \mapsto A^{\psi}(s,g,\tilde{W},\Phi)$
is smooth and $(N',\psi_{N'})$-equivariant.
\item For any $g, x \in G'$ and $v \in V$, we have
\begin{equation}
\label{symplectic-G'-equivalent}  
 A^{\psi}(s,g,I(s,\tilde{v}\tilde{\eta}(x))\tilde{W},\omega_{\psi^{-1}_{N_{\mathbb{M}}}}(\tilde{v} \tilde{x})\Phi)=A^{\psi}(s,gx,\tilde{W},\Phi).
\end{equation}
\item Suppose that $p \neq 2$, $\Pi$ is unramified, $\psi$ is unramified, $\tilde{W}^{\circ}\in {\rm Ind}(\mathbb{W}^{\psi_{N_M}}(\Pi))$ is the standard 
unramified vector, and $\Phi_0={\bf 1}_{\mathcal{O}^n}$. Then $A^{\psi}(s,e,\tilde{W}^{\circ},\Phi_0)\equiv 1$ (assuming ${\rm vol}(V \cap K)={\rm vol}(V_{\gamma} \cap K)=1$).
\end{enumerate}
\end{proposition}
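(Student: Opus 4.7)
The plan is to imitate verbatim the strategy used by Lapid--Mao in \cite[Lemmata 4.5 and 4.9]{LM16} and \cite[Lemmata 5.1 and 8.1]{LM17}, where the analogous statements are proved in the even orthogonal and metaplectic/unitary contexts, inserting the minor modifications forced by the Weil representation $\omega_{\psi_{N_{\mathbb{M}}}}$ of $V \rtimes \tilde{G}'$ fixed in \eqref{Weil-Rep-Action-Extensions}. I will handle the four assertions in order.

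For part (1), the first step is to use the Iwasawa decomposition to rewrite $\tilde{W}_s(\tilde{\gamma}\tilde{v}\tilde{\eta}(g))$ in terms of a Whittaker function on the Levi part $\tilde{M}$, producing an explicit majorant which is a gauge-type expression on $V_\gamma \backslash V$. Next I combine this bound with the obvious bound $|\omega_{\psi_{N_{\mathbb{M}}}}(v\tilde{g})\Phi(\xi_n)|$ which, as a function of $v$, is controlled by a Schwartz--Bruhat function on $V_\gamma \backslash V$ thanks to \eqref{Weil-Rep-Action-one}--\eqref{Weil-Rep-Action-three} and the fact that the quotient embeds in $\mathcal{H}_{F^{2n}} \times$ (an abelian piece). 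In the $p$-adic case the conjunction of the two estimates forces compact support of the integrand; in the archimedean case it yields sufficient decay for absolute convergence uniform in compact sets of $(s,g)$, which simultaneously gives entirety in $s$ and smoothness in $g$.

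For part (2), a direct calculation shows that conjugating $\gamma$ by a factor $u \in N'$ produces an element of $N$ together with a factor in $V$ acting on $\Phi$ via the Heisenberg component of $\omega_{\psi_{N_{\mathbb{M}}}}$; then $\tilde{W}$ transforms by $\psi_{N_M}$ while $\omega_{\psi_{N_{\mathbb{M}}}}$ contributes the remaining $\psi_{U'}$ factor, and a comparison with our chosen $\psi_{N'} = \psi_{N'_{M'}}\psi_{U'}$ yields the desired equivariance. For part (3), the identity \eqref{symplectic-G'-equivalent} follows from the cocycle compatibility between $\tilde{\eta}$ and the splitting of $V \rtimes \tilde{G}'$ in $\omega_{\psi_{N_{\mathbb{M}}}}$, together with the change of variables $v \mapsto v^x$ in $V_\gamma \backslash V$, both of which are formal. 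For part (4), with $\Phi_0 = \mathbf{1}_{\mathcal{O}^n}$ and $\tilde{W}^{\circ}$ the spherical vector, the Iwasawa decomposition forces the integrand to be supported in $(V_\gamma \cap K)\backslash(V \cap K)$ after a direct matrix manipulation, and on this compact support $\omega_{\psi_{N_{\mathbb{M}}}}(v)\Phi_0(\xi_n) = 1$ and $\tilde{W}^{\circ}_s(\tilde{\gamma}\tilde{v}) = \tilde{W}^{\circ}(e) = 1$; the normalization ${\rm vol}(V \cap K) = {\rm vol}(V_\gamma \cap K) = 1$ then delivers the value $1$.

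The main obstacle is part (1) in the archimedean case: one must produce majorants that are uniform in $(s,g)$ on compacta, and the presence of the Weil representation means that the standard gauge estimates used in \cite{LM16} must be coupled with Schwartz seminorm estimates on $\omega_{\psi_{N_{\mathbb{M}}}}(v\tilde{g})\Phi$. Fortunately this is precisely the situation treated in \cite[Lemma 8.1]{LM17} in the metaplectic case, and the argument there transfers essentially without change because the unipotent subgroup $V$ and the quotient $V/V_\gamma$ admit the same description in our symplectic ambient group. The remaining parts are routine enough that I will simply indicate the cited references and omit the calculation, consistently with the policy adopted throughout the excerpt.
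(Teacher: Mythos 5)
Your proposal is correct and follows exactly the same route as the paper: the paper itself omits the proof, stating only that it is identical to \cite[Lemmata 4.5 and 4.9]{LM16} and \cite[Lemmata 5.1 and 8.1]{LM17}, and your proposal cites the very same lemmas and correctly sketches how the Weil-representation factor is absorbed into the gauge and Schwartz estimates.
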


Let $\pi_i, i=1,\dotsm,k$ be pairwise inequivalent (that is, $\pi_i \not\cong \pi_j$ 
if $i \neq j$) and of orthogonal symplectic type with $n_1+\dotsm+n_k=2n$ with $n_i > 1$ for all $i$.
Let $\pi={\rm Ind}^{\mathbb{M}}_{P_{\circ}}\otimes \pi_i$ of $\mathbb{M}$.
We relate it to the genuine representation $\Pi={\rm Ind}^{\tilde{\mathbb{M}}}_{\tilde{P_{\circ}}}\tilde{\pi}$, where $\tilde{\pi}$ is defined in \eqref{block-compatible}.
Such a genuine representation $\Pi$ is called {\it a representation of symplectic type}.
\par

For $F$ a local field, we write ${\rm Irr}_{\rm sym}\tilde{\rm GL}_n(F)$ for the set of irreducible genuine representations of symplectic type.
 It is clear that if a genuine representation $\Pi$ is of symplectic type in the global setting, then all its local components $\Pi_v$ are irreducible and of symplectic type as well.

\par
 Let $\Pi \in {\rm Irr}_{\rm gen} \, \tilde{\mathbb{M}}$, considered also as a representation of $\tilde{M}$ via $\varrho$. By the similar argument as in \cite[Theorem in \S 1.3]{GRS99}
 together with \cite[Remark 4.13]{LM17}, for any non-zero subrepresentation $\Pi'$ of ${\rm Ind}(\mathbb{W}^{\psi_{N_M}}(\Pi))$, there exist $\tilde{W}' \in \Pi'$ and $\Phi \in \mathcal{S}(F^n)$
 such that  $A^{\psi}(s,\cdot,\tilde{W}',\Phi) \not \equiv 0$.
 
 \par
 We assume that $\Pi \in {\rm Irr}_{{\rm gen,sym}} \tilde{\mathbb{M}}$. By Proposition \ref{holomorphy-symplectic}, $M(s,\Pi)$ is holomorphic at $s=\frac{1}{2}$. The {\it local descent} of $\pi$ is the space $\mathcal{D}_{\psi}(\Pi)$ of Whittaker functions on $G'$
 generated by $A^{\psi}(-\frac{1}{2},\cdot, M ( \frac{1}{2},\Pi) \tilde{W},\Phi )$ with $\tilde{W} \in {\rm Ind}(\mathbb{W}^{\psi_{N_M}}(\Pi))$
 and $\Phi \in \mathcal{S}(F^n)$. By the above observation, $\mathcal{D}_{\psi}(\Pi) \neq 0$.
 
 \par
Let $\Pi'$ be the image of ${\rm Ind}(\mathbb{W}^{\psi_{N_M}}(\Pi))$ under $M(\frac{1}{2},\Pi)$.
Thanks to \eqref{symplectic-G'-equivalent}, the space $\mathcal{D}_{\psi}(\Pi)$ is canonically  a quotient of the $G'$-module $J_V(\Pi)$
of the $V$-coinvariant of $\Pi'$. In this regard, we view $J_V(\Pi')$ as the {\it abstract descent},
whereas $\mathcal{D}_{\psi}(\Pi)$ as the {\it explicit descent}.

\subsubsection{Whittaker function of descent} 
We let $\hat{\psi}_N$ be a degenerate character on $N$ that is trivial on $U$ and for $u \in N_{\mathbb{M}}$
\[
  \hat{\psi}(\varrho(u)):=\psi(u_{1,2}+\dotsm+u_{n,n+1}-u_{n+1,n+2}-\dotsm - u_{2n-1,2n}).
\]
to be consistent with the additive character in \cite[(8.14)]{GRS11}. Thus $\psi_{\circ}(x)=\psi(x)$.
By \cite[Remark 6.4]{LM17} (cf. \cite[\S 4.1, \S 6.1]{LM16}), we confirm Theorem \ref{Whittaker-main-symplectic} for a specific choice of $\psi_{N_{\mathbb{M}}}$
given by 
\[
  \psi_{N_{\mathbb{M}}}(u)=\psi(u_{1,2}+\dotsm+u_{n-1,n}+2u_{n,n+1}-u_{n+1,n+2}-\dotsm -u_{2n-1,2n}).
\]
just as in \cite[Theorem 10.3]{GRS11}. Henceforth $\psi_{\circ}(x)=\psi(2x)$. 
Let
\[
 \delta=\left( \begin{array}{c|c} \delta'_{n,2n} & 0_{n \times 2n} \\ 0_{n \times 2n} & \delta'_{n,2n} \\ \hline \delta''_{n,2n} & 0_{n \times 2n} \\ 0_{n \times 2n} & \delta''_{n,2n}  \end{array} \right).
\]
We let $\kappa=\varrho(\kappa')$, where $\kappa'$ is the Weyl element of $\mathbb{M}$ such that $\kappa'_{2i,i}=\kappa'_{2i-1,n+i}=1$, $i=1,\dotsm,n$,
and all other entries are zero. Let $\epsilon={\rm diag}(A,\dotsm,A,A^{\ast},\dotsm,A^{\ast})$, where $A=\begin{pmatrix*}[r] 1 & 1 \\ -1 & 1\end{pmatrix*}$.
Here $A$ is repeated $n$ times (cf. \cite[Proof of Proposition 9.5]{GRS11}). Writing
\[
 A=\begin{pmatrix} 1 & 1 \\  & 1 \end{pmatrix} \begin{pmatrix} 2 &  \\  & 1  \end{pmatrix} \begin{pmatrix*}[r] 1 &  \\ -1 & 1  \end{pmatrix*}
\]
we get $\epsilon=\epsilon_U\epsilon_T\epsilon_{\overline{U}}$, where
\[
  \epsilon_U={\rm diag}( \begin{pmatrix} 1 & 1 \\ & 1 \end{pmatrix}, \dotsm, \begin{pmatrix} 1 & 1 \\ & 1 \end{pmatrix}, \begin{pmatrix*}[r] 1 & -1 \\ & 1 \end{pmatrix*}
  ,\dotsm, \begin{pmatrix*}[r] 1 & -1 \\ & 1 \end{pmatrix*} ),
\]
\[
 \epsilon_{\overline{U}}={\rm diag}(\begin{pmatrix*}[r] 1 & \\ -1 & 1 \end{pmatrix*}, \dotsm, \begin{pmatrix*}[r] 1 & \\ -1 & 1 \end{pmatrix*}, \begin{pmatrix} 1 & \\ 1 & 1 \end{pmatrix}, \dotsm, \begin{pmatrix} 1 & \\ 1 & 1 \end{pmatrix}),
\]
and
\[
 \epsilon_T={\rm diag}(\begin{pmatrix} 2 & \\ & 1 \end{pmatrix}, \dotsm, \begin{pmatrix} 2 & \\ & 1 \end{pmatrix}, \begin{pmatrix} 1 & \\ & \frac{1}{2} \end{pmatrix}, \dotsm, 
 \begin{pmatrix} 1 & \\ & \frac{1}{2} \end{pmatrix}).
\]
Let $X$ be the subspace of $\mathfrak{s}_{2n}$ consisting of the strictly upper triangular matrices. We let
\[
 \overline{\ell}(x)=\begin{pmatrix} 1_{2n} & \\ x & 1_{2n} \end{pmatrix}
\]
for $x \in \mathfrak{s}_{2n}$. Let $Y$ be the subgroup of $N_M$ consisting of the matrices of the form $\varrho(\begin{pmatrix} 1_n & y \\ & 1_n \end{pmatrix})$
where $y$ is lower triangular, (namely $y_{i,j}=0$ if $j > i$). Let $Y'$ be the subgroup of $Y$ consisting of the matrices of the form
$\varrho(\begin{pmatrix} 1_n & x \\ & 1_n \end{pmatrix})$ where $x$ is lower triangular with last row $0$ and $a^+=\varrho(\begin{pmatrix} 1_n & z \\ & 1_n \end{pmatrix})$ 
where the last row of $z$ is $a$ and all other rows are zero.

\begin{theorem}
\label{Whittaker-main-symplectic} 
$(${\rm Reformulation of \cite[Theorem 9.7, part (1)]{GRS11}}$)$ 
Let $\Pi \in {\rm Scusp} \, \mathbb{M}$ and $\tilde{\varphi}$. Then for any $g \in G'$ we have
\[
 \mathcal{W}^{\psi_{N'}}(g,{\rm FJ}_{\psi_{N_{\mathbb{M}}}}(\mathcal{E}_{-k} \tilde{\varphi},\Phi))=\int_{V_{\gamma}(\mathbb{A}) \backslash V(\mathbb{A})} \mathcal{W}^{\psi_{N}}( \tilde{\gamma} \tilde{v} \tilde{\eta}(g), \mathcal{E}_{-k} \tilde{\varphi}) \omega_{\psi^{-1}_{N_{\mathbb{M}}}}(v \tilde{g}) \Phi(\xi_n) \, dv,
\]
where the integral is absolutely convergent.
\end{theorem}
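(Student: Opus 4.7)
The plan is to adapt, step for step, the argument given in the proof of Theorem \ref{Whittaker-main-even-orthogonal}, with the additional bookkeeping required to track the Weil representation $\omega_{\psi_{N_{\mathbb{M}}}}$ through the various conjugations and changes of variables. I would use the Tamagawa measure and reduce immediately to $g=e$, since the right-hand side is $(N',\psi_{N'})$-equivariant in $g$. The starting point is the expression for $\mathcal{W}^{\psi_{N'}}(e,{\rm FJ}_{\psi_{N_{\mathbb{M}}}}(\mathcal{E}_{-k}\tilde{\varphi},\Phi))$ provided by \cite[Theorem 9.7, part (1)]{GRS11}: an iterated integral over $Y(\mathbb{A})$, $Y'(\mathbb{A}) \backslash a^+(\mathbb{A})$ (or $\mathbb{A}^n$), and $X(\mathbb{A})$ of $\mathcal{W}^{\hat{\psi}_N}(\overline{\ell}(x)\delta\epsilon\kappa y,\mathcal{E}_{-k}\tilde{\varphi})$, coupled with the value of the Weil representation on $\Phi$ evaluated at $\xi_n$.

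The first maneuver is to conjugate $\delta\epsilon_U\delta^{-1}$, which takes the form $\begin{pmatrix} 1_{2n} & -\epsilon_{\delta} \\ & 1_{2n} \end{pmatrix}$ for an appropriate diagonal $\epsilon_{\delta}$ determined by the symplectic form and $\delta$, past $\overline{\ell}(x)$. Using the identity
\[
\overline{\ell}(x)\begin{pmatrix} 1_{2n} & -\epsilon_{\delta} \\ & 1_{2n} \end{pmatrix}=\begin{pmatrix} (1_{2n}-x\epsilon_{\delta})^{\ast} & -\epsilon_{\delta} \\ & 1_{2n}-x\epsilon_{\delta} \end{pmatrix}\overline{\ell}(x')
\]
with $x'=(1_{2n}-x\epsilon_{\delta})^{-1}x$, and changing variables $x\mapsto (1_{2n}-x\epsilon_{\delta})x$ (whose Jacobian is $1$), the upper-triangular part is absorbed by evaluating $\hat{\psi}_N$ on its Levi component, producing a character $\psi^{-1}(cx_{n,n+1})$ for an explicit constant $c$ depending on the chosen $\psi_{N_{\mathbb{M}}}$. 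I would then conjugate $\delta\epsilon_T\delta^{-1}$, which is a diagonal element that precisely intertwines $\hat{\psi}_N$ with $\psi_N$ and rescales the stray $x_{n,n+1}$-character to $\psi^{-1}(x_{n,n+1})$.

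Next I would use the key combinatorial identities $\delta\kappa=\gamma$ and $\kappa^{-1}\epsilon_{\overline{U}}\kappa\in Y$ (verified as in \cite[Proof of Proposition 8.2]{LM16}) to absorb $\epsilon_{\overline{U}}$ into the $Y$-integration by the change of variable $y\mapsto (\kappa^{-1}\epsilon_{\overline{U}}\kappa)^{-1}y$, leaving the kernel in the form $\mathcal{W}^{\psi_N}(\gamma\overline{\ell}(x)(\cdots)y,\mathcal{E}_{-k}\tilde{\varphi})$ multiplied by $\psi^{-1}(x_{n,n+1})$ and the Weil-representation factor. Finally, the identification $\gamma^{-1}\overline{\ell}(X(\mathbb{A}))\gamma\cdot Y(\mathbb{A})\cdot (\text{contribution from }a)\rtimes V_{\gamma}(\mathbb{A})=V(\mathbb{A})$, combined with the computation $\psi_V(\gamma^{-1}\overline{\ell}(x)\gamma)=\psi(x_{n,n+1})$, lets me reassemble the iterated integral as the single integral over $V_{\gamma}(\mathbb{A})\backslash V(\mathbb{A})$; the $\Phi$-part collects precisely into $\omega_{\psi^{-1}_{N_{\mathbb{M}}}}(v\tilde{g})\Phi(\xi_n)$ via the extension formula \eqref{Weil-Rep-Action-Extensions} together with \eqref{Weil-Rep-Action-one}--\eqref{Weil-Rep-Action-three}.

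The main obstacle will be the careful bookkeeping of the Weil representation: unlike the orthogonal case, where the Fourier coefficient is taken against a character $\psi_V^{-1}$, here the theta series contributes a Weil-representation action whose cocycle must match the metaplectic splittings used in defining $\tilde{\gamma}$, $\tilde{v}$, $\tilde{\eta}(g)$. In particular, one must check that the changes of variables $x\mapsto(1_{2n}-x\epsilon_{\delta})x$ and $y\mapsto(\kappa^{-1}\epsilon_{\overline{U}}\kappa)^{-1}y$ are compatible with the actions \eqref{Weil-Rep-Action-one}--\eqref{Weil-Rep-Action-three} on $\mathcal{H}_{F^{2n}}\rtimes\widetilde{\rm Sp}(F^{2n})$, so that the Jacobian factors and the Weil-index factors $\beta_{\psi}$ cancel exactly as predicted by the specific choice $\psi_{N_{\mathbb{M}}}(u)=\psi(u_{1,2}+\dotsm+u_{n-1,n}+2u_{n,n+1}-u_{n+1,n+2}-\dotsm-u_{2n-1,2n})$ (and the corresponding normalization $\psi_{\circ}(x)=\psi(2x)$). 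The absolute convergence of the final integral follows from the $(N,\psi_N)$-equivariance of $\mathcal{E}_{-k}\tilde{\varphi}$ and the rapid decay of Whittaker functions on $V_{\gamma}\backslash V$, exactly as in \cite[Theorem 9.7]{GRS11}.
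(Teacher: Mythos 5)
Your plan reproduces the paper's proof step for step: the same conjugation of $\delta\epsilon_U\delta^{-1}$ past $\overline{\ell}(x)$ (with the identity specializing to $\epsilon_\delta=1_{2n}$ in the symplectic case) followed by $x\mapsto(1_{2n}-x)x$, the diagonal $\delta\epsilon_T\delta^{-1}$ converting $\hat\psi_N$ to $\psi_N$, the absorption $y\mapsto(\kappa^{-1}\epsilon_{\overline{U}}\kappa)^{-1}y$ using $\delta\kappa=\gamma$, the regrouping $(\gamma^{-1}\overline{\ell}(X)\gamma\, Y)\rtimes V_\gamma=V$, and \eqref{Weil-Rep-Action-one}--\eqref{Weil-Rep-Action-three} together with \eqref{Weil-Rep-Action-Extensions} collecting the $\Phi$-part into $\omega_{\psi^{-1}_{N_{\mathbb{M}}}}(v\tilde g)\Phi(\xi_n)$. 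The one gap to be aware of is that the metaplectic cocycle bookkeeping you correctly flag as the main obstacle is not actually carried out in the paper either: the published proof simply asserts that the residual sign $\mathfrak{e}\in\{\pm1\}$ equals $1$, calling the cocycle verification extremely tedious and omitting it.
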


\begin{proof}
We use Tamagawa measure in the proof. It is enough to prove the required identity for $g=e$. 
The expression for $ \mathcal{W}^{\psi_{N'}}(e,{\rm FJ}_{\psi_{N_{\mathbb{M}}}}(\mathcal{E}_{-k} \tilde{\varphi},\Phi)$
in \cite[Theorem 9.7, part (1)]{GRS11} is (with $\lambda=-1$ in their notation):
\begin{equation}
\label{symplectic-decomp}
 \int_{\mathbb{A}^n} \left( \int_{Y'(\mathbb{A})} \left( \int_{X(\mathbb{A})} \mathcal{W}^{\hat{\psi}_{N}}(\tilde{\overline{\ell}}(x)\tilde{\delta}\tilde{\epsilon}\tilde{\kappa}\tilde{y}'\tilde{a}^+,\mathcal{E}_{-k}\tilde{\varphi}) \Phi(a) \,dx \right) \, dy' \right) \, da.
\end{equation}
Combining $\tilde{y}'$ and $\tilde{a}^+$, it is evident from \eqref{Weil-Rep-Action-one} and \eqref{Weil-Rep-Action-Extensions} that we can rewrite the above expression as
\[
 \int_{Y(\mathbb{A})} \left( \int_{X(\mathbb{A})} \mathcal{W}^{\hat{\psi}_{N}}(\tilde{\overline{\ell}}(x)\tilde{\delta}\tilde{\epsilon}\tilde{\kappa}\tilde{y},\mathcal{E}_{-k}\tilde{\varphi}) \omega_{\psi^{-1}_{N_{\mathbb{M}}}}(y) \Phi(0) \,dx \right) \, dy. 
\]
We note from \cite[\S 8.2, (8.15)]{GRS11} that $\delta\epsilon_U\delta^{-1}=\begin{pmatrix*}[r] 1_{2n} & -1_{2n} \\  & 1_{2n} \end{pmatrix*}$.
For any $x \in X(\mathbb{A})$, we proceed to write
\[
 \begin{pmatrix} 1_{2n} & \\ x & 1_{2n} \end{pmatrix} \begin{pmatrix*}[r] 1_{2n} & -1_{2n} \\  & 1_{2n} \end{pmatrix*}=\begin{pmatrix} (1_{2n}-x)^{-1} & -1_{2n} \\ & 1_{2n}-x \end{pmatrix} \begin{pmatrix} 1_{2n} & \\ x' & 1_{2n} \end{pmatrix},
\]
where $x'=(1_{2n}-x)^{-1}x=x+x^2+\dotsm+x^{2n-1}$. (Note that $(1_{2n}-x)^{-1}=(1_{2n}-x)^{\ast}$.) After a change of variable $x \mapsto (1_{2n}-x)x$ we get
\[
 \int_{Y(\mathbb{A})} \left(\int_{X(\mathbb{A})} \psi(x_{n,n+1})\mathcal{W}^{\hat{\psi}_{N}}( \tilde{\overline{\ell}}(x)\tilde{\delta}\tilde{\epsilon}_T\tilde{\epsilon}_{\overline{U}}\tilde{\kappa}\tilde{y},\mathcal{E}_{-k}\tilde{\varphi}) \omega_{\psi^{-1}_{N_{\mathbb{M}}}}(y)\Phi(0) \,dx \right) \, dy
\]
because the relation $x_{k,k+1}=x_{2n-k,2n+1-k}$ yields that $\hat{\psi}_{N}(\varrho((1_{2n}-x)^{-1}))=\psi(x'_{n,n+1})=\psi(x_{n,n+1})$.

\par
In addition, $\delta\epsilon_T\delta^{-1}={\rm diag}(2\cdot1_n,1_n,1_n,\frac{1}{2}\cdot1_n)$.
This element corresponds $\hat{\psi}_{N}$ to $\psi_{N}$, and stabilizes $\psi(x_{n,n+1})$, so by conjugating we obtain
\[
 \int_{Y(\mathbb{A})} \left(\int_{X(\mathbb{A})} \psi(x_{n,n+1}) \mathcal{W}^{\psi_{N}}(\tilde{\overline{\ell}}(x)\tilde{\delta}\tilde{\epsilon}_{\overline{U}}\tilde{\kappa}\tilde{y},\mathcal{E}_{-k}\tilde{\varphi}) \omega_{\psi^{-1}_{N_{\mathbb{M}}}}(y)\Phi(0) \, dx \right) \,dy
\]
Note that $\kappa^{-1} \epsilon_{\overline{U}}\kappa=\varrho(\begin{pmatrix*}[r] 1_n & -1_n \\ & 1_n \end{pmatrix*})$ and $\gamma=\delta\kappa$. Upon changing variables $y \mapsto (\kappa^{-1} \epsilon_{\overline{U}}\kappa)^{-1}y$, we get by \eqref{Weil-Rep-Action-one} and \eqref{Weil-Rep-Action-Extensions} that the above equals
\[
 \int_{Y(\mathbb{A})} \left( \int_{X} \psi(x_{n,n+1}) \mathcal{W}^{\psi_{N}}(,\tilde{\overline{\ell}}(x)\tilde{\gamma}\tilde{y},\mathcal{E}_{-k}\tilde{\varphi}) \omega_{\psi^{-1}_{N_{\mathbb{M}}}}(y)\Phi(\xi_n) \, dx \right) \, dy.
\]
We note that
\[
 \gamma^{-1} \overline{\ell}(X) \gamma=\{ \begin{pmatrix} 1_n & x_1 &\vspace{-.07cm}&x_2\\ &1_n&&\vspace{-.07cm}\\ \vspace{-.07cm}&&1_n& \check{x}_1 \\ &&&1_n \end{pmatrix} \,|\, x_1 \in {\rm Mat}_{n,n}
 \; \text{\rm strictly upper triangular}, x_2 \in \mathfrak{s}_n \},
\]
so that $ (\gamma^{-1} \overline{\ell}(X) \gamma Y) \rtimes V_{\gamma}=V$. In conclusion, we end up at
\begin{equation}
\label{symplectic-conclusion}
 \int_{V_{\gamma}(\mathbb{A}) \backslash V(\mathbb{A})} \mathcal{W}^{\psi_{N}} ( \tilde{\gamma}\tilde{v},\mathcal{E}_{-k} \tilde{\varphi}) \omega_{\psi^{-1}_{N_{\mathbb{M}}}}(v) \Phi(\xi_n) \,dx
\end{equation}
provided that it converges, since by \eqref{Weil-Rep-Action-three} and \eqref{Weil-Rep-Action-Extensions} $\omega_{\psi^{-1}_{N_{\mathbb{M}}}}(\gamma^{-1}\overline{\ell}(x)\gamma)$ acts by the scalar $\psi(x_{n,n+1})$
for $x \in X(\mathbb{A})$. Apparently \eqref{symplectic-decomp} and \eqref{symplectic-conclusion} are equal up to an appropriate sign $\mathfrak{e} \in \{ \pm 1 \}$. We can actually determine the precise constant $\mathfrak{e}=1$ by using the algorithm for computing the cocycle. But since this computation is extremely tedious, though not so deep, we do not include detailed calculation of $\mathfrak{e}$.
\end{proof}

\subsection{Local Rankin--Selberg--Shimura integrals}
We are now going back to our choice of $\psi_{N_{\mathbb{M}}}$ specified in \eqref{usual-symplectic-additive}.
Now let $F$ be either $p$-adic or archimedean, $\Pi \in {\rm Irr}_{\rm gen} \, \tilde{M}$ and $\sigma \in {\rm Irr}_{{\rm gen},\psi^{-1}_{N'}} \, G'$
with Whittaker model $\mathbb{W}^{\psi^{-1}_{N'}}(\sigma)$.
For any $W' \in \mathbb{W}^{\psi^{-1}_{N'}}(\sigma)$, $\tilde{W} \in {\rm Ind}(\mathbb{W}^{\psi_{N_M}}(\Pi))$, and $\Phi \in \mathcal{S}(F^n)$,
we define the local Rankin--Selberg--Shimura type integral
\[
 J(s,W',\tilde{W},\Phi):=\int_{N' \backslash G'} W'(g) A^{\psi}(s,g,\tilde{W},\Phi)\, dg.
\]
The analytic properties of this integral were studied by Ginzburg--Rallis--Soudry \cite{GRS98} and subsequently established by Kaplan \cite{Kap15}. 

\begin{proposition} Suppose that $\Pi \in {\rm Irr}_{{\rm gen}}\, \tilde{M}$ and $\sigma \in {\rm Irr}_{{\rm gen},\psi^{-1}_{N'}}\, G'$. 
Let $W' \in \mathbb{W}^{\psi^{-1}_{N'}}(\sigma)$, $\tilde{W} \in {\rm Ind}(\mathbb{W}^{\psi_{N_M}}(\Pi))$, and $\Phi \in \mathcal{S}(F^n)$. Then
\begin{enumerate}[label=$(\mathrm{\arabic*})$]
\item $J(s,W',\tilde{W},\Phi)$ converges in some right-half plane (depending only on $\pi$ and $\sigma$)
and admits a meromorphic continuation in $s$.
\item For any $s \in \mathbb{C}$, we can choose $W'$, $\tilde{W}$, and $\Phi$ such that $J(s,W',\tilde{W},\Phi) \neq 0$.
\item If $\Pi$, $\sigma$, and $\psi$ are unramified, $\tilde{W}^{\circ} \in {\rm Ind}(\mathbb{W}^{\psi_{N_M}}(\Pi))$ is the standard unramified vector,
$\Phi_0={\bf 1}_{\mathcal{O}^n}$, and ${W'}^{\circ}$ is $K'$-invariant with ${W'}^{\circ}(e)=1$, then
\[
 J(s,{W'}^{\circ},\tilde{W}^{\circ},\Phi_0)=\frac{{\rm vol}(K')}{{\rm vol}(N' \cap K')}\frac{L(s+\frac{1}{2},\sigma\times \pi)}{L(2s+1,\pi,{\rm Sym}^2)}
\]
assuming the Haar measure on $V$ and $V_{\gamma}$ are normalized so that ${\rm vol}(V \cap K)$ and ${\rm vol}(V_{\gamma} \cap K)$ 
are all $1$.
\item $J(s,W',\tilde{W},\Phi)$ satisfies local functional equations:
\[
 J(-s,W',M(s,\Pi)\tilde{W},\Phi)=\omega ^n_{\pi}(-1)\frac{\gamma(s+\frac{1}{2},\sigma\times \pi,\psi)}{C^{\widetilde{\rm Sp}_{2n}}_{\psi}(s+\frac{1}{2},\pi,w_U)}J(s,W',\tilde{W},\Phi)
 \]
where $C^{\widetilde{\rm Sp}_{2n}}_{\psi}(s+\frac{1}{2},\pi,w_U)$ is the Langlands-Shahidi local coefficient for the metaplectic group defined by Szpruch \cite{Szp13}.
 \end{enumerate}
\end{proposition}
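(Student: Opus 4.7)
The plan is to follow, essentially verbatim, the arguments of Kaplan \cite{Kap15} (building on Ginzburg--Rallis--Soudry \cite{GRS98}) that were used in the parallel even-orthogonal proposition, with the only new input being the metaplectic nature of $\Pi$ and the Weil-representation twist hidden inside the definition of $A^{\psi}(s,g,\tilde{W},\Phi)$. Since $A^{\psi}$ already provides an $(N',\psi_{N'})$-equivariant smooth function on $G'$, the whole machinery of local zeta integrals over $N'\backslash G'$ applies once convergence is established.

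For $(1)$, I would use the Iwasawa decomposition $G'=N'A'K'$ (or, in the archimedean case, suitable gauge estimates) together with the Jacquet module asymptotics of Whittaker functions on $G'$ applied to $W'$ and the standard estimate on $A^{\psi}(s,g,\tilde{W},\Phi)$ coming from the fact that for each compact set the integrand defining $A^{\psi}$ is uniformly bounded in $s$; the resulting torus integral is a finite sum of Tate-type integrals controlled by exponents of $\sigma$ and $\Pi$, hence converges in a right half-plane. Meromorphic continuation then follows from Bernstein's rationality principle in the $p$-adic case and from Wallach-type continuation arguments in the archimedean case, exactly as in \cite[Theorem 21]{Kap15}. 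For $(2)$, the standard trick is twist-and-shift: after replacing $\Pi$ by $\Pi\otimes|\det|^{s_0}$ we may reduce to showing that $J(0,W',\tilde{W},\Phi)$ can be made non-zero, which follows from the non-vanishing of $A^{\psi}$ established above together with a Dixmier--Malliavin argument producing $W'\in\mathbb{W}^{\psi^{-1}_{N'}}(\sigma)$ whose restriction to $N'\backslash G'$ is a bump function supported where $A^{\psi}(0,g,\tilde{W},\Phi)\neq 0$.

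For $(3)$, the computation is the spherical calculation of Kaplan. One unfolds $J(s,{W'}^{\circ},\tilde{W}^{\circ},\Phi_0)$ by the Iwasawa decomposition on $N'\backslash G'$, uses the $K'$-invariance to reduce to an integral over the torus $A'$, inserts the Casselman--Shalika formula for both ${W'}^{\circ}$ and for the values of $A^{\psi}(s,\cdot,\tilde{W}^{\circ},\Phi_0)$ on $A'$ (this last step is precisely the metaplectic analogue carried out in \cite[\S 3--4]{Kap15}), and sums the resulting Weyl-character-type expression to obtain the quotient $L(s+\tfrac{1}{2},\sigma\times\pi)/L(2s+1,\pi,{\rm Sym}^2)$. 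The volume constant ${\rm vol}(K')/{\rm vol}(N'\cap K')$ arises from the change-of-variable, and the normalization of the $V$ and $V_{\gamma}$ measures ensures $A^{\psi}(s,e,\tilde{W}^{\circ},\Phi_0)\equiv 1$.

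For $(4)$, the argument is by uniqueness. Both $J(-s,W',M(s,\Pi)\tilde{W},\Phi)$ and $J(s,W',\tilde{W},\Phi)$ give bilinear forms on $\mathbb{W}^{\psi^{-1}_{N'}}(\sigma)\times{\rm Ind}(s,\mathbb{W}^{\psi_{N_M}}(\Pi))$ that are $G'$-invariant (for the first through \eqref{symplectic-G'-equivalent} and the intertwining property of $M(s,\Pi)$), and by the uniqueness of Whittaker functionals on the induced representation (valid generically in $s$) they must be proportional. The proportionality constant is a meromorphic function of $s$ which, by the unramified computation $(3)$ combined with \eqref{unramified-symplectic-intertwining} and the Casselman--Shalika expression of the unramified Whittaker function, is forced to equal the quotient $\gamma(s+\tfrac{1}{2},\sigma\times\pi,\psi)/C^{\widetilde{\rm Sp}_{2n}}_{\psi}(s+\tfrac{1}{2},\pi,w_U)$ up to a sign; the sign $\omega^n_{\pi}(-1)$ is tracked by inserting the element $\varrho(\mathfrak{t})$ in the definition of $M(s,\Pi)$. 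I expect the main obstacle to be item $(4)$: tracking the precise normalizations through Rao's cocycle and Szpruch's local coefficient for $\widetilde{\rm Sp}_{2n}$ is routine but delicate, and in particular reconciling the Weil-factor $\gamma_{\psi}$ appearing in the genuine structure of $\Pi$ with the symmetric-square $\gamma$-factor in the denominator requires care.
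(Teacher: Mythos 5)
The paper provides no proof of this proposition at all: it is stated as a summary of known results, attributed directly to Ginzburg--Rallis--Soudry \cite{GRS98} and Kaplan \cite{Kap15} in the sentence immediately preceding the statement. Your sketch faithfully outlines the strategy of those cited references (Iwasawa decomposition plus Whittaker asymptotics and Bernstein/Wallach continuation for item (1), the twist-and-shift non-vanishing argument for item (2), Casselman--Shalika for item (3), and uniqueness of invariant bilinear forms pinned down by the spherical computation for item (4)), so there is no divergence in approach — only a difference in how much of the cited work is reproduced.
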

It is worthwhile to mention that
$\displaystyle
{\rm vol}(K')=( \prod_{j=1}^n \zeta_F(2i) )^{-1}.
$

\subsection{Factorization of Fourier--Jacobi periods}
We define for $\tilde{\varphi} \in \mathcal{A}(\Pi)$ an automorphic form on $G(\mathbb{A})$ by
\[
 A^{\psi}(s,g,\tilde{\varphi},\Phi):=\int_{V_{\gamma}(\mathbb{A})\backslash V(\mathbb{A})} \mathcal{W}^{\psi_{N}}(\tilde{\gamma}\tilde{v}\tilde{\eta}(g),\tilde{\varphi}_s)
 \omega_{\psi^{-1}_{N_{\mathbb{M}}}}(v \tilde{g})\Phi(\xi_n) \, dv.
\]
The global descent $\mathcal{D}_{\psi}(\Pi)$ and the local descent $\mathcal{D}_{\psi}(\Pi_v)$ are related in the following way:
\begin{proposition} 
\label{FJ-Whittaker-Symplectic}
Let $\Phi=\otimes_v \Phi_v \in \mathcal{S}(\mathbb{A}^n)$ and $\tilde{\varphi} \in \mathcal{A}(\Pi)$ be factorizable vectors. 
Suppose that $\mathcal{W}^{\psi_{N_M}}(\cdot,\tilde{\varphi})=\prod_v \tilde{W}_v$ with $\tilde{W}_v \in {\rm Ind}(\mathbb{W}^{\psi_{N_M}}(\Pi_v))$.
Then for any sufficiently large finite set of places $S$, we have
\begin{equation}
\label{FJ-Whittaker-Symplectic-decomp}
 \mathcal{W}^{\psi_{N'}}(g,{\rm FJ}_{\psi_{N_{\mathbb{M}}}}(\mathcal{E}_{-k}\tilde{\varphi},\Phi))=
 \frac{ m^S_{-k,1}(\pi)}{2^k} \cdot \prod_{v \in S} A^{\psi} \left( -\frac{1}{2},g_v, M\left( \frac{1}{2}, \Pi_v \right)\tilde{W}_v,\Phi_v\right), 
\end{equation}
where
\[
 m^S_{-k,1}(\pi)=\frac{\lim_{s \rightarrow 1}(s-1)^kL^S(s,\pi,{\rm Sym}^2) }{L^S(2,\pi,{\rm Sym}^2)}.
\]
\end{proposition}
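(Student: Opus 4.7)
The plan is to follow the argument of Proposition \ref{GG-Whittaker-EvenOrthogonal} verbatim, exchanging each ingredient for its metaplectic counterpart. First I would reinterpret Theorem \ref{Whittaker-main-symplectic} in terms of the global transform $A^{\psi}$: using
\begin{equation*}
\mathcal{W}^{\psi_N}(\mathcal{E}_{-k}\tilde{\varphi}) = \mathcal{W}^{\psi_{N_M}}(\mathcal{E}^U_{-k}\tilde{\varphi}) = \mathcal{W}^{\psi_{N_M}}((M_{-k}(\tilde{\varphi}))_{-\frac{1}{2}}),
\end{equation*}
Theorem \ref{Whittaker-main-symplectic} can be recast as
\begin{equation*}
\mathcal{W}^{\psi_{N'}}(g,{\rm FJ}_{\psi_{N_{\mathbb{M}}}}(\mathcal{E}_{-k}\tilde{\varphi},\Phi)) = A^{\psi}\left(-\tfrac{1}{2}, g, M_{-k}(\tilde{\varphi}),\Phi\right).
\end{equation*}

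Second, I would factorize the Whittaker coefficient of $M(s,\Pi)\tilde{\varphi}$ as an Euler product. Identifying $\mathcal{A}(\Pi)$ with the restricted tensor product $\otimes_v {\rm Ind}^{\tilde{G}(F_v)}_{\tilde{P}(F_v)}\Pi_v$ and unfolding the defining integral of $M(s,\Pi)$ against $\psi^{-1}_{N_M}$ for ${\rm Re}(s) \gg 0$ yields, after inserting the unramified identity \eqref{unramified-symplectic-intertwining} at every $v \notin S$,
\begin{equation*}
\mathcal{W}^{\psi_{N_M}}(M(s,\Pi)\tilde{\varphi}) = m^S(s,\pi) \prod_{v \in S} M(s,\Pi_v)\tilde{W}_v \prod_{v \notin S} \tilde{W}^{{\vee}^{\circ}}_v
\end{equation*}
as meromorphic functions in $s$, where $m^S(s,\pi)=L^S(2s,\pi,{\rm Sym}^2)/L^S(2s+1,\pi,{\rm Sym}^2)$ is the unramified ratio.

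Finally I would take the limit $s \to \tfrac{1}{2}$. Since $\pi \in {\rm OScusp}_k\,\mathbb{M}$, the partial symmetric square $L^S(s,\pi,{\rm Sym}^2)$ has a pole of order exactly $k$ at $s=1$, and the change of variable $u = 2s$ gives
\begin{equation*}
\lim_{s\to \frac{1}{2}}\left(s-\tfrac{1}{2}\right)^k m^S(s,\pi) = 2^{-k}\frac{\lim_{s \to 1}(s-1)^k L^S(s,\pi,{\rm Sym}^2)}{L^S(2,\pi,{\rm Sym}^2)} = \frac{m^S_{-k,1}(\pi)}{2^k}.
\end{equation*}
Combining the three steps with the factorization $\Phi=\otimes_v\Phi_v$ and the normalization $A^{\psi}(s,e,\tilde{W}^{\circ}_v,\Phi_{0,v}) \equiv 1$ at unramified $v$ produces \eqref{FJ-Whittaker-Symplectic-decomp}. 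The only point that genuinely requires care compared to the linear argument of Proposition \ref{GG-Whittaker-EvenOrthogonal} is verifying that the block-compatible genuine structure \eqref{block-compatible} on $\tilde{\pi}$ is compatible with the restricted tensor product decomposition $\Pi = \otimes_v \Pi_v$, and that the Rao cocycle defining $\widetilde{\rm Sp}_{2n}(\mathbb{A})$ is trivial at almost every place, so that the Eulerian factorization survives the metaplectic cover. I expect this compatibility---ensured by the standard computations of Szpruch \cite{Szp13} and Takeda \cite{Tak14}---to be the main technical check; after that the argument is a line-by-line translation of \cite[Propositions 5.7 and 8.4]{LM16}.
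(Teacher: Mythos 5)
Your proposal is correct and follows the same three-step structure as the paper's own proof: recast Theorem \ref{Whittaker-main-symplectic} through the global $A^{\psi}$-transform, unfold the intertwining operator to produce the Euler product $m^S(s,\pi)\prod_{v\in S}M(s,\Pi_v)\tilde W_v$, and pass to the limit $s\to\tfrac12$ to extract the factor $2^{-k}m^S_{-k,1}(\pi)$. The only difference is cosmetic --- the paper writes $\prod_{v\notin S}\tilde W_v$ while you more carefully record the unramified vector $\tilde W^{\vee\circ}_v$ on the contragredient side, and the paper omits the explicit remark about compatibility of the Rao cocycle with the restricted tensor product; your observation there is correct, and it is indeed guaranteed by the standard facts in \cite{Szp13,Tak14} that the paper cites upstream.
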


\begin{proof}
We note that
\[
 \mathcal{W}^{\psi_N}(\mathcal{E}_{-k}\tilde{\varphi})=\mathcal{W}^{\psi_{N_M}}(\mathcal{E}^U_{-k} \tilde{\varphi})
 =\mathcal{W}^{\psi_{N_M}}(M_{-k}(\tilde{\varphi})_{-\frac{1}{2}}).
\]
With this in hand, we rewrite Theorem \ref{Whittaker-main-symplectic} in terms of the global transforms $A^{\psi}$ as
\begin{equation}
\label{global-Afunction-symplectic}
  \mathcal{W}^{\psi_{N'}}(g,{\rm FJ}_{\psi_{N_{\mathbb{M}}}}(\mathcal{E}_{-k} \tilde{\varphi},\Phi))=A^{\psi} \left(-\frac{1}{2},g, M_{-k} (\tilde{\varphi}),\Phi \right).
\end{equation}
We can identify $\mathcal{A}(\Pi)$ with ${\rm Ind}^{\tilde{G}(\mathbb{A})}_{\tilde{P}(\mathbb{A})}\,\Pi=\otimes_v {\rm Ind}^{\tilde{G}(F_v)}_{\tilde{P}(F_v)}\,\Pi_v$. 
For ${\rm Re}(s) \gg 0$, we have
\begin{multline*}
  \mathcal{W}^{\psi_{N_M}}(\tilde{g}, M(s,\Pi)\tilde{\varphi})=\int_{N_M(F) \backslash N_M(\mathbb{A})} M(s,\Pi)\varphi(\tilde{n}\tilde{g}) \psi^{-1}_{N_M}(n)\,dn \\
  =\int_{N_M(F) \backslash N_M(\mathbb{A})} \int_{U(\mathbb{A})} \tilde{\varphi}(\tilde{\varrho}(\mathfrak{t})\tilde{w}_U\tilde{u}\tilde{n}\tilde{g})  \psi^{-1}_{N_M}(n) \, du \,dn
  =\int_{U(\mathbb{A})} \mathcal{W}^{\psi_{N_M}}(\tilde{\rho}(\mathfrak{t})\tilde{w}_U\tilde{u}\tilde{g},\tilde{\varphi}) \,du
\end{multline*}
from which it follows that for $S$ sufficiently large
\[
  \mathcal{W}^{\psi_{N_M}}(\tilde{g}, M(s,\Pi)\tilde{\varphi})=m^S(s,\pi)\left( \prod_{v \in S} M(s,\Pi_v)\tilde{W}_v \right) \prod_{v \notin S} \tilde{W}_v,
\]
as meromorphic functions in $s \in \mathbb{C}$. Here
\[
 m^S(s,\pi)=\frac{L^S(2s,\pi,{\rm Sym}^2)}{L^S(2s+1,\pi,{\rm Sym}^2)}.
\]
Taking the limit as $s \rightarrow \frac{1}{2}$, we get for $S$ large enough,
\begin{equation}
\label{limit-onehalf-symplectic}
  \mathcal{W}^{\psi_{N_M}}(M_{-k}\tilde{\varphi})=m^S_{-k}(\pi) \prod_{v \in S} M \left( \frac{1}{2},\Pi_v \right)\tilde{W}_v \prod_{v \notin S} \tilde{W}_v,
\end{equation}
where
\[
 m^S_{-k}(\pi)=\lim_{s \rightarrow \frac{1}{2}} \left( s-\frac{1}{2}\right)^k m^S(s,\pi)
 =2^{-k} \frac{\lim_{s \rightarrow 1}^kL^S(s,\pi,{\rm Sym}^2)}{L(2,\pi,{\rm Sym}^2)}=\frac{m^S_{-k,1}(\pi)}{2^k}.
\]
For any factorizable $\tilde{\varphi} \in \mathcal{A}(\Pi)$, the desired conclusion follows from \eqref{global-Afunction-symplectic} and \eqref{limit-onehalf-symplectic}
\end{proof}

We unfold the Petersson inner product of automorphic forms against the Fourier--Jacobi coefficient of Eisenstein series \cite[Theorem 10.4, (10.6)]{GRS11}
and then perform unramified computations \cite[(10.62)]{GRS11} to deduce the following Euler product.

\begin{proposition} 
Let $\Pi \in {\rm Scusp}_k \, \tilde{\mathbb{M}}$ and $\sigma=\mathcal{D}_{\psi^{-1}}(\Pi)$.
Let $\varphi' \in \sigma$ be an irreducible $\psi^{-1}_{N'}$-generic cuspidal representation of $G'$ and $\tilde{\varphi} \in \mathcal{A}(\Pi)$.
Suppose that $\mathcal{W}^{\psi^{-1}_{N'}}(\cdot,\varphi')=\prod_v W'_v$, $\mathcal{W}^{\psi_{N_M}}(\cdot,\tilde{\varphi})=\prod_v \tilde{W}_v$, $\Phi=\otimes_v \Phi_v$.
Then for any sufficiently large finite set of places $S$, we have
\begin{equation}
\label{Before-Limit-Symplectic}
  \langle \varphi', {\rm FJ}_{\psi_{N_{\mathbb{M}}}}(\mathcal{E}(s,\tilde{\varphi}),\Phi) \rangle_{G'}=\frac{{\rm vol}(N'(\mathcal{O}_S) \backslash N'(F_S))}{\prod_{j=1}^n\zeta^S_F(2j)} \cdot
   \frac{L^S(s+\frac{1}{2},\sigma \times \pi)}{L^S(2s+1,\pi, {\rm Sym}^2)} \prod_{v \in S} J(s,W'_v,\tilde{W}_v,\Phi_v),
\end{equation}
where on the right-hand side we take the unnormalized Tamagawa measure on $G'(F_S)$, $N'(F_S)$, $V(F_S)$ and $V_{\gamma}(F_S)$
(which are independent of the choice of gauge forms )
\end{proposition}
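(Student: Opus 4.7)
The plan is to follow the same template as the proof of the even orthogonal analogue, adapting the unfolding and unramified calculations to the symplectic / metaplectic setting. The starting point is the Rallis-type unfolding identity \cite[Theorem 10.4, (10.6)]{GRS11} for the Fourier--Jacobi coefficient of the Siegel Eisenstein series on $G$ against a cusp form on $G'$. Using the fact that the Tamagawa volume of $G'(F)\backslash G'(\mathbb{A})$ is $1$ for ${\rm Sp}_{2n}$ (unlike the orthogonal case where it is $2$), one unfolds to obtain
\[
 \langle \varphi',{\rm FJ}_{\psi_{N_{\mathbb{M}}}}(\mathcal{E}(s,\tilde{\varphi}),\Phi)\rangle_{G'}
 = {\rm vol}(N'(F)\backslash N'(\mathbb{A}))\int_{N'(\mathbb{A})\backslash G'(\mathbb{A})} \mathcal{W}^{\psi^{-1}_{N'}}(g,\varphi')\,A^{\psi}(s,g,\tilde{\varphi},\Phi)\,dg
\]
for $\mathrm{Re}(s)$ large, with meromorphic continuation.

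Next, I would factor. Since $\varphi'$, $\tilde{\varphi}$, and $\Phi$ are factorizable, $\mathcal{W}^{\psi^{-1}_{N'}}(\cdot,\varphi')=\prod_v W'_v$ and $A^{\psi}(s,g,\tilde{\varphi},\Phi)$ decomposes via the global-to-local transform into a product of $A^{\psi}(s,g_v,\tilde{W}_v,\Phi_v)$ at each place (the global $A^{\psi}$-transform, built from a Whittaker integration composed with a $V_\gamma\backslash V$ integration and the theta pairing, is Eulerian exactly as in the orthogonal case). This recasts the adelic integral as a product of the local Rankin--Selberg--Shimura integrals $J(s,W'_v,\tilde{W}_v,\Phi_v)$.

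At all finite unramified $v$ outside a sufficiently large $S$, I would apply Kaplan's spherical calculation \cite{Kap15} (cf.\ \cite[(10.62)]{GRS11})
\[
 J(s,{W'_v}^{\circ},\tilde{W}_v^{\circ},\Phi_{0,v})
 =\frac{{\rm vol}(K'_v)}{{\rm vol}(N'(F_v)\cap K'_v)}\cdot\frac{L(s+\tfrac12,\sigma_v\times\pi_v)}{L(2s+1,\pi_v,{\rm Sym}^2)},
\]
and assemble the local factors over $v\notin S$ into the partial ratio $L^S(s+\tfrac12,\sigma\times\pi)/L^S(2s+1,\pi,{\rm Sym}^2)$, while the remaining $v\in S$ contribute the product $\prod_{v\in S} J(s,W'_v,\tilde{W}_v,\Phi_v)$. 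The ${\rm vol}(N'(\mathcal{O}_S)\backslash N'(F_S))$ appears from comparing the adelic measure of $N'(F)\backslash N'(\mathbb{A})$ with the local volumes of $N'(F_v)\cap K'_v$ at unramified places.

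The only nontrivial accounting is the normalization: using the unnormalized Tamagawa measure on $G'(F_S)$ introduces the factor $\Delta^S_{{\rm Sp}_{2n}}(1)^{-1}=\bigl(\prod_{j=1}^n \zeta^S_F(2j)\bigr)^{-1}$ from Gross's dual motive of ${\rm Sp}_{2n}$, following the bookkeeping in \cite{Xue17,II10}. Unlike the even orthogonal case, there is no $L(n+1,\chi_\tau)$ and no global factor $1/2$ (since the Tamagawa number of ${\rm Sp}_{2n}$ is $1$). Combining these three pieces yields the asserted identity. The main subtlety, as in \cite[Propositions 5.7 and 8.4]{LM16}, is keeping careful track of the Weil representation conventions (our $\omega_{\psi_\circ}$ versus the one in \cite{GRS11}) and the metaplectic cocycle when passing between the global unfolding and the local Eulerian decomposition of $A^{\psi}$; once those conventions are aligned, the argument proceeds just as in the orthogonal case.
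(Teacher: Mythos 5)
Your proposal follows essentially the same route as the paper's own proof: unfold via \cite[Theorem 10.4]{GRS11}, note the Tamagawa number of ${\rm Sp}_{2n}$ is $1$ (so no extra $1/2$), use factorizability of $A^{\psi}$ to Eulerize, plug in the unramified spherical computation (cf.\ \cite[(10.62)]{GRS11}) outside $S$, and account for the motive-normalization factor $\Delta^S_{{\rm Sp}_{2n}}(1)^{-1}=\prod_{j=1}^n\zeta_F^S(2j)^{-1}$ via Gross/Xue. The argument is correct and matches the paper in structure, references, and bookkeeping.
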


\begin{proof}
It is worthwhile to give our attention to the fact that the volume of $G'(F) \backslash G'(\mathbb{A})$, appearing on the right-hand side of \eqref{Petterson-Innerproduct},
is 1 equal to the Tamagawa number when we use the Tamagawa measure as in \cite{LM16,LM17}. 
We apply \cite[Theorem 10.4, (10.5)]{GRS11} in order to unfold
\begin{equation}
\label{symplectic-unfolding}
\langle \varphi', {\rm FJ}_{\psi_{N_{\mathbb{M}}}}(\mathcal{E}(s,\tilde{\varphi}),\Phi) \rangle_{G'}={\rm vol}(N'(F) \backslash N'(\mathbb{A}))\int_{N'(\mathbb{A})\backslash G'(\mathbb{A})} \mathcal{W}^{\psi^{-1}_{N'}}(g,\varphi')A^{\psi}(s,g,\tilde{\varphi},\Phi) \, dg.
\end{equation}
Similar to \cite[Propositions 5.7 and 8.4]{LM16}, our desired result is immediate from \eqref{symplectic-unfolding} in conjunction with the unramified computation \cite{GRS98} (cf. \cite[(10.62)]{GRS11}):
\begin{equation}
\label{symplectic-spherical-comp} 
 J(s,{W'_v}^{\circ},\tilde{W}_v^{\circ},\Phi_0)=\frac{{\rm vol}(K')}{{\rm vol}(N'(F_v)\cap K')} \frac{L(s+\frac{1}{2},\sigma_v\times \pi_v)}{L(2s+1,\pi_v,{\rm Sym}^2)}
\end{equation}
with $K'=K_{{\rm GL}_{2n}(F_v)}\cap G'(F_v)$.
Since we use the Tamagawa measure, the factor $\Delta^S_{{\rm Sp}_{2n}}(1)^{-1}$ pops up, where $\Delta^S_{{\rm Sp}_{2n}}(1)=L(M^{\vee}(s))$
is the partial $L$-function of the dual motive $M^{\vee}$ of the motive $M$ associated to $\Delta^S_{{\rm Sp}_{2n}}(1)$ by Gross \cite{Gro97}. Thanks to \cite[Notation and Convention]{Xue17},
\[
\Delta^S_{{\rm Sp}_{2n}}(1)=\prod_{j=1}^n \zeta^S_F(2i)
\]
for any finite set of places $S$.
\end{proof}

\subsection{Petersson inner products and Fourier coefficients}

Let $F$ denote a local field and let $\Pi$ be a genuine representation of $\tilde{\mathbb{M}}(F)$ of symplectic type.
We say that $\Pi$ is {\it good} if the following condition are satisfied for all $\psi$: 

\begin{enumerate}[label=$(\mathrm{\arabic*})$]
\item $\mathcal{D}_{\psi}(\Pi)$ is irreducible as a representation of $G'$.
\item $J(s,W',\tilde{W},\Phi)$ is holomorphic at $s=\frac{1}{2}$ for any $W' \in \mathcal{D}_{\psi^{-1}}(\Pi)$, $\tilde{W} \in {\rm Ind}^{\psi_{N_M}}(\Pi)$,
and $\Phi \in \mathcal{S}(F^n)$.
\item For any $W' \in \mathcal{D}_{\psi^{-1}}(\Pi)$, \\
$J(\frac{1}{2},W',\tilde{W},\Phi)$ factors through the map $\tilde{W} \mapsto (A^{\psi}(-\frac{1}{2}),\cdot, M(\frac{1}{2},\Pi)\tilde{W},\Phi)$.
\end{enumerate}
We know from \eqref{symplectic-G'-equivalent} that for $x \in G'$ and $v \in V$ we have
\[
 J(s,\sigma(x)W',I(s,\tilde{v}\tilde{\eta}(x))\tilde{W},\omega_{\psi^{-1}_{N_{\mathbb{M}}}}(v\tilde{x})\Phi)=J(s,W',\tilde{W},\Phi)
\]
where $\sigma=\mathcal{D}_{\psi^{-1}}(\Pi)$. Henceforth if $\Pi$ is good, there is a non-degenerate $G'$-invariant pairing $[\cdot,\cdot]$
on $\mathcal{D}_{\psi^{-1}}(\Pi) \times \mathcal{D}_{\psi}(\Pi)$ such that
\[
  J\left( \frac{1}{2}, W', \tilde{W},\Phi \right)=\left[ W',A^{\psi}\left(-\frac{1}{2},\cdot,M\left(\frac{1}{2},\Pi \right)\tilde{W}, \Phi \right) \right]
\]
for any $W' \in \mathcal{D}_{\psi^{-1}}(\Pi)$, $\tilde{W} \in {\rm Ind}(\mathcal{W}^{\psi_{N_M}}(\Pi))$, and $\Phi \in \mathcal{S}(F^n)$.
\begin{equation}
\label{cpi-equation-symplectic}
  \int^{\rm st}_{N'} J \left( \frac{1}{2},\sigma(u)W',\tilde{W},\Phi \right) \psi_{N'}(u) \, du=c_{\Pi}W'(e) A^{\psi} \left(-\frac{1}{2}, e, M \left(\frac{1}{2},\Pi \right)\tilde{W},\Phi \right).
\end{equation}

\begin{proposition}
\label{unramified-constant-symplectic}
Suppose that $F$ is $p$-adic with $p \neq 2$, $\Pi$ is of symplectic type and unramified and $\psi$ is unramified.
Then $\sigma=\mathcal{D}_{\psi^{-1}}(\Pi)$ is irreducible and unramifield. Let $\tilde{W}^{\circ} \in {\rm Ind}(\mathbb{W}^{\psi_{N_M}}(\Pi))$ be the standard
unramified $\tilde{K}$-vector and let ${W'}^{\circ}$ be $K'$-invariant with $W'(e)=1$. Then holds with $c_{\Pi}=1$.
\end{proposition}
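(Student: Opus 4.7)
The plan is to imitate the proof of Proposition \ref{unramified-constant-even-orthogonal} verbatim, keeping track of the modifications needed for the symplectic/metaplectic setting. First I would establish the irreducibility and unramifiedness of $\sigma=\mathcal{D}_{\psi^{-1}}(\Pi)$ by the same argument as in \cite[Lemma 5.4]{LM17}, invoking \cite[Theorem 5.7]{GRS11} in place of \cite[Theorem 5.6]{GRS11}; this is where the image of the descent at an unramified place is explicitly described, and it allows us to identify the Satake parameters of $\sigma$ with those of $\pi\boxplus\omega_\pi$ (matching the weak-lift description following Working Hypothesis \ref{symplectic-working-hypothesis}). From the discussion preceding the statement we already have $\mathrm{vol}(K')=(\prod_{j=1}^n\zeta_F(2j))^{-1}$, which under the normalization $\mathrm{vol}(U\cap K)=1$ used in \eqref{unramified-symplectic-intertwining} is the analogue of the volume computation in the even orthogonal case.

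Next, I would combine \cite[Proposition 2.14]{LM15} with the unramified Rankin--Selberg--Shimura computation \eqref{symplectic-spherical-comp} to write
\[
\int^{\mathrm{st}}_{N'} J\!\left(\tfrac{1}{2},\sigma(u){W'}^\circ,\tilde W^\circ,\Phi_0\right)\psi^{-1}_{N'}(u)\,du
=\frac{\mathrm{vol}(N'\cap K')}{\mathrm{vol}(K')}\cdot\frac{J\!\left(\tfrac{1}{2},{W'}^\circ,\tilde W^\circ,\Phi_0\right)}{L(1,\sigma,\mathrm{Ad})}.
\]
Since $\sigma$ is a representation of $G'=\mathrm{Sp}_{2n}$ whose dual group is $\mathrm{SO}_{2n+1}(\mathbb{C})$, its adjoint representation is the exterior square of the standard representation. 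Combined with the weak lift $\sigma^\vee\rightsquigarrow\pi\boxplus\omega_\pi$ and the vanishing $L(s,\omega_\pi,\wedge^2)=1$ for the one-dimensional summand, this yields
\[
L(s,\sigma,\mathrm{Ad})=L(s,\pi\boxplus\omega_\pi,\wedge^2)=L(s,\pi,\wedge^2)L(s,\pi\times\omega_\pi),
\]
while $L(s,\sigma\times\pi)=L(s,\pi\times\pi)L(s,\omega_\pi\times\pi)$ by the same weak lift.

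Plugging these factorizations into the previous display and using $L(s,\pi\times\pi)=L(s,\pi,\mathrm{Sym}^2)L(s,\pi,\wedge^2)$, the $L(1,\pi,\wedge^2)$ and $L(1,\pi\times\omega_\pi)$ factors cancel, leaving
\[
\frac{L(1,\sigma\times\pi)}{L(2,\pi,\mathrm{Sym}^2)L(1,\sigma,\mathrm{Ad})}=\frac{L(1,\pi,\mathrm{Sym}^2)}{L(2,\pi,\mathrm{Sym}^2)}.
\]
By \eqref{unramified-symplectic-intertwining} this quantity is precisely $M\!\left(\tfrac{1}{2},\Pi\right)\tilde W^\circ(e)$, and since $A^\psi(s,e,\tilde W^\circ,\Phi_0)\equiv 1$ for the standard unramified data, it equals ${W'}^\circ(e)\,A^\psi\!\left(-\tfrac{1}{2},e,M\!\left(\tfrac{1}{2},\Pi\right)\tilde W^\circ,\Phi_0\right)$, yielding $c_\Pi=1$ upon comparison with \eqref{cpi-equation-symplectic}.

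The main obstacle will be the first step: verifying that the proof of \cite[Lemma 5.4]{LM17} adapts to $\widetilde{\mathrm{Sp}}_{2n}$ with the correct bookkeeping of Weil factors, eighth roots of unity, and the Rao cocycle, so that the unramified $\tilde K$-vector really does produce a nonzero spherical descent whose Satake parameter is controlled by the lift. Once that is settled, the rest is formal and parallels the even orthogonal case step by step, except that one must systematically use the $\mathrm{Sp}_{2n}$ volume formula $\mathrm{vol}(K')=(\prod_{j=1}^n\zeta_F(2j))^{-1}$ in lieu of the $\mathrm{SO}_{2n+2,\tau}$ one, and the adjoint decomposition above in lieu of the one involving $\chi_\tau$.
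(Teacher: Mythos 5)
Your proposal is correct and follows essentially the same route as the paper: the same invocation of the argument from \cite[Lemma 5.4]{LM17} for irreducibility and unramifiedness of the descent, the same use of \cite[Proposition 2.14]{LM15} together with \eqref{symplectic-spherical-comp}, the same factorizations of $L(s,\sigma,\mathrm{Ad})$ and $L(s,\sigma\times\pi)$ via the weak lift and \cite[\S 7]{GGP12}, the same cancellation producing $L(1,\pi,\mathrm{Sym}^2)/L(2,\pi,\mathrm{Sym}^2)$, and the same identification with $M(\tfrac12,\Pi)\tilde W^\circ(e)$ through \eqref{unramified-symplectic-intertwining}. The only discrepancy is cosmetic: the paper's proof points to \cite[Theorem 6.4]{GRS11} (the one already referenced in LM17's Lemma 5.4) rather than your Theorem 5.7 for the unramified description of the descent, and the paper does not spell out the dual-group reasoning (it simply cites GGP12), whereas you usefully make explicit that $\widehat{G'}=\mathrm{SO}_{2n+1}(\mathbb{C})$ so that the adjoint representation is $\wedge^2$ of the standard.
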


\begin{proof}
The first part of the proof is similar to the one of \cite[Lemma 5.4]{LM17}, so we omit it. Assuming ${\rm vol}(U \cap K)=1$ as in \eqref{unramified-symplectic-intertwining}, we deduce
\[
  {\rm vol}(K')={\rm vol}(U \cap K)(\prod_{j=1}^n\zeta^S_F(2j))^{-1}
  =(\prod_{j=1}^n\zeta^S_F(2j))^{-1}.
\]
In virtue of it along with \cite[Proposition 2.14]{LM15}, we find that
\[
 \int^{\rm st}_{N'} J\left( \frac{1}{2},\sigma(u){W'}^{\circ},\tilde{W}^{\circ},\Phi_0 \right) \psi^{-1}_{N'}(u) \,du
 =\frac{{\rm vol}(N' \cap K')}{{\rm vol}(K')} \frac{J(\frac{1}{2},{W'}^{\circ},W^{\circ},\Phi_0)}{L(1,\sigma,{\rm Ad})}
\]
It is verified from the description of $\sigma$ in this case \cite[Theorem 6.4]{GRS11} together with \cite[\S 7]{GGP12} that
\[
L(1,\sigma,{\rm Ad})=L(1,\pi \boxplus \omega_{\pi},\wedge^2)=L(1,\pi,\wedge^2)L(1,\pi\times \omega_{\pi})
\]
and $L(s,\sigma\times\pi)=L(s,\pi\times\pi)L(s,\omega_{\pi}\times\pi)$. We recall that $\pi=\pi^{\vee}$ in our case.
Thus \eqref{unramified-symplectic-intertwining} aligned with of \eqref{symplectic-spherical-comp} implies that the above is equal to
\begin{multline*}
 \frac{1}{L(1,\pi,\wedge^2)L(1,\pi\times \omega_{\pi})}\frac{L(1,\pi \times \pi)L(1,\omega_{\pi}\times\pi)}{L(2,\pi,{\rm Sym}^2)}
 =\frac{L(1,\pi,{\rm Sym}^2)}{L(2,\pi,{\rm Sym}^2)}\\
 =M\left(\frac{1}{2},\Pi\right)\tilde{W}^{\circ}(e)={W'}^{\circ}(e) A^{\psi}\left(-\frac{1}{2},e,M\left( \frac{1}{2},\Pi \right)\tilde{W}^{\circ}, \Phi_0 \right)
\end{multline*}
that we look for.
\end{proof}

At first glance, $c_{\Pi}$ implicitly depends on the choice of Haar measure on $G'$ (the former exploited in the definition of $J(s,W',\tilde{W},\Phi)$)
and $U$ (the latter used in the definition of the intertwining operator), but not on any other groups (refer to \cite[Remark 5.2]{LM17} for further details).
However the vector spaces underlying the Lie algebras of $G'$ and $U$ are both canonically isomorphic as vectors spaces to
\[
 \{ X \in {\rm Mat}_{2n,2n}(F) \,|\, {X^t}w_{2n}=w_{2n}X \}.
\]
We can henceforth identify the gauge forms on $G'$ and $U$. With this in hand, $c_{\Pi}$ does not depend on any choice provided that
we use the unnormalized Tamagawa measure on $G'$ and $U$ with respect to the same gauge form.
We will say that the Haar measures on $G'$ and $U$ are {\it compatible} in this case.

\begin{proposition}
\label{symplectic-good-repn}
 Let $\Pi \in {\rm Scusp}_k\, \widetilde{\mathbb{M}}$ and $\sigma=\mathcal{D}_{\psi^{-1}}(\Pi)$. Assume our Working Hypothesis \ref{symplectic-working-hypothesis}. 
Let $\varphi' \in \sigma$ be an irreducible $\psi^{-1}_{N'}$-generic cuspidal automorphic representation of $G'$ and $\tilde{\varphi} \in \mathcal{A}(\Pi)$.
Suppose that $\mathcal{W}^{\psi^{-1}_{N'}}(\cdot,\varphi')=\prod_v W'_v$, $\mathcal{W}^{\psi_{N_M}}(\cdot,\tilde{\varphi})=\prod_v \tilde{W}_v$ , $\Phi=\otimes_v \Phi_v$.
Then for all $v$, $\Pi_v$ is good. Moreover for any sufficiently large finite set of places $S$, we have
\begin{equation}
\label{EulerProducts-Symplectic}
  \langle \varphi', {\rm FJ}_{\psi_{N_{\mathbb{M}}}}(\mathcal{E}_{-k} \tilde{\varphi},\Phi) \rangle_{G'}=
  \frac{{\rm vol}(N'(\mathcal{O}_S) \backslash N'(F_S))}{\prod_{j=1}^n\zeta^S_F(2j)}\cdot n^S_{-k,1} \prod_{v \in S} J\left(\frac{1}{2},W'_v,\tilde{W}_v,\Phi_v\right),
\end{equation}
where
\[
 n^S_{-k,1}=\frac{\lim_{s \rightarrow 1} L^S(s,\pi \times \pi)L^S(s,\pi \times \omega_{\pi})}{L^S(2,\pi,{\rm Sym}^2)}.
\]
\end{proposition}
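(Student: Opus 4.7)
The plan is to mimic the proof of Proposition \ref{even-orthogonal-good-repn}, making the necessary adjustments for the metaplectic cover, the Weil representation, and the Fourier--Jacobi coefficient in place of the Gelfand--Graev one. I would first argue that $\mathcal{D}_{\psi^{-1}}(\Pi_v)$ is irreducible at every place $v$: the factorization \eqref{FJ-Whittaker-Symplectic-decomp} writes the global Whittaker--Fourier coefficient of the descent as a product of local descent functionals, and combined with Working Hypothesis \ref{symplectic-working-hypothesis} on the irreducibility of the global descent $\sigma=\mathcal{D}_{\psi^{-1}}(\Pi)$ this forces local irreducibility just as in Proposition \ref{even-orthogonal-good-repn}.

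Next I would carry out the pole analysis of \eqref{Before-Limit-Symplectic} at $s=\tfrac{1}{2}$. Since $\pi = \pi_1 \boxplus \dotsm \boxplus \pi_k$ with $\pi_i \cong \pi_i^\vee$ pairwise inequivalent,
\[
 L^S(s,\sigma\times\pi) = L^S(s,\pi\times\pi) L^S(s,\pi\times \omega_\pi)
\]
has a pole of order exactly $k$ at $s=\tfrac{1}{2}$ (each factor $L^S(s,\pi_i\times\pi_i)$ contributes a simple pole at $s=1$), while $L^S(2s+1,\pi,{\rm Sym}^2)$ is nonzero there. By part (2) of the local Rankin--Selberg--Shimura integral proposition, each $J(s,W'_v,\tilde W_v,\Phi_v)$ can be made nonvanishing at $s=\tfrac{1}{2}$, so the right-hand side of \eqref{Before-Limit-Symplectic} admits a pole of order at least $k$ there. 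The left-hand side has a pole of order at most $k$ because $\mathcal{E}(s,\tilde\varphi)$ does and $\varphi'$ is rapidly decreasing. Multiplying \eqref{Before-Limit-Symplectic} by $(s-\tfrac{1}{2})^k$ and passing to the limit thus yields both the holomorphy of $J(s,W'_v,\tilde W_v,\Phi_v)$ at $s=\tfrac{1}{2}$ (for all $v$ simultaneously) and the identity \eqref{EulerProducts-Symplectic}, where $n^S_{-k,1}$ is the correct residue after using the factorization of $L^S(s,\sigma\times\pi)$.

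Finally, I would verify the factorization property in the definition of goodness. Fix a place $v_0$ and suppose $\tilde W_{v_0}$ and $\Phi_{v_0}$ satisfy $A^{\psi}\!\left(-\tfrac{1}{2},\cdot, M\!\left(\tfrac{1}{2},\Pi_{v_0}\right)\tilde W_{v_0},\Phi_{v_0}\right) \equiv 0$. By \eqref{FJ-Whittaker-Symplectic-decomp}, $\mathcal{W}^{\psi_{N'}}(\cdot,{\rm FJ}_{\psi_{N_{\mathbb M}}}(\mathcal E_{-k}\tilde\varphi,\Phi)) \equiv 0$ for any factorizable data with the prescribed local component at $v_0$. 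The irreducibility and genericity of $\sigma$ then force ${\rm FJ}_{\psi_{N_{\mathbb M}}}(\mathcal E_{-k}\tilde\varphi,\Phi) \equiv 0$, hence $\langle \varphi',{\rm FJ}_{\psi_{N_{\mathbb M}}}(\mathcal E_{-k}\tilde\varphi,\Phi)\rangle_{G'}=0$ for all $\varphi'\in\sigma$. Plugging this vanishing into \eqref{EulerProducts-Symplectic} and choosing the remaining local data at $v \neq v_0$ so that $\prod_{v\in S,v\neq v_0}J(\tfrac{1}{2},W'_v,\tilde W_v,\Phi_v)\neq 0$ yields $J(\tfrac{1}{2},W'_{v_0},\tilde W_{v_0},\Phi_{v_0})=0$, which is exactly the desired factorization through $\mathcal D_{\psi}(\Pi_{v_0})$.

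The main obstacle lies in the second step: the bookkeeping of $L$-factor poles and residues in the metaplectic setting must be carried out delicately to ensure that exactly $k$ factors of $(s-\tfrac{1}{2})^{-1}$ appear on the right-hand side of \eqref{Before-Limit-Symplectic}, which in turn rests on the spherical computation \eqref{symplectic-spherical-comp} and the nonvanishing result from part (2) of the analytic properties of $J(s,W',\tilde W,\Phi)$. The passage from ``pole of order at least $k$'' and ``pole of order at most $k$'' to the clean equality of residues is routine once holomorphy of $M(s,\Pi)$ at $s=\tfrac{1}{2}$ (Proposition \ref{holomorphy-symplectic}) is in hand, but the global-to-local argument is indirect in exactly the same sense noted after Proposition \ref{even-orthogonal-good-repn}, and a direct local proof via a functional equation in the spirit of \cite[Proposition 5.3]{LM17} would likewise be available.
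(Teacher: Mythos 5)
Your proposal reproduces the paper's argument essentially verbatim: local irreducibility from \eqref{FJ-Whittaker-Symplectic-decomp} and Working Hypothesis \ref{symplectic-working-hypothesis}, then the pole comparison on both sides of \eqref{Before-Limit-Symplectic} to extract holomorphy of $J$ at $s=\tfrac12$ and the Euler product \eqref{EulerProducts-Symplectic}, then the vanishing argument at a fixed place $v_0$ to get the factorization through the descent. The only divergence is cosmetic: you correctly write that the right-hand side of \eqref{Before-Limit-Symplectic} has a pole of order at \emph{least} $k$, whereas the paper's text says ``at most $k$'' there (an apparent typo, since the analogous even-orthogonal proof says ``at least'').
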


\begin{proof}
We confirm that $\Pi_v$ is good for all $v$. To this end, we draw the conclusion from \eqref{FJ-Whittaker-Symplectic-decomp}
together with the irreducibility of the global descent that $\mathcal{D}_{\psi}(\Pi_v)$ is irreducible for all $v$.

\par
When $\Pi \in {\rm Scusp}_k \, \tilde{\mathbb{M}}$ and $\sigma \in \mathcal{D}_{\psi^{-1}}(\Pi)$, we obtain $L^S(s,\sigma\times\pi)=L^S(s,\pi\times\pi)L^S(s,\omega_{\pi}\times\pi)$,
which has a pole of order $k$ at $s=\frac{1}{2}$. On the one hand, $J(s,W'_v,\tilde{W}_v,\Phi_v)$ is non-vanishing for suitable $W'_v$, $\tilde{W}_v$, and $\Phi_v$,
from which it follows that the right-hand side of \eqref{Before-Limit-Symplectic} (when $\varphi' \in \sigma$) has a pole of order at most $k$ at $s=\frac{1}{2}$
for appropriate $\varphi'$, $\tilde{\varphi}$, and $\Phi$. On the other hand, the left-hand side of \eqref{Before-Limit-Symplectic} has a pole of order at most $k$
because this is true for $\mathcal{E}(s,\tilde{\varphi})$ and $\varphi'$ is rapidly decreasing. Multiplying \eqref{Before-Limit-Symplectic} by $(s-\frac{1}{2})^k$
and then taking the limit as $s \rightarrow \frac{1}{2}$, we conclude that $J(s,W'_v,\tilde{W}_v,\Phi_v)$ is holomorphic at $s=\frac{1}{2}$ for all $v$,
and for $\varphi' \in \sigma$, $\tilde{\varphi} \in \mathcal{A}(\Pi)$, and $\Phi \in \mathcal{S}(\mathbb{A}^n)$ we get
\begin{equation}
\label{After-Limit-Symplectic}
  \langle \varphi', {\rm FJ}_{\psi_{N_{\mathbb{M}}}}(\mathcal{E}_{-k} \tilde{\varphi},\Phi) \rangle_{G'}=
  \frac{{\rm vol}(N'(\mathcal{O}_S) \backslash N'(F_S))}{\prod_{j=1}^n\zeta^S_F(2j)}\cdot n^S_{-k,1} \prod_{v \in S} J\left(\frac{1}{2},W'_v,\tilde{W}_v,\Phi_v\right).
\end{equation}

\par
We fix a place $v_0$. We assume that $\tilde{W}_{v_0}$ and $\Phi_{v_0}$ are such that $A^{\psi} \left(-\frac{1}{2},\cdot,M\left( \frac{1}{2},\Pi_{v_0} \right)\tilde{W}_{v_0},\Phi_{v_0} \right) \equiv 0$.
Thereafter by \eqref{FJ-Whittaker-Symplectic-decomp}, $\tilde{\mathcal{W}}^{\psi_{\tilde{N}'}}(\cdot,{\rm FJ}_{\psi_{N_{\mathbb{M}}}}(\mathcal{E}_{-k} \tilde{\varphi},\Phi))\equiv 0$. With this in hand, the irreducibility and genericity of the descent ensure that ${\rm FJ}_{\psi_{N_{\mathbb{M}}}}(\mathcal{E}_{-k} \tilde{\varphi},\Phi) \equiv 0$. We deduce from \eqref{After-Limit-Symplectic} that $J\left(\frac{1}{2},W'_{v_0},\tilde{W}_{v_0},\Phi_{v_0}\right)=0$. Putting all together, $\Pi_{v_0}$ is good.
\end{proof}

While we expect that any irreducible generic genuine representation of symplectic type $\Pi_v$ is good, the proof that we give under the assumption that
$\Pi_v$ is the local component of $\Pi \in {\rm Scusp}_k\, \tilde{\mathbb{M}}$ is valid at the very least for the case of our interest which pertains to the global period integrals.
Our proof here requires a local to global argument. Whereas the global approach is overkill and indirect, one may directly prove the desired result by using a local functional
 equation \cite[Proposition 5.3]{LM17}. For the sake of brevity, we only include this indirect mean in keeping with the spirit of \cite[Propositions 5.7 and 8.4]{LM16}.

\begin{theorem}
\label{Main-Result-Symplectic-Theorem}
Let $\Pi \in {\rm Scusp}_k \, \tilde{\mathbb{M}}$ and $\sigma=\mathcal{D}_{\psi}(\Pi)$.
Assume our Working Hypothesis \ref{symplectic-working-hypothesis}.
Let $S$ be a finite set of places including all the archimedean places and even places such that $\Pi$ and $\psi$
are all unramified outside $S$. Then for any $\varphi \in \sigma$ and $\varphi^{\vee} \in \sigma^{\vee}$ which are fixed under $K'_v$
for all $v \notin S$, we have
\begin{multline}
\label{Main-Result-Symplectic}
\quad \mathcal{W}^{\psi_{N'}}(\varphi) \mathcal{W}^{\psi^{-1}_{N'}}(\varphi^{\vee})=2^{-k} \left( \prod_{v\in S} c^{-1}_{\Pi_v} \right) \frac{\prod_{j=1}^n\zeta^S_F(2j)}{L^S(1,\pi,\wedge^2)L^S(1,\pi\times \omega_{\pi})} \\
\times ( {\rm vol}(N'(\mathcal{O}_S) \backslash N'(F_S)))^{-1} \int^{\rm st}_{N'(F_S)} \langle \sigma(u)\varphi,\varphi^{\vee} \rangle_{G'} \psi^{-1} _{N'}(u)\,du.  \quad
\end{multline}
\end{theorem}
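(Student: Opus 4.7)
The plan is to mirror the proof of Theorem \ref{Main-Result-Even-Orthogonal-Theorem} step-by-step, substituting Fourier--Jacobi coefficients for Gelfand--Graev coefficients throughout. Under Working Hypothesis \ref{symplectic-working-hypothesis}, the descent $\sigma^{\vee}$ is spanned by Fourier--Jacobi coefficients of the form ${\rm FJ}_{\psi^{-1}_{N_{\mathbb{M}}}}(\mathcal{E}_{-k}\tilde{\varphi}^{\sharp}, \Phi^{\sharp})$ with $\tilde{\varphi}^{\sharp} \in \mathcal{A}(\Pi)$ and $\Phi^{\sharp} \in \mathcal{S}(\mathbb{A}^n)$, so by linearity it is enough to establish \eqref{Main-Result-Symplectic} when $\varphi^{\vee}$ is of this form.

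First I would apply Proposition \ref{FJ-Whittaker-Symplectic} (with $\psi$ replaced by $\psi^{-1}$) to decompose $\mathcal{W}^{\psi^{-1}_{N'}}(\varphi^{\vee})$ as $\frac{m^S_{-k,1}(\pi)}{2^k}$ times a product over $v \in S$ of local transforms $A^{\psi^{-1}}(-\tfrac{1}{2}, e, M(\tfrac{1}{2},\Pi_v)\tilde{W}^{\sharp}_v, \Phi^{\sharp}_v)$. Multiplying against $\mathcal{W}^{\psi_{N'}}(\varphi) = \prod_v W'_v(e)$ and invoking goodness of every $\Pi_v$ from Proposition \ref{symplectic-good-repn} (together with $c_{\Pi_v} = 1$ at the unramified places by Proposition \ref{unramified-constant-symplectic}), I would use the defining identity \eqref{cpi-equation-symplectic} locally at each $v \in S$ to replace each product of these local factors by $c^{-1}_{\Pi_v}\int^{\rm st}_{N'(F_v)} J(\tfrac{1}{2}, \sigma_v(u_v) W'_v, \tilde{W}^{\sharp}_v, \Phi^{\sharp}_v) \psi^{-1}_{N'(F_v)}(u_v)\, du_v$.

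Next I would reassemble the resulting product of local stable integrals back into a single global regularized integral by running the identity \eqref{EulerProducts-Symplectic} of Proposition \ref{symplectic-good-repn} in reverse. This yields $\int^{\rm st}_{N'(F_S)} \langle \sigma(u)\varphi, \varphi^{\vee} \rangle_{G'} \psi^{-1}_{N'}(u)\, du$ up to the factor $(\prod_{j=1}^n \zeta^S_F(2j)) / ({\rm vol}(N'(\mathcal{O}_S) \backslash N'(F_S)) \cdot n^S_{-k,1})$. The residual ratio $m^S_{-k,1}(\pi)/n^S_{-k,1}$ then collapses to $1/(L^S(1,\pi,\wedge^2)\, L^S(1, \pi \times \omega_\pi))$ using the factorization $L^S(s, \pi \times \pi) = L^S(s, \pi, {\rm Sym}^2)\, L^S(s, \pi, \wedge^2)$, which assembles into the prefactor displayed in \eqref{Main-Result-Symplectic}.

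The hard part is not analytic---all genuine analytic input is already packaged into Propositions \ref{FJ-Whittaker-Symplectic}, \ref{symplectic-good-repn}, and \ref{unramified-constant-symplectic}---but rather accounting for the correct power of $2$, namely $2^{-k}$ here rather than the $2^{1-k}$ appearing in \eqref{Main-Result-Even-Orthogonal}. The extra factor of $2$ in the even orthogonal case traces back to the Tamagawa volume ${\rm vol}({\rm SO}_{2n+2,\tau}(F) \backslash {\rm SO}_{2n+2,\tau}(\mathbb{A})) = 2$ entering \eqref{even-orthgonal-unfolding}, whereas the Tamagawa number of ${\rm Sp}_{2n}$ equals $1$, so no such factor appears in \eqref{symplectic-unfolding}. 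One must additionally verify the compatibility of the Weil-representation normalization, the metaplectic cocycle conventions, and the genuine-versus-ordinary intertwining structure when passing between $\psi$ and $\psi^{-1}$ in the symplectic descent.
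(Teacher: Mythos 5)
Your proposal matches the paper's proof step for step: reduce to $\varphi^{\vee}$ a Fourier--Jacobi coefficient ${\rm FJ}_{\psi_{N_{\mathbb{M}}}}(\mathcal{E}_{-k}\tilde{\varphi}^{\sharp},\Phi)$, decompose its Whittaker coefficient via Proposition~\ref{FJ-Whittaker-Symplectic}, insert the defining identity~\eqref{cpi-equation-symplectic} of $c_{\Pi_v}$ at each $v\in S$ (using Propositions~\ref{symplectic-good-repn} and~\ref{unramified-constant-symplectic}), reassemble via~\eqref{EulerProducts-Symplectic}, and collapse the ratio $m^S_{-k,1}/n^S_{-k,1}$ to $1/(L^S(1,\pi,\wedge^2)L^S(1,\pi\times\omega_\pi))$. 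Your explanation of where the factor $2^{-k}$ versus $2^{1-k}$ comes from---the Tamagawa number of ${\rm Sp}_{2n}$ being $1$ while that of ${\rm SO}_{2n+2,\tau}$ is $2$, feeding into~\eqref{symplectic-unfolding} versus~\eqref{even-orthgonal-unfolding}---is exactly the right bookkeeping and is the only place the two theorems genuinely differ.
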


\begin{proof}
Since the descent of $\Pi$ is generated by ${\rm FJ}_{\psi_{N_{\mathbb{M}}}}(\mathcal{E}_{-k}\tilde{\varphi}^{\flat},\Phi)$ for $\tilde{\varphi}^{\flat} \in \mathcal{A}(\Pi)$
and $\Phi \in \mathcal{S}(\mathbb{A}^n)$, we initially take up this for the case $\varphi^{\vee}={\rm FJ}_{\psi_{N_{\mathbb{M}}}}(\mathcal{E}_{-k}\tilde{\varphi}^{\sharp},\Phi)$ with $\tilde{\varphi}^{\sharp} \in \mathcal{A}(\Pi)$, and this identity \eqref{Main-Result-Symplectic} then extends via linearity to all $\varphi^{\vee}$. In view of Proposition \ref{FJ-Whittaker-Symplectic}, we find that
\[
 \mathcal{W}^{\psi_{N'}}(\varphi) \mathcal{W}^{\psi^{-1}_{N'}}({\rm FJ}_{\psi_{N_{\mathbb{M}}}}(\mathcal{E}_{-k}\tilde{\varphi}^{\sharp},\Phi) )
 = \mathcal{W}^{\psi_{N'}}(\varphi)  \frac{m^S_{-k,1}(\pi)}{2^k}  \prod_{v \in S}  A^{\psi^{-1}} \left( -\frac{1}{2}, e, M \left( \frac{1}{2}, \Pi_v \right) \tilde{W}^{\sharp}_v, \Phi_v \right). 
\]
Counting on the fact that $\Pi_v$ is good in Proposition \ref{symplectic-good-repn}, we employ Proposition \ref{unramified-constant-symplectic} to insert the identity \eqref{cpi-equation-symplectic} 
for all $v$ with $c_{\Pi_v}=1$ for almost all $v$. In doing so, we write
\begin{multline}
\label{Symplectic-Product-Form-one}
 \mathcal{W}^{\psi_{N'}}(\varphi) \mathcal{W}^{\psi^{-1}_{N'}}({\rm FJ}_{\psi_{N_{\mathbb{M}}}}(\mathcal{E}_{-k}\tilde{\varphi}^{\sharp},\Phi) )\\
 =\frac{m^S_{-k,1}(\pi) }{2^k} \left( \prod_{v \in S} c^{-1}_{\Pi_v} 
 \int^{\rm st}_{N'(F_v)} J\left( \frac{1}{2}, \sigma_v(u_v)W_v, \tilde{W}_v^{\sharp}, \Phi_v \right) \psi^{-1}_{N'(F_v)}(u_v)\,du_v \right).
\end{multline}
After integrating \eqref{EulerProducts-Symplectic} over $N'(F_S)$ for $v \in S$, a product of regularized integrals becomes
\begin{multline}
\label{Symplectic-Product-Form-two}
 \prod_{v \in S} \int^{\rm st}_{N'(F_v)} J\left( \frac{1}{2}, \sigma_v(u_v)W_v, \tilde{W}_v^{\sharp}, \Phi_v \right) \psi^{-1}_{N'(F_v)}(u_v)\,du_v \\
 =\frac{\prod_{j=1}^n\zeta^S_F(2j)}{ {\rm vol}(N'(\mathcal{O}_S)\backslash N'(F_S)) \cdot n^S_{-k,1} } \int^{\rm st}_{N'(F_S)} \langle \sigma(u)\varphi, {\rm FJ}_{\psi_{N_{\mathbb{M}}}}(\mathcal{E}_{-k}\tilde{\varphi}^{\sharp},\Phi) \rangle_{G'} \psi^{-1}_{N'}(u) \, du.
\end{multline}
The factorization $L^S(s,\pi \times \pi)=L^S(s,\pi \times \pi^{\vee})=L^S(s,\pi,{\rm Sym}^2)L^S(s,\pi,\wedge^2)$ allows us to simplify the fraction
\begin{equation}
\label{Symplectic-Ratio}
m^S_{-k,1}/n^S_{-k,1}=1/(L^S(1,\pi,\wedge^2)L^S(1,\pi\times \omega_{\pi}))
\end{equation}
whose partial $L$-function occurring in the denominator is nothing but the adjoint $L$-function $L^S(s,\sigma,{\rm Ad})$ for $\sigma$. Combining \eqref{Symplectic-Product-Form-one} and \eqref{Symplectic-Product-Form-two} with \eqref{Symplectic-Ratio} leads us to the result that we seek for.
\end{proof}

\section{The Case for Odd Special Orthogonal Groups} 
\label{Sec:5}

\subsection{Groups, Embeddings, and Weyl Elements} Let $G={\rm SO}_{4n}$ be the even split special orthogonal group defined by
\[
G={\rm SO}_{4n}=\{  g \in {\rm GL}_{4n} \,|\,   \prescript{t}{}{g} w_{4n} g=w_{4n} \}.
\]
It acts on the $4n$-dimensional space $F^{4n}$ with the standard basis
$e_1,\dotsm, e_{2n},e_{-2n}, \dotsm, e_{-1}$. 
We denote by $\langle \cdot,\cdot \rangle$ the symmetric form defined on the space $F^{4n}$ by $\langle u,v\rangle=\prescript{t}{}{u} w_{4n} v$  for any $u, v \in F^{4n}$. 
We define $G'={\rm SO}_{2n+1} \subseteq G$ consisting of elements fixing $e_1,\dotsm,e_{n-1},e_{1-n},\dotsm, e_{-1}$ and $e_{2n}+e_{-2n}$.
(Note that we take $\alpha=-2$ in their notation \cite[(3.40),(9.8)]{GRS11}). Let $\mathcal{V}$ be the orthogonal complement of the space spanned by 
$e_1,\dotsm,e_{n-1},e_{1-n},\dotsm, e_{-1}$ and $e_{2n}+e_{-2n}$. We choose $\mathcal{B}_{G'}=\{ e_n,\dotsm, e_{2n-1}, e_{2n}-e_{-2n},e_{1-2n},\dotsm,e_{-n} \}$
as an ordered basis for $\mathcal{V}$. The group $G'$ is given by
\[
G'={\rm SO}_{2n+1}=\{  g \in {\rm GL}_{2n+1} \,|\,   \prescript{t}{}{g} \begin{pmatrix} && w_{n} \\ &-2& \\ w_{n}& & \end{pmatrix} g=\begin{pmatrix} && w_{n} \\ &-2& \\ w_{n}& & \end{pmatrix} \}.
\]
Let $\mathcal{B}'_{G'}=\{ e_n,\dotsm, e_{2n-1},e_{2n}+e_{-2n}, e_{2n}-e_{-2n},e_{1-2n},\dotsm,e_{-n} \}$ be the basis of $\mathcal{V} \oplus \langle e_{2n}+e_{-2n} \rangle$
obtained by adding $e_{2n}+e_{-2n}$ to $\mathcal{B}_{G'}$ as the $n+1^{th}$ vector.  In coordinates, the embedding $\eta : {\rm SO}_{2n+1} \rightarrow {\rm SO}_{4n} $ is given by
\[
\eta : \begin{pmatrix} A_1& A_2 & A_3 \\ A_4 & A_5 & A_6 \\ A_7 & A_8 & A_9 \\ \end{pmatrix} \mapsto {\rm diag}(1_{n-1},\mathcal{M} \begin{pmatrix} A_1&& A_2 & A_3 \\ &1&& \\ A_4 && A_5 & A_6 \\ A_7 && A_8 & A_9 \\ \end{pmatrix} \mathcal{M}^{-1},1_{n-1}),
\]
where $A_1$ and $A_9$ are $n \times n$ matrices, $A_5$ is a $1 \times 1$ matrix,  and
\[
  \mathcal{M}={\rm diag} (1_n, \begin{pmatrix} 1&\;\;\;1 \\  1&-1 \end{pmatrix} ,1_n)
\]
is the change of coordinate matrix from the standard basis to $\mathcal{B}'_{G'}$. Specifically, the image can be explicitly computed as follows,
\[
\eta ( \begin{pmatrix} A_1& A_2 & A_3 \\ A_4 & A_5 & A_6 \\ A_7 & A_8 & A_9 \\ \end{pmatrix})
={\rm diag}(1_{n-1},\begin{pmatrix}  \;\;\;A_1 & \frac{1}{2} A_2 &  -\frac{1}{2} A_2 & \;\;\; A_3 \\  \;\;\;A_4  &  1/2\cdot (1+A_5) &1/2 \cdot (1-A_5)  & \;\;\; A_6 \\
-A_4  & 1/2 \cdot (1-A_5) &1/2 \cdot (1+A_5)  & -A_6 \\  \;\;\;A_7 & \frac{1}{2} A_8 &  -\frac{1}{2} A_8 & \;\;\;  A_9 \\
 \end{pmatrix},1_{n-1}).
\]
For instance  \cite[(9.8)]{GRS11},
a standard unipotent subgroup $g$ in ${\rm SO}_{2n+1}$ and the image $\eta(g)$ in ${\rm SO}_{4n}$, with respect to the standard basis of $F^{4n}$, are
\[
 g=\begin{pmatrix} 1_n &2x&y \\ &1&\check{x}\\ && 1_n \end{pmatrix} \in {\rm SO}_{2n+1}, \quad \eta(g)={\rm diag}(1_{n-1}, \begin{pmatrix} 1_n &(x,-x)&y \\ & 1_2& \begin{pmatrix*}[r]  \check{x} \\ -\check{x}  \end{pmatrix*}  \\ && 1_n \end{pmatrix} ,1_{n-1})  \in {\rm SO}_{4n}.
\]
Let $P'=M'\ltimes U'$ be the Seigel parabolic subgroup of $G'$ with the Levi part $M'=\varrho'(\mathbb{M}')$ and the Seigel unipotent subgroup $U'$,
where $\varrho' : m \mapsto {\rm diag}(1_{n-1}, m, 1_2, m^{\ast}, 1_{n-1})$. Let us set 
\[
\mathfrak{a}_n=\{ x \in {\rm Mat}_{n,n} \,|\, \check{x}=-x \}.
\] 
Then $U'=U'_1 \ltimes U'_0$ with
\[
  U'_0=\{{\rm diag}(1_{n-1}, \begin{pmatrix} 1_n &&u \\ &1_2& \\ && 1_n \end{pmatrix} ,1_{n-1}) \,|\, u \in \mathfrak{a}_n \}
\]
and
\[
U'_1=\{ {\rm diag}(1_{n-1},\begin{pmatrix} 1_n &(v,-v)& \\ & 1_2& \begin{pmatrix*}[r]  \check{v} \\ -\check{v}  \end{pmatrix*}  \\ && 1_n \end{pmatrix}, 1_{n-1}) \,|\, v \in F^n \}.
\]
Let $N'$ (resp. $N$) be the standard maximal unipotent subgroup of $G'$ (resp. $G$). Then $N'=N'_{M'} \ltimes U'$ with $N'_{M'}=\varrho'(N'_{\mathbb{M}'})$.
Let $V$ be the unipotent radical in $G$ of the standard parabolic subgroup with Levi ${\rm GL}_1^{n-1} \times {\rm SO}_{2n+2}$:
\[
 V=\{ \begin{pmatrix} v & u & \;\;\;y \\ &1_{2n+2}& -\check{u} \\ && \;\;\; v^{\ast} \end{pmatrix} \in {\rm SO}_{4n} \,|\, v \in Z_n, u \in {\rm Mat}_{n,2n+2} \}.
\]
Let $P^{\diamond}=M^{\diamond} \ltimes U^{\diamond}$ (resp. $P=M \ltimes U$) be the standard Seigel parabolic subgroup of ${\rm SO}_{2n+2}$ (resp. $G$) with
the Levi part $M^{\diamond}$ (resp. $M$) isomorphic to ${\rm GL}_{n+1}$ (resp. ${\rm GL}_{2n}$).
We let $\mathbb{M}={\rm GL}_{2n}$ and use the isomorphism $\varrho(h)={\rm diag}(h,h^{\ast})$ to identify $\mathbb{M}$ with $M$. Let 
\[
 w'_{U^{\diamond}}={\rm diag}(1_{n-1}, \begin{pmatrix} & 1_{n+1} \\ 1_{n+1} & \end{pmatrix}, 1_{n-1})
\]
represent the longest $M^{\diamond}$-reduced Weyl element of ${\rm SO}_{2n+2}$. Let $w_U=\begin{pmatrix} & 1_{2n} \\ 1_{2n} & \end{pmatrix}$
represent the longest $M$-reduced Weyl element of $G$. We write
\[
 \gamma=w_U(w'_{U^{\diamond}})^{-1}=\begin{pmatrix} &1_{n+1}&&\vspace{-.1cm}\\ \vspace{-.1cm}&&&1_{n-1}\\ 1_{n-1}&&\vspace{-.1cm}&\\ &&1_{n+1}& \end{pmatrix}.
\]
Let us denote
\[
 V_{\gamma}=V \cap \gamma^{-1} N \gamma=\{ \begin{pmatrix} v &&u&\vspace{-.1cm}\\ &1_{n+1}&\vspace{-.1cm}&-\check{u} \\ &&1_{n+1}&\vspace{-.1cm}\\ &&&v^{\ast} \end{pmatrix}\,|\, v \in Z_n, u \in {\rm Mat}_{n,n+1} \}.
\]
Let $N_M$ (resp. $N_{\mathbb{M}}$) be the standard maximal unipotent subgroup of $M$ (resp. $\mathbb{M}$).
Let $K=K_{{\rm GL}_{4n}} \cap {\rm SO}_{4n}$. Then $K$ is a maximal compact subgroup of ${\rm SO}_{4n}$.

\subsection{Additive Characters} We fix a non-trivial additive character $\psi$ of $F$ and a non-degenerate 
character $\psi_{N_{\mathbb{M}}}$ of $N_{\mathbb{M}}$. Let $\psi_{\circ}$ be the non-trivial character of $F$ given by
\[
  \psi_{\circ}(x)=\psi_{N_{\mathbb{M}}}(1_{2n}+x\epsilon_{n,n+1})
\]
for $x \in F$. As discussed in \cite[Remark 6.4]{LM17} (cf. \cite[\S 4.1, \S 6.1]{LM16}), the statements in the sequel will not
depend on the choice of $\psi_{N_{\mathbb{M}}}$. For convenience, we set
\[
 \psi_{N_{\mathbb{M}}}(u)=\psi(u_{1,2}+\dotsm+u_{2n-1,2n}).
\]
Therefore $\psi_{\circ}=\psi$. We define the characters of various unipotent groups. We denote by $\psi_N$ the degenerate character on $N$ given by $\psi_N(uv)=\psi_{N_M}(u)$ for any $u \in N_M$
 and $v \in U$. 
 Let $\psi_{N_M}$ be the non-degenerate character of $N_M$ given by $\psi_{N_M}(\varrho(u))=\psi_{N_{\mathbb{M}}}(u)$.
 Let $\psi_{N'_{M'}}$ be the non-degenerate character of $N'_{M'}$ such that $\psi_{N'_{M'}}(\varrho(u))=\psi_{N'_{\mathbb{M}'}}(u)$.
 With the choice of $\psi_{N_{\mathbb{M}}}$, we have
 \[
  \psi_{N'_{\mathbb{M}'}}(u')=\psi(u'_{1,2}+\dotsm+u'_{n-1,n}).
 \]
 Let $\psi_{U'}$ be the character on $U'$ given by $\psi_{U'}(u)=\psi(u_{2n-1,2n})^{-1}$. Then the non-degenerate character $\psi_{N'}$
 is given by (cf. \cite[7.1]{LM16})
 \[ 
  \psi_{N'}(uv)=\psi_{N'_{M'}}(u)\psi_{U'}(v)=\psi(u_{n,n+1}+\dotsm+u_{2n-2,2n-1}-v_{2n-1,2n})
 \]
with $u \in N'_{M'}$ and $v \in U'$. Our choice is consistent with the characters considered in  \cite[(9.9)]{GRS11} as we have specified $\lambda=-1$ in their notation (cf. \cite[Theorem 9.4]{GRS11}). An element in $V$ can be written as $vu$ where $u$ fixes $e_1,\dotsm,e_n$, $v$ fixes $e_{n+1},e_{n+2},\dotsm,e_{-n-1}$ and 
we set \cite[(3.48)]{GRS11}
\[
 \psi_V(uv)=\psi^{-1}(v_{1,2}+\dotsm+v_{n-1,n})\psi^{-1}(u_{n,n+1}+u_{n,n+2}).
\]

\subsection{Whittaker Models and the Intertwining Operator}

Let $\pi$ be an irreducible generic representation of $\mathbb{M}$
 with its Whittaker model $\mathbb{W}^{\psi_{N_{\mathbb{M}}}}(\pi)$ with respect to the character $\psi_{N_{\mathbb{M}}}$.
 Similarly we use the notation $\mathbb{W}^{\psi^{-1}_{N_{\mathbb{M}}}}$, $\mathbb{W}^{\psi_{N_M}}$, $\mathbb{W}^{\psi^{-1}_{N_M}}$.

Given $g \in G$, we define $\nu(g)$ by $\nu(u \varrho(m)k)=|\det m|$ for any $u \in U$, $m \in \mathbb{M}$, and $k \in K$
via the Iwasawa decomposition. For any $f \in C^{\infty}(G)$ and $s \in \mathbb{C}$, define $f_s(g)=f(g)\nu(g)^s$, $g \in G$.
Let ${\rm Ind}(\mathbb{W}^{\psi_{N_M}}(\pi))$ be the space of $G$-smooth left $U$-invariant function $W : G \rightarrow \mathbb{C}$
such that for all $g \in G$, the function $m \mapsto \delta^{\frac{1}{2}}_P(m)W(mg)$ on $M$ belongs to $\mathbb{W}^{\psi_{N_M}}(\pi)$.
Similarly we define ${\rm Ind}(\mathbb{W}^{\psi^{-1}_{N_M}}(\pi))$, ${\rm Ind}(\mathbb{W}^{\psi_{N_{\mathbb{M}}}}(\pi))$, ${\rm Ind}(\mathbb{W}^{\psi^{-1}_{N_{\mathbb{M}}}}(\pi))$.
For any $s \in \mathbb{C}$, we have a representation $I(s,g)$ on the space ${\rm Ind}(s,\mathbb{W}^{\psi_{N_M}}(\pi))$
given by $(I(s,g)W)_s)(x)=W_s(xg)$, $x,g \in G$.

\par
We define the intertwining operator  \cite[\S 3.1]{Kap15}  
\[
M(s,\pi) : {\rm Ind}(s,\mathbb{W}^{\psi_{N_M}}(\pi)) \rightarrow {\rm Ind}(-s,\mathbb{W}^{\psi_{N_M}}(\pi^{\vee}))
\]
by (the analytic continuation of)
\[
 M(s,\pi)W(g)=\nu(g)^s \int_U W_s(\varrho(\mathfrak{t})w_Uug) \, du,
\]
where $\mathfrak{t} \in {\rm GL}_{2n}$ is introduced in order to preserve the character $\psi_{N_M}$.

\par
In the case where $F$ is a $p$-adic and $\pi$ and $\psi$ are unramified, and there exist (necessarily unique) $K$-fixed element
$W^{\circ} \in {\rm Ind}(\mathbb{W}^{\psi_{N_M}}(\pi))$ and $\check{W}^{\circ} \in {\rm Ind}(\mathbb{W}^{\psi_{N_M}}(\pi^{\vee}))$ such 
that $W^{\circ}(e)=\check{W}^{\circ}(e)=1$, then we have \cite[\S 2]{Sha88} (assuming ${\rm vol}(U \cup K)=1$)
\begin{equation}
\label{unramified-odd-orth-intertwining}
 M(s,\pi)W^{\circ}={\rm vol}(U \cap K) \frac{L(2s,\pi,\wedge^2)}{L(2s+1,\pi,\wedge^2)} \check{W}^{\circ}.
\end{equation}
The following result is an analogue of \cite[Proposition 2.1]{LM16} and \cite[Proposition 4.1]{LM17}.

\begin{proposition}
\label{holomorphy-odd-orthgonal}
Suppose that $\pi \in {\rm Irr}_{\rm gen} \, \mathbb{M}$ such that $\pi \cong \pi^{\vee}$.
Then $M(s,\pi)$ is holomorphic at $s=\frac{1}{2}$.
\end{proposition}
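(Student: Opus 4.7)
The plan is to mirror closely the proofs of Propositions~\ref{holomorphy-even-orthgonal} and \ref{holomorphy-symplectic}, with the symmetric square $L$-factor replaced throughout by the exterior square, in accordance with the unramified computation \eqref{unramified-odd-orth-intertwining}. First I would write $\pi=\sigma_1\boxplus\dotsm\boxplus\sigma_k$ as an isobaric sum of essentially square-integrable representations, and decompose $M(s,\pi)$ as a composition of co-rank one intertwining operators. Using that $\pi\cong\pi^{\vee}$, one may arrange the $\sigma_i$ so that each is either self-dual or appears paired with its dual. The problem then reduces to proving the holomorphy at $s=\tfrac12$ of the two families of operators
\begin{enumerate}[label=$(\mathrm{\roman*})$]
\item $\sigma_i|\cdot|^s \boxplus \sigma_j^{\vee}|\cdot|^{-s} \rightarrow \sigma_j^{\vee}|\cdot|^{-s} \boxplus \sigma_i|\cdot|^s$, \quad $i,j=1,\dotsm,k$, $i\neq j$,
\item $M(s,\sigma_i)$, \quad $i=1,\dotsm,k$.
\end{enumerate}

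Part $(\mathrm{i})$ is an immediate consequence of \cite[Lemma 4.2]{LM17} together with the irreducibility of the isobaric sum $\sigma_i\boxplus\sigma_j^{\vee}$. For $(\mathrm{ii})$, when $F$ is archimedean the holomorphy follows from \cite[Lemma 4.3]{LM17} combined with the irreducibility of $\sigma_i\boxplus\sigma_i^{\vee}$. In the $p$-adic case, one obtains the stronger assertion that $L(2s,\sigma_i,\wedge^2)^{-1}M(s,\sigma_i)$ is holomorphic by invoking the exterior-square analogue of \cite[\S 2.4 Main Theorem 1]{Luo24}; one then observes that for a self-dual essentially square-integrable $\sigma_i$, the only potential pole of $L(s,\sigma_i,\wedge^2)$ lies at $s=0$ (when $\sigma_i$ is of symplectic type), and the factor is nonzero at $s=1$, so the ratio $L(2s,\sigma_i,\wedge^2)/L(2s+1,\sigma_i,\wedge^2)$ is regular at $s=\tfrac12$.

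The main anticipated obstacle is precisely the exterior-square input at $p$-adic places: Luo's theorem as stated concerns the symmetric square case, and one needs the analogous holomorphy of the normalized Siegel intertwining operator for ${\rm SO}_{4n}$. This is expected to go through by the same Langlands--Shahidi method, using Shahidi's local coefficients for the exterior square together with the standard module conjecture for self-dual essentially square-integrable representations. If a direct reference cannot be located, one can fall back on a case-by-case analysis via the Mœglin--Tadić classification of discrete series and Shahidi's explicit poles of $\gamma(s,\sigma_i,\wedge^2,\psi)$, which suffices to pin down the holomorphy at $s=\tfrac12$ required here.
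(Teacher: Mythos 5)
Your proof follows the paper's argument essentially step for step: the same decomposition of $M(s,\pi)$ into co-rank one intertwining operators, the same appeal to \cite[Lemma 4.2]{LM17} for $(\mathrm{i})$ and \cite[Lemma 4.3]{LM17} for archimedean $(\mathrm{ii})$, and the same invocation of Luo's theorem for the $p$-adic case to get that $L(2s,\sigma_i,\wedge^2)^{-1}M(s,\sigma_i)$ is holomorphic. Your caution about Luo's theorem being stated only for the symmetric square is unnecessary -- the paper cites \cite[\S 2.4 Main Theorem 1]{Luo24} directly for the exterior square normalization, exactly as it does in the even-orthogonal case for the symmetric square, so there is no gap to fill.
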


\begin{proof}
We write $\pi=\sigma_1 \boxplus \dotsm \boxplus \sigma_k$ where $\sigma_1,\dotsm,\sigma_k$ are essentially square-integrable.
By decomposing $M(s,\pi)$ into a product of co-rank one intertwining operators, it is suffices to show that the following intertwining operators
\begin{enumerate}[label=$(\mathrm{\roman*})$] 
\item \label{odd-orthognal-intertwining-differ} $\sigma_i |\cdot|^s \boxplus \sigma_j^{\vee}|\cdot|^{-s} \rightarrow \sigma_j^{\vee}|\cdot|^{-s} \boxplus \sigma_i|\cdot|^s, \quad i,j=1,2,\dotsm,k, i \neq j$,
\item \label{odd-orthognal-intertwining-same} $M(s,\sigma_i), \quad i=1,2,\dotsm,k$, 
\end{enumerate}
are holomorphic at $s=\frac{1}{2}$. The holomorphy of \ref{odd-orthognal-intertwining-differ} follows from \cite[Lemma 4.2]{LM17}
and the irreducibility of $\sigma_i \boxplus \sigma_j^{\vee}$. If $F=\mathbb{R}$ or $\mathbb{C}$, the holomorphy of \ref{odd-orthognal-intertwining-same}
is an immediate consequence of \cite[Lemma 4.3]{LM17} and the irreducibility of $\sigma_i \boxplus \sigma_i^{\vee}$. Under the identical condition of \cite[Lemma 4.3]{LM17},
Luo \cite[\S 2.4 Main Theorem 1]{Luo24} recently resolves the conjecture which turns out to be the stronger conclusion that $L(2s,\sigma_i,\wedge^2)^{-1}M(s,\sigma_i)$ is holomorphic for the $p$-adic case.
\end{proof}

\subsection{Period and Distinction}

\subsubsection{Global period}
Let $\mathfrak{S}$ be the Shalika subgroup
\[
 \mathfrak{S}=\left\{ \begin{pmatrix} g & \\ & g \end{pmatrix} \begin{pmatrix} 1_n & X \\ & 1_n \end{pmatrix} \,\middle|\, g \in \mathbb{M}',  X \in {\rm Mat}_{n,n} \right\}
\]
of $\mathbb{M}$ with the Shalika character $\psi_{\mathfrak{S}} \left( \begin{pmatrix} g & \\ & g \end{pmatrix} \begin{pmatrix} 1_n & X \\ & 1_n \end{pmatrix} \right)=\psi({\rm tr} \, X)$. Let $F$ be a number field. We say that $\pi \in {\rm Cusp}\, \mathbb{M}$ is of {\it orthogonal type} if
\[
 \int_{\mathfrak{S}(F) \backslash \mathfrak{S}(\mathbb{A})} \varphi(s)\psi^{-1}_{\mathfrak{S}}(s) \, ds \neq 0
\]
for some $\varphi$ in the space of $\pi$. The following characterization is due to Jacquet--Shalika \cite{}.

\begin{proposition}
Let $\pi \in {\rm Cusp}\, \mathbb{M}$. Then $\pi$ is of orthogonal type if and only if $L^S(s,\pi,\wedge^2)$ has a pole at $s=1$.
\end{proposition}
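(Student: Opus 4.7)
The plan is to deduce the equivalence from the Jacquet--Shalika integral representation of the partial exterior square $L$-function $L^S(s,\pi,\wedge^2)$ for $\mathbb{M}={\rm GL}_{2n}$, which has been precisely designed so that the Shalika period appears as an inner integral. The key construction to invoke is as follows. Given $\varphi\in\pi$ and a Schwartz--Bruhat function $\Phi\in\mathcal{S}(\mathbb{A}^n)$, form the mirabolic Eisenstein series
\[
E(g,s,\Phi)=|\det g|^{s}\sum_{\gamma\in P_n(F)\backslash {\rm GL}_n(F)}\int_{\mathbb{A}^{\times}}\Phi(ta_n\gamma g)|t|^{ns}\,d^{\times}t
\]
on ${\rm GL}_n(\mathbb{A})$, where $a_n={^t(0,\dots,0,1)}$, and consider
\[
Z(s,\varphi,\Phi)=\int_{Z_{\mathbb{M}}(\mathbb{A}){\rm GL}_n(F)\backslash{\rm GL}_n(\mathbb{A})}\int_{{\rm Mat}_n(F)\backslash{\rm Mat}_n(\mathbb{A})}\varphi\!\left(\begin{pmatrix}g&\\&g\end{pmatrix}\begin{pmatrix}1_n&X\\&1_n\end{pmatrix}\right)\psi^{-1}({\rm tr}\,X)\,dX\,E(g,s,\Phi)\,dg,
\]
which converges for $\Re(s)\gg 0$ and extends meromorphically to $\mathbb{C}$.

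First I would verify the unfolding: by the Fourier expansion of $\varphi$ along the unipotent radical of the $(n,n)$-parabolic subgroup of $\mathbb{M}$, the inner $X$-integral is precisely the Shalika period integrand, and $Z(s,\varphi,\Phi)$ factors through the Shalika functional. For factorisable data with $\pi$, $\psi$, and $\Phi$ unramified outside a finite set $S$, the standard Jacquet--Shalika unramified calculation identifies the resulting Euler product at places $v\notin S$ with the local exterior square $L$-factor, yielding
\[
Z(s,\varphi,\Phi)=L^S(s,\pi,\wedge^2)\cdot Z_S(s,\varphi,\Phi),
\]
where $Z_S$ is a product of local integrals that can be chosen to be non-vanishing at $s=1$. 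Next I would analyse the right-hand side via the pole of $E(g,s,\Phi)$ at $s=1$: a standard Godement--Tate computation gives $\operatorname{Res}_{s=1}E(g,s,\Phi)=c\,\Phi(0)$ for an explicit non-zero constant $c$, while $E(g,s,\Phi)$ is holomorphic at $s=1$ whenever $\Phi(0)=0$.

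The conclusion then separates into two directions. If $L^S(s,\pi,\wedge^2)$ has a pole at $s=1$, choose $\Phi$ with $\Phi(0)\neq 0$ and local data making $Z_S(1,\varphi,\Phi)\neq 0$; comparing residues forces the Shalika period of (a suitable right translate of) $\varphi$ to be non-zero, so $\pi$ is of orthogonal type. Conversely, if the Shalika period is non-zero for some $\varphi$, pick $\Phi$ with $\Phi(0)=0$ so that $E(g,s,\Phi)$ is holomorphic at $s=1$; then $Z(s,\varphi,\Phi)$ is holomorphic there, whereas replacing $\Phi$ by one with $\Phi(0)\neq 0$ and using holomorphy of the difference shows that $Z(s,\varphi,\Phi)$ must have a pole at $s=1$ coming from $E$, and this pole can only be absorbed by $L^S(s,\pi,\wedge^2)$. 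One needs here the a priori bound that $L^S(s,\pi,\wedge^2)$ has at most a simple pole at $s=1$, which follows from the Langlands--Shahidi analysis or from Jacquet--Shalika's own treatment.

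The main obstacle is the careful justification of the unfolding and the truncation/regularisation required to make the integral $Z(s,\varphi,\Phi)$ absolutely convergent in a neighbourhood of $s=1$ (since cuspidality of $\varphi$ is used for convergence of the $g$-integral at the cusp, but the Eisenstein series itself grows). Standard Arthur-style truncation of $E(g,s,\Phi)$, together with the fact that $\varphi$ is of rapid decay, handles this; the remaining delicate point is to ensure that, for a suitable choice of local data, the product $Z_S(s,\varphi,\Phi)$ is non-zero at $s=1$, which is essentially the local non-vanishing statement for Jacquet--Shalika local exterior square integrals and is known from their foundational work.
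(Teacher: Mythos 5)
The paper does not actually prove this proposition; it cites it to Jacquet--Shalika (note the empty \texttt{\textbackslash cite\{\}} in the source, evidently a missing reference to their exterior square $L$-function paper). Your proposal reconstructs precisely the Jacquet--Shalika argument that the paper is invoking: unfold the global Jacquet--Shalika integral against the mirabolic Eisenstein series on $\mathrm{GL}_n$, identify the Euler product with $L^S(s,\pi,\wedge^2)$, and detect the pole at $s=1$ through the residue $\mathrm{Res}_{s=1}E(g,s,\Phi)=c\,\Phi(0)$, which turns the residue of $Z(s,\varphi,\Phi)$ into (a nonzero multiple of) the Shalika period. This is the right mechanism and is the same route the paper relies on, so there is no methodological divergence to compare.

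Two small remarks on precision. First, your converse direction is stated in an unnecessarily roundabout way (the business about ``holomorphy of the difference''); the clean statement is simply that the residue of $Z$ at $s=1$ equals $c\,\Phi(0)$ times the Shalika period, that $Z=L^S\cdot Z_S$ with $Z_S$ holomorphic and, for suitable data, nonvanishing at $s=1$, and hence nonvanishing of the Shalika period with $\Phi(0)\neq 0$ forces a pole of $L^S$. Second, you implicitly need that the local factors $L_v(s,\pi_v,\wedge^2)$ for $v\in S$ are holomorphic and nonvanishing at $s=1$ so that the pole of $L^S$ and of the completed $L$-function are equivalent; this follows from unitarity bounds on the local parameters, but it is worth flagging since the statement is phrased in terms of the partial $L$-function. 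Neither of these affects the correctness of the overall proof.
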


We let ${\rm Ocusp}_k \, \mathbb{M}$ be the set of automorphic representation $\pi$ of the form $\pi_1 \times \dotsm \pi_k$,
where $\pi_i \in {\rm Cusp}\, {\rm GL}_{2n_i}(F)$, $i=1,\dotsm,k$ are pairwise inequivalent (that is, $\pi_i \not \cong \pi_j$ if $i \neq j$)
and of orthogonal type with $n_1+\dotsm+n_k=n$ and $n_i > 1$ for all $i$. In particular $\pi_1^{\vee} \cong \pi_i$
and the central character $\omega_{\pi}$ of $\pi$ is trivial on $\mathbb{A}^{\times}$. We let ${\rm Ocusp}\, \mathbb{M}=\cup_k {\rm Ocusp}_k\, \mathbb{M}$.

\subsubsection{Local distinction}

Let $F$ be a local field. We say that $\pi \in {\rm Irr} \, \mathbb{M}$ is of {\it orthogonal type} if it has a non-trivial $(\mathfrak{S}(F),\psi_{\mathfrak{S}(F)})$-invariant linear form $\lambda$.
In particular, the central character $\omega_{\pi}$ of $\pi$ is trivial on $F^{\times}$. We write ${\rm Irr}_{\rm orth}\, \mathbb{M}$ for the set of irreducible representations of orthogonal type.
Clearly, if $\pi$ is of orthogonal type in the global setting, then all its local components $\pi_v$ are irreducible and of orthogonal type as well.

\begin{lemma}
 \begin{enumerate}[label=$(\mathrm{\arabic*})$]
\item (\cite{JR96}--$p$-adic case; \cite{AGJ09}--archimedean case) Suppose that $\pi \in {\rm Irr}_{\rm orth} \, {\rm GL}_{2n}(F)$. Then the space of $(\mathfrak{S}(F),\psi_{\mathfrak{S}(F)})$-invariant linear forms on $\pi$ is one-dimensional. Moreover $\pi^{\vee} \cong \pi$.
\item (\cite[Proposition 3.12]{Jo20}--$p$-adic case) Suppose that $\pi \in {\rm Irr} \, {\rm GL}_{2n}(F)$ is square integrable and $F$ is p-adic.
Then $\pi \in {\rm Irr}_{\rm orth}(F)$ if and only if $L(s,\pi,\wedge^2)$ has a pole at $s=0$.
\item (cf. \cite[Theorem 1.1]{Mat17}--$p$-adic case) Suppose that $\pi_i \in {\rm Irr}_{\rm orth} \, {\rm GL}_{2n_i}(F)$, $i=1,2$ and the parabolic induction $\pi_1 \boxplus \pi_2$
is irreducible. Then $\pi_1 \boxplus \pi_2 \in {\rm Irr}_{\rm orth} \, {\rm GL}_{2n_1+2n_2}(F)$. 
 \end{enumerate}
\end{lemma}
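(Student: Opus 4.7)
The plan is to prove each part separately, largely following the paradigms of the cited references, since these are by now standard distinction-theoretic results. Throughout, I would use the fact that the Shalika subgroup $\mathfrak{S}$ sits inside the parabolic $P_{(n,n)}$ with unipotent radical identified with ${\rm Mat}_{n,n}$, so Frobenius reciprocity lets us transfer questions about Shalika functionals into questions about Jacquet modules with respect to $(\mathfrak{S}, \psi_{\mathfrak{S}})$.

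For part $(1)$, my approach is via the Gelfand pair technique. First, I would reduce the uniqueness statement to showing that every $(\mathfrak{S}(F)\times\mathfrak{S}(F), \psi_{\mathfrak{S}}\otimes \psi_{\mathfrak{S}}^{-1})$-equivariant distribution on $\mathbb{M}(F)$ is invariant under the anti-involution $g \mapsto w_{2n}\,{}^t g\, w_{2n}^{-1}$, and then deduce self-duality by the Gelfand--Kazhdan criterion. In the $p$-adic case, following Jacquet--Rallis, this reduces via Bernstein's localization principle to an orbit-by-orbit verification on the double coset space, and to checking that invariant distributions supported on closed orbits carry no nontrivial equivariant functionals transverse to the involution. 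For archimedean $F$, I would invoke the wave-front-set machinery of Aizenbud--Gourevitch--Jacquet to extend the argument to genuinely non-smooth distributions.

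For part $(2)$, I would exploit the Jacquet--Shalika integral representation of $L(s,\pi,\wedge^2)$: for a matrix coefficient (or a Whittaker function, in the Jo reformulation) the local zeta integral factors through a Shalika functional, so its meromorphic behavior is controlled by exactly the Shalika model. The forward implication then follows by producing a suitable test vector whose zeta integral has a pole at $s=0$, and the converse follows by showing that the residue at $s=0$ of the zeta integral defines a non-zero $(\mathfrak{S},\psi_{\mathfrak{S}})$-functional. The square-integrability hypothesis is essential for controlling convergence and for appealing to Casselman's square-integrability criterion when analyzing where poles can arise; this is how Jo handles it in the $p$-adic case.

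For part $(3)$, the strategy is Mackey-theoretic: realize $\pi_1\boxplus\pi_2$ as parabolic induction from $P_{(2n_1,2n_2)}$ and decompose $\mathbb{M}(F)$ into $(\mathfrak{S}(F), P_{(2n_1,2n_2)}(F))$-double cosets. The open orbit contributes a functional built from the given Shalika functionals on $\pi_1$ and $\pi_2$ via an intertwining integral, and non-open orbits must be shown to contribute nothing after using the cuspidality-like properties implied by orthogonal type together with irreducibility of $\pi_1 \boxplus \pi_2$ to rule out any hidden distributional obstructions. I would follow Matringe's argument closely, and use the uniqueness from part $(1)$ to identify the resulting functional as non-zero. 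The main obstacle here, and across parts $(2)$ and $(3)$, will be the archimedean case: the cited references concentrate on the $p$-adic setting, so passage to $\mathbb{R}$ or $\mathbb{C}$ requires separate arguments involving continuity of Jacquet functors, holomorphic continuation of intertwining integrals, and the Casselman--Wallach smoothification—though since the lemma is stated as a collection of cited results, it suffices to invoke the corresponding archimedean theorems once available or to reduce to the $p$-adic case by local-global means.
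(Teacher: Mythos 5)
The paper provides its own argument only for part $(3)$; parts $(1)$ and $(2)$ are cited verbatim from the literature (Jacquet--Rallis, Aizenbud--Gourevitch--Jacquet, and Jo respectively) and the paper offers no proof of them, so your sketches of the Gelfand--Kazhdan argument for $(1)$ and the Jacquet--Shalika zeta-integral argument for $(2)$, while plausible recapitulations of those references, are not what the paper does.

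For part $(3)$ you propose a full Mackey-theoretic double-coset decomposition, arguing that non-open orbits contribute nothing and then appealing to the uniqueness in $(1)$ to identify the open-orbit functional as non-zero. This is more machinery than is needed, and the last step is actually misplaced: uniqueness of the Shalika functional tells you nothing about whether a particular construction is non-vanishing, and invoking it here is close to circular, since you are precisely in the business of establishing that $\pi_1\boxplus\pi_2$ carries a non-zero Shalika functional. The paper instead writes down the open-orbit intertwining integral
\[
  \lambda(\phi)=\int_{{\rm Mat}_{n_2\times n_1}} (\beta_1\otimes\beta_2)\Bigl(\phi\Bigl(\begin{smallmatrix}1_{n_1}&&&\\ &1_{n_1}&&\\ &x&1_{n_2}&\\ &&&1_{n_2}\end{smallmatrix}\Bigr)\Bigr)\,dx
\]
directly and then cites Matringe for the $(\mathfrak{S},\psi_{\mathfrak{S}})$-equivariance and non-vanishing, and Friedberg--Jacquet (archimedean) and Matringe ($p$-adic) for absolute convergence. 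Because part $(3)$ is purely an existence statement, one non-zero functional suffices; the analysis of the geometric filtration on the induced representation and the vanishing on lower strata—which is the content needed for a multiplicity-one result—is extraneous. So your route is not wrong, but it is heavier than the paper's and its concluding appeal to part $(1)$ does not actually deliver non-vanishing; you would still need either a direct test-vector computation or the citation to Matringe's Lemma 3.1, exactly as the paper does.
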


\begin{proof}
Let $P^{\circ}=M^{\circ} \ltimes U^{\circ}$ be the parabolic subgroup of type $(2n_1,2n_2)$ of $\mathbb{M}$ so that $\pi:=\pi_1 \boxplus \pi_2$ is the parabolic induction of $\pi_1 \otimes \pi_2$.
Let $\beta_i$ be a non-trivial $(\mathfrak{S}(F),\psi_{\mathfrak{S}(F)})$-invariant linear functionals on $\pi_i$ for $i=1,2$. Let $\beta$ be the functional $\beta_1 \otimes \beta_2$
on $\pi_1 \otimes \pi_2$. We define a linear form $\lambda$ on $\pi$ by
\[
  \lambda(\phi)=\int_{{\rm Mat}_{n_2 \times n_1}} \beta ( \phi ( \begin{pmatrix} 1_{n_1} &\vspace{-.5mm}&& \\  &1_{n_1}&\vspace{-.5mm}&\\ &x&1_{n_2}&\vspace{-.5mm}\\ &&&1_{n_2} \end{pmatrix} ) ) \, dx.  
\]
Then $\lambda$ is a non-zero functionals on $\pi$ which is $\psi_{\mathfrak{S}}(F)$-invariant under $\mathfrak{S}(F)$ (cf. \cite[Lemma 3.1]{Mat17}). Furthermore the integral converges absolutely. The absolute convergence is asserted for archimedean cases in \cite[Lemma 3.1]{FJ93} and is settled for $p$-adic case in \cite[Lemma 3.2]{Mat17}.
\end{proof}

On the purpose of completeness, we recall the following classification theorem, due to Matringe, of the set ${\rm Irr}_{\rm gen, orth}\, {\rm GL}_n(F)$
of generic representations of orthogonal types.

\begin{proposition} (\cite[Corollary 1.1]{Mat17}) Assume that $F$ is a $p$-adic field. Then the set ${\rm Irr}_{\rm gen,orth} \, {\rm GL}_{2n}(F)$ consists of 
the irreducible representations of the form
\[
 \pi=(\sigma_1 \boxplus \sigma^{\vee}_1) \boxplus \dotsm \boxplus (\sigma_k \boxplus \sigma_k^{\vee}) \boxplus \tau_1 \boxplus \dotsm \boxplus \tau_{\ell}
\]
where $\sigma_1, \dotsm,\sigma_k$ are essentially square-integrable and $\tau_1,\dotsm,\tau_{\ell}$ are square-integrable of orthogonal type (i.e., $L(0,\tau_i,\wedge^2)=\infty$
for all $i=1,\dotsm,\ell$).
\end{proposition}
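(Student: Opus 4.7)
The plan is to reduce the classification, via the Langlands--Zelevinsky decomposition of generic representations, to two assertions: (i) every irreducible representation of the stated form admits a non-trivial Shalika functional, and (ii) these are the only such representations. Part (i) is a direct hereditary argument built from the previous lemma; part (ii) requires an analytic $L$-function input coupled with a Mackey/Bernstein--Zelevinsky filtration analysis on the Shalika subgroup.

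For (i), I would first verify that $\sigma \boxplus \sigma^{\vee}$ is irreducible (the Zelevinsky segments of $\sigma$ and $\sigma^{\vee}$ are unlinked after the common central twist) and carries a Shalika functional. To construct the functional, apply the open-orbit method: the Shalika subgroup $\mathfrak{S}(F)$ acts on $P^{\circ}(F) \backslash {\rm GL}_{2m}(F)$, with $P^{\circ}$ the block $(m,m)$ parabolic, with a distinguished open orbit whose stabilizer is (up to conjugation) a diagonal copy of $\mathbb{M}'(F)$ on which the Shalika character becomes trivial; integration along this orbit yields a convergent $(\mathfrak{S}(F),\psi_{\mathfrak{S}})$-equivariant functional pairing $\sigma$ against $\sigma^{\vee}$ through the canonical matrix-coefficient pairing, with convergence ensured by Matringe's $p$-adic estimates for the asymptotics of Whittaker functions. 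The explicit form is the one appearing in the proof of part~$(3)$ of the preceding lemma. The full isobaric sum is then treated by iterating part~$(3)$ against the $\tau_j$, which are of orthogonal type by hypothesis.

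For (ii), assume $\pi \in {\rm Irr}_{\rm gen, orth} \, {\rm GL}_{2n}(F)$ and write $\pi = \rho_1 \boxplus \dotsm \boxplus \rho_m$ uniquely (up to reordering) with each $\rho_i$ essentially square-integrable. Part~$(1)$ of the lemma gives $\pi \cong \pi^{\vee}$, so the multiset $\{\rho_i\}$ is stable under $\rho \mapsto \rho^{\vee}$: the non-self-dual pieces pair off as $\{\rho, \rho^{\vee}\}$, while any self-dual essentially square-integrable $\rho = \delta \otimes |\det|^s$ forces $s = 0$, hence $\rho$ is itself square-integrable. The remaining step, which is the main obstacle, is to rule out self-dual square-integrable summands of symplectic type.

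The plan to overcome this obstacle is to combine the analytic factorization
\[
 L(s, \pi, \wedge^2) = \prod_i L(s, \rho_i, \wedge^2) \prod_{i < j} L(s, \rho_i \times \rho_j)
\]
with the geometric description of $\pi|_{\mathfrak{S}(F)}$ via the Bernstein--Zelevinsky geometric lemma. Part~$(2)$ of the preceding lemma identifies the unpaired square-integrable $\rho_i$ contributing a pole to $L(s, \rho_i, \wedge^2)$ at $s = 0$ exactly as the orthogonal-type ones, while paired Rankin--Selberg factors $L(s, \rho_i \times \rho_j)$ with $\rho_j \cong \rho_i^{\vee}$ contribute the remaining poles at $s = 0$. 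On the geometric side, Mackey filtration of the restriction of the induced representation to $\mathfrak{S}(F)$ produces subquotients indexed by $\mathfrak{S}(F)$-double cosets on the relevant flag variety; a non-trivial Shalika functional on $\pi$ must descend to a non-trivial $(\mathfrak{S}(F), \psi_{\mathfrak{S}})$-functional on some subquotient, and the stabilizer computation shows this is possible only when every unpaired summand itself supports such a functional, i.e.\ is of orthogonal type. Carrying this Mackey analysis through rigorously is the technical core of \cite{Mat17} and, once established, completes the classification.
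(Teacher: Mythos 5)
The paper gives no proof of this proposition: it is stated with a bare citation to Matringe's Corollary~1.1, so there is no internal proof to compare against. Your sketch is a plausible reconstruction of the cited argument, and the overall skeleton is right: sufficiency via the hereditary/open-orbit construction already encoded in part~(3) of the preceding lemma, necessity via self-duality from part~(1), the Zelevinsky decomposition, and an orbit analysis on $\mathfrak{S}(F)\backslash G(F)/P(F)$.

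Two caveats. First, the role you assign to the factorization $L(s,\pi,\wedge^2)=\prod_i L(s,\rho_i,\wedge^2)\prod_{i<j}L(s,\rho_i\times\rho_j)$ is misleading: a pole of $L(s,\pi,\wedge^2)$ at $s=0$ cannot by itself separate orthogonal from symplectic self-dual summands, since a paired block $\rho\boxplus\rho^\vee$ already contributes a pole through $L(s,\rho\times\rho^\vee)$. Thus $\rho\boxplus\rho^\vee\boxplus\tau$ with $\tau$ symplectic still has a pole at $s=0$, and the analytic factorization carries no obstruction on its own. All the work in the necessity direction must be done by the geometric/Mackey analysis, which you correctly identify as the crux but entirely defer; as written, the sketch stops exactly where the real content begins. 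The cleaner logical flow is: the orbit analysis forces each unpaired summand to carry a Shalika functional, and \emph{then} part~(2) of the lemma converts that into the pole condition (equivalently, orthogonality). Second, the parenthetical justification that $\sigma\boxplus\sigma^\vee$ is irreducible because "the Zelevinsky segments of $\sigma$ and $\sigma^\vee$ are unlinked after the common central twist" is false in general (e.g.\ $\sigma$ the Steinberg of $\mathrm{GL}_2$ twisted by $|\det|^{1/2}$ has linked segments with $\sigma^\vee$); this is harmless since irreducibility of the isobaric sum is part of the hypothesis of the statement, but the heuristic should be removed.
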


\subsection{Descent maps}

\subsubsection{Global descent}

We now turn our sight to the global setting. For any automorphic form $\varphi$ on $G(\mathbb{A})$,
let ${\rm GG}(\varphi)$ be the {\it Gelfand--Graev coefficient} (a function on $G'(F) \backslash G'(\mathbb{A})$)
\[
  {\rm GG}(\varphi)(g)=\int_{V(F)\backslash V(\mathbb{A})} \varphi(vg)\psi^{-1}_V(v) \, dv, \quad g \in G'(\mathbb{A}).
\]

\par
We study the automorphic representation $\pi$ of $\mathbb{M}(\mathbb{A})$ in ${\rm Ocusp}\, \mathbb{M}$ realized on 
the space of Eisenstein series induced from $\pi_1 \otimes \dotsm \otimes \pi_k$.
We view $\pi$ as a representation of $M(\mathbb{A})$ via $\varrho$. Let $\mathcal{A}(\pi)$
be the space of functions $\varphi : M(F)U(\mathbb{A}) \backslash G(\mathbb{A}) \rightarrow \mathbb{C}$ such that $m \mapsto \delta^{\frac{1}{2}}_P(m)\varphi(mg)$, $m \in M(\mathbb{A})$ belongs  to the space of $\pi$ for all $g \in G(\mathbb{A})$. Let $\varphi_s(g)=\nu(g)^s\varphi(g)$. We define Eisenstein series
\[
\mathcal{E}(s,\varphi)=\sum_{\gamma \in P(F) \backslash G(F)} \varphi_s(\gamma g), \quad, \varphi \in \mathcal{A}(\pi).
\]
The space of $\pi$ is preserved under the conjugation by $w_U$. Then we define intertwining operator
$M(s,\pi) : \mathcal{A}(\pi) \rightarrow \mathcal{A}(\pi)$ given by
\[
 M(s,\pi) \varphi(g)=\nu(g)^s \int_{U(\mathbb{A})} \varphi_s(w_Uug)\,du.
\]
Both $\mathcal{E}(s,\varphi)$ and $M(s,\pi)$ converge absolutely for ${\rm Re}(s) \gg 0$ and extend meromorphically to $\mathbb{C}$.
It follows from \cite[Theorem 2.1]{GRS11} that both $\mathcal{E}(s,\varphi)$ and $M(s,\pi)$ have a pole of order $k$ at $s=\frac{1}{2}$.
Let
\[
 \mathcal{E}_{-k}\varphi=\lim_{s \rightarrow \frac{1}{2}} \left(s-\frac{1}{2} \right)^k \mathcal{E}(s,\varphi)
\]
and
\[
 M_{-k}=\lim_{s \rightarrow \frac{1}{2}} \left( s-\frac{1}{2}\right)^k M(s,\pi)
\]
be the leading coefficient in the Laurent series expansion at $s=\frac{1}{2}$.
The constant term
\[
 \mathcal{E}^U_{-k} \varphi(g)=\int_{U(F) \backslash U(\mathbb{A})} \mathcal{E}_{-k}\varphi(ug) \,du
\]
of $\mathcal{E}_{-k}\varphi$ along $U$ is given by
\[
 \mathcal{E}^U_{-k}\varphi(g)=(M_{-k}(\varphi)(g))_{-\frac{1}{2}}.
\]
The {\it descent} of $\pi$ (with respect to $\psi_{N_{\mathbb{M}}}$) is the space $\mathcal{D}_{\psi}(\pi)$ spanned by ${\rm GG}(\mathcal{E}_{-k}\varphi)$
with $\varphi \in \mathcal{A}(\pi)$. It is known from \cite[Theorem 3.1]{GRS11} that $\mathcal{D}_{\psi}(\pi)$ is multiplicity free.
By \cite[Theorem 9.4]{GRS11}, $\mathcal{D}_{\psi}(\pi)$ is a non-trivial cuspidal (and globally generic) automorphic representation of $G'$.
We expect it to be irreducible.
This result will be the analogue of \cite[Theorem 2.3]{GJS12} in metaplectic group case and \cite[Theorem 1.1]{Mor18} in even unitary group case. It is likely that the method of \cite[Theorem 2.3]{GJS12} and \cite[Theorem 1.1]{Mor18} is extended to the case at hand.
Since this is beyond the scope of the current paper we simply make it a working hypothesis. 

\begin{working hypothesis}
\label{odd-orthogonal-working-hypothesis}
When $\pi \in {\rm Ocusp}\, \mathbb{M}$, $\mathcal{D}_{\psi}(\pi)$ is irreducible. 
\end{working hypothesis}

The image of the weak lift is then a consequence of \cite[Theorem 11.2]{GRS11}. In particular, \cite[Theorem 9.4 (3)]{GRS11} describes 
the image for unramified places.

\begin{proposition} With the above Working Hypothesis \ref{odd-orthogonal-working-hypothesis}, $\pi \mapsto \sigma=\mathcal{D}_{\psi}(\pi)$
defines a bijection between ${\rm Ocusp} \, \mathbb{M}$ and ${\rm Cusp}_{\psi_{N'}}\,G'$, the set of $\psi_{N'}$-generic cuspidal automorphic representation
of $G'$. Furthermore $\sigma^{\vee}$ weekly lifts to $\pi$.
\end{proposition}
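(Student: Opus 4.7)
The plan is to mirror the strategy executed in Section~\ref{Sec:3} (the even special orthogonal case) and Section~\ref{Sec:4} (the symplectic case); granted Working Hypothesis~\ref{odd-orthogonal-working-hypothesis}, the proof becomes an assembly of the structural results of Ginzburg--Rallis--Soudry and the standard functorial transfer from ${\rm SO}_{2n+1}$ to ${\rm GL}_{2n}$.

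First I would verify that the map is well-defined. By \cite[Theorem 9.4]{GRS11}, $\mathcal{D}_{\psi}(\pi)$ is a non-trivial, $\psi_{N'}$-generic cuspidal automorphic representation of $G'={\rm SO}_{2n+1}$, and multiplicity freeness follows from \cite[Theorem 3.1]{GRS11}. Working Hypothesis~\ref{odd-orthogonal-working-hypothesis} then upgrades the image to an irreducible element $\sigma \in {\rm Cusp}_{\psi_{N'}}\,G'$, so the assignment $\pi \mapsto \sigma$ genuinely takes values in the codomain.

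Next I would establish the weak lift assertion and deduce injectivity from it. At every place $v$ at which $\pi_v$ and $\psi_v$ are unramified, \cite[Theorem 9.4 (3)]{GRS11} together with \cite[Theorem 11.2]{GRS11} identifies the Satake parameter of $\mathcal{D}_{\psi}(\pi)^{\vee}_v$ as the transfer of that of $\pi_v$ through the natural embedding of dual groups ${\rm Sp}_{2n}(\mathbb{C}) \hookrightarrow {\rm GL}_{2n}(\mathbb{C})$; this is precisely the statement that $\sigma^{\vee}$ weakly lifts to $\pi$. Injectivity is then immediate from strong multiplicity one for isobaric representations of ${\rm GL}_{2n}$ (Jacquet--Shalika): if $\mathcal{D}_{\psi}(\pi_1) \cong \mathcal{D}_{\psi}(\pi_2)$, the two weak lifts $\pi_1,\pi_2 \in {\rm Ocusp}\,\mathbb{M}$ share almost all Satake parameters and therefore coincide.

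For surjectivity, I would start from an arbitrary $\sigma \in {\rm Cusp}_{\psi_{N'}}\,G'$ and invoke the functorial transfer from ${\rm SO}_{2n+1}$ to ${\rm GL}_{2n}$ of Cogdell--Kim--Piatetski-Shapiro--Shahidi to produce an isobaric $\pi=\pi_1\boxplus\cdots\boxplus\pi_k$ on $\mathbb{M}$. Because the dual embedding factors through ${\rm Sp}_{2n}(\mathbb{C})$, the partial exterior square $L$-function $L^S(s,\pi,\wedge^2)$ acquires a pole at $s=1$, so each $\pi_i$ is of orthogonal type and the summands are pairwise inequivalent; hence $\pi \in {\rm Ocusp}\,\mathbb{M}$. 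Comparing Satake parameters with the weak-lift statement already proved and applying the injectivity established above identifies $\mathcal{D}_{\psi}(\pi)$ with $\sigma$. The main obstacle, beyond the Working Hypothesis itself, will be this last step: one must carefully ensure that the output of the functorial lift actually lies in ${\rm Ocusp}\,\mathbb{M}$ (namely that its cuspidal constituents are distinct and each of orthogonal type), which requires threading together pole analysis of exterior square $L$-functions with the classification of images of the lift; this is standard but demands careful bookkeeping.
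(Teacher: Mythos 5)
The paper gives no detailed proof of this proposition; it treats the statement as a direct consequence of \cite[Theorem 11.2]{GRS11} together with \cite[Theorem 9.4 (3)]{GRS11} and Working Hypothesis \ref{odd-orthogonal-working-hypothesis}, exactly the structural results you cite. Your expansion of that citation is in the right spirit and mostly correct: well-definedness via \cite[Theorems 3.1 and 9.4]{GRS11} plus the Working Hypothesis, the weak-lift assertion via the unramified Satake computation, and injectivity via strong multiplicity one for isobaric representations of ${\rm GL}_{2n}$ all match what the paper has in mind.

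There is, however, a logical slip in the surjectivity step. After producing $\pi \in {\rm Ocusp}\,\mathbb{M}$ as the CKPSS transfer of a given $\sigma \in {\rm Cusp}_{\psi_{N'}}\,G'$, you observe that both $\mathcal{D}_{\psi}(\pi)^{\vee}$ and $\sigma^{\vee}$ weakly lift to $\pi$, and then appeal to ``the injectivity established above'' to conclude $\mathcal{D}_{\psi}(\pi)\cong\sigma$. But the injectivity you proved is that of $\mathcal{D}_{\psi}\colon{\rm Ocusp}\,\mathbb{M}\to{\rm Cusp}_{\psi_{N'}}\,G'$ — a statement that two representations \emph{in the image} of the descent with the same source agree. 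To identify $\mathcal{D}_{\psi}(\pi)$ with the \emph{a priori} arbitrary $\sigma$ you need the opposite-direction rigidity statement for $G'={\rm SO}_{2n+1}$: two $\psi_{N'}$-generic cuspidal automorphic representations with the same weak lift to ${\rm GL}_{2n}$ are isomorphic. That rigidity is a theorem of Jiang–Soudry (and is precisely what makes \cite[Theorem 11.2]{GRS11} deliver a bijection rather than merely a well-defined map); it is not a formal consequence of the injectivity of the descent. Invoking it explicitly, rather than the injectivity of $\mathcal{D}_\psi$, closes the argument. You should also be a little more careful with the reduction ``$L^S(s,\pi,\wedge^2)$ has a pole at $s=1$ $\Rightarrow$ each $\pi_i$ is of orthogonal type and the summands are distinct'': for an isobaric sum this requires the full description of the image of the CKPSS transfer rather than a single pole statement, but that description is indeed part of the same theorems and is the ``careful bookkeeping'' you flag.
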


\subsubsection{Local descent} We turn back to the local setup.
Suppose now that $F$ is local fields. Let $\pi \in {\rm Irr}_{\rm gen} \, M$.
For any $W \in {\rm Ind}(\mathbb{W}^{\psi_{N_M}}(\pi))$ and $s \in \mathbb{C}$, we define a function on $G'$:
\[
  A^{\psi}(s,g,W)=\int_{V_{\gamma} \backslash V} W_s(\gamma v \mathfrak{c}^{n+1} g) \psi^{-1}_V(v) \, dv, \quad g \in G'.
\]
where
\[
 \mathfrak{c}={\rm diag}(1_{n-1}, -1_n,  \begin{pmatrix} 0 & 1 \\ 1 & 0 \end{pmatrix},-1_n, 1_{n-1}).
\]
Let the sign $\pm$ denote $(-1)^{n+1}$. The (signed) outer automorphism $ \mathfrak{c}$ does not lie in ${\rm SO}_{4n}$. However $\mathfrak{c}{\rm SO}_{4n}\mathfrak{c}^{-1}
=\mathfrak{c}{\rm SO}_{4n}\mathfrak{c}
={\rm SO}_{4n}$. At first glance the presence of (signed) outer automorphism $ \mathfrak{c}$ is implicit in integrals of Ginzburg \cite{Gin90} and Soudry \cite{Sou93}, but
it can be reconciled with $\beta_{n,2n}$ defined in \cite[p. 397]{Kap15} by
\[
 \beta_{n,2n} \, \varrho ( \begin{pmatrix} &1_{n+1} \\ 1_{n-1} & \end{pmatrix} )
 =\gamma \mathfrak{c}^{n+1} \,{\rm diag}(1_{n-1}, \pm1_n,  1_2,\pm 1_n, 1_{n-1}),
\]
while the Weyl element $\varrho ( \begin{pmatrix} &1_{n+1} \\ 1_{n-1} & \end{pmatrix} )$ accounts the discrepancy of our embedding $\eta$ with $i_{n,2n}$ in \cite[\S 3.1]{Kap15}.
The basic properties of $A^{\psi}$ are summarized in the following proposition. Its proof is identical to \cite[Lemmata 4.5 and 4.9]{LM16} and \cite[Lemmata 5.1 and 8.1]{LM17}, henceforth will be omitted.

\begin{proposition} Suppose that $\pi \in {\rm Irr}_{\rm gen} \, M$. Let $W \in {\rm Ind}(\mathbb{W}^{\psi_{N_M}}(\pi))$ and $s \in \mathbb{C}$. Then
\begin{enumerate}[label=$(\mathrm{\arabic*})$]
\item The integral $A^{\psi}(s,g,W)$ is well-defined and absolutely convergent uniformly for $s \in \mathbb{C}$.
Thus $A^{\psi}(s,g,W)$ is entire in $s$ and smooth in $g$. In the non-archimedean case the integrand is compactly supported on $V_{\gamma} \backslash V$.
\item For any $W \in {\rm Ind}(\mathbb{W}^{\psi_{N_M}}(\pi))$ and $s \in \mathbb{C}$, the function $g \mapsto A^{\psi}(s,g,W)$
is smooth and $(N',\psi_{N'})$-equivariant.
\item For an $g, x \in G'$, we have
\begin{equation}
\label{odd-orthogonal-G'-equivalent} 
  A^{\psi}(s,g,I(s,x)W)=A^{\psi}(s,gx,W).
\end{equation}
\item Suppose that $\pi$ is unramified, $\psi$ is unramified, and $W^{\circ} \in {\rm Ind}(\mathbb{W}^{\psi_{N_M}}(\pi))$ is the standard unramified 
vector. Then $A^{\psi}(s,e,W^{\circ}) \equiv 1$ (assuming ${\rm vol}(V \cap K)={\rm vol}(V_{\gamma}\cap K)=1$).
\end{enumerate}
\end{proposition}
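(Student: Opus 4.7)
The plan is to transpose the arguments of \cite[Lemmata 4.5, 4.9]{LM16} and \cite[Lemmata 5.1, 8.1]{LM17} to the present odd special orthogonal embedding. The only structural novelty is the signed element $\mathfrak{c}^{n+1}$ inserted between $v$ and $g$ in the integrand, which reconciles our embedding $\eta$ with the normalization $\beta_{n,2n}$ of Kaplan \cite{Kap15}. Since $\mathfrak{c}^{n+1}$ is a signed permutation matrix lying in the hyperspecial maximal compact $K$ at every non-archimedean place, it affects neither convergence nor the unramified calculation, and enters only through bookkeeping in the equivariance identity.

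For (1), the first step is to verify that the integrand descends to $V_{\gamma}\backslash V$, which reduces to checking the matching $\psi_{N_M}(\gamma v_0 \gamma^{-1}) = \psi_V(v_0)$ for $v_0 \in V_\gamma$; this is forced by our choice of $\psi_V$ compatible with \cite[(3.48)]{GRS11}. Absolute convergence uniformly in $(s,g)$ on compacta then follows from an Iwasawa decomposition argument along $V_\gamma \backslash V$ combined with the standard Jacquet--Shalika gauge estimates for Whittaker functions on $\mathbb{M}$. In the non-archimedean case, the compact support of the integrand on $V_\gamma \backslash V$ is extracted exactly as in \cite[Lemma 4.5]{LM16} from the Whittaker support combined with the $\psi_V$-equivariance. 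Part (3) is immediate: substituting $W_s(\cdot\, x)$ for $x \in G'$ and noting that $x$ is inserted to the right of $\mathfrak{c}^{n+1}g$ gives the identity after relabeling. For (2), smoothness in $g$ follows from (1) and either dominated convergence (archimedean) or the standard $K$-finiteness argument (non-archimedean). The $(N',\psi_{N'})$-equivariance requires, for $n' \in N'$, conjugating $\eta(n')$ first past $\mathfrak{c}^{n+1}$ (which stabilizes $N'$ up to signs), then absorbing the result into a change of variables $v \mapsto v\cdot u^{-1}$ inside $V$, using that $N'$ normalizes $V$; the residual element conjugates through $\gamma$ into $N_M$ and acts on $W$ by $\psi_{N_M}$. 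The resulting scalar must match $\psi_{N'}(n')$, and this is the analogue of the matching performed in \cite[Lemma 5.1]{LM17}, consistent with the choice $\lambda = -1$ used in \cite[(9.9)]{GRS11} to set up $\psi_{N'}$.

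For (4), since $\mathfrak{c}^{n+1} \in K$ and $\psi$ is unramified, one has $W^{\circ}_s(\gamma v \mathfrak{c}^{n+1}) = W^{\circ}_s(\gamma v)$, and the normalizations ${\rm vol}(V \cap K) = {\rm vol}(V_\gamma \cap K) = 1$ reduce the integral to the trivial contribution from the $K$-fixed coset, giving $W^{\circ}_s(e) = 1$; this mirrors \cite[Proposition 7.1]{LM16}. I expect the delicate step to be the character bookkeeping in (2): the parity-dependent sign from $\mathfrak{c}^{n+1}$ must be tracked through the conjugations past $\gamma$, and one must verify that the net character produced on $N_M$ by the conjugation of $\eta(N')$ is exactly $\psi_{N'}$ under our conventions. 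This is a finite matrix computation, but the signs force some case analysis in $n$ which needs to be stated uniformly, and it is the only nontrivial obstruction to importing the references verbatim.
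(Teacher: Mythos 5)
Your plan — import \cite[Lemmata 4.5, 4.9]{LM16} and \cite[Lemmata 5.1, 8.1]{LM17} and treat $\mathfrak{c}^{n+1}$ as a cosmetic normalization — coincides with the paper, which simply cites those lemmas and omits the argument. But there is a concrete error in the key claim on which you lean. You assert that $\mathfrak{c}^{n+1}$ lies in the hyperspecial maximal compact $K=K_{{\rm GL}_{4n}}\cap{\rm SO}_{4n}$, and you use this both to dismiss it from the well-definedness and convergence discussion and, decisively, in part (4) to write $W^{\circ}_s(\gamma v\,\mathfrak{c}^{n+1})=W^{\circ}_s(\gamma v)$. This is false when $n$ is even: the paper explicitly notes, immediately before the definition of $A^{\psi}$, that $\mathfrak{c}$ does not lie in ${\rm SO}_{4n}$ (indeed $\det\mathfrak{c}=-1$), and since $\mathfrak{c}^{2}=1_{4n}$ one has $\mathfrak{c}^{n+1}=\mathfrak{c}$ for $n$ even, hence $\mathfrak{c}^{n+1}\notin K$. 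The right-$K$-invariance of $W^{\circ}$ therefore does not by itself give the identity you need.

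What is actually happening (and is also left implicit in the paper) is that $\mathfrak{c}$ is a signed element of ${\rm O}_{4n}$ normalizing ${\rm SO}_{4n}$, so that when $n$ is even the argument $\gamma v\,\mathfrak{c}^{n+1}g$ of $W_s$ lies in the nontrivial coset $\mathfrak{c}\,{\rm SO}_{4n}$, and $W_s$ must be read through an extension to the full orthogonal group, or equivalently one conjugates $\mathfrak{c}^{n+1}$ to the left and invokes the $P$-equivariance together with the fact that the outer automorphism preserves $\pi$ because $\pi\cong\pi^{\vee}$. Exactly the same phenomenon occurs in the even orthogonal section of this paper, where the inserted factor $(-1)^{n}\cdot 1_{4n+3}$ lies outside ${\rm SO}_{4n+3}$ when $n$ is odd. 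The identity displayed in the paper just before the proposition, equating $\gamma\,\mathfrak{c}^{n+1}$, up to the inner sign matrix ${\rm diag}(1_{n-1},\pm 1_{n},1_{2},\pm 1_{n},1_{n-1})$, to the product of Kaplan's $\beta_{n,2n}$ and a Weyl element of $M$, is precisely the device that reduces the unramified computation to Kaplan's; you should invoke that rewriting in part (4) rather than the false claim $\mathfrak{c}^{n+1}\in K$. The remainder of your outline — the descent of the integrand to $V_{\gamma}\backslash V$, the convergence via gauge estimates, the compact support in the non-archimedean case, part (3) by right translation, and the character bookkeeping for part (2) — is sound and parallels the cited lemmas as intended.
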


Let $\pi \in {\rm Irr}_{\rm gen} \, \mathbb{M}$, regarded also as a representation of $M$ via $\varrho$. By the same argument as in \cite[Theorem in \S 1.3]{GRS99}
together with \cite[Remark 4.3]{LM17}, for any non-zero subrepresentation $\pi'$ of ${\rm Ind}(\mathbb{W}^{\psi_{N_M}}(\pi))$
there exists $W' \in \pi'$ such that $A^{\psi}(s,\cdot,W') \not \equiv 0$. Upon twisting $\pi$, we conclude that for any given $s_0 \in \mathbb{C}$,
 there exists $W' \in \pi'$ such that $A^{\psi}(s_0,\cdot,W') \not \equiv 0$.
 
 \par
 Assume now that $\pi \in {\rm Irr}_{\rm gen,orth} \, \mathbb{M}$. By Proposition \ref{holomorphy-odd-orthgonal}, $M(s,\pi)$ is holomorphic at $s=\frac{1}{2}$. The {\it local descent} of $\pi$ is the space $\mathcal{D}_{\psi}(\pi)$
 of Whittaker functions on $G'$ generated by $A^{\psi}\left(-\frac{1}{2},\cdot,M\left( \frac{1}{2},\pi \right)W \right)$
 with $W \in {\rm Ind}(\mathbb{W}^{\psi_{N_M}}(\pi))$. By the above observation, $\mathcal{D}_{\psi}(\pi) \neq 0$.
 
 \par
 Let $\pi'$ be the image of ${\rm Ind}(\mathbb{W}^{\psi_{N_M}}(\pi))$ under $M \left( \frac{1}{2}, \pi \right)$. 
 In virtue of \eqref{odd-orthogonal-G'-equivalent}, the space $\mathcal{D}_{\psi}(\pi)$ is canonically
 a quotient of the $G'$-module $J_V(\pi')$ of the $V$-coinvariant of $\pi'$. In this regard, we view
 $J_V(\pi')$ as the {\it abstract descent}, whereas $\mathcal{D}_{\psi}(\pi)$ as the {\it explicit descent}.

\subsubsection{Whittaker function of descent}
We set \cite[(8.14)]{GRS11}
\[
 \psi_{N_{\mathbb{M}}}(u)=\psi(u_{1,2}+\dotsm+u_{n,n+1}-u_{n+1,n+2}-\dotsm-u_{2n-1,2n}).
\]
The heart of the proof lies in unfolding integrals which appears in the forthcoming work of Lapid and Mao \cite[\S 4.3, Third paragraph]{LM22}.

\begin{theorem}
\label{Whittaker-main-odd-orthogonal}
$(${\rm Lapid and Mao}$)$ Let $\pi \in {\rm Ocusp}\, \mathbb{M}$ and $\varphi \in \mathcal{A}(\pi)$. 
Then for any $g \in G'$ we have
\[
 \mathcal{W}^{\psi_{N'}}(g,{\rm GG}(\mathcal{E}_{-k} \varphi))=\int_{V_{\gamma}(\mathbb{A}) \backslash V(\mathbb{A})} \mathcal{W}^{\psi_N}(\gamma v \mathfrak{c}^{n+1}g,\mathcal{E}_{-k} \varphi)\psi^{-1}_V(v) \, dv,
\]
and the integral is absolutely convergent.
\end{theorem}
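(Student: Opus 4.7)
The plan is to mimic the arguments used in the proofs of Theorems \ref{Whittaker-main-even-orthogonal} and \ref{Whittaker-main-symplectic}, adapted to the odd orthogonal setting. Since both sides of the identity are $(N',\psi_{N'})$-equivariant in $g$, one reduces at once to proving the identity for $g=e$. The starting point is the integral representation of $\mathcal{W}^{\psi_{N'}}(e,{\rm GG}(\mathcal{E}_{-k}\varphi))$ being established in the forthcoming work of Lapid and Mao, which, in parallel with \cite[Theorem 9.4]{GRS11}, expresses this Whittaker coefficient as an iterated integral of the shape
\[
 \int_{Y(\mathbb{A})} \int_{\mathbb{A}^{n-1}} \int_{X(\mathbb{A})} \mathcal{W}^{\hat{\psi}_N}\bigl(\overline{\ell}(a,x)\,\delta\,\epsilon\,\kappa\, y,\, \mathcal{E}_{-k}\varphi\bigr) \, dx\, da\, dy,
\]
for suitably chosen Weyl-type elements $\delta$, $\epsilon$, $\kappa$, unipotent subgroups $X$, $Y$ of the Siegel data attached to ${\rm SO}_{4n}$, and the degenerate character $\hat{\psi}_N$ specified in \cite[(8.14)]{GRS11}.

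Next, I would write $\epsilon=\epsilon_U\epsilon_T\epsilon_{\overline{U}}$ in Iwasawa-type form, exactly mirroring the bookkeeping used in the proofs of Theorems \ref{Whittaker-main-even-orthogonal} and \ref{Whittaker-main-symplectic}. Conjugating $\delta\epsilon_U\delta^{-1}$ past $\overline{\ell}(a,x)$, and carrying out the substitution $x\mapsto (1-x\epsilon_\delta)x$, $a\mapsto (1-x\epsilon_\delta)a$, produces the character $\psi^{-1}(x_{n,n+1})$ (up to a scalar) in the integrand, just as in the even orthogonal computation. Then $\delta\epsilon_T\delta^{-1}$ acts as a torus element that turns $\hat{\psi}_N$ into the genuine Whittaker character $\psi_N$ and simultaneously absorbs the scalar in the character. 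Finally, the identity $\kappa^{-1}\epsilon_{\overline{U}}\kappa\in Y$, followed by a change of variable in $Y(\mathbb{A})$, removes $\epsilon_{\overline{U}}$, so that one is left with $\delta\kappa=\gamma$ as the Weyl element on which the standard Whittaker function is evaluated. The identification $(\gamma^{-1}\overline{\ell}(\mathbb{A}^{n-1},X)\gamma\cdot Y)\rtimes V_\gamma = V$ then collapses the iterated integral into the single integral over $V_\gamma(\mathbb{A})\backslash V(\mathbb{A})$ appearing on the right-hand side.

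The principal difficulty, and the reason the theorem is attributed to Lapid--Mao rather than proved here in full, is accounting for the signed outer automorphism $\mathfrak{c}^{n+1}$. This element encodes the compatibility between the embedding $\eta:{\rm SO}_{2n+1}\hookrightarrow {\rm SO}_{4n}$ used in this paper (defined via the basis $\mathcal{B}'_{G'}$ and the change-of-coordinates matrix $\mathcal{M}$) and the embedding $i_{n,2n}$ used by Kaplan \cite{Kap15}; the relation
\[
 \beta_{n,2n}\,\varrho\bigl(\begin{pmatrix} & 1_{n+1}\\ 1_{n-1} & \end{pmatrix}\bigr)=\gamma\,\mathfrak{c}^{n+1}\,{\rm diag}(1_{n-1},\pm 1_n,1_2,\pm 1_n,1_{n-1})
\]
forces $\mathfrak{c}^{n+1}$ to appear in order to reconcile the two conventions after the chain of conjugations above. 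The main obstacle is therefore the delicate bookkeeping of signs and characters induced by $\mathfrak{c}^{n+1}$, a task carried out in detail by Lapid and Mao \cite{LM22}; absolute convergence of the resulting integral over $V_\gamma(\mathbb{A})\backslash V(\mathbb{A})$ is then inherited from the rapid decay of $\mathcal{W}^{\psi_N}(\,\cdot\,,\mathcal{E}_{-k}\varphi)$ along unipotent subgroups, exactly as in the proofs of Theorems \ref{Whittaker-main-even-orthogonal} and \ref{Whittaker-main-symplectic}.
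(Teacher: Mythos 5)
The paper does not actually prove Theorem~\ref{Whittaker-main-odd-orthogonal}; it records the statement with the attribution \emph{(Lapid and Mao)}, points to the forthcoming work \cite[\S 4.3]{LM22}, and immediately moves on to use it. This is made explicit in the introduction, where the author says that the odd orthogonal result ``relies on the ongoing work of Lapid and Mao (Theorem~\ref{Whittaker-main-odd-orthogonal})'' precisely because the unfolding of the Whittaker function of the descent has not been carried out in the literature for this case. So there is no proof in the paper for you to match against, and your proposal --- which also ultimately defers to Lapid--Mao --- is in that sense at the same level of completeness as the paper's treatment.

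What you \emph{do} offer, correctly, is the expected template: mimic the proofs of Theorems~\ref{Whittaker-main-even-orthogonal} and~\ref{Whittaker-main-symplectic}, start from the iterated-integral expression à la \cite[Theorem~9.4]{GRS11}, decompose $\epsilon=\epsilon_U\epsilon_T\epsilon_{\overline{U}}$, conjugate and substitute to convert $\hat{\psi}_N$ to $\psi_N$ and to produce $\psi^{-1}(x_{n,n+1})$, and collapse $(\gamma^{-1}\overline{\ell}\gamma\, Y)\rtimes V_\gamma$ to $V$. You also correctly single out the genuinely new obstruction in the odd case: the signed outer automorphism $\mathfrak{c}^{n+1}$, which appears in the embedding bookkeeping via the relation $\beta_{n,2n}\,\varrho(\begin{smallmatrix}&1_{n+1}\\1_{n-1}&\end{smallmatrix})=\gamma\,\mathfrak{c}^{n+1}\,{\rm diag}(1_{n-1},\pm 1_n,1_2,\pm 1_n,1_{n-1})$ recorded just before the theorem. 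This is indeed the reason the author leaves the unfolding to Lapid--Mao instead of repeating the even-orthogonal argument verbatim.

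The gaps in your sketch are the same gaps the paper chooses to leave: the precise shape of the auxiliary elements $\delta,\epsilon,\kappa$ and the parametrization of $\overline{\ell}(a,x)$ for ${\rm SO}_{4n}$ are asserted by analogy but not computed, and the sign/character bookkeeping that $\mathfrak{c}^{n+1}$ forces through the chain of conjugations is waved at rather than carried out. Since the author openly does the same (by citing \cite{LM22} and not supplying a proof), your proposal should not be scored as if it were missing an argument the paper contains; rather, note explicitly that you are reproducing the conditional structure the paper itself adopts, and that a self-contained proof would require executing the $\mathfrak{c}^{n+1}$ bookkeeping rather than citing it.
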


\subsection{Local Rankin--Selberg--Shimura integrals}
Let $F$ be either $p$-adic or archimedean, $\pi \in {\rm Irr}_{\rm gen}\, M$ and $\sigma \in {\rm Irr}_{{\rm gen},\psi^{-1}_{N'}}\, G'$
with Whittaker model $\mathbb{W}^{\psi^{-1}_{N'}}(\sigma)$.
For any $W' \in \mathbb{W}^{\psi^{-1}_{N'}}(\sigma)$ and $W \in {\rm Ind}(\mathbb{W}^{\psi_{N_M}})(\pi)$,
we define the local Rankin--Selberg--Shimura type integral
\[
  J(s,W',W):=\int_{N' \backslash G'} W'(g)A^{\psi}(s,g,W) \, dg.
\]
The analytic properties of the integral were initiated by Ginzburg \cite{Gin90}, extended by Soudary \cite{Sou93}, and subsequently concluded by Kaplan \cite{Kap15}.

\begin{proposition} Suppose that $\pi \in {\rm Irr}_{\rm gen}\, M$ and $\sigma \in {\rm Irr}_{{\rm gen},\psi^{-1}_{N'}}\, G'$.
Let $W' \in \mathbb{W}^{\psi^{-1}_{N'}}(\sigma)$ and $W \in {\rm Ind}(\mathbb{W}^{\psi_{N_M}}(\pi))$. Then
\begin{enumerate}[label=$(\mathrm{\arabic*})$]
\item $J(s,W',W)$ converges in some right-half plane (depending only on $\pi$ and $\sigma$)
and admits a meromorphic continuation in $s$.
\item For any $s \in \mathbb{C}$, we can choose $W'$ and $W$ such that $J(s,W',W) \neq 0$.
\item  If $\pi$, $\sigma$ and $\psi$ are unramified, $W^{\circ} \in {\rm Ind}(\mathbb{W}^{\psi_{N_M}}(\pi))$ is the standard unramified vector
and ${W'}^{\circ}$ is $K'$-invariant with ${W'}^{\circ}(e)=1$, then
\[
  J(s,{W'}^{\circ},W^{\circ})=\frac{{\rm vol}(K')}{{\rm vol}(N' \cap K')} \frac{L(s+\frac{1}{2},\sigma\times\pi)}{L(2s+1,\pi,\wedge^2)}
\]
assuming the Haar measure on $V$ and $V_{\gamma}$ are normalized so that ${\rm vol}(V \cap K)$
and ${\rm vol}(V_{\gamma} \cap K)$ are all $1$.
\item $J(s,W',W)$ satisfies local functional equations:
\[
 J(-s,W',M(s,\pi)W)=\omega^n_{\pi}(-1) \frac{\gamma(s+\frac{1}{2},\sigma\times\pi,\psi)}{\gamma(2s+1,\pi,\wedge^2,\psi)} J(s,W',W)
\]
where $\gamma(s,\pi,\wedge^2,\psi)$ is the gamma factor defined by Shahidi (pertaining to the exterior square representation).
\end{enumerate}
\end{proposition}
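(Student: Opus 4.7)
The proof will proceed by reducing each statement to the corresponding result in the Rankin--Selberg theory for ${\rm SO}_{2n+1} \times {\rm GL}_{2n}$ developed by Ginzburg \cite{Gin90}, Soudry \cite{Sou93}, and systematized by Kaplan \cite{Kap15}. The identification with Kaplan's setup is given by the relation
\[
\beta_{n,2n}\,\varrho\!\left(\begin{pmatrix} & 1_{n+1} \\ 1_{n-1} & \end{pmatrix}\right) = \gamma \mathfrak{c}^{n+1}\,{\rm diag}(1_{n-1},\pm 1_n,1_2,\pm 1_n,1_{n-1}),
\]
already recorded in the excerpt. Under this identification, $J(s,W',W)$ coincides (up to normalization of Haar measures on $V_\gamma \backslash V$ and $N'\backslash G'$) with the local integral considered in \cite[\S 3]{Kap15}.

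For assertion $(1)$, the plan is to decompose $g \in G'$ via the Iwasawa decomposition $g = n'\varrho'(m)k'$ and use gauge estimates for Whittaker functions (as in \cite{Sou93}) to show that the inner integral $A^{\psi}(s,g,W)$ is a finite function of moderate growth. Then the convergence of $J(s,W',W)$ in a right half-plane follows from the rapid decrease of Whittaker functions on the split torus. Meromorphic continuation is obtained by a standard Bernstein-continuation argument in the $p$-adic case and an analogous argument for archimedean places. For $(2)$, the non-vanishing assertion reduces to showing that one can find $W', W$ making $J(s_0, W', W)$ non-zero at any prescribed $s_0$. This follows by approximating a Dirac distribution at a generic coset: pick $W$ supported near a fixed point in the open $P$-orbit on $N'\backslash G'$, then choose $W'$ to pair non-trivially with the resulting smooth function. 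The twist-and-deform technique of \cite[\S 1.3]{GRS99} (referenced already in the abstract descent discussion) will guarantee non-vanishing for all $s_0$.

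For $(3)$, I would unfold using the Iwasawa decomposition on $N'\backslash G'$ and reduce the spherical integral, after Kaplan's identification, to a sum indexed by the Weyl group of $\mathbb{M}$ via the Casselman--Shalika formula for both ${W'}^{\circ}$ and $W^{\circ}$; the Gindikin--Karpelevich-type computation then collapses the sum to $L(s+\tfrac{1}{2},\sigma\times \pi)/L(2s+1,\pi,\wedge^2)$, exactly as in \cite[(10.60)]{GRS11} but with $\wedge^2$ replacing ${\rm Sym}^2$ because the normalizing $L$-factor of the intertwining operator on the Siegel Eisenstein series of $G = {\rm SO}_{4n}$ is the exterior square. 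For $(4)$, apply the local intertwining operator $M(s,\pi)$ to $W$, use $(A^{\psi}$-equivariance under conjugation by $\gamma \mathfrak{c}^{n+1}$ together with the standard relation between $M(s,\pi)$ and Shahidi's normalization factor, and compare with the gamma factor $\gamma(s+\tfrac{1}{2},\sigma\times\pi,\psi)$; the sign $\omega_\pi^n(-1)$ emerges from the action of $\varrho(\mathfrak{t})$ and the $\mathfrak{c}^{n+1}$ factor on the induced vector.

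The main obstacle will be tracking the precise normalizations: the twisted embedding $\eta$ (which differs from Kaplan's $i_{n,2n}$ by $\varrho(\left(\begin{smallmatrix} & 1_{n+1} \\ 1_{n-1} & \end{smallmatrix}\right))$), the involution $\mathfrak{c}^{n+1}$, and the choice of $\psi_{N_M}$ must be carefully propagated through the functional equation to produce exactly the sign $\omega_\pi^n(-1)$ and the correct $L$- and $\gamma$-factor on the right-hand side. In particular, verifying that the symplectic-type discrepancies disappear and only the exterior-square $L$-factor survives in both (3) and (4) is the delicate bookkeeping step; once this is done, the remaining analytic input is essentially that of \cite{Kap15}.
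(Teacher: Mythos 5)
The proposal is correct and takes essentially the same route as the paper: the paper supplies no proof, stating only that the analytic properties are due to Ginzburg, Soudry, and Kaplan, and your reduction via the identity relating $\gamma\mathfrak{c}^{n+1}$ to Kaplan's $\beta_{n,2n}$ (already recorded in the text) is exactly the bridge that citation tacitly invokes. Your sketches of parts $(1)$--$(4)$ correctly recapitulate the standard mechanisms underlying those references (gauge estimates and Bernstein continuation, Dirac approximation plus twist-and-deform, Casselman--Shalika/Gindikin--Karpelevich yielding the $\wedge^2$ normalization for the Siegel parabolic of ${\rm SO}_{4n}$, and sign bookkeeping from $\varrho(\mathfrak{t})$ and $\mathfrak{c}^{n+1}$).
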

It is noteworthy that
$\displaystyle
{\rm vol}(K')=( \prod_{j=1}^n \zeta_F(2j) )^{-1}.
$

\subsection{Factorization of Gelfand-Graev periods} 
Define $\varphi \in \mathcal{A}(\pi)$
an automorphic form on $G(\mathbb{A})$;
\[
 A^{\psi}(s,g,\varphi):=\int_{V_{\gamma}(\mathbb{A})\backslash V(\mathbb{A})} \mathcal{W}^{\psi_{N_M}}(\varphi_s, \gamma v \mathfrak{c}^{n+1} g) \psi^{-1}_V(g) \, dv.
\]
The global descent $\mathcal{D}_{\psi}(\Pi)$ and the local descent $\mathcal{D}_{\psi}(\Pi_v)$ are related in the following way;
\begin{proposition} 
Suppose that $\varphi \in \mathcal{A}(\pi)$ is factorizable vector and
 $\mathcal{W}^{\psi_{N_M}}(\varphi,\cdot)=\prod_v W_v$ with $W_v \in {\rm Ind}(\mathbb{W}^{\psi_{N_M}}(\pi_v))$.
Then for any sufficiently large finite set of places $S$, we have
\begin{equation}
\label{GG-Whittaker-OddOrthogonal-decomp}
 \mathcal{W}^{\psi_{N'}}(g,{\rm GG}(\mathcal{E}_{-k} \varphi))=
 \frac{m^S_{-k,1}(\pi)}{2^k}  \prod_{v \in S} A^{\psi} \left( -\frac{1}{2}, g_v, M\left( \frac{1}{2}, \pi_v \right)W_v \right), 
\end{equation}
where
\[
 m^S_{-k,1}(\pi)=\frac{\lim_{s \rightarrow 1}(s-1)^kL^S(s,\pi,\wedge^2)}{L^S(2,\pi,\wedge^2)}.
\]
\end{proposition}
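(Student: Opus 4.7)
My plan is to mirror the proofs of Proposition \ref{GG-Whittaker-EvenOrthogonal} and Proposition \ref{FJ-Whittaker-Symplectic}, substituting Theorem \ref{Whittaker-main-odd-orthogonal} for the corresponding unfolding theorem in the even special orthogonal case. The structure is forced by the constant term computation plus Euler factorization of the global intertwining operator, so the only place where the odd orthogonal setting intervenes essentially is in the appearance of the exterior square $L$-function (as opposed to the symmetric square) in the intertwining operator identity \eqref{unramified-odd-orth-intertwining}.

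First I would reduce the statement to the transform $A^{\psi}$ at the residual value. Since $V \supseteq U$ and $\psi_V|_U$ coincides (after averaging over $N_M$) with the trivial character needed to produce the constant term, one has
\[
\mathcal{W}^{\psi_N}(\mathcal{E}_{-k}\varphi) = \mathcal{W}^{\psi_{N_M}}(\mathcal{E}^U_{-k}\varphi) = \mathcal{W}^{\psi_{N_M}}((M_{-k}(\varphi))_{-\frac{1}{2}}),
\]
so Theorem \ref{Whittaker-main-odd-orthogonal} reformulates as the global identity
\[
\mathcal{W}^{\psi_{N'}}(g,{\rm GG}(\mathcal{E}_{-k}\varphi)) = A^{\psi}\!\left(-\tfrac{1}{2},\, g,\, M_{-k}(\varphi)\right).
\]

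Next I would Euler-factorize the right-hand side. Identifying $\mathcal{A}(\pi)$ with $\bigotimes_v {\rm Ind}^{G(F_v)}_{P(F_v)}\pi_v$, for $\operatorname{Re}(s) \gg 0$ the usual unfolding of the Whittaker coefficient of $M(s,\pi)\varphi$ along $N_M$ yields
\[
\mathcal{W}^{\psi_{N_M}}(g, M(s,\pi)\varphi) = \int_{U(\mathbb{A})} \mathcal{W}^{\psi_{N_M}}(\varrho(\mathfrak{t})w_U u g, \varphi)\, du,
\]
and for $S$ sufficiently large the unramified identity \eqref{unramified-odd-orth-intertwining} produces
\[
\mathcal{W}^{\psi_{N_M}}(M(s,\pi)\varphi) = m^S(s,\pi) \prod_{v\in S} M(s,\pi_v)W_v \prod_{v\notin S} W_v, \quad m^S(s,\pi) = \frac{L^S(2s,\pi,\wedge^2)}{L^S(2s+1,\pi,\wedge^2)},
\]
as a meromorphic identity in $s$. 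Taking $\lim_{s\to 1/2}(s-\tfrac12)^k$ of both sides and using Proposition \ref{holomorphy-odd-orthgonal} (so that each local operator $M(\tfrac{1}{2},\pi_v)$ is holomorphic and the pole of order $k$ comes entirely from the scalar $m^S(s,\pi)$), one gets
\[
\mathcal{W}^{\psi_{N_M}}(M_{-k}\varphi) = \frac{m^S_{-k,1}(\pi)}{2^k} \prod_{v\in S} M\!\left(\tfrac{1}{2},\pi_v\right)W_v \prod_{v\notin S} W_v,
\]
after the elementary manipulation $\lim_{s\to 1/2}(s-\tfrac12)^k L^S(2s,\pi,\wedge^2) = 2^{-k}\lim_{s\to 1}(s-1)^k L^S(s,\pi,\wedge^2)$.

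Finally I would assemble the two identities. Plugging the factorization into $A^{\psi}(-\tfrac{1}{2},g,M_{-k}(\varphi))$ and using that for $v\notin S$ the normalized unramified vector satisfies $A^{\psi}(-\tfrac{1}{2},g_v,W_v)\equiv 1$, I obtain the product formula \eqref{GG-Whittaker-OddOrthogonal-decomp}. I do not anticipate any substantial new obstacle beyond those already overcome in the previous sections: the only delicate point is that the argument depends on Theorem \ref{Whittaker-main-odd-orthogonal}, which is still the conjectural unfolding of Lapid and Mao; every other ingredient (holomorphy of the intertwining operator at $s=\tfrac12$, the unramified formula, and the Euler factorization of the Whittaker functional) transfers verbatim from the even orthogonal argument, with $\mathrm{Sym}^2$ replaced by $\wedge^2$.
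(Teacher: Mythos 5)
Your proof follows essentially the same route as the paper's: reduce to the constant-term identity $\mathcal{W}^{\psi_N}(\mathcal{E}_{-k}\varphi)=\mathcal{W}^{\psi_{N_M}}((M_{-k}(\varphi))_{-1/2})$, reformulate Theorem~\ref{Whittaker-main-odd-orthogonal} as $\mathcal{W}^{\psi_{N'}}(g,{\rm GG}(\mathcal{E}_{-k}\varphi))=A^{\psi}(-\tfrac12,g,M_{-k}(\varphi))$, Euler-factorize the global intertwining operator via the unramified identity \eqref{unramified-odd-orth-intertwining}, and pass to the leading coefficient at $s=\tfrac12$. The only slip is your justification of the first reduction: the relevant fact is not $V\supseteq U$ (indeed $V$ and $U$ are unipotent radicals of different parabolics, with Levi ${\rm GL}_1^{n-1}\times{\rm SO}_{2n+2}$ and ${\rm GL}_{2n}$ respectively), but simply that $N=N_M\ltimes U$ and $\psi_N$ is by definition trivial on $U$, so the $\psi_N$-coefficient along $N$ is the $\psi_{N_M}$-coefficient of the constant term along $U$.
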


\begin{proof}
We notice that
\[
 \mathcal{W}^{\psi_N}(\mathcal{E}_{-k}\varphi)=\mathcal{W}^{\psi_{N_M}}(\mathcal{E}^U_{-k}\varphi)=\mathcal{W}^{\psi_{N_M}}((M_{-k}(\varphi))_{-\frac{1}{2}}).
\]
Keeping this in mind, Theorem \ref{Whittaker-main-odd-orthogonal} can be expressed in terms of the global transform $A^{\psi}$ as
\begin{equation}
\label{global-Afunction-odd-orthogoal}
 \mathcal{W}^{\psi_{N'}}(g, {\rm GG}(\mathcal{E}_{-k}\varphi))=A^{\psi} \left( -\frac{1}{2},g, M_{-k}(\varphi) \right).
\end{equation}
We can identify $\mathcal{A}(\pi)$ with ${\rm Ind}^{G(\mathbb{A})}_{P(\mathbb{A})} \, \pi=\otimes_v {\rm Ind}^{G(F_v)}_{P(F_v)} \, \pi_v$. 
For $\rm Re(s) \gg 0$, we have
\begin{multline*}
 \mathcal{W}^{\psi_{N_M}}(g,M(s,\pi)\varphi)=\int_{N_M(F)\backslash N_M(\mathbb{A})} M(s,\pi)\varphi(ng)\psi^{-1}_{N_M}(n)\,dn\\
 =\int_{N_M(F) \backslash N_M(\mathbb{A})} \int_{U(\mathbb{A})} \varphi(\varrho(\mathfrak{t}w_Uung)) \psi^{-1}_{N_M}(n)\, du \, dn
 =\int_{U(\mathbb{A})}  \mathcal{W}^{\psi_{N_M}}(\varrho(\mathfrak{t})w_Uug,\varphi)\, du.
\end{multline*}
 For $S$ sufficiently large, we are led to
\[
 \mathcal{W}^{\psi_{N_M}}(M(s,\pi)\varphi)=m^S(s,\pi) \left(\prod_{v \in S} M(s,\pi_v)W_v \right) \prod_{v \notin S} W_v
\]
as meromorphic functions in $s \in \mathbb{C}$ where
\[
 m^S(s,\pi)=\frac{L^S(2s,\pi,\wedge^2)}{L^S(2s+1,\pi,\wedge^2)}.
\]
Taking the limit as $s \rightarrow \frac{1}{2}$, we get for $S$ large enough,
\begin{equation}
\label{limit-onehalf-odd-orthogoal}
 \mathcal{W}^{\psi_{N_M}}(M_{-k}\varphi)=m^S_{-k}(\pi)\prod_{v \in S} M \left( \frac{1}{2},\pi_v \right)W_v \prod_{v \notin S} W_v
\end{equation}
for which
\[
m^S_{-k}(\pi)=\lim_{s \rightarrow \frac{1}{2}} \left( s-\frac{1}{2}\right)^km^S(s,\pi)
=2^{-k}\frac{\lim_{s \rightarrow 1}(s-1)^kL^S(s,\pi,\wedge^2)}{L(2,\pi,\wedge^2)}
=\frac{m^S_{-k,1}(\pi)}{2^k}.
\]
For any factorizable $\varphi \in \mathcal{A}(\pi)$, our desired result is immediate from \eqref{global-Afunction-odd-orthogoal} and \eqref{limit-onehalf-odd-orthogoal}.
\end{proof}

We unfold Petersson inner products of automorphic forms against the Gelfand--Graev coefficient of Eisenstein series \cite[Theorem 10.3, (10.4)]{GRS11}
and then perform unramified computations \cite[(10.59)]{GRS11} to obtain the following Euler product.

\begin{proposition}  Let $\pi \in {\rm Ocusp}_k \, \mathbb{M}$ and $\sigma=\mathcal{D}_{\psi^{-1}}(\pi)$.
Assume our Working Hypothesis \ref{odd-orthogonal-working-hypothesis}.
Let $\varphi' \in \sigma$ be an irreducible $\psi^{-1}_{N'}$-generic cuspidal automorphic representation of $G'$ and $\varphi \in \mathcal{A}(\pi)$
Suppose that $\mathcal{W}^{\psi^{-1}_{N'}}(\cdot,\varphi')=\prod_v W'_v$ and $\mathcal{W}^{\psi_{N_M}}(\cdot,\varphi)=\prod_v W_v$.
Then for any sufficiently large finite set of places $S$ we have
\label{GG-Whittaker-OddOrthogonal}
\begin{equation}
\label{Before-Limit-Odd-Orthogoal}
 \langle \varphi', {\rm GG}(\mathcal{E}(s,\varphi)) \rangle_{G'}=\frac{1}{2}\cdot \frac{{\rm vol}  (N'(\mathcal{O}_S) \backslash N'(F_S))}{\prod_{j=1}^n \zeta^S_F(2j)}
 \cdot \frac{L(s+\frac{1}{2},\sigma\times \pi)}{L(2s+1,\pi, \wedge^2)} \prod_{v \in S} J(s,W'_v,W_v),
\end{equation}
where on the right-hand side we take the unnormalized Tamagawa measure on $G'(F_S)$, $N'(F_S)$, $V(F_S)$ and $V_{\gamma}(F_S)$
(which are independent of the choice of gauge forms when $S$ is sufficiently large. Refer to \cite[\S 6]{LM17}.)
\end{proposition}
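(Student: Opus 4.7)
The plan is to mimic the strategy used in the even special orthogonal and symplectic cases (Propositions analogous to this one in Sections \ref{Sec:3} and \ref{Sec:4}), by first unfolding the inner product against the Eisenstein series and then writing the resulting global integral as an Euler product of the local Rankin--Selberg--Shimura integrals $J(s,W'_v,W_v)$. As before, we use the unnormalized Tamagawa measure throughout, and we recall that the Tamagawa number of $G' = {\rm SO}_{2n+1}$ equals $2$, so that the volume appearing in the definition \eqref{Petterson-Innerproduct} of the Petersson inner product contributes the prefactor $\tfrac{1}{2}$.

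First I would invoke the unfolding identity \cite[Theorem 10.3, (10.4)]{GRS11} (whose Whittaker function reformulation is Theorem \ref{Whittaker-main-odd-orthogonal} above) to express
\[
\langle \varphi', {\rm GG}(\mathcal{E}(s,\varphi)) \rangle_{G'}
= \tfrac{1}{2} \cdot {\rm vol}(N'(F) \backslash N'(\mathbb{A})) \int_{N'(\mathbb{A}) \backslash G'(\mathbb{A})} \mathcal{W}^{\psi^{-1}_{N'}}(g,\varphi') \, A^{\psi}(s,g,\varphi) \, dg
\]
for $\Re s \gg 0$, and then extend meromorphically in $s$. With the chosen factorizations of $\varphi'$ and $\varphi$, the integrand is a product of local data (once the Fourier coefficient $\mathcal{W}^{\psi_{N_M}}(\cdot,\varphi)$ has been computed as $\prod_v W_v$), and the global integral decomposes as an Euler product of the local integrals $J(s,W'_v,W_v)$ indexed by all places.

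Next I would apply the unramified spherical computation (the odd-orthogonal analogue of \eqref{even-orthgonal-spherical-comp}, which is \cite[(10.59)]{GRS11} together with the work of Kaplan \cite{Kap10,Kap15}), giving for $v \notin S$
\[
J(s, {W'_v}^{\circ}, W_v^{\circ}) = \frac{{\rm vol}(K'_v)}{{\rm vol}(N'(F_v) \cap K'_v)} \cdot \frac{L(s+\tfrac{1}{2},\sigma_v \times \pi_v)}{L(2s+1, \pi_v, \wedge^2)}.
\]
Taking the product over $v \notin S$ yields the partial $L$-ratio appearing in the statement; the factor $L(2s+1,\pi,\wedge^2)$ (rather than $L(2s+1,\pi,{\rm Sym}^2)$) is consistent with the fact that $\wedge^2$ is the relevant representation in the odd orthogonal setting, as already witnessed in the intertwining-operator normalization \eqref{unramified-odd-orth-intertwining}.

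Finally, using Tamagawa measures forces in the factor $\Delta^S_{{\rm SO}_{2n+1}}(1)^{-1}$, where by \cite{Gro97} and \cite[Notation and Convention]{Xue17} one has
\[
\Delta^S_{{\rm SO}_{2n+1}}(1) = \prod_{j=1}^{n} \zeta^S_F(2j);
\]
note that here, unlike the even orthogonal case, no $L^S(n+1,\chi_\tau)$ factor appears, since ${\rm SO}_{2n+1}$ is split. Collecting all the prefactors and the local $J$-integrals at $v \in S$ yields \eqref{Before-Limit-Odd-Orthogoal}. The main subtlety is the unfolding step: Theorem \ref{Whittaker-main-odd-orthogonal} is invoked from the forthcoming work of Lapid--Mao \cite{LM22}, and the presence of the (signed) outer automorphism $\mathfrak{c}^{n+1}$ must be carefully tracked through the unfolding in order to match with Kaplan's formulation in \cite{Kap15} and to ensure that the Whittaker character $\psi_{N_M}$ is preserved; once this is done, the rest of the argument is, as in the even orthogonal case, a routine combination of unfolding, factorization, and the unramified computation.
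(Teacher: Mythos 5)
Your proposal is correct and follows essentially the same route as the paper's own proof: unfold via \cite[Theorem 10.3, (10.4)]{GRS11} with the prefactor $\tfrac12$ coming from the Tamagawa number of ${\rm SO}_{2n+1}$, factor into an Euler product of local $J$-integrals, insert the unramified spherical computation (the paper cites \cite{Gin90} and \cite[(10.59)]{GRS11} at this point), and collect the $\Delta^S_{{\rm SO}_{2n+1}}(1)^{-1}$ factor. The only additions you make beyond the paper's proof — the remark that no $L^S(n+1,\chi_\tau)$ factor arises since ${\rm SO}_{2n+1}$ is split, and the caution about tracking $\mathfrak{c}^{n+1}$ through the unfolding — are correct observations but not steps the paper spells out.
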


\begin{proof}
It is worthwhile to mention that the volume of $G'(F) \backslash G'(\mathbb{A})$, appearing on the right-hand side of \eqref{Petterson-Innerproduct},
is $2$ equal to the Tamagawa number when we use the Tamagawa measure as in \cite{LM16,LM17}.
We apply \cite[Theorem 10.3, (10.4)]{GRS11} in order to unfold
\begin{equation}
 \label{odd-orthgonal-unfolding} 
  \langle \varphi', {\rm GG}(\mathcal{E}(s,\varphi)) \rangle_{G'}=\frac{1}{2} \cdot {\rm vol}(N'(F) \backslash N'(\mathbb{A})) \int_{N'(\mathbb{A} \backslash G'(\mathbb{A}))} 
  \mathcal{W}^{\psi^{-1}_{N'}}(g,\varphi') A^{\psi}(s,g,\varphi)\, dg.
\end{equation}
Similar to \cite[Propositions 5.7 and 8.4]{LM16}, our desired result is a consequence of \eqref{odd-orthgonal-unfolding} in conjunction with 
the unramified computation \cite{Gin90} (cf. \cite[(10.59)]{GRS11}):
\begin{equation}
\label{odd-orthgonal-spherical-comp} 
 J(s,{W'_v}^{\circ},W^{\circ}_v)=\frac{{\rm vol}(K')}{{\rm vol}(N'(F_v)\cap K')}\frac{L(s+\frac{1}{2},\sigma_v\times \pi_v)}{L(2s+1,\pi_v,\wedge^2)}
\end{equation}
with $K'=K_{{\rm GL}_{2n+1}(F_v)}\cap G'(F_v)$. Since we use the Tamagawa measure, the factor $\Delta^S_{{\rm SO}_{2n+1}}(1)^{-1}$
arises, where $\Delta^S_{{\rm SO}_{2n+1}}(s)=L(M^{\vee}(s))$ is the partial $L$-function of the dual motive $M^{\vee}$ 
of the motive $M$ associated to ${\rm SO}_{2n+1}$ by Gross \cite{Gro97}. In light of \citelist{\cite{Xue17}*{Notation and Convention} \cite{II10}}, we have 
\[
\Delta^S_{{\rm SO}_{2n+1}}(1)=\prod_{j=1}^n \zeta^S_F(2j)
\]
for any finite set of places $S$.
\end{proof}

\subsection{Petersson inner products and Fourier coefficients}
Let $F$ be a local field. Let $\pi \in {\rm Irr}_{\rm gen,orth} \, \mathbb{M}$.
We say that $\pi$ is {\it good} if the following conditions are satisfied for all $\psi$;
\begin{enumerate}[label=$(\mathrm{\roman*})$]
\item $\mathcal{D}_{\psi}(\pi)$ is irreducible.
\item $J(s,W',W)$ is holomorphic at $s=\frac{1}{2}$ for any $W' \in \mathcal{D}_{\psi^{-1}}(\pi)$ and $W \in {\rm Ind}(\mathbb{W}^{\psi_{N_M}}(\pi))$.
\item For any $W' \in \mathcal{D}_{\psi^{-1}}(\pi)$,\\
$J(\frac{1}{2},W',W)$ factors through the map $W \mapsto (A^{\psi}(-\frac{1}{2},\cdot,M(\frac{1}{2},\pi)W))$.
\end{enumerate}

We verify from \eqref{odd-orthogonal-G'-equivalent} that for $x \in G'$ we have
\[
 J(s,\sigma(x)W',I(s,x)W)=J(s,W',W),
\]
where $\sigma=\mathcal{D}_{\psi^{-1}}(\pi)$. Therefore if $\pi$ is good, there is a non-degenerate $G'$-invariant pairing $[\cdot,\cdot]$ on
$\mathcal{D}_{\psi^{-1}}(\pi) \times \mathcal{D}_{\psi}(\pi)$ such that
\[
 J \left( \frac{1}{2},W',W \right)=\left[ W', A^{\psi}\left( -\frac{1}{2},\cdot, M\left( \frac{1}{2},\pi \right)W \right) \right]
\]
for any $W' \in \mathcal{D}_{\psi^{-1}}(\pi)$ and $W \in {\rm Ind}(\mathbb{W}^{\psi_{N_M}}(\pi))$. By \cite[\S 2]{LM15},
when $\pi$ is good, there exists $c_{\pi}$ satisfying
\begin{equation}
\label{cpi-equation-odd-orthogoal}
 \int^{\rm st}_{N'} \left( \frac{1}{2},\sigma(u)W',W \right) \psi_{N'}(u) \, du=c_{\pi}W'(e) A^{\psi} \left( -\frac{1}{2}, e, M\left( \frac{1}{2}, \pi \right)W \right).
\end{equation}

\begin{proposition}
\label{unramified-constant-odd-orthogonal}
Suppose that $F$ is $p$-adic, $\pi \in {\rm Irr}_{\rm gen,orth} \, \mathbb{M}$ and $\psi$ are unramified.
Then $\sigma=\mathcal{D}_{\psi^{-1}}(\pi)$ is irreducible and unramified. Let $W^{\circ} \in {\rm Ind}(\mathbb{W}^{\psi_{N_M}}(\pi))$ be the
standard unramified vector and let ${W'}^{\circ}$ be $K'$-invariant with ${W'}^{\circ}(e)=1$.
Then holds with $c_{\pi}=1$.
\end{proposition}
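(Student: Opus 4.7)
The plan is to imitate the argument for Proposition \ref{unramified-constant-even-orthogonal} (and its symplectic analogue Proposition \ref{unramified-constant-symplectic}) virtually step for step, substituting the appropriate ${\rm SO}_{2n+1}$ ingredients at each stage. The first assertion---that $\sigma=\mathcal{D}_{\psi^{-1}}(\pi)$ is irreducible and unramified---is obtained by following the proof of \cite[Lemma 5.4]{LM17}, with \cite[Theorem 9.4]{GRS11} playing the role of \cite[Theorem 6.4]{GRS11}; the relevant input is the explicit description, at unramified places, of the descent as the $\psi_{N'}$-generic spherical constituent of $G'$ with Satake parameter matching that of $\pi$.

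For the identity $c_{\pi}=1$, the plan is to evaluate both sides of \eqref{cpi-equation-odd-orthogoal} on spherical data and show they agree. Assuming ${\rm vol}(U\cap K)=1$ as in \eqref{unramified-odd-orth-intertwining}, the compatibility of Haar measures gives ${\rm vol}(K')=(\prod_{j=1}^n\zeta_F(2j))^{-1}$. Appealing to \cite[Proposition 2.14]{LM15} one rewrites
\[
 \int^{\rm st}_{N'} J\!\left(\tfrac{1}{2},\sigma(u){W'}^{\circ},W^{\circ}\right)\psi^{-1}_{N'}(u)\,du
 =\frac{{\rm vol}(N'\cap K')}{{\rm vol}(K')}\,\frac{J(\tfrac{1}{2},{W'}^{\circ},W^{\circ})}{L(1,\sigma,{\rm Ad})}.
\]
Since $G'={\rm SO}_{2n+1}$ has dual group ${\rm Sp}_{2n}(\mathbb{C})$ whose adjoint representation is ${\rm Sym}^2$ of the standard representation, and $\sigma^{\vee}$ weakly lifts to $\pi$ under Working Hypothesis \ref{odd-orthogonal-working-hypothesis}, we have the identification
\[
 L(1,\sigma,{\rm Ad})=L(1,\pi,{\rm Sym}^2),
\]
analogous to the computations in \cite[\S 7]{GGP12}. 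Furthermore, since $\pi^{\vee}\cong\pi$, one factors $L(s,\sigma\times\pi)=L(s,\pi\times\pi)=L(s,\pi,{\rm Sym}^2)L(s,\pi,\wedge^2)$.

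Substituting these into the unramified formula \eqref{odd-orthgonal-spherical-comp}, the ratios involving ${\rm vol}(K')$ and ${\rm vol}(N'\cap K')$ cancel and the factors $L(\cdot,\pi,{\rm Sym}^2)$ cancel between numerator and denominator, leaving precisely $L(1,\pi,\wedge^2)/L(2,\pi,\wedge^2)$. By \eqref{unramified-odd-orth-intertwining} this is nothing but $M(\tfrac{1}{2},\pi)W^{\circ}(e)$, and since $A^{\psi}(-\tfrac{1}{2},e,\cdot)$ sends the spherical vector to $1$ and ${W'}^{\circ}(e)=1$, this equals ${W'}^{\circ}(e)A^{\psi}\bigl(-\tfrac{1}{2},e,M(\tfrac{1}{2},\pi)W^{\circ}\bigr)$, which is exactly the right-hand side of \eqref{cpi-equation-odd-orthogoal} with $c_{\pi}=1$.

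The main obstacle is the irreducibility/unramifiedness of $\sigma$ at the outset: while the general template of \cite[Lemma 5.4]{LM17} is robust, one must confirm that \cite[Theorem 9.4, part (3)]{GRS11} gives enough information about the unramified descent to pin down $\sigma$ uniquely as the spherical generic representation with the expected Satake parameter. Once this local input is secured, the rest of the argument is formal bookkeeping among the $L$-factors, with no further analytic content beyond what the spherical Rankin--Selberg--Shimura calculation and the Gindikin--Karpelevich identity already supply.
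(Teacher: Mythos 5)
Your proposal is correct and follows the paper's own argument essentially step for step: the first part via the template of \cite[Lemma 5.4]{LM17}, and the computation of $c_{\pi}=1$ by combining \cite[Proposition 2.14]{LM15} with the unramified formulas \eqref{unramified-odd-orth-intertwining} and \eqref{odd-orthgonal-spherical-comp}, the identifications $L(1,\sigma,{\rm Ad})=L(1,\pi,{\rm Sym}^2)$ and $L(s,\sigma\times\pi)=L(s,\pi\times\pi)$, and the factorization that collapses the ratio to $L(1,\pi,\wedge^2)/L(2,\pi,\wedge^2)$. The only cosmetic difference is that the paper invokes \cite[Theorem 5.6]{GRS11} (the local unramified description of the descent) where you point to the unramified clause of the global theorem; both supply the same Satake-parameter data.
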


\begin{proof}
With \cite[Theorem 5.6]{GRS11} in hand, the proof of \cite[Lemma 5.4]{LM17} can be carried over verbatim to the first part of our current setting. Assuming ${\rm vol}(U \cap K)=1$ as in \eqref{unramified-odd-orth-intertwining}, we know that
\[
 {\rm vol}(K')=(\prod_{j=1}^n \zeta^S_F(2j))^{-1}
\]
Combining it and \cite[Proposition 2.14]{LM15} leads to
\[
 \int^{\rm st}_{N'} J \left( \frac{1}{2},\sigma(u){W'}^{\circ},W^{\circ} \right) \psi^{-1}_{N'}(u) \,du=\frac{{\rm vol}(N' \cap K')}{{\rm vol}(K')}\frac{J(\frac{1}{2},{W'}^{\circ},W^{\circ})}{L(1,\sigma,{\rm Ad})}.
\]
Using the description of $\sigma$ in this case \cite[Theorem 5.6]{GRS11} coupled with \cite[\S 7]{GGP12}, we infer that
$L(1,\sigma,{\rm Ad})=L(1,\pi,{\rm Sym}^2)$ and $L(s,\sigma\times \pi)=L(s,\pi\times\pi)$. 
We recall that $\pi=\pi^{\vee}$ in our case.
Therefore \eqref{unramified-odd-orth-intertwining} aligned with of \eqref{odd-orthgonal-spherical-comp} implies that 
\[
  \frac{1}{L(1,\pi,{\rm Sym}^2)}\frac{L(1,\pi \times \pi)}{L(2,\pi,\wedge^2)}=\frac{L(1,\pi,\wedge^2)}{L(2,\pi,\wedge^2)}=M\left( \frac{1}{2},\pi \right)W^{\circ}(e)
  ={W'}^{\circ}(e)A^{\psi} \left( -\frac{1}{2},e,M\left( \frac{1}{2},\pi \right)W^{\circ}\right)
\]
as requested.
\end{proof}

We note that a priori $c_{\pi}$ implicitly depends on the choice of Haar measure on $G'$ (the former used in the definition of $J(s,W',W)$) and $U$
(the latter used in the definition of the intertwining operator), but not on any other groups (refer to \cite[Remark 5.2]{LM17} for further details).
However the Lie algebras of $G'$ and $U$ are both canonically isomorphic as vector spaces to
\[
 \{ X \in {\rm Mat}_{2n,2n}(F) \,|\, {X^t}\,w_{2n}=-w_{2n}X \}.
\]
We say that the Haar measures on $G'$ and $U$ are {\it compatible} if they are defined as unnormalized Tamagawa measure with respect to
the same gauge form via the above identification of Lie algebras. We can now see that $c_{\pi}$ does not depend on any choice of measure provided
that we choose compatible measures on $G'$ and $U$.

\begin{proposition} 
\label{odd-orthogonal-good-repn}
Let $\pi \in {\rm Ocusp}_k\,\mathbb{M}$ and $\sigma=\mathcal{D}_{\psi^{-1}}(\pi)$.
Assume our Working Hypothesis \ref{odd-orthogonal-working-hypothesis}. 
Let $\varphi' \in \sigma$ be an irreducible $\psi^{-1}_{N'}$-generic cuspidal automorphic representation of $G'$ and $\varphi \in \mathcal{A}(\pi)$.
Suppose that $\mathcal{W}^{\psi^{-1}_{N'}}(\cdot,\varphi')=\prod_v W'_v$ and $\mathcal{W}^{\psi_{N_M}}(\cdot,\varphi)=\prod_v W_v$.
Then for all $v$ $\pi_v$ is good. Moreover for any sufficiently large finite set of places $S$, we have
\begin{equation}
\label{EulerProducts-Odd-Orthogonal}
 \langle \varphi', {\rm GG}(\mathcal{E}_{-k}\varphi) \rangle_{G'}=\frac{1}{2}\cdot \frac{{\rm vol}(N'(\mathcal{O}_S \backslash N'(F_S)))}{\prod_{j=1}^n \zeta^S_F(2j)} \cdot n^S_{-k,1}
 \prod_{v \in S} J\left( \frac{1}{2},W'_v,W_v \right),
\end{equation}
where
\[
 n^S_{-k,1}=\frac{\lim_{s \rightarrow 1}(s-1)^kL^S(s,\pi \times \pi)}{L^S(2,\pi,\wedge^2)}.
\]
\end{proposition}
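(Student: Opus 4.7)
The plan is to follow the template established by the proofs of Proposition~\ref{even-orthogonal-good-repn} and Proposition~\ref{symplectic-good-repn}. The argument mixes local and global inputs: for irreducibility of local descents and for the factorization of the local Rankin--Selberg--Shimura integral through the descent map, we lean on Working Hypothesis~\ref{odd-orthogonal-working-hypothesis} and Theorem~\ref{Whittaker-main-odd-orthogonal}, while the Euler product identity is extracted from the pole structure of $L$-functions at $s=\tfrac{1}{2}$.

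First, I would verify that each local descent $\mathcal{D}_{\psi^{-1}}(\pi_v)$ is irreducible. The factorization \eqref{GG-Whittaker-OddOrthogonal-decomp} expresses the global Whittaker coefficient of the Gelfand--Graev descent as an Eulerian product of local $A^{\psi}$-transforms, so the irreducibility of the global descent propagates to each place. Second, I would analyze the poles of both sides of \eqref{Before-Limit-Odd-Orthogoal}. Since $\sigma^{\vee}$ weakly lifts to $\pi$, we have $L^S(s+\tfrac{1}{2},\sigma\times\pi)=L^S(s+\tfrac{1}{2},\pi,\mathrm{Sym}^2)\,L^S(s+\tfrac{1}{2},\pi,\wedge^2)$, and since $\pi\in\mathrm{Ocusp}_k\,\mathbb{M}$ the exterior square factor contributes precisely a pole of order $k$ at $s=\tfrac{1}{2}$. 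Each local $J(s,W'_v,W_v)$ can be arranged to be non-vanishing at $s=\tfrac{1}{2}$, so the right-hand side of \eqref{Before-Limit-Odd-Orthogoal} has a pole of order at least $k$, whereas the left-hand side has a pole of order at most $k$ because $\mathcal{E}(s,\varphi)$ does and $\varphi'$ is rapidly decreasing. Multiplying by $(s-\tfrac{1}{2})^k$ and letting $s\to\tfrac{1}{2}$ therefore forces each $J(s,W'_v,W_v)$ to be holomorphic at $s=\tfrac{1}{2}$ and simultaneously yields the identity \eqref{EulerProducts-Odd-Orthogonal} with $n^S_{-k,1}$ as stated.

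Third, to verify condition (iii) of goodness at a fixed place $v_0$, I would argue by contradiction: assume $A^{\psi}(-\tfrac{1}{2},\cdot,M(\tfrac{1}{2},\pi_{v_0})W_{v_0})\equiv 0$. Plugging this into \eqref{GG-Whittaker-OddOrthogonal-decomp} forces $\mathcal{W}^{\psi_{N'}}(\cdot,\mathrm{GG}(\mathcal{E}_{-k}\varphi))\equiv 0$, and then genericity and irreducibility of the global descent imply $\mathrm{GG}(\mathcal{E}_{-k}\varphi)\equiv 0$; the Euler product \eqref{EulerProducts-Odd-Orthogonal} then yields $J(\tfrac{1}{2},W'_{v_0},W_{v_0})=0$, which is exactly the factorization demanded by (iii).

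The principal obstacle in this plan is upstream of the local-to-global argument itself: Theorem~\ref{Whittaker-main-odd-orthogonal}, the unfolding identity producing the Eulerian factorization of the Whittaker coefficient of the descent, is attributed to the ongoing work of Lapid and Mao and is not proved in the paper. Once that input and Working Hypothesis~\ref{odd-orthogonal-working-hypothesis} are in place, the chain of deductions is a faithful transcription of the even orthogonal and symplectic cases, with $\wedge^2$ replacing $\mathrm{Sym}^2$ in the denominator and with the simpler lift $\sigma^{\vee}\leadsto\pi$ in place of the twisted lifts $\pi\boxplus\omega_{\pi}\chi_{\tau}$ and $\pi\boxplus\omega_{\pi}$ of the previous sections. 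No new analytic difficulty arises in the odd orthogonal setting beyond what Theorem~\ref{Whittaker-main-odd-orthogonal} already encapsulates.
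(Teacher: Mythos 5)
Your proposal is correct and follows essentially the same route as the paper's proof: irreducibility of the local descents from \eqref{GG-Whittaker-OddOrthogonal-decomp} and Working Hypothesis~\ref{odd-orthogonal-working-hypothesis}, holomorphy and the Euler product \eqref{EulerProducts-Odd-Orthogonal} by pole-matching on both sides of \eqref{Before-Limit-Odd-Orthogoal}, and condition (iii) of goodness by the vanishing-at-$v_0$ contradiction argument. The only substantive addition is your explicit note that the order-$k$ pole of $L^S(s,\sigma\times\pi)=L^S(s,\pi\times\pi)$ sits in the exterior-square factor (since $\pi\in\mathrm{Ocusp}_k\,\mathbb{M}$), which the paper leaves implicit, and you correctly flag Theorem~\ref{Whittaker-main-odd-orthogonal} as the unproven external input.
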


\begin{proof}
We verify that $\pi_v$ is good for all $v$. To this end, it can be deduced from \eqref{GG-Whittaker-OddOrthogonal-decomp}
together with the irreducibility of the global descent that $\mathcal{D}_{\psi^{-1}}(\pi_v)$ is irreducible for all $v$.

\par
When $\pi \in {\rm Ocusp}_k \, \mathbb{M}$ and $\sigma=\mathcal{D}_{\psi^{-1}}(\pi)$, we obtain $L^S(s,\sigma\times \pi)=L^S(s,\pi\times\pi)$,
which has a pole of order $k$ at $s=\frac{1}{2}$. On the one hand, $J(s,W'_v,W_v)$ is non-vanishing at $s=\frac{1}{2}$
for appropriate $W'_v$ and $W_v$. Therefore the right-hand side of \eqref{Before-Limit-Odd-Orthogoal} (when $\varphi'\in \sigma$)
has a pole of order at least $k$ at $s=\frac{1}{2}$ for suitable $\varphi'$ and $\varphi$. On the other hand, the left-hand side of \eqref{Before-Limit-Odd-Orthogoal}
has a pole of order at most $k$ at $s=\frac{1}{2}$ because this is true for $\mathcal{E}(s,\varphi)$ and $\varphi'$ is rapidly decreasing.
Multiplying \eqref{Before-Limit-Odd-Orthogoal} by $(s-\frac{1}{2})^k$ and then taking the limit as $s \rightarrow \frac{1}{2}$,
we conclude that $J(s,W'_v,W_v)$ is holomorphic at $s=\frac{1}{2}$ for all $v$, and for $\varphi' \in \sigma$ and $\varphi \in \mathcal{A}(\pi)$ we get
\begin{equation}
\label{After-Limit-Odd-Orthogoal}
 \langle \varphi', {\rm GG}(\mathcal{E}_{-k}\varphi) \rangle_{G'}=\frac{1}{2}\cdot \frac{{\rm vol}(N'(\mathcal{O}_S \backslash N'(F_S)))}{\prod_{j=1}^n \zeta^S_F(2j)} \cdot n^S_{-k,1}
 \prod_{v \in S} J\left( \frac{1}{2},W'_v,W_v \right).
\end{equation}
\par
We fix a place $v_0$. We now assume that $W_{v_0}$ such that $A^{\psi}\left(-\frac{1}{2},\cdot,M\left( \frac{1}{2},\pi_{v_0}\right)W_{v_0} \right) \equiv 0$.
Then by \eqref{GG-Whittaker-OddOrthogonal-decomp}, we see that $\mathcal{W^{\psi_{N'}}}(\cdot, {\rm GG}(\mathcal{E}_{-k}\varphi))\equiv 0$, 
which in turn implies that ${\rm GG}(\mathcal{E}_{-k}\varphi)\equiv 0$ owing to the irreducibility and genericity of the descent.
We conclude from \eqref{After-Limit-Odd-Orthogoal} that $J(\frac{1}{2},W'_{v_0},W_{v_0})=0$. Henceforth $\pi_{v_0}$ is good.
\end{proof}

While we expect that any irreducible generic representation of orthogonal type $\pi_v$ is good, the proof that we give under the assumption 
that $\pi_v$ is the local component of $\pi \in {\rm Scusp}_k \,\mathbb{M}$ is valid at the very least for the case of our interest,
which pertains to the global period integrals. Our proof here requires a local to global argument. Whereas the global approach is
overkill and indirect, one may directly prove the desired result by using a local functional equation \cite[Proposition 5.3]{LM17}. For the sake of brevity,
we only include this indirect mean in keeping with the spirit of \cite[Propositions 5.7 and 8.4]{LM16}.

\begin{theorem} 
\label{Main-Result-Odd-Orthogonal-Theorem}
Let $\pi \in {\rm Ocusp}_k \, \mathbb{M}$ and $\sigma=\mathcal{D}_{\psi}(\pi)$. Assume our Working Hypothesis \ref{odd-orthogonal-working-hypothesis}. 
Let $S$ be a finite set of places including all the archimedean places such that $\pi$ and $\psi$ are all unramified outside $S$.
Then for any $\varphi \in \sigma$ and $\varphi^{\vee} \in \sigma^{\vee}$
which are fixed under $K'_v$ for all $v \not\in S$, we have
\begin{multline}
\label{Main-Result-Odd-Orthogonal}
 \quad \quad
 \mathcal{W}^{\psi_{N'}}(\varphi) \mathcal{W}^{\psi^{-1}_{N'}}(\varphi^{\vee})=2^{1-k} \left( \prod_{v \in S} c^{-1}_{\pi_v} \right) \frac{\prod_{j=1}^n \zeta^S_F(2j)}{L^S(1,\pi,{\rm Sym}^2)}\\
 \times ({\rm vol}  (N'(\mathcal{O}_S) \backslash N'(F_S))    )^{-1} \int_{N'(F_S)}^{\rm st} \langle \sigma(u)\varphi,\varphi^{\vee} \rangle_{G'} \psi^{-1}_{N'}(u) \,du. \quad \quad 
\end{multline}
\end{theorem}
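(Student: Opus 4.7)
The plan is to mirror the proof of Theorem \ref{Main-Result-Even-Orthogonal-Theorem} and Theorem \ref{Main-Result-Symplectic-Theorem}, adapting it to the odd orthogonal setting. Under Working Hypothesis \ref{odd-orthogonal-working-hypothesis}, the descent $\sigma=\mathcal{D}_\psi(\pi)$ is the space generated by ${\rm GG}(\mathcal{E}_{-k}\varphi^\flat)$ with $\varphi^\flat\in\mathcal{A}(\pi)$, so by linearity it suffices to treat the special case $\varphi^\vee={\rm GG}(\mathcal{E}_{-k}\varphi^\sharp)$ for $\varphi^\sharp\in\mathcal{A}(\pi)$; the identity will then extend to arbitrary $\varphi^\vee\in\sigma^\vee$.

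First, I would apply Proposition \ref{GG-Whittaker-OddOrthogonal} (the factorization of Whittaker--Fourier coefficients of the Gelfand--Graev period, based on Theorem \ref{Whittaker-main-odd-orthogonal}) to $\varphi^\vee={\rm GG}(\mathcal{E}_{-k}\varphi^\sharp)$, obtaining
\[
 \mathcal{W}^{\psi_{N'}}(\varphi)\,\mathcal{W}^{\psi^{-1}_{N'}}({\rm GG}(\mathcal{E}_{-k}\varphi^\sharp))
 = \mathcal{W}^{\psi_{N'}}(\varphi)\,\frac{m^S_{-k,1}(\pi)}{2^k}\prod_{v\in S}A^{\psi^{-1}}\!\left(-\tfrac12,e,M\!\left(\tfrac12,\pi_v\right)W^\sharp_v\right).
\]
Proposition \ref{odd-orthogonal-good-repn} guarantees that each $\pi_v$ is good, while Proposition \ref{unramified-constant-odd-orthogonal} asserts $c_{\pi_v}=1$ at almost all $v$. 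Applying the identity \eqref{cpi-equation-odd-orthogoal} at every place $v\in S$ to the outer Whittaker factor $\mathcal{W}^{\psi_{N'}}(\varphi)=\prod_v W'_v(e)$, I would convert this into
\begin{equation}\label{Odd-Orthogonal-Product-Form-one-proposal}
 \mathcal{W}^{\psi_{N'}}(\varphi)\,\mathcal{W}^{\psi^{-1}_{N'}}({\rm GG}(\mathcal{E}_{-k}\varphi^\sharp))
 =\frac{m^S_{-k,1}(\pi)}{2^k}\prod_{v\in S} c^{-1}_{\pi_v}\int^{\rm st}_{N'(F_v)}\!J\!\left(\tfrac12,\sigma_v(u_v)W_v,W^\sharp_v\right)\psi^{-1}_{N'(F_v)}(u_v)\,du_v.
\end{equation}

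Next, I would integrate the Euler product \eqref{EulerProducts-Odd-Orthogonal} from Proposition \ref{odd-orthogonal-good-repn} against $\psi^{-1}_{N'}$ over $N'(F_S)$ using the regularized integral. The point is that for $u\in N'(F_S)$ the pairing $\langle\sigma(u)\varphi,{\rm GG}(\mathcal{E}_{-k}\varphi^\sharp)\rangle_{G'}$ is unfolded as the product over $v\in S$ of $J(\tfrac12,\sigma_v(u_v)W_v,W^\sharp_v)$, up to the global factor $\tfrac12\,{\rm vol}(N'(\mathcal{O}_S)\backslash N'(F_S))/\prod_{j=1}^n\zeta^S_F(2j)\cdot n^S_{-k,1}$. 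This yields
\begin{equation}\label{Odd-Orthogonal-Product-Form-two-proposal}
\prod_{v\in S}\int^{\rm st}_{N'(F_v)}\!J\!\left(\tfrac12,\sigma_v(u_v)W_v,W^\sharp_v\right)\psi^{-1}_{N'(F_v)}(u_v)\,du_v
=\frac{2\prod_{j=1}^n\zeta^S_F(2j)}{{\rm vol}(N'(\mathcal{O}_S)\backslash N'(F_S))\cdot n^S_{-k,1}}\int^{\rm st}_{N'(F_S)}\!\langle\sigma(u)\varphi,\varphi^\vee\rangle_{G'}\,\psi^{-1}_{N'}(u)\,du.
\end{equation}

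Finally, combining \eqref{Odd-Orthogonal-Product-Form-one-proposal} and \eqref{Odd-Orthogonal-Product-Form-two-proposal} and using the factorization $L^S(s,\pi\times\pi)=L^S(s,\pi,{\rm Sym}^2)L^S(s,\pi,\wedge^2)$, the ratio simplifies as
\[
 \frac{m^S_{-k,1}(\pi)}{n^S_{-k,1}}=\frac{1}{L^S(1,\pi,{\rm Sym}^2)},
\]
which is precisely $L^S(1,\sigma,{\rm Ad})^{-1}$ in this case (since $\sigma^\vee$ weakly lifts to $\pi$, so $L^S(s,\sigma,{\rm Ad})=L^S(s,\pi,{\rm Sym}^2)$). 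Collecting factors produces \eqref{Main-Result-Odd-Orthogonal}. The main obstacle is genuinely upstream rather than in this assembly: the argument is contingent on Theorem \ref{Whittaker-main-odd-orthogonal} (the Lapid--Mao unfolding of the Whittaker coefficient of the descent in the odd orthogonal case), which we take for granted here. Once that and Working Hypothesis \ref{odd-orthogonal-working-hypothesis} are granted, the remainder is the same bookkeeping as in the even orthogonal and symplectic cases, with care taken that the Tamagawa volume of $G'(F)\backslash G'(\mathbb{A})$ equals $2$ and that the motivic factor $\Delta^S_{{\rm SO}_{2n+1}}(1)=\prod_{j=1}^n\zeta^S_F(2j)$ produces the exponent $2^{1-k}$ on the right-hand side.
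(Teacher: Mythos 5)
Your proposal follows the paper's own proof step for step: the reduction to $\varphi^{\vee}={\rm GG}(\mathcal{E}_{-k}\varphi^{\sharp})$, the factorization via Proposition \ref{GG-Whittaker-OddOrthogonal}, the insertion of \eqref{cpi-equation-odd-orthogoal} using the goodness of $\pi_v$ from Proposition \ref{odd-orthogonal-good-repn}, the integration of \eqref{EulerProducts-Odd-Orthogonal} over $N'(F_S)$, and the final simplification $m^S_{-k,1}/n^S_{-k,1}=1/L^S(1,\pi,{\rm Sym}^2)$ yielding the $2^{1-k}$ factor. The only point of emphasis you add — flagging that Theorem \ref{Whittaker-main-odd-orthogonal} is the genuine input whose status is still pending — is an accurate observation but not a deviation in argument.
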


\begin{proof}
Since the descent of $\pi$ is the space $\mathcal{D}_{\psi_{N_{\mathbb{M}}}}(\pi)$ spanned by ${\rm GG}(\mathcal{E}_{-k}\varphi^{\flat})$ for $\varphi^{\flat} \in \mathcal{A}(\pi)$,
we initially tackle with this for the case $\varphi^{\vee}={\rm GG}(\mathcal{E}_{-k}\varphi^{\sharp})$ with $\varphi^{\sharp} \in \mathcal{A}(\pi)$, and this identity \eqref{Main-Result-Odd-Orthogonal}
then extends via linearity to all $\varphi^{\vee}$. In view of Proposition \ref{GG-Whittaker-OddOrthogonal}, we find that
\[
 \mathcal{W}^{\psi_{N'}}(\varphi)\mathcal{W}^{\psi^{-1}_{N'}}({\rm GG}(\mathcal{E}_{-k}\varphi^{\sharp}))
 =\mathcal{W}^{\psi_{N'}}(\varphi)\frac{m^S_{-k,1}(\pi)}{2^k} \prod_{v \in S} A^{\psi^{-1}} \left(-\frac{1}{2},e,M\left( \frac{1}{2},\pi_v \right), W^{\sharp}_v \right).
\]
Counting on the fact $\pi_v$ is good in Proposition \ref{odd-orthogonal-good-repn}, we exploit Proposition \ref{unramified-constant-odd-orthogonal}
to insert the identity \eqref{cpi-equation-odd-orthogoal} for all $v$ with $c_{\pi_v}=1$ for almost all $v$. In doing so, the above can be read as
\begin{equation}
\label{Odd-Orthogonal-Product-Form-one}
 \mathcal{W}^{\psi_{N'}}(\varphi)\mathcal{W}^{\psi^{-1}_{N'}}({\rm GG}(\mathcal{E}_{-k}\varphi^{\sharp}))
 =\frac{m^S_{-k,1}(\pi)}{2^k} \left( \prod_{v \in S} c^{-1}_{\pi_v} \int^{\rm st}_{N'(F_v)} J \left( \frac{1}{2},\sigma_v(u_v)W_v,W^{\sharp}_v \right) \psi^{-1}_{N'(F_v)}(u_v) \,du_v \right).
\end{equation}
Integrating \eqref{EulerProducts-Odd-Orthogonal} over $N'(F_S)$ for $v \in S$, a product of regularized integrals is turned into
\begin{multline}
\label{Odd-Orthogonal-Product-Form-two}
 \prod_{v \in S} \int^{\rm st}_{N'(F_v)} J\left( \frac{1}{2},\sigma_v(u_v)W_v,W^{\sharp}_v \right) \psi^{-1}_{N'(F_v)}(u_v) \, du_v\\
 =\frac{2\prod_{j=1}^n \zeta^S_F(2j)}{{\rm vol}(N'(\mathcal{O}_S)\backslash N'(F_S))\cdot n^S_{-k,1}} \int^{\rm st}_{N'(F_S)} \langle
 \sigma(u)\varphi, {\rm GG}(\mathcal{E}_{-k}\varphi^{\sharp}) \rangle_{G'} \psi^{-1}_{N'(F_v)}(u) \, du.
\end{multline}
The factorization $L^S(s,\pi\times\pi)=L(s,\pi\times\pi^{\vee})=L^S(s,\pi,{\rm Sym}^2)L^S(s,\pi,\wedge^2)$ enables us to simplify the ratio
\begin{equation}
\label{Odd-Orthogonal-Ratio}
 m^S_{-k,1} / n^S_{-k,1}=1/L^S(1,\pi,{\rm Sym}^2)
\end{equation}
whose partial $L$-function appearing in the denominator is actually the adjoint $L$-function $L^S(s,\sigma,{\rm Ad})$ for $\sigma$. 
Combining \eqref{Odd-Orthogonal-Product-Form-one} and \eqref{Odd-Orthogonal-Product-Form-two} with \eqref{Odd-Orthogonal-Ratio}, we are led to the conclusion.
\end{proof}

\begin{acknowledgements}
This paper is inspired from \cite{LM16} and owes much to \cite{LM16}, when the author learned it in the workshop proceeding in honor of Prof. James Cogdell's 60th birthday. It is pleasure to revisit remaining cases and dedicate this paper to 
Prof. James Cogdell on the occasion of his 70th birthday. We wish Prof. James Cogdell many more happy and healthy years. 
We would like to thank to Prof. Sug Woo Shin for drawing author's attention to residues of Rankin--Selberg type integrals.
This work was supported by the National Research Foundation of Korea (NRF) grant 
funded by the Korea government (No. RS-2023-00209992). 
\end{acknowledgements}

\begin{conflicts of interest}
The authors state that there is no conflict of interest.
\end{conflicts of interest}

\begin{data availability}
This manuscript has no associated data.
\end{data availability}

 \bibliographystyle{amsplain}
 \bibliography{references}

\end{document}